\newcommand*\samethanks[1][\value{footnote}]{\footnotemark[#1]}
\newcommand{\mres}{\mathbin{\vrule height 1.6ex depth 0pt width 0.13ex\vrule height 0.13ex depth 0pt width 1.3ex}}
\newcommand{\Ha}{\mathcal{H}^1}
\newcommand{\Le}{\mathcal{L}}
\newcommand{\dHa}{\,\mathrm{d}\Ha}
\newcommand{\dLe}{\,\mathrm{d}\mathcal{L}}
\newcommand{\R}{\mathbb{R}}
\newcommand{\ws}{\stackrel{*}{\rightharpoonup}}
\newcommand{\F}{\mathcal{F}}
\newcommand{\G}{\mathcal{G}}
\DeclareMathOperator*{\esssup}{ess\,sup}
\newtheorem{corr}{Corollary}[subsection]
\newtheorem{prop}[corr]{Proposition}
\newtheorem{lem}[corr]{Lemma}
\newtheorem{rem}[corr]{Remark}
\newtheorem{examp}[corr]{Example}
\newtheorem{defin}[corr]{Definition}
\newtheorem{assump}[corr]{Assumption}
\newtheorem{conv}[corr]{Convention}
\newtheorem{thm}[corr]{Theorem}
\crefname{examp}{example}{examples}
\crefname{prop}{proposition}{propositions}
\crefname{lem}{lemma}{lemmas}
\crefname{defin}{definition}{definitions}
\crefname{assump}{assumption}{assumptions}
\crefname{figure}{figure}{figures}
\newcommand{\notinclude}[1]{}
\newcommand{\dualityApproach}[1]{}
\title{Formulation of branched transport as geometry optimization}
\author{Julius Lohmann\thanks{Institute for Numerical and Applied Mathematics, University of Münster, Einsteinstraße 62, 48149 Münster, Germany}\hspace{0.35em}\thanks{juliuslohmann@uni-muenster.de}\qquad Bernhard Schmitzer\thanks{Institute for Computer Science, University of Göttingen, Goldschmidtstraße 7, 37077 Göttingen, Germany. schmitzer@cs.uni-goettingen.de}\qquad Benedikt Wirth\samethanks[1]\hspace{0.35em}\thanks{benedikt.wirth@uni-muenster.de}}
\begin{document}
	\maketitle
	\begin{abstract}
		The branched transport problem, a popular recent variant of optimal transport, is a non-convex and non-smooth variational problem on Radon measures.
		The so-called urban planning problem, on the contrary, is a shape optimization problem that seeks the optimal geometry of a street or pipe network.
		We show that the branched transport problem with concave cost function is equivalent to a generalized version of the urban planning problem.
		Apart from unifying these two different models used in the literature,
		another advantage of the urban planning formulation for branched transport is that it provides a more transparent interpretation of the overall cost by separation into a transport (Wasserstein-$1$-distance) and a network maintenance term,
		and it splits the problem into the actual transportation task and a geometry optimization.
		\\\\
		\textbf{Keywords:} optimal transport, optimal networks, branched transport, urban planning, Wasserstein distance, geometric measure theory
	\end{abstract}
	
	
	\small
	\tableofcontents
	\normalsize
	
	\section{Introduction}
	\label{intro}
	Branched transport and urban planning are distinct models developed during the past two decades that both describe transportation networks;
	the textbooks by Bernot et al.\ \cite{BCM} and by Buttazzo et al.\ \cite{BPSS} are devoted to either model and provide a good starting point into the literature.
	
	The main motivation for \emph{branched transport} is a variational explanation of the high complexity and ramification found in many natural transportation systems
	such as river networks, vascular anatomy (like the blood vessel or the bronchial system) or botanical structures (like roots or leaf venation).
	The model is based on the assumption that the (biological or energetic) cost incurred by the transport is subadditive in the transported mass
	so that it is cost-efficient to merge originally separate material flows into few large material flows.
	This tendency of flow-merging then automatically leads to network-like material streams with many branchings.
	
	\emph{Urban planning} on the other hand was devised as an optimal control or shape optimization problem.
	Here one optimizes the street layout or the public transport routes in order to allow efficient commuting of the population between their homes and their workplaces.
	The cost of a street or public transport network then is composed
	of its maintenance cost (in the original model simply the total network length)
	and the cost of the population for commuting
	(measured as the optimal transport or Wasserstein-$1$-distance between the distributions of homes and workplaces in a metric that depends on the street network).
	
	Even though both model formulations are fundamentally different
	(branched transport is a non-convex optimization problem over $1$-currents, while urban planning can be seen as a bilevel shape optimization problem)
	the resulting network structures behave in a phenomenologically similar way.
	In \cite{BW16} it was then shown that the (original) urban planning problem can equivalently be formulated as a specific branched transport problem.
	The aim of the current work is to greatly generalize this result:
	We will introduce a natural generalization of the urban planning problem (of which the original urban planning problem is a specific case)
	and then show that every branched transport problem with concave transportation cost is equivalent to a generalized urban planning problem and vice versa.
	In particular, optimizers of one problem induce optimizers of the other.
	
	We think that the equivalence between both models is not just useful because it unites different strands of literature.
	It also has implications for the modelling and the numerics of such problems.
	As for the modelling, the urban planning formulation clearly separates two different contributions to the overall cost:
	the cost for the actual transportation as well as the cost for building and maintaining the transport network.
	This is not only easier to interpret than the lumped cost of branched transport,
	it also allows to consider (potentially more realistic) variants in which transportation and maintenance cost are payed by different parties (such as commuters and transport companies),
	leading to games between different players.
	As for numerics, there exist phase field approximations of branched transport \cite{CFM,FDW,W} that can now be applied to solve urban planning problems numerically.
	Similarly, a bilevel optimization seems an attractive alternative numerical approach (though not yet implemented for such problems to the best of our knowledge)
	which now becomes available also for branched transport.
	
	In the remainder of the introduction we briefly state the branched transport and the urban planning model as well as our main results.
	In \cref{sect2} we then analyse the Wasserstein distance with respect to the so-called urban metric, a (pseudo-)metric that depends on a street or transport network and that occurs in urban planning.
	In particular, we will prove properties of this urban metric and derive an equivalent Beckmann formulation. Finally, in \cref{sec4} the equivalence between the branched transport and the urban planning problem is shown.
	\dualityApproach{\todo[inline]{If Section 3 stays, add back in: An alternative proof of the Beckmann formulation, using an approach based on the Kantorovich\textendash Rubinstein formula and Fenchel duality, is given in \cref{subs6}.}}

	\subsection{Generalized branched transport}
	\label{euler}
	There are various ways to describe branched transport, in particular a \emph{Eulerian formulation} due to Xia \cite{X}, which uses vector-valued Radon measures or $1$-currents on $\R^n$, and a \emph{Lagrangian formulation} due to Maddalena, Solimini and Morel \cite{MSM03} based on so-called irrigation patterns. We here only present the former (irrigation patterns will be introduced later in \cref{subs31}).
	
	The cost for moving an amount of mass $m$ per unit distance will be described by a \emph{transportation cost} $\tau$.
	
	\begin{defin}[Transportation cost]
		A \textbf{transportation cost} is a non-decreasing concave function $\tau:[0,\infty)\to[0,\infty)$ with $\tau(0)=0$.
	\end{defin}
	
	The monotonicity of $\tau$ as well as $\tau(0)=0$ are natural requirements for a cost.
	The concavity could in principle be relaxed to the weaker condition of subadditivity,
	\begin{equation*}
	\tau(m_1+m_2)\leq\tau(m_1)+\tau(m_2),
	\end{equation*}
	which encodes an efficiency gain if mass is transported in bulk.
	Different examples for $\tau$ are presented in \cref{fig1}.
	Originally only $\tau(m)=m^\alpha$ for $\alpha\in(0,1)$ was used but was generalized to the above in \cite{BW}.
	The borderline choice $\tau(m)=m$ does not exhibit any preference for transport in bulk and is known to lead to classical Wasserstein-$1$ transport.
	
	The material flows from a source distribution $\mu_+$ to a sink distribution $\mu_-$ (without loss of generality probability measures) are described by so-called mass fluxes.
	
	\begin{defin}[Polyhedral mass flux and branched transport cost]
		Assume that $\mu_+$ and $\mu_-$ are finite sums of weighted Dirac measures, i.e.,
		\begin{equation*}
		\mu_+=\sum_{i=1}^{M}f_i\delta_{x_i}\textup{\qquad and\qquad}\mu_-=\sum_{j=1}^{N}g_j\delta_{y_j},
		\end{equation*}
		where $f_i,g_j\in[0,1]$ satisfy $\sum_if_i=\sum_jg_j=1$ and $x_i,y_j\in\R^n$. A \textbf{polyhedral mass flux} between $\mu_+$ and $\mu_-$ is a vector-valued Radon measure $\F\in\mathcal{M}^n(\R^n)$ which satisfies $\textup{div}(\F)=\mu_+-\mu_-$ in the distributional sense and can be expressed as
		\begin{equation*}
		\F=\sum_em_e\vec{e}\Ha\mres e,
		\end{equation*}
		where the sum is over finitely many edges $e=x_e+[0,1](y_e-x_e)\subset\R^n$ with orientation $\vec{e}=(y_e-x_e)/|y_e-x_e|$, the coefficients $m_e$ are real weights, and $\Ha\mres e$ is the one-dimensional Hausdorff measure restricted to $e$. The \textbf{branched transport cost} of $\F$ with respect to a transportation cost $\tau$ is defined as
		\begin{equation*}
		\mathcal{J}^{\tau,\mu_+,\mu_-}[\F]=\sum_e\tau(m_e)\Ha(e).
		\end{equation*}
	\end{defin}
	A polyhedral mass flux can equivalently be represented as a weighted directed graph with edges $e$ and weights $m_e$.
	The condition $\textup{div}(\F)=\mu_+-\mu_-$ encodes Kirchhoff's law of mass preservation: Let $v$ be any vertex of the weighted directed graph associated with $\F$ such that $v$ is not contained in $\textup{supp}(\mu_+)\cup\textup{supp}(\mu_-)$. Then the condition implies
	\begin{equation*}
	\sum_{e_{\text{in}}}m_{e_{\text{in}}}=\sum_{e_{\text{out}}}m_{e_{\text{out}}},
	\end{equation*}
	where we sum over all incoming edges $e_{\text{in}}$ and outgoing edges $e_{\text{out}}$ at $v$.
	
	Using the idea of (discrete) polyhedral mass fluxes we can pass to the continuous case using weak-$*$ convergence.
	\begin{defin}[Mass flux, approximating graph sequence and branched transport cost]
		\label{defags}
		A vector-valued Radon measure $\F\in\mathcal{M}^n(\R^n)$ is called \textbf{mass flux} between two probability measures $\mu_+$ and $\mu_-$ on $\R^n$ if there exist two sequences of probability measures $\mu_+^k,\mu_-^k$ and a sequence of polyhedral mass fluxes $\F_k$ with $\textup{div}(\F_k)=\mu_+^k-\mu_-^k$ such that $\F_k\xrightharpoonup{*}\F$ and $\mu_{\pm}^k\xrightharpoonup{*}\mu_{\pm}$, where $\xrightharpoonup{*}$ indicates the weak-$*$ convergence in duality with continuous functions. The sequence $(\F_k,\mu_+^k,\mu_-^k)$ is called \textbf{approximating graph sequence}, and we write $(\F_k,\mu_+^k,\mu_-^k)\xrightharpoonup{*}(\F,\mu_+,\mu_-)$. If $\F$ is a mass flux, then the \textbf{branched transport cost} of $\F$ is defined as
		\begin{equation*}
		\mathcal{J}^{\tau,\mu_+,\mu_-}[\F]=\inf\left\{\liminf_k\mathcal{J}^{\tau,\mu_+^k,\mu_-^k}[\F_k]\,\bigg|\, (\F_k,\mu_+^k,\mu_-^k)\xrightharpoonup{*}(\F,\mu_+,\mu_-)\right\}.
		\end{equation*}
	\end{defin}
	The branched transport problem seeks the optimal mass fluxes between $\mu_+$ and $\mu_-$.
	\begin{defin}[Branched transport problem]
		\label{defbtp}
		The \textbf{branched transport problem} is given by
		\begin{equation*}
		\inf\{ \mathcal{J}^{\tau,\mu_+,\mu_-}[\F]\,|\,\F\in\mathcal{M}^n(\R^n),\textup{div}(\F)=\mu_+-\mu_- \}.
		\end{equation*}
	\end{defin}
	A minimizer is known to exist in case the problem is finite, which is true under mild growth conditions on $\tau$ \cite{BW}.
	
	\subsection{Generalized urban planning problem}
	\label{GurbWas}
	\label{Guppdef}
	The urban planning problem was proposed by Brancolini and Buttazzo \cite{BB} as well as Buttazzo, Pratelli, Solimini and Stepanov \cite{BPSS}.
	We directly state our generalization and relate it to the original model afterwards.
	The basic idea is to optimize a street network, which is represented by a set $S\subset\R^n$
	as well as a function $b:S\to[0,\infty)$ that describes how costly it is to travel along each part of the network (one may view it as the inverse road quality).
	The cost for travelling outside the network $S$ is assumed to be a fixed constant $a\in[0,\infty]$ per distance.
	Given such a street network, the cost for travelling from $x$ to $y$ is described by the \emph{urban metric}.
	
	\begin{defin}[Generalized urban metric]
		\label{gum}
		Let $S\subset\R^n$ be countably $1$-rectifiable and Borel measurable, $b:S\to[0,\infty)$ lower semi-continuous and $a\in[0,\infty]$ with $b\leq a$ on $S$. The associated \textbf{generalized urban metric} is defined as
		\begin{equation*}
		d_{S,a,b}(x,y)=\inf_{\gamma\in\Gamma^{xy}}\int_{\gamma([0,1])\cap S}b\,\mathrm{d}\Ha+a\Ha(\gamma([0,1])\setminus S),
		\end{equation*}
		where $\Gamma^{xy}$ denotes the set of all Lipschitz paths $\gamma:[0,1]\to\R^n$ with $\gamma(0)=x$ and $\gamma(1)=y$.
	\end{defin}
	
	Without any further restrictions, $d_{S,a,b}$ is actually only a pseudometric since positive definiteness and finiteness cannot be guaranteed.
	
	We will call the function $b$ the \emph{friction coefficient} of the road or pipe network
	since it obviously describes how difficult motion on the network is.
	Of course, the meaning is the same as the previously mentioned inverse road quality.
	
	Now let $\mu_+$ and $\mu_-$ be probability measures on $\R^n$ describing the initial and final distribution of a quantity that is to be transported (or of homes and workplaces). The total cost for transporting $\mu_+$ onto $\mu_-$ is given by the Wasserstein distance with respect to $d_{S,a,b}$.
	
	\begin{defin}[Wasserstein distance, transport plans]
		\label{WdisTp}
		Let $S,a,b$ be as in \cref{gum}. The \textbf{Wasserstein distance} between $\mu_+$ and $\mu_-$ with respect to $d_{S,a,b}$ is defined as
		\begin{equation*}
		W_{d_{S,a,b}}(\mu_+,\mu_-)=\inf_\pi\int_{\R^n\times\R^n}d_{S,a,b}\,\mathrm{d}\pi,
		\end{equation*}
		where the infimum is taken over all probability measures $\pi$ on $\R^n\times\R^n$ with $\pi(B\times\R^n)=\mu_+(B)$ and $\pi(\R^n\times B)=\mu_-(B)$ for all Borel sets $B\in\mathcal{B}(\R^n)$. Any such measure is called a \textbf{transport plan}. The set of transport plans is denoted by $\Pi(\mu_+,\mu_-)$.
	\end{defin}
	
	Given a transport plan $\pi$, the quantity $\pi(A,B)$ indicates how much mass is transported from $A\subset\R^n$ to $B\subset\R^n$
	so that the Wasserstein distance is nothing else than the accumulated travel cost of all mass particles.
	This Wasserstein distance will form part of the urban planning cost, the other part comes from the maintenance of the network $(S,b)$.
	The maintenance cost of a unit street segment of inverse quality $\hat b$ shall be described by $c(\hat b)$ for a function $c:[0,\infty)\to[0,\infty]$.
	Since maintenance cost naturally increases with road quality, $c$ shall be non-increasing.
	
	\begin{defin}[Generalized urban planning cost]\label{def:urbPlnCost}
		Given a non-increasing maintenance cost $c:[0,\infty)\to[0,\infty]$, set
		\begin{equation*}
		a=\inf c^{-1}(0).
		\end{equation*}
		The \textbf{generalized urban planning cost} of a street network $(S,b)$ (as in \cref{gum}) is given by
		\begin{equation*}
		\mathcal{U}^{c,\mu_+,\mu_-}[S,b]=W_{d_{S,a,b}}(\mu_+,\mu_-)+\int_Sc(b)\,\mathrm{d}\Ha.
		\end{equation*}
	\end{defin}
	Note that $c(b)$ is Borel measurable as a composition of Borel measurable functions. Recall that the function $b$ describes the cost for travelling on the network while the constant $a$ is the cost for travelling outside the network.
	Hence we must have $c(a)=0$ as there is no road to be maintained, which explains the relation $a=\inf c^{-1}(0)$.
	The urban planning problem now seeks the optimal street network.
	
	\begin{defin}[Generalized urban planning problem]\label{def:urbPlnProblem}
		The \textbf{generalized urban planning problem} is given by
		\begin{equation*}
		\inf\left\{\mathcal{U}^{c,\mu_+,\mu_-}[S,b]\,\middle|\,S\subset\R^n\text{ countably $1$-rectifiable and Borel measurable, }b:S\to[0,a]\text{ lower semi-continuous}\right\}.
		\end{equation*}
	\end{defin}
	
	The generalized urban planning problem can be seen as a bilevel optimization problem,
	where the outer problem optimizes the shape $S$ and friction coefficient $b$ of the network and the inner one solves the optimal transport problem.
	
	The original urban planning problem from \cite{BB,BPSS} is obtained by the specific choice
	\begin{equation*}
	c(\hat b)=\begin{cases}
	\infty&\text{if }\hat b<\bar b\\
	\bar c&\text{if }\bar b\leq\hat b<\bar a\\
	0&\text{if }\bar a\leq\hat b
	\end{cases}
	\end{equation*}
	for fixed parameters $\bar a$, $\bar b$ and $\bar c$.
	In that model only a single type of roads is built, namely roads with friction coefficient $\bar b$:
	A better quality is impossible due to infinite maintenance cost, and there is no gain in using worse streets as their maintenance costs the same.
	
	\begin{figure}
		\centering
		\begin{tikzpicture}
		\begin{axis}[axis on top=true,
		x = 2.5em, y = 2.5em,
		samples = 500, 
		domain = 0:5, 
		ymax = 3,
		axis x line=bottom, 
		axis y line=left, 
		xlabel = { $m$}, 
		ylabel = { $\tau (m)$}, 
		x label style={at={(axis description cs:1,0)},anchor=south},
		y label style={at={(axis description cs:.07,0.9)},rotate=270,anchor=west},
		xtick = {1.25},
		xticklabels = {$\frac{\bar{c}}{\bar{a}-\bar{b}}$},
		ytick = {1},
		yticklabels = {$\bar{c}$},
		legend style = {cells={anchor=west},legend pos=outer north east},
		]
		\addplot[lightgray, line width = 2.9pt] expression {min(1.8*x,x+1)};
		\addplot[gray, line width = 1.8pt] expression {x};
		\addplot[black] expression {1};
		\addplot[dashed, mark=none] expression {pow(x,0.5)};
		\addplot[only marks,mark=*,mark options={fill=black,scale=.8},text mark as node=true] coordinates{(0,0)};
		\addplot[only marks,mark=*,mark options={fill=white,scale=.7},text mark as node=true] coordinates{(0,1)};
		\legend{urban planning cost $\min(\bar am,\bar{b}m+\bar{c})$\\
			Wasserstein cost $\bar bm$\\
			discrete cost $\bar{c}\cdot 1_{(0,\infty)}(m)$\\
			branched transport cost $m^\alpha$\\};
		\end{axis}
		\end{tikzpicture}%
		\quad
		\begin{tikzpicture}
		\begin{axis}[axis on top=true,
		x = 2.5em, y = 2.5em,
		samples = 500, 
		domain = 0:5, 
		ymax = 3,
		axis x line=bottom, 
		axis y line=left, 
		xlabel = { $b$}, 
		ylabel = { $\varepsilon (b)$}, 
		x label style={at={(axis description cs:1,0)},anchor=south},
		y label style={at={(axis description cs:.1,0.9)},rotate=270,anchor=west},
		xtick = {1,1.8},
		xticklabels = {$\bar{b}$,$\bar{a}$},
		ytick = {1},
		yticklabels = {$\bar{c}$},
		]
		\addplot[only marks,mark=*,mark options={black,scale=.8},text mark as node=true] coordinates{(1,1)};
		\addplot[black] expression {1};
		\addplot[dashed, mark=none,domain=0:4.9] expression {1/(4*x)};
		\addplot[lightgray, line width = 2.9pt,domain=1:4.9] expression {max(0,2.25-1.25*x)};
		\addplot[gray, line width = 1.8pt,domain=1:4.9] expression {0};
		\addplot[only marks,mark=*,mark options={fill=black,scale=.7},text mark as node=true] coordinates{(0,1)};
		\addplot[only marks,mark=*,mark options={black,scale=.7},text mark as node=true] coordinates{(1,0)};
		\end{axis}
		\end{tikzpicture}
		\caption{Classical examples for the transportation cost $\tau(m)$ and the corresponding maintenance cost $\varepsilon(b)=(-\tau)^*(-b)$.
			The discrete cost corresponds to the Steiner problem of finding an optimal network with minimal total length.}
		\label{fig1}
	\end{figure}
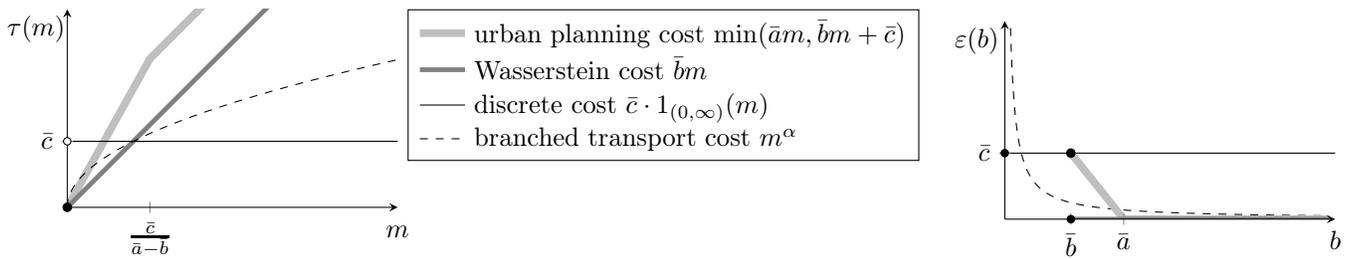
	
	\subsection{Summary of results}

	Our main results are
	\begin{itemize}
		\item
		a Beckmann-type formulation of the Wasserstein distance $W_{d_{S,a,b}}$ (\cref{BWfinal}) and
		\item
		the urban planning formulation of the branched transport problem (\cref{finalthm}).
	\end{itemize}
	In the following we briefly state and discuss both results as well as a few auxiliary results of independent interest.
	Let $\mu_+$ and $\mu_-$ be probability measures on $\R^n$ with bounded supports, without loss of generality contained in $\mathcal{C}=[-1,1]^n$.
	The Wasserstein-1-distance $W_1(\mu_+,\mu_-)=\min_{\pi\in\Pi(\mu_+,\mu_-)}\int_{\mathcal C\times\mathcal C}|x-y|\,\mathrm{d}\pi(x,y)$
	between $\mu_+$ and $\mu_-$ with respect to the Euclidean (or a similarly smooth geodesic) metric is known to equal the minimum cost
	of a material flux from the source $\mu_+$ to the sink $\mu_-$ \cite[Thm.\,4.6]{San},
	\begin{equation*}
	W_1(\mu_+,\mu_-)=\min_{\G}|\G|(\mathcal C),
	\end{equation*}
	where the minimum is taken over all $\R^n$-valued Radon measures $\G\in\mathcal M^n(\mathcal C)$ that satisfy $\textup{div}(\G)=\mu_+-\mu_-$
	and $|\G|(\mathcal C)$ denotes the total variation or total mass of $\G$.
	This minimum cost flow problem is also known as Beckmann formulation.
	The proof essentially consists of two applications of standard convex Fenchel--Rockafellar duality
	(the first dualization yields the so-called Kantorovich--Rubinstein formula, from which the second dualization derives the Beckmann formulation).
	We show that an analogous formulation holds for the Wasserstein distance $W_{d_{S,a,b}}(\mu_+,\mu_-)$ with respect to our urban metric $d_{S,a,b}$.
	To avoid pathological situations in which the street network connects any two points at arbitrarily small cost, we assume the following.
	\begin{assump}
		\label{Sassump}
		For a given pair $(S,b)$ denote the part of the network with friction coefficient no larger than $\lambda$ by $$S_\lambda=\{ z\in S\,|\, b(z)\leq\lambda \}.$$
		We assume that $S_\lambda$ has finite Hausdorff measure, $\Ha(S_\lambda)<\infty$, for all $\lambda\in [0,a)$.
	\end{assump}
	\begin{thm}[Beckmann-type formulation of $W_{d_{S,a,b}}(\mu_+,\mu_-)$]
		\label{BWfinal}
		Let $S\subset\mathcal{C}$ countably $1$-rectifiable and Borel measurable, $a\in [0,\infty]$ and $b:S\to [0,\infty)$ lower semi-continuous with $b\leq a$ on $S$. Suppose that \cref{Sassump} is satisfied. Then we have
		\begin{equation*}
		W_{d_{S,a,b}}(\mu_+,\mu_-)=\inf_{\xi,\F^\perp}\int_S b|\xi|\,\mathrm{d}\Ha+a|\F^\perp|(\mathcal{C}),
		\end{equation*}
		where the infimum is taken over $\xi\in L^1(\Ha\mres S;\R^n)$ and $\F^\perp\in\mathcal{M}^n(\mathcal{C})$ with $\F^\perp\mres S=0$ and $\textup{div}(\xi\Ha\mres S+\F^\perp)=\mu_+-\mu_-$.
	\end{thm}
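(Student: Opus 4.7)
My plan is to prove the identity via the two-step convex duality strategy hinted at in the introduction: first apply Kantorovich--Rubinstein to pass from transport plans to Lipschitz potentials, then apply Fenchel--Rockafellar to pass from Lipschitz potentials to the sought vector-measure formulation. The starting point is the Kantorovich duality
\begin{equation*}
W_{d_{S,a,b}}(\mu_+,\mu_-)=\sup\left\{\int u\,\mathrm{d}(\mu_+-\mu_-)\,\middle|\, u\colon\mathcal{C}\to\R,\ |u(x)-u(y)|\leq d_{S,a,b}(x,y)\ \forall x,y\in\mathcal{C}\right\},
\end{equation*}
which is applicable once one shows that lower semi-continuity of $b$ together with \cref{Sassump} imply lower semi-continuity of the pseudometric $d_{S,a,b}$ on $\mathcal{C}\times\mathcal{C}$. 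Since the straight-line competitor yields $d_{S,a,b}(x,y)\leq a|x-y|$, admissible $u$ are automatically $a$-Lipschitz in the Euclidean sense whenever $a<\infty$; the case $a=\infty$ is handled by truncation, approximating with $S_{\lambda_k}$ for $\lambda_k\uparrow\infty$ where \cref{Sassump} guarantees finite $\Ha$-measure of each stratum.

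The technical heart of the proof is a pointwise characterization: a Lipschitz $u$ is $1$-Lipschitz with respect to $d_{S,a,b}$ if and only if $|\nabla u|\leq a$ Lebesgue-a.e.\ on $\mathcal{C}$ and the approximate tangential derivative along $S$ satisfies $|\nabla_T u(x)|\leq b(x)$ for $\Ha$-a.e.\ $x\in S$. The ``only if'' direction is obtained by testing against short straight segments (for the Euclidean bound) and against $C^1$-curves locally parametrizing $S$ (for the tangential bound). For the converse, I would integrate $\nabla u\cdot\dot\gamma$ along an arbitrary competitor $\gamma\in\Gamma^{xy}$: exploiting that at $\Ha$-a.e.\ point of $\gamma([0,1])\cap S$ the velocity of $\gamma$ is parallel to the approximate tangent of $S$ (since both sets are $1$-rectifiable), the contribution from $\gamma\cap S$ is bounded by $\int_{\gamma\cap S}b\,\dHa$ via the tangential estimate and the contribution from $\gamma\setminus S$ by $a\Ha(\gamma\setminus S)$ via the Euclidean estimate, so that taking the infimum over $\gamma$ yields $u(y)-u(x)\leq d_{S,a,b}(x,y)$.

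Equipped with this characterization, the second duality step identifies the Kantorovich sup as a Fenchel conjugate. Introducing the convex set $K=\{v\in L^\infty(\mathcal{C};\R^n)\,|\,|v|\leq a\text{ Lebesgue-a.e.},\ |v_T|\leq b\ \Ha\text{-a.e.\ on }S\}$, the primal reads $\sup\{\int u\,\mathrm{d}(\mu_+-\mu_-)\,|\,u\in W^{1,\infty}(\mathcal{C}),\ \nabla u\in K\}$, and Fenchel--Rockafellar duality applied to the operator $\nabla$ yields the infimum over $\R^n$-valued Radon measures $\F$ on $\mathcal{C}$ with $\textup{div}\,\F=\mu_+-\mu_-$ of the support function of $K$ evaluated at $\F$. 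Decomposing $\F=\xi\Ha\mres S+\F^\perp$ uniquely with $\F^\perp\mres S=0$ via Radon--Nikodym, a direct computation shows that this support function equals $\int_S b|\xi|\,\dHa+a|\F^\perp|(\mathcal{C})$, matching the claim. Absence of duality gap follows from standard qualifications, since $K$ is convex and weak-$*$ closed (by lower semi-continuity of $b$) and the linear functional $u\mapsto\int u\,\mathrm{d}(\mu_+-\mu_-)$ is continuous.

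The main obstacle is the ``if'' direction of the pointwise characterization: because $S$ is only countably $1$-rectifiable and $b$ merely lower semi-continuous, an arbitrary Lipschitz competitor $\gamma$ can intersect $S$ on a complicated set where alignment of tangents must be verified $\Ha$-a.e.\ and where $b$ may be discontinuous. I would handle this by stratification, writing $S=\bigcup_k S_{\lambda_k}$ with $\lambda_k\uparrow a$ (each of finite $\Ha$-measure by \cref{Sassump}), approximating each stratum from inside by Borel subsets of images of injective $C^1$-curves on which both the tangent field and $b$ behave continuously (Lusin-type approximation), and passing to the limit using lower semi-continuity of $b$ and monotone convergence.
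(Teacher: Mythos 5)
Your route is the two-step duality (Kantorovich--Rubinstein, then Fenchel--Rockafellar) that the paper explicitly considers and then abandons for the general case; the paper instead proves the identity directly, turning an admissible flux into a transport plan via Smirnov's decomposition of divergence-measure fields into curves, and conversely turning an optimal plan into a flux by pushing it forward under a Borel measurable selection of $d_{S,a,b}$-optimal paths (\cref{final}), which is what most of \cref{sect2} is built to provide. As written, your proposal has gaps that are not merely technical. First, the case $a=\infty$ --- the relevant one for branched transport with $\tau'(0)=\infty$, e.g.\ $\tau(m)=m^\alpha$ --- is not handled: when $a=\infty$ the admissible Kantorovich potentials need not be Euclidean-Lipschitz (they may jump between regions not connected by $S$), so the pointwise gradient characterization and the $C^1/W^{1,\infty}$ framework for the Fenchel step collapse. ``Truncation with $S_{\lambda_k}$'' does not repair this: restricting the network to the stratum $S_{\lambda_k}$ does not make the off-network cost $a$ finite, and whichever truncation is meant you would still owe a proof that both the Wasserstein and the Beckmann values of the truncated problems converge to those of the original --- a limit passage that is essentially as delicate as the theorem itself (cf.\ the instabilities exhibited in \cref{exnotlow,exnotex}).

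Second, the constraint set $K=\{v\in L^\infty(\mathcal{C};\R^n)\,:\,|v|\leq a\text{ a.e.},\ |v_T|\leq b\ \Ha\text{-a.e.\ on }S\}$ is ill-posed: for $n\geq2$ the set $S$ is Lebesgue-null, so the tangential constraint is invisible to Lebesgue equivalence classes. Setting up the duality correctly forces a product space such as $C(S;\R^n)\times C(\mathcal{C};\R^n)$ with dual $\mathcal{M}^n(S)\times\mathcal{M}^n(\mathcal{C})$, which in turn requires $S$ to be closed --- but $S$ is only assumed countably $1$-rectifiable and Borel and need not be closed (cf.\ \cref{exclos}); one must then approximate $S$ from inside by compact pieces and pass to the limit in both problems. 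Third, the claimed ``standard qualifications'' for the absence of a duality gap are not available in this generality: the usual condition is continuity of the indicator of the constraint at some point of the form $\nabla\varphi_0$, which fails whenever $\inf_S b=0$ (allowed here, since $b:S\to[0,\infty)$), and requires a further approximation $b\vee\lambda$ with $\lambda\searrow0$. Each of these defects can be repaired when $a<\infty$ --- and the duality route then even dispenses with \cref{Sassump} --- but the theorem as stated includes $a=\infty$, where the duality argument does not go through and the paper's direct construction, resting on the lower semi-continuity of the path length (\cref{Lalmostlow}), the Go\l\k{a}b-type theorem (\cref{Golabim}) and the measurable path selection, is doing real work.
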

	While the result is not unexpected (the flux $\G$ here takes the form $\xi\Ha\mres S+\F^\perp$),
	its proof is quite technical and substantially more involved than for the Euclidean Wasserstein-1-distance.
	Indeed, in the Kantorovich--Rubinstein formula one typically needs to jump back and forth (using density arguments)
	between Lipschitz functions with respect to the metric and differentiable functions whose gradient is bounded in terms of the Lipschitz constant and the local metric.
	However, for discontinuous metrics $d_{S,a,b}$ and in particular for $a=\infty$ this becomes difficult.
	Instead, it turns out easier to prove the equality directly, without passing to an intermediate dual problem, by contructing a minimizer of one problem from a minimizer of the other.
	\dualityApproach{\todo[inline]{If Section 3 stays, add the condition $a<\infty$ as an alternative to \cref{Sassump} in the theorem and uncomment the commented sentences.}\notinclude{
			In \cref{sect2} and \cref{subs6} we will provide two different proofs of the Beckmann formulation for the Wasserstein distance. In each of those two sections at least one of the following assumptions is satisfied.
			\begin{assump}
				\label{assump3}
				The parameter $a$ satisfies $a<\infty$.
			\end{assump}
		}
	}
	In essence, if $W_{d_{S,a,b}}(\mu_+,\mu_-)<\infty$ and $\pi\in\Pi(\mu_+,\mu_-)$ is a minimizer (which will exist by \cref{exoptplan}), then under \cref{Sassump} an optimal mass flux for the Beckmann problem can be defined as $\F_{\pi,\rho}$ with
	\begin{equation*}
	\langle\varphi,\F_{\pi,\rho}\rangle=\int_{\Theta}\int_{[0,1]}\varphi(\gamma)\cdot\dot{\gamma}\,\mathrm{d}\mathcal{L}\,\mathrm{d}(\rho_\#\pi)(\gamma)
	\quad\text{for all }\varphi\in C(\mathcal{C};\R^n),
	\end{equation*}
	where $\Theta$ is a space of (equivalence classes of) Lipschitz paths (it will be defined in \cref{subs4}) and $\rho_\#\pi$ denotes the push-forward of $\pi$ under $\rho:\mathcal{C}\times\mathcal{C}\to\Theta$, which assigns to a pair of points a shortest connecting path (its existence and Borel measurability will be shown in \cref{final}). Conversely, if the Beckmann problem is finite, then under \cref{Sassump} there exists a minimizer $\G=\xi\Ha\mres S+\F^\perp$ which can be associated with a mass flux measure $\eta$ on $\Theta$ moving $\mu_+$ onto $\mu_-$ (cf. \cref{mfmeas}) via
	\begin{equation*}
	\langle\varphi,\G\rangle=\int_{\Theta}\int_{[0,1]}\varphi(\gamma)\cdot\dot{\gamma}\,\mathrm{d}\mathcal{L}\,\mathrm{d}\eta(\gamma)
	\quad\text{for all }\varphi\in C(\mathcal{C};\R^n).
	\end{equation*}
	This $\G$ then induces an optimal transport plan by
	\begin{equation*}
	\langle\varphi,\pi_\G\rangle=\int_{\Theta}\varphi (\gamma (0),\gamma (1))\,\mathrm{d}\eta (\gamma)
	\quad\text{for all }\varphi\in C(\mathcal{C}\times\mathcal{C} ).
	\end{equation*}
	\notinclude{The proof in \cref{subs6} does not yield constructions of minimizers.}
	
	As for the second main result, we show that any branched transport problem with transportation cost $\tau$ and source and sink $\mu_+$ and $\mu_-$
	can equivalently be written as a generalized urban planning problem with a particular maintenance cost.
	\begin{defin}[Maintenance cost associated with $\tau$]
		\label{maintcost}
		Let $\tau:[0,\infty)\to[0,\infty)$ be a transportation cost. We extend $\tau$ to a function on $\R$ via $\tau(m)=-\infty$ for all $m<0$. The associated \textbf{maintenance cost} ist defined by $\varepsilon(b)=(-\tau)^*(-b)=\sup_{m\geq 0}\tau(m)-bm$ for any $b\in\R$.
	\end{defin}
	By definition $\varepsilon$ equals $+\infty$ on $(-\infty,0)$ and is decreasing by the properties of $\tau$. We use the maintenace cost $c=\varepsilon$ in \cref{def:urbPlnCost}.
	The constant $a=\inf\varepsilon^{-1}(0)$ then equals the right derivative of $\tau$ in $0$, $a=\tau'(0)$.
	Examples are provided in \cref{fig1}.
	\begin{thm}[Bilevel formulation of the branched transport problem with concave transportation cost $\tau$]
		\label{finalthm}
		The branched transport problem can equivalently be written as urban planning problem,
		\begin{equation*}
		\inf_{\F}\mathcal{J}^{\tau,\mu_+,\mu_-}[\F]=\inf_{S,b}\mathcal{U}^{\varepsilon,\mu_+,\mu_-}[S,b],
		\end{equation*}
		where the infima are taken over $\F\in\mathcal{M}^n(\mathcal{C})$ with $\textup{div}(\F)=\mu_+-\mu_-$, countably $1$-rectifiable and Borel measurable $S\subset\mathcal{C}$ and lower semi-continuous functions $b:S\to[0,a]$.
	\end{thm}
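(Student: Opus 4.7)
The overall strategy is to combine the Beckmann-type characterisation of $W_{d_{S,a,b}}$ from \cref{BWfinal} with the Fenchel biconjugation identity
\begin{equation*}
\tau(m)=\inf_{b\geq 0}[bm+\varepsilon(b)]\qquad\text{for }m\geq 0,
\end{equation*}
which holds because $-\tau$ (extended by $+\infty$ on $(-\infty,0)$) is convex and lower semi-continuous and has $\varepsilon$ as its conjugate up to the sign convention of \cref{maintcost}. The infimum is attained at any $b\in[0,a]$ in the (non-empty) superdifferential of $\tau$ at $m$, so that $\tau(m)=bm+\varepsilon(b)$ can be realised pointwise.

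To show $\inf_\F\mathcal{J}^{\tau,\mu_+,\mu_-}[\F]\leq\inf_{S,b}\mathcal{U}^{\varepsilon,\mu_+,\mu_-}[S,b]$, I fix $(S,b)$ of finite urban planning cost, apply \cref{BWfinal} to obtain (near-)Beckmann minimisers $\xi$ and $\F^\perp$, and set $\F:=\xi\Ha\mres S+\F^\perp$, which has the correct divergence. By subadditivity of $\mathcal{J}^\tau$ along approximating graph sequences it suffices to establish
\begin{equation*}
\mathcal{J}^\tau[\xi\Ha\mres S]\leq\int_S\tau(|\xi|)\dHa\qquad\text{and}\qquad\mathcal{J}^\tau[\F^\perp]\leq a\,|\F^\perp|(\mathcal{C}).
\end{equation*}
The first bound is obtained by approximating $S$ by finite piecewise linear subsets and $|\xi|$ by step functions; the second by spreading $\F^\perp$ over polyhedral fluxes whose individual edge masses $m_e$ tend to zero, so that $\tau(m_e)/m_e\to a$ by concavity of $\tau$ with $\tau(0)=0$. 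Combining the two bounds and invoking the pointwise Fenchel inequality $\tau(|\xi|)\leq b|\xi|+\varepsilon(b)$ on $S$ bounds $\mathcal{J}^\tau[\F]$ by $\int_S b|\xi|\dHa+\int_S\varepsilon(b)\dHa+a|\F^\perp|(\mathcal{C})$, which by \cref{BWfinal} equals $\mathcal{U}^\varepsilon[S,b]$.

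For the reverse inequality I first handle the polyhedral case $\F=\sum_e m_e\vec{e}\,\Ha\mres e$: set $S:=\bigcup_e e$ and on each edge $e$ put $b:=b_e\geq 0$ realising $\tau(m_e)=b_e m_e+\varepsilon(b_e)$, extending $b$ to the negligible vertex set by $b(v):=\min\{b_e \mid e\ni v\}$ to secure lower semi-continuity. Using the feasible pair $\xi:=m_e\vec{e}$ on $e$ and $\F^\perp:=0$ in \cref{BWfinal} gives $W_{d_{S,a,b}}(\mu_+,\mu_-)\leq\sum_e b_e m_e\Ha(e)$, and therefore
\begin{equation*}
\mathcal{U}^\varepsilon[S,b]\leq\sum_e[b_e m_e+\varepsilon(b_e)]\Ha(e)=\sum_e\tau(m_e)\Ha(e)=\mathcal{J}^\tau[\F].
\end{equation*}
For a general admissible $\F$ I pick an approximating graph sequence $(\F_k,\mu_+^k,\mu_-^k)\ws(\F,\mu_+,\mu_-)$ with $\liminf_k\mathcal{J}^{\tau,\mu_+^k,\mu_-^k}[\F_k]\leq\mathcal{J}^\tau[\F]+\delta$, apply the polyhedral construction to each $\F_k$ to obtain $(S_k,b_k)$ with $\mathcal{U}^{\varepsilon,\mu_+^k,\mu_-^k}[S_k,b_k]\leq\mathcal{J}^{\tau,\mu_+^k,\mu_-^k}[\F_k]$, and then transfer the bound from the approximating marginals back to $(\mu_+,\mu_-)$.

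This transfer step will be the main technical obstacle. If $a<\infty$ the pointwise inequality $d_{S_k,a,b_k}(x,y)\leq a|x-y|$ and the triangle inequality in Wasserstein space yield
\begin{equation*}
W_{d_{S_k,a,b_k}}(\mu_+,\mu_-)\leq W_{d_{S_k,a,b_k}}(\mu_+^k,\mu_-^k)+a\,W_1(\mu_+,\mu_+^k)+a\,W_1(\mu_-^k,\mu_-),
\end{equation*}
and the two correction terms vanish as $k\to\infty$ since $\mu_\pm^k\ws\mu_\pm$ on the compact set $\mathcal{C}$. When $a=\infty$ that estimate is vacuous and $d_{S_k,\infty,b_k}$ may be infinite between the supports of $\mu_\pm$ and $\mu_\pm^k$; the remedy is to augment $S_k$ by thin connectors of length $\ell_k\to 0$ linking each atom of $\mu_\pm$ to a nearest atom of $\mu_\pm^k$, equipped with a friction $\bar b_k\to\infty$ chosen so that both the extra transport cost (of order $\bar b_k\ell_k$ per unit transported mass) and the extra maintenance cost (of order $\ell_k\varepsilon(\bar b_k)$ per connector) vanish in the limit. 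Such a tuning is feasible because $\varepsilon(\bar b)\to 0$ as $\bar b\to\infty$ when $a=\infty$, and $\ell_k$ can be made arbitrarily small by a diagonal argument on the approximating graph sequence.
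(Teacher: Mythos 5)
Your first inequality follows the paper's route in spirit, but your second inequality takes a genuinely different path that does not work as stated. The paper never returns to polyhedral approximating sequences in the direction $\inf\mathcal{J}\geq\inf\mathcal{U}$: it uses the rectifiable representation of the \emph{relaxed} energy (\cref{gengilbert}, i.e.\ \cite[Prop.\,2.32]{BW}) together with \cref{propmassdens} to write the given flux itself as $\F=\xi\Ha\mres S+\F^\perp$ with $|\xi|$ equal to the upper semi-continuous multiplicity $m_{\chi_\F}$ of an irrigation pattern, and then defines $b=-\max(\partial(-\tau)(|\xi|))$ directly on that $S$, obtaining lower semi-continuity of $b$ from \cref{uplow}. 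Your plan — build $(S_k,b_k)$ from each polyhedral $\F_k$ and transfer the bound from $(\mu_+^k,\mu_-^k)$ to $(\mu_+,\mu_-)$ — breaks down precisely in the case $a=\tau'(0)=\infty$, which is the generic branched-transport regime ($\tau(m)=m^\alpha$). First, $\mu_\pm$ are arbitrary probability measures, not atomic, so "linking each atom of $\mu_\pm$ to a nearest atom of $\mu_\pm^k$" is meaningless; for diffuse $\mu_\pm$ and a finite graph $S_k$ one has $W_{d_{S_k,\infty,b_k}}(\mu_+,\mu_-)=\infty$ no matter how finitely many connectors you add, since almost all of the mass must leave the network to reach its target. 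Second, the tuning you invoke rests on the claim that $\varepsilon(\bar b)\to0$ as $\bar b\to\infty$ when $a=\infty$; this is false in general, because $\lim_{\bar b\to\infty}\varepsilon(\bar b)=\lim_{m\searrow0}\tau(m)=\tau_+(0)$, which is strictly positive for the Steiner-type cost $\tau=\bar c\,1_{(0,\infty)}$ (there $\varepsilon\equiv\bar c$ on $[0,\infty)$ and $a=\infty$). So the transfer step, which you yourself identify as the main obstacle, cannot be closed along these lines.

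In the direction $\inf\mathcal{J}\leq\inf\mathcal{U}$ your argument is essentially the paper's, except that the two bounds
\begin{equation*}
\mathcal{J}^\tau[\xi\Ha\mres S]\leq\int_S\tau(|\xi|)\dHa,\qquad \mathcal{J}^\tau[\F^\perp]\leq \tau'(0)\,|\F^\perp|(\mathcal{C}),
\end{equation*}
together with the "subadditivity along approximating graph sequences", are exactly the nontrivial upper-bound half of \cref{gengilbert}; as written they are asserted rather than proved. Note also that $\mathcal{J}^{\tau,\cdot,\cdot}$ is only defined for fluxes whose divergence is a difference of probability measures, so the two summands $\xi\Ha\mres S$ and $\F^\perp$ do not individually carry a branched transport cost in the sense of \cref{defags}; you must either cite \cite[Prop.\,2.32]{BW} (as the paper does) or carry out its construction, which is substantially more delicate than "approximate $S$ by piecewise linear subsets" because the polyhedral competitors must respect the divergence constraint. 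Finally, the paper also treats separately the case where $\tau$ is not right-continuous at $0$ (then $-\tau$ is not lower semi-continuous, biconjugation fails, and one must pass to $\tilde\tau=\tau-\tau_+(0)$ as in \cref{tildetau}); your pointwise identity $\tau(m)=\inf_b[bm+\varepsilon(b)]$ is not valid at $m=0$ in that case and this needs the same correction.
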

	In fact, we do not only show equality of the infima, but from each admissible $\F$ we construct an admissible pair $(S,b)$ with nongreater cost and vice versa
	so that optimizers of one problem induce optimizers of the other.
	In more detail, let $(S,b)$ be admissible for the urban planning problem with $\mathcal{U}^{\varepsilon,\mu_+,\mu_-}[S,b]<\infty$. The latter implies that \cref{Sassump} is automatically satisfied and a minimizer $\pi$ of $W_{d_{S,\tau'(0),b}}(\mu_+,\mu_-)$ exists (which we will show in \cref{exoptplan}). The mass flux $\F_{\pi,\rho}$ from above then can be shown to satisfy
	\begin{equation*}
	\mathcal{J}^{\tau,\mu_+,\mu_-}[\F_{\pi,\rho}]\leq\mathcal{U}^{\varepsilon,\mu_+,\mu_-}[S,b].
	\end{equation*}  
	Conversely, if $\G$ is admissible for the branched transport problem, then there exists a mass flux $\F$ (induced by removing divergence-free parts of $\G$) with
	\begin{equation*}
	\mathcal{J}^{\tau,\mu_+,\mu_-}[\F]\leq\mathcal{J}^{\tau,\mu_+,\mu_-}[\G],
	\end{equation*}
	which can be written as $\F=\xi\Ha\mres S+\F^\perp$ with $\xi\in L^1(\Ha\mres S;\R^n)$ and $\F^\perp\in\mathcal{M}^n(\mathcal{C})$. Further, as we will show, $\xi$ can be represented such that the street network $(S,b=-\max(\partial(-\tau)(|\xi|)))$ is admissible for the urban planning problem and
	\begin{equation*}
	\mathcal{U}^{\varepsilon,\mu_+,\mu_-}[S,b]\leq\mathcal{J}^{\tau,\mu_+,\mu_-}[\F].
	\end{equation*}
	
	Especially the proof of \cref{BWfinal} requires a number of lower semi-continuity results for path lengths and related functionals which are also of their own interest,
	so we list some of them below.
	We assume to be given $S\subset\mathcal{C}$ countably $1$-rectifiable and Borel measurable, $a\in[0,\infty]$ and $b:S\to[0,a]$ lower semi-continuous such that \cref{Sassump} holds.
	Consider a sequence $\gamma_j:[0,1]\to\mathcal{C}$ of Lipschitz paths with uniformly bounded Lipschitz constant that converges uniformly to some $\gamma:[0,1]\to\mathcal{C}$.
	Then the following holds.
	\begin{itemize}
		\item A version of Go\l\k{a}b's theorem holds (see \cref{Golabim}): For all Lebesgue-measurable sets $T\subset [0,1]$ one has
		\begin{equation*}
		\Ha(\gamma(T))\leq\liminf_j\Ha(\gamma_j(T)).
		\end{equation*}
		\item If the $\gamma_j$ have constant speed, the path length associated with $d_{S,a,b}$ is lower semi-continuous (see \cref{Lalmostlow}),
		\begin{equation*}
		L_{S,a,b}(\gamma)\leq\liminf_jL_{S,a,b}(\gamma_j)
		\qquad\text{for}\qquad
		L_{S,a,b}(\gamma)=\int_{\gamma^{-1}(S)}b(\gamma)|\dot{\gamma}|\mathrm{d}\mathcal{L}+a\int_{[0,1]\setminus\gamma^{-1}(S)}|\dot{\gamma}|\mathrm{d}\mathcal{L}.
		\end{equation*}
		\item If $L_{S,a,b}(\gamma_j)$ is uniformly bounded and $a=\infty$, then for each $\delta>0$ there exists a $\lambda\in[0,\infty)$ such that
		\begin{equation*}
		\Ha(\gamma\setminus S_\lambda)\leq\delta
		\end{equation*}
		(see \cref{corro}). In particular, we have $\Ha(\gamma\setminus S)=0$.
		\item For all $x,y\in\mathcal{C}$ there exists an injective Lipschitz path $\psi:[0,1]\to\mathcal{C}$ with $\psi(0)=x$, $\psi(1)=y$ and $L_{S,a,b}(\psi)=d_{S,a,b}(x,y)$. Moreover, $\psi$ may be chosen to have a constant speed, bounded in terms of $d_{S,a,b}(x,y)$ and the Hausdorff measure of subsets of $S$ (see \cref{minimiz}).
		If all $\gamma_j$ are such optimal paths with $L_{S,a,b}(\gamma_j)=d_{S,a,b}(\gamma_j(0),\gamma_j(1))$ uniformly bounded,
		then also $\gamma$ is optimal with $L_{S,a,b}(\gamma)=d_{S,a,b}(\gamma(0),\gamma(1))$ (see \cref{optpathlimit}).
		\item In fact, $d_{S,a,b}=L\circ\rho$ for a Borel measurable path selection $\rho:\mathcal{C}\times\mathcal{C}\to\Theta$ (see \cref{final}; the topology on the space $\Theta$ of paths will be specified in \cref{subs4}).
		\item The urban metric $d_{S,a,b}$ is lower semi-continuous and for $a<\infty$ even continuous (see \cref{dLsc}).
	\end{itemize}
	\dualityApproach{\todo[inline]{If section 3 stays, add corresponding commented results.}
		\notinclude{\textbf{Main results from \cref{subs6}}: 
			\begin{itemize}
				\item Let $S\subset\mathcal{C}$ be closed with $\Ha(S)<\infty$, $a\in(0,\infty)$ and $b:S\to(0,a]$ lower semi-continuous and bounded away from zero (i.e., $d_{S,a,b}$ is a metric). Assume that $x,y\in\mathcal{C},t_1,t_2\in\R$ satisfy $|t_1-t_2|<d_{S,a,b}(x,y)$ and fix any $\lambda>1$. Then there is a function $f\in C^1(\mathcal{C})$ with $f(x)=t_1,f(y)=t_2,|\nabla f|\leq \lambda b$ on $S$ and $|\nabla f|\leq \lambda a$ in $\mathcal{C}\setminus S$ (\cref{psepa}). A consequence is the following version of the Kantorovich\textendash Rubinstein formula:
				\begin{equation*}
				W_{d_{S,a,b}}(\mu_+,\mu_-)=\sup_\varphi\int_{\mathcal{C}}\varphi\,\mathrm{d}(\mu_+-\mu_-),
				\end{equation*}
				where the supremum is taken over $\varphi\in C^1(\mathcal{C})$ with $|\nabla\varphi|\leq b$ on $S$ and $|\nabla\varphi|\leq a$ on $\mathcal{C}\setminus S$ (\cref{KRformula}).
				\item Let \cref{assump3} be satisfied. If $S$ is closed and $b$ bounded away from $0$, then the Beckmann problem from above can be written (\cref{beckradon} and \cref{Fenchel})
				\begin{equation*}
				\inf_{\xi,\F}\int_S b|\xi|\,\mathrm{d}\Ha+a|\F|(\mathcal{C})=\sup_\varphi\int_{\mathcal{C} }\varphi\,\mathrm{d}(\mu_+-\mu_-).
				\end{equation*}
				The infimum is taken over $\xi\in L^1(\Ha\mres S;\R^n),\F\in\mathcal{M}^n(\mathcal{C})$ with $\F\mres S=0$ and $\textup{div}(\xi\Ha\mres S+\F)=\mu_+-\mu_-$. The supremum is taken over $\varphi\in C^1(\mathcal{C})$ with $|\nabla\varphi|\leq b$ on $S$ and $|\nabla\varphi|\leq a$ in $\mathcal{C}\setminus S$.
			\end{itemize}
		}
	}
	
	\subsection{General notation and definitions}
	\label{notdefs}
	Throughout the article, we will use the following notation and definitions.
	\begin{itemize}
		\item $I= [0,1]$ denotes the \textit{unit interval}. We will use this notation if $I$ represents the domain of a path.
		\item $\mathcal{S}^{n-1}$ denotes the \textit{unit sphere}.
		\item $\mathcal{C}$ denotes the \textit{hypercube} $[-1,1]^n$.
		\item We write $B_r(x)$ for the \textit{open Euclidean ball} with radius $r>0$ and center $x\in\R^n$.
		\item $\mathcal{L}^k$ denotes the $k$-dimensional \textit{Lebesgue measure}. We write $\mathcal{L}=\mathcal{L}^1$.
		\item $\mathcal{H}^k$ indicates the $k$-dimensional \textit{Hausdorff measure}.
		\item Let $A$ be a topological space. We write $\mathcal{B}(A)$ for the \textit{$\sigma$-algebra of Borel subsets} of $A$.
		\item Assume that $(\Omega,\mathcal{A},\mu)$ is a measure space. We write $L^1(\mu;\R^n)$ for the \textit{Lebesgue space} of equivalence classes of $\mathcal{A}$-$\mathcal{B}(\R^n)$-measurable functions $f:\Omega\to\R^n$ with $\int_\Omega|f|\,\mathrm{d}\mu<\infty$, where two such functions belong to the same class if they coincide $\mu$-almost everywhere. For $\sigma$-finite $\mu$ this definition corresponds to the quotient of the Lebesgue space $L_1(\mu,\R^n)$ defined in \cite[\S\,2.4.12]{F} by the subspace $\{ f\,|\, f=0\, \mu\textup{-almost everywhere} \}$.
		\item Let $\mu:\mathcal{A}\to X$ be a map on a $\sigma$-algebra $\mathcal{A}$ to some set $X$ (e.g., a scalar- or vector-valued measure). For any $A\in\mathcal{A}$ we define the \textit{restriction} $\mu\mres A:\mathcal{A}\to X$ of $\mu$ to $A$ by
		\begin{equation*}
		(\mu\mres A) (B)=\mu(A\cap B)
		\quad\text{for all }B\in\mathcal{A}.
		\end{equation*}
		\item A set $S\subset\R^n$ is said to be \textit{countably $k$-rectifiable} (following \cite[p.\,251]{F}) if it is the countable union of $k$-rectifiable sets. More precisely, 
		\begin{equation*}
		S=\bigcup_{i=1}^\infty f_i(A_i),
		\end{equation*}
		where $A_i\subset\R^k$ is bounded and $f_i:A_i\to\R^n$ Lipschitz continuous. If $S$ is countably $k$-rectifiable and $\mathcal{H}^k$-measurable, then we can apply \cite[Lem.\,3.2.18]{F} which yields the existence of bi-Lipschitz functions $g_i:C_i\to S$ with $C_i\subset\R^k$ compact, $T_i=g_i(C_i)$ pairwise disjoint and 
		\begin{equation*}
		S=T_0\cup \bigcup_{i=1}^\infty T_i
		\end{equation*}
		with $\mathcal{H}^k(T_0)=0$. The sequence 
		\begin{equation*}
		S^N=\bigcup_{i=1}^NT_i
		\end{equation*}
		will be called an \textit{approximating sequence for} $S$.
		\item $\mathcal{M}^k(A)=\{\F:\mathcal{B}(A)\to\R^k\,\sigma\textup{-additive}\}$ denotes the set of $\R^k$-valued \textit{Radon measures} on a Polish space $A$. Note that every $\F\in\mathcal{M}^k(A)$ is automatically regular and of bounded variation (cf.\ \cite[p.\,343]{Els} and \cite[XI, 4.5., Thm.\,8]{L}). More specifically, the \textit{total variation measure} $|\F|$ is regular and satisfies $|\F|(A)<\infty$. We indicate the weak-$*$ convergence of Radon measures by $\ws$. The measure $\F\in\mathcal{M}^k(A)$ is called $\mathcal{H}^l$\textit{-diffuse} if $\F(B)=0$ for all $B\in\mathcal{B}(A)$ with $\mathcal{H}^l(B)<\infty$ \cite[p.\,2]{Sil}.
		\item For any closed subset $A\subset\R^n$ we write $\mathcal{DM}^n(A)=\{\F\in\mathcal{M}^n(A)\,|\,\textup{div}(\F)\in\mathcal{M}^1(A) \}$, where $\textup{div}$ denotes the distributional divergence. These vector-valued Radon measures were termed \textit{divergence measure vector fields} in \cite[p.\,2]{Sil}.
		\item $\Theta^{*k}(\mu,.)$ denotes the \textit{upper $k$-dimensional density} of a Radon measure  $\mu:\mathcal{B}(\R^n)\to[0,\infty)$ \cite[p.\,13]{LS}. It is for every $x\in\R^n$ given by
		\begin{equation*}
		\Theta^{*k}(\mu,x)=\limsup_{r\searrow 0}\frac{\mu(B_r(x))}{r^k\omega_k},
		\end{equation*}
		where $\omega_k$ denotes the volume of the $k$-dimensional unit ball.
		\item The \textit{pushforward} $f_\#\mu$ of a measure $\mu$ on $X$ under a measurable map $f : X \to Y$ is the measure defined by $f_\#\mu (A)=\mu (f^{-1}(A))$ for all measurable subsets $A\subset Y$.
		\item $p_i:A_1\times\ldots\times A_k\to A_i$ abbreviates the \emph{projection} on the $i$-th component.
		\item We write the \textit{arc length} of a Lipschitz path $\gamma:[t_1,t_2]\to\mathcal{C}$ as $\textup{len}(\gamma)=\int_{[t_1,t_2]}|\dot{\gamma}|\,\mathrm{d}\mathcal{L}$ and denote the \textit{Lipschitz constant} by $\textup{Lip}(\gamma)=\sup_{t\neq\tilde{t}}|\gamma (t)-\gamma (\tilde{t})|/|t-\tilde{t}|$.
		\item $\Gamma$ denotes the set of all \textit{Lipschitz paths} mapping $I$ onto $\mathcal{C}$. We write $\Gamma^{xy}=\{f\in\Gamma\,|\,f(0)=x,f(1)=y\}$ for $x,y\in\mathcal{C}$. Further, for $x,y\in\mathcal{C}$ and $C>0$ let 
		\begin{equation*}
		\Gamma_C=\{ f\in\Gamma\,|\,\textup{Lip}(f)\leq C\}\textup{\qquad and\qquad}\Gamma_C^{xy}=\{f\in\Gamma_C\,|\,f(0)=x,f(1)=y\}.
		\end{equation*}
		\item For any Lipschitz path $\gamma:[t_1,t_2]\to\mathcal{C}$ we write $\textup{md}(\gamma,t_0)$ for the \textit{metric differential} of $\gamma$ at $t_0\in (t_1,t_2)$ \cite[p.\,115]{Kirch}, which can be applied to $u\in\R$ by
		\begin{equation*}
		\textup{md}(\gamma,t_0)(u)=\lim_{h\searrow 0}\frac{|\gamma(t_0+hu)-\gamma(t_0)|}{h}
		\end{equation*}
		if the limit exists. Further, the \textit{metric derivative} \cite[p.\,24]{AGS} of $\gamma$ at $t_0$ is given by 
		\begin{equation*}
		|\gamma'|(t_0)=\lim_{h\to 0}\frac{|\gamma(t_0+h)-\gamma(t_0)|}{|h|}
		\end{equation*}
		if this limit exists. Note that by Rademacher's theorem $\dot{\gamma}(t_0)$ exists for $\mathcal{L}$-almost all $t_0$, and for those $t_0$ we have
		\begin{equation*}
		|\dot{\gamma}(t_0)|=|\gamma'|(t_0)=\textup{md}(\gamma,t_0)(1).
		\end{equation*}
		\item We will frequently identify the image of a path $\gamma:I\to\mathcal{C}$ with its parameterization, i.e., we simply write $\gamma$ instead of $\gamma(I)$ when no confusion is possible
		(for instance when we integrate over $\gamma(I)$).
		\item The \textit{Euclidean distance} between two sets $A,B\subset\R^n$ is denoted
		\begin{equation*}
		\textup{dist}(A,B)=\inf_{x\in A,y\in B}|x-y|.
		\end{equation*}
		We write $\textup{diam}(A)$ for the \textit{diameter} of $A$, i.e.,
		\begin{equation*}
		\textup{diam}(A)=\sup_{x,y\in A}|x-y|.
		\end{equation*}
		Moreover, $d_H(A,B)$ denotes the \textit{Hausdorff-distance} between $A$ and $B$, given by
		\begin{equation*}
		d_H(A,B)=\max\left(\sup_{x\in A}\textup{dist}(x,B),\sup_{y\in B}\textup{dist}(A,y)\right),
		\end{equation*}
		where we use the notation $\textup{dist}(x,B)=\textup{dist}(B,x)=\textup{dist}(\{ x \},B)$.
		\item For any set $A$ we write $1_A$ for the \emph{characteristic function},
		\begin{equation*}
		1_A(x)=\begin{cases*}
		1&if $x\in A$,\\
		0&else.
		\end{cases*}
		\end{equation*}
		\item For $x,y\in\mathcal{C}$ we define $[x,y]$ as the \textit{line segment} $\{ x+t(y-x)\,|\, t\in I \}$. The sets $(x,y],[x,y)$ and $(x,y)$ are defined similarly, e.g., $(x,y]=[x,y]\setminus\{ x \}$.
		\item For any function $f:X\to V$ with values in some normed vector space $(V,\|.\|)$ and $A\subset X$ we write
		\begin{equation*}
		|f|_{\infty,A}=\sup_{x\in A}\|f(x)\|.
		\end{equation*}
		\item A sequence $x:\mathbb{N}\to M$ of elements in some set $M$ will be indicated by the notation $(x_i)\subset M$ with $x_i=x(i)$. If $x_i$ actually stems from a subset $M_i$ we instead speak of a sequence $x_i\in M_i$.
		\item If a sequence of Lipschitz paths $(\gamma_j)\subset\Gamma$ converges uniformly to some $\gamma$, i.e., $|\gamma_j-\gamma|_{\infty,I}\to 0$, we write $\gamma_j\rightrightarrows\gamma$. 
	\end{itemize}
	\section{Wasserstein distance with generalized urban metric as min-cost flow}
	\label{sect2}
	In this section we prove \cref{BWfinal}, a Beckmann-type formula for the Wasserstein distance from \cref{WdisTp} between two probability measures $\mu_+,\mu_-$.
	Due to simple domain rescaling arguments we may assume $\mu_+,\mu_-$ to be supported in $\mathcal C$ without loss of generality (cf.\ \cite[Lem.\,2.4]{BW}).
	Throughout the section we will fix $S\subset\mathcal{C}$ countably $1$-rectifiable and Borel measurable as well as $a\in [0,\infty ]$ and $b:S\to [0,a]$ lower semi-continuous.
	Therefore we may denote the (pseudo-)metric $d_{S,a,b}$ from \cref{gum} simply by $d$
	(while in \cref{sec4} we will return to the notation $d_{S,a,b}$ as $(S,b)$ varies in the urban planning problem).
	Recall that $S$ can be seen as a transportation network with a friction coefficient $b$ describing the necessary effort to move on the network, while motion outside the network is penalized by the parameter $a$. To simplify notation, we extend $b$ to $\mathcal{C}\setminus S$ with value $a$ so that we may write
	\begin{equation*}
	d(x,y)=d_{S,a,b}(x,y)= \inf_{\gamma\in\Gamma^{xy}}\int_\gamma b\,\mathrm{d}\Ha.
	\end{equation*}
	A related important quantity is the length of a path, which in contrast to $d$ measures the travel distance with multiplicity.
	\begin{defin}[Path length associated with $d$]
		Let $S,a,b$ as above. For Lipschitz paths $\gamma :[t_1,t_2]\to\mathcal{C}$ we write the cost for travelling along $\gamma$ as
		\begin{equation*}
		L(\gamma )=\int_{[t_1,t_2]}b(\gamma )|\dot{\gamma }|\,\mathrm{d}\mathcal{L}.
		\end{equation*}
	\end{defin}
	In \cref{subs1} we will show that $L$ is lower semi-continuous in a certain sense. In \cref{subs3} we will then prove the intuitive statement $d(x,y)=\inf_{\gamma\in\Gamma^{xy}}L(\gamma)$ and exploit the lower semi-continuity of $L$ to show that there exists a minimizer $\gamma_{opt}\in\Gamma^{xy}$. Additionally, we will prove further properties of $d$ such as lower semi-continuity. A key consequence will be the existence of a Borel measurable path map $\rho$ with $d(x,y)=L(\rho (x,y))$ for all $x,y\in\mathcal{C}$ in \cref{subs4}. \Cref{subs5} then provides the proof of \cref{BWfinal}.
	\notinclude{the following formula for the Wasserstein distance between $\mu_+$ and $\mu_-$ is highlighted:
		\begin{equation*}
		W_d(\mu_+,\mu_-)=B^{\mu_+,\mu_-}=B_{S,a,b}^{\mu_+,\mu_-}=\inf_{\xi,\F}\int_S b|\xi|\,\mathrm{d}\Ha+a|\F|(\mathcal{C}),
		\end{equation*}
		where $\xi\in L^1(\Ha\mres S;\R^n)$ and $\F\in\mathcal{M}^n(\mathcal{C})$ with $\F\mres S=0$ satisfy $\textup{div}(\xi\Ha\mres S+\F)=\mu_+-\mu_-$. In other words, the Wasserstein distance corresponds to the total variation of an optimal mass flux between $\mu_+$ and $\mu_-$ (which exists if $W_d(\mu_+,\mu_-)$ is finite, cf.\ construction in the proof of \cref{BWfinal} in \cref{subs5}). Clearly, using the definition of $b$ and the fact that $\F$ is singular with respect to $\Ha\mres S$ we may also write
		\begin{equation*}
		B^{\mu_+,\mu_-}=\inf_{\xi,\F}\int_{\mathcal{C}} b\,\mathrm{d}|\xi\Ha\mres S+\F|.
		\end{equation*}
	}
	
	Throughout the section \cref{Sassump} will be made frequent use of. It is natural with regard to the urban planning problem. Indeed, for any pair $(S,b)$ with finite urban planning cost $\mathcal{U}^{\varepsilon,\mu_+,\mu_-}[S,b]$ it holds
	\begin{equation*}
	\infty>\mathcal{U}^{\varepsilon,\mu_+,\mu_-}[S,b]\geq\int_S\varepsilon (b)\,\mathrm{d}\Ha\geq\int_{S_\lambda}\varepsilon (b)\,\mathrm{d}\Ha\geq \varepsilon(\lambda)\Ha(S_\lambda)
	\end{equation*}
	for every $\lambda<a$, while $\varepsilon(\lambda)>0$ due to the relation $a=\inf\varepsilon^{-1}(0)$.
	\dualityApproach{\todo[inline]{If Section 3 stays, add commented sentences on alternative proof.}
	}
	
	Let us now briefly collect some basic statements which will predominantly be used in \cref{sect2}. 
	\begin{lem}[Lower semi-continuity of $\textup{Lip}$]
		\label{linf}
		Assume that $(\gamma_j)\subset\Gamma$ is a sequence such that $\gamma_j\rightrightarrows\gamma:I\to\mathcal{C}$. Then we obtain
		\begin{equation*}
		\textup{Lip}(\gamma)\leq\liminf_j\textup{Lip}(\gamma_j).
		\end{equation*}
	\end{lem}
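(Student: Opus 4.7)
The plan is to exploit the fact that $\mathrm{Lip}(\gamma)$ is a supremum over pointwise quantities, which behaves well under pointwise (and hence also uniform) convergence. Concretely, for any fixed pair $t,\tilde t\in I$ with $t\neq\tilde t$, the functional
\begin{equation*}
f_{t,\tilde t}(\eta)=\frac{|\eta(t)-\eta(\tilde t)|}{|t-\tilde t|}
\end{equation*}
defined on paths $\eta\in\Gamma$ is continuous with respect to uniform convergence (it depends only on the two values $\eta(t)$ and $\eta(\tilde t)$ through the continuous Euclidean norm). Hence
\begin{equation*}
f_{t,\tilde t}(\gamma)=\lim_j f_{t,\tilde t}(\gamma_j)\leq\liminf_j\,\textup{Lip}(\gamma_j).
\end{equation*}

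Taking the supremum over all $t\neq\tilde t\in I$ on the left-hand side then yields
\begin{equation*}
\textup{Lip}(\gamma)=\sup_{t\neq\tilde t}f_{t,\tilde t}(\gamma)\leq\liminf_j\,\textup{Lip}(\gamma_j),
\end{equation*}
which is the claim. There is no real obstacle here since pointwise evaluation is trivially continuous under uniform convergence, and lower semi-continuity of a supremum of continuous functionals is standard; the only mild subtlety is to invoke $\liminf$ uniformly on the right-hand side before taking the supremum on the left, which is permitted because $\liminf_j\,\textup{Lip}(\gamma_j)$ is independent of $(t,\tilde t)$.
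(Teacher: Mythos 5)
Your proof is correct and is essentially the same argument as the paper's: the paper writes the triangle inequality $|\gamma(t_1)-\gamma(t_2)|\leq|\gamma(t_1)-\gamma_j(t_1)|+\textup{Lip}(\gamma_j)|t_1-t_2|+|\gamma_j(t_2)-\gamma(t_2)|$ and passes to the $\liminf$, which is exactly your observation that the difference quotient at fixed $(t,\tilde t)$ converges and is dominated by $\textup{Lip}(\gamma_j)$, followed by taking the supremum. No issues.
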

	\begin{proof}
		For all $t_1,t_2\in I$ we have
		\begin{equation*}
		|\gamma (t_1)-\gamma (t_2)|\leq |\gamma (t_1)-\gamma_j(t_1)|+\textup{Lip}(\gamma_j)|t_1-t_2|+|\gamma_j(t_2)-\gamma (t_2)|
		\end{equation*}
		and thus
		\begin{equation*}
		|\gamma (t_1)-\gamma (t_2)|\leq |t_1-t_2|\liminf_j\textup{Lip}(\gamma_j).\qedhere
		\end{equation*}
	\end{proof}
	The next remark gives a relation between the measures of the image and the preimage of Lipschitz paths.
	\begin{rem}[$\Ha$-measure under Lipschitz paths]
		\label{measund}
		For any $\gamma\in\Gamma$ and $\mathcal{L}$-measurable $A\subset I$ we have (cf.\ \cite[Thm.\,7.5]{M})
		\begin{equation*}
		\Ha(\gamma(A))\leq\int_A|\dot{\gamma}|\,\mathrm{d}\mathcal{L}\leq\textup{Lip}(\gamma)\mathcal{L}(A).
		\end{equation*}
		If $\gamma$ is injective and has constant speed, then we directly get \cite[pp.\,241-244]{F}
		\begin{equation*}
		\Ha(\gamma(A))=\textup{Lip}(\gamma)\mathcal{L}(A).
		\end{equation*} 
	\end{rem}
	The content of the next remark follows directly from \cite[Thm.\,2]{MM}. Recall that $\mathcal{H}^0$ is the counting measure.
	\begin{rem}[Area formula]
		\label{cov}
		Let $\gamma\in\Gamma$ and $f:\gamma\to\R$ Borel measurable. For any $\Le$-measurable $A\subset[0,1]$ we have
		\begin{equation*}
		\int_{\gamma(A)}f(x)\mathcal{H}^0(\gamma^{-1}(x)\cap A)\,\mathrm{d}\Ha(x)=\int_Af(\gamma)|\dot{\gamma}|\,\mathrm{d}\mathcal{L}
		\end{equation*}
		if one of the two sides is well-defined. For $\Ha(\gamma(A))=0$ we obtain $\dot{\gamma}=0$ $\mathcal{L}$-almost everywhere in $A$.
		Moreover, for injective $\gamma$ we obtain
		\begin{equation*}
		\int_{\gamma(A)}f(x)\,\mathrm{d}\Ha(x)=\int_Af(\gamma)|\dot{\gamma}|\,\mathrm{d}\mathcal{L},
		\qquad\text{in particular}\quad
		\Ha(\gamma(A))=\int_A|\dot{\gamma}|\,\mathrm{d}\mathcal{L}.
		\end{equation*}
	\end{rem}
	We finally remind the reader of the following compactness result.
	\begin{rem}[Arzel\`{a}\textendash Ascoli theorem]
		\label{unisub}
		Let $C>0$ and $(\gamma_j)\subset\Gamma_C$ be a sequence. The $\gamma_j$ are uniformly equicontinuous by Rademacher's theorem,
		\begin{equation*}
		\textup{Lip}(\gamma_j)=\esssup_{I}|\dot{\gamma}_j|\leq C,
		\end{equation*}
		and they are pointwise bounded. The Arzel\`{a}\textendash Ascoli Theorem thus implies $\gamma_j\rightrightarrows\gamma$ up to a subsequence. Additionally, by \cref{linf} we have
		\begin{equation*}
		\textup{Lip}(\gamma)\leq\liminf_j\textup{Lip}(\gamma_j)\leq C
		\end{equation*}
		$\mathcal{L}$-almost everywhere and thus $\gamma\in\Gamma_C$.
	\end{rem}
	\subsection{Properties of the path length $L$}
	\label{subs1}
	The following statement will be the main result of this section.
	\begin{thm}[Lower semi-continuity property of $L$]
		\label{Lalmostlow}
		Let \cref{Sassump} be satisfied and $(\gamma_j)\subset\Gamma_C$ be a sequence of paths with constant speed such that $\gamma_j\rightrightarrows\gamma$. Then we have
		\begin{equation*}
		L(\gamma)\leq\liminf_jL(\gamma_j).
		\end{equation*}
	\end{thm}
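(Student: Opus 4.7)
The plan is a three-step reduction: truncate the weight to make it bounded, rewrite the functional via a layer-cake formula as the difference between a lower semi-continuous length term and a ``saving'' term, and approximate the level sets of $b$ by closed sets using the rectifiability structure of $S$. Concretely, for $\lambda\in[0,a)$ define the truncated weight $\beta_\lambda:=\min(b,\lambda)\cdot 1_S+\lambda\cdot 1_{\mathcal{C}\setminus S}$ and $L_\lambda(\gamma):=\int\beta_\lambda(\gamma)|\dot\gamma|\,\mathrm{d}\mathcal L$. Since $\beta_\lambda$ is bounded above by $b$ (with $b$ extended by $a$ off $S$) and $\beta_\lambda\nearrow b$ pointwise as $\lambda\nearrow a$, monotone convergence gives $L_\lambda\nearrow L$; a pointwise supremum of lower semi-continuous functionals is lower semi-continuous, so it suffices to prove $\liminf_j L_\lambda(\gamma_j)\geq L_\lambda(\gamma)$ for each fixed $\lambda<a$.

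Exploiting the layer-cake identity $\lambda-\beta_\lambda(x)=\int_0^\lambda 1_{S_s}(x)\,\mathrm{d}s$ and Fubini yields the decomposition
\begin{equation*}
L_\lambda(\gamma)=\lambda\,\textup{len}(\gamma)-\int_0^\lambda H_s(\gamma)\,\mathrm{d}s,\qquad H_s(\gamma):=\int_{\gamma^{-1}(S_s)}|\dot\gamma|\,\mathrm{d}\mathcal L.
\end{equation*}
Arc length is classically lower semi-continuous under uniform convergence with uniformly bounded Lipschitz constants. Since $H_s(\gamma_j)\leq\textup{Lip}(\gamma_j)\leq C$ uniformly, the reverse Fatou lemma in the $s$ variable reduces the remaining task to the pointwise estimate $\limsup_j H_s(\gamma_j)\leq H_s(\gamma)$ for almost every $s\in[0,\lambda]$, together with bookkeeping between the length and saving terms that leverages the constant-speed identity $|\dot\gamma_j|=\textup{Lip}(\gamma_j)$.

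Because $S_s$ need not be closed in $\mathcal{C}$, I would replace it by a closed approximation using the approximating sequence $S^N=T_1\cup\dots\cup T_N$ of \cref{notdefs}. Each $S^N$ is closed, the lower semi-continuity of $b$ on $S$ ensures that $S^N_s:=S^N\cap S_s$ is closed in $\mathcal{C}$, and \cref{Sassump} together with $\Ha(T_0)=0$ gives $\Ha(S_\lambda\setminus S^N_\lambda)\to 0$ as $N\to\infty$. For closed $K\subset\mathcal{C}$, uniform convergence yields the elementary inclusion $\limsup_j\gamma_j^{-1}(K)\subset\gamma^{-1}(K)$ and hence $\limsup_j\mathcal L(\gamma_j^{-1}(K))\leq\mathcal L(\gamma^{-1}(K))$ by reverse Fatou on sets; combined with constant speed this controls the truncated quantity $H^N_s(\gamma_j):=\int_{\gamma_j^{-1}(S^N_s)}|\dot\gamma_j|\,\mathrm{d}\mathcal L$ in the limit $j\to\infty$.

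The main obstacle is then to control the residual $H_s(\gamma_j)-H^N_s(\gamma_j)$ uniformly in $j$ as $N\to\infty$, since a priori $\gamma_j$ could traverse the thin set $S_\lambda\setminus S^N_\lambda$ with large multiplicity. This is where the Go\l\k{a}b-type theorem \cref{Golabim} and the area formula \cref{cov} come into play: rewriting the residual as $\int_{\gamma_j\cap(S_s\setminus S^N_s)}\mathcal H^0(\gamma_j^{-1})\,\mathrm{d}\Ha$ and, if necessary, passing to a loop-erased reparametrization of $\gamma_j$ whose $L$-value is no larger, bounds the multiplicity by a quantity controlled by $\Ha(S_\lambda\setminus S^N_\lambda)\to 0$. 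A double limit, first $j\to\infty$ for fixed $N$ and then $N\to\infty$, closes the proof.
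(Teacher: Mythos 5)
Your reduction to the truncated functionals $L_\lambda$ via monotone convergence is fine, and the layer-cake identity $L_\lambda(\gamma)=\lambda\,\textup{len}(\gamma)-\int_0^\lambda H_s(\gamma)\,\mathrm{d}s$ is correct. The gap is that the termwise semicontinuity you then invoke reduces the theorem to a \emph{false} intermediate claim: $\limsup_j H_s(\gamma_j)\leq H_s(\gamma)$ does not hold. Take $n=2$, let $S_s$ contain the vertical segment $\{1/2\}\times[0,\varepsilon]$, let $\gamma$ be the horizontal unit segment through $(1/2,0)$, and let $\gamma_j$ coincide with $\gamma$ except for a detour that runs up and down $\{1/2\}\times[0,1/j]$ roughly $j$ times, reparametrized to constant speed. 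Then $\gamma_j\rightrightarrows\gamma$ with $\textup{Lip}(\gamma_j)\leq 3$, but $H_s(\gamma_j)\approx 2$ while $H_s(\gamma)=0$ (the preimage $\gamma^{-1}(S_s)$ is a single time instant). Lower semi-continuity of $L_\lambda$ survives here only because $\textup{len}(\gamma_j)\approx 3>1=\textup{len}(\gamma)$: the jump in the length term and the jump in the saving term cancel. This coupling is precisely what your ``bookkeeping'' would have to establish, and it is the entire content of the theorem; neither term is continuous, so estimating them separately cannot work. The same problem persists after your closed approximation, since even $\limsup_j\textup{Lip}(\gamma_j)\,\mathcal{L}(\gamma_j^{-1}(S^N_s))\leq\textup{Lip}(\gamma)\,\mathcal{L}(\gamma^{-1}(S^N_s))$ fails ($\textup{Lip}$ is only lower semi-continuous, and $|\dot\gamma|$ may be strictly smaller than $\lim_j\textup{Lip}(\gamma_j)$ where $\gamma_j$ oscillates).

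Two further points. First, the loop-erasure device is not available: replacing $\gamma_j$ by an injective reparametrization with smaller $L$-value changes the uniform limit of the sequence (e.g.\ if $\gamma$ itself retraces part of its image), so a lower bound for the modified sequence says nothing about $L(\gamma)$. Second, you use \cref{Sassump} only to get $\Ha(S_\lambda\setminus S^N_\lambda)\to0$, which is not where the hypothesis does its real work. The essential mechanism in the paper's proof is different: on the set of times where $\gamma$ lies \emph{outside} $S$ and $\dot\gamma\neq0$, one extracts compact pieces $K_j$ with $\gamma_j(K_j)\subset S_{\lambda_1}$ whose images are at positive distance from $\gamma(K)\subset\mathcal C\setminus S$; after passing to a subsequence with rapidly decreasing Hausdorff distances these images are pairwise disjoint, so $\sum_j\Ha(\gamma_j(K_j))\leq\Ha(S_{\lambda_1})<\infty$ forces $\Ha(\gamma_j(K_j))\to0$, which (combined with \cref{Golabim} and a reduction to a set where $\gamma$ is injective) contradicts any asymptotic cost saving. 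Some argument of this disjointness type is unavoidable — without it the statement fails, cf.\ \cref{exnotlow} — and your proposal contains no substitute for it.
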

	We first prove a version of Go\l\k{a}b's theorem for images under Lipschitz paths following the proof of \cite[Thm.\,3.3]{PS}.
	The next lemma will be helpful.
	\begin{lem}[]
		\label{vrx}
		Let $J\subset I$ be an interval and $\gamma\in\Gamma$. Then for $\Ha$-almost all $x\in\gamma(J) $ there exists some $\delta>0$ and a function $\varrho_x:[-\delta,\delta]\to\gamma(J)$ such that
		\begin{enumerate}
			\item $|\dot{\varrho}_x|=1$ $\mathcal{L}$-almost everywhere on $[-\delta,\delta]$,
			\item $\dot{\varrho}_x(0)$ exists and $\varrho_x(0)=x$,
			\item \label{point1} for all $\varepsilon>0$ there is some $r>0$ such that
			\begin{equation*}
			|t_1-t_2|-r\varepsilon\leq|\varrho_x(t_1)-\varrho_x(t_2)|
			\end{equation*}
			for all $t_1,t_2\in[-r,r]$.
		\end{enumerate}
	\end{lem}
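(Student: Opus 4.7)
The plan is to choose, for $\mathcal{H}^1$-almost every $x \in \gamma(J)$, a ``good'' preimage $t_0 \in J$ of $x$ and to take $\varrho_x$ to be a local arc-length reparametrization of $\gamma$ based at $t_0$. By ``good'' I mean that $\dot\gamma(t_0)$ exists classically with $v := |\dot\gamma(t_0)| > 0$ and that $t_0$ lies in the Lebesgue set of $|\dot\gamma|$. These conditions fail only on a subset $B \subset J$ on which $|\dot\gamma|$ vanishes almost everywhere (the differentiability set is full by Rademacher's theorem, the Lebesgue set is full by the Lebesgue differentiation theorem, and the remaining failures coincide with $\{|\dot\gamma| = 0\}$). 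Hence $\int_B |\dot\gamma|\,\mathrm{d}\mathcal{L} = 0$, and \cref{measund} gives $\mathcal{H}^1(\gamma(B)) = 0$; so for $\mathcal{H}^1$-almost every $x \in \gamma(J)$ there exists a good $t_0 \in J$ with $\gamma(t_0) = x$.

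Given such a $t_0$, I would pick $h_0 > 0$ with $[t_0 - h_0, t_0 + h_0] \subset J$, set $s(h) := \int_{t_0}^{t_0 + h} |\dot\gamma(u)|\,\mathrm{d}u$, and define the right inverse $\phi(r) := \inf\{h \geq 0 : s(h) \geq r\}$ for $r \geq 0$, extended symmetrically for $r \leq 0$. The Lebesgue-point property at $t_0$ translates into the expansion $s(h) = vh + o(h)$ as $h \to 0$, which guarantees both that $\delta := \min(|s(-h_0)|, s(h_0)) > 0$ after possibly shrinking $h_0$, and that $\phi(r) = r/v + o(r)$ as $r \to 0$. Setting $\varrho_x(r) := \gamma(t_0 + \phi(r))$ for $r \in [-\delta, \delta]$ yields a continuous curve into $\gamma(J)$, because $\gamma$ must be constant on any flat plateau of $s$ (where $\phi$ has a jump), as $\dot\gamma$ vanishes there.

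Properties (1) and (2) then follow by standard arc-length reasoning: the inequality $|\varrho_x(r_2) - \varrho_x(r_1)| \leq \int_{t_0 + \phi(r_1)}^{t_0 + \phi(r_2)} |\dot\gamma|\,\mathrm{d}\mathcal{L} = r_2 - r_1$ shows $\varrho_x$ is $1$-Lipschitz, while the invariance of arc length under reparametrization gives $\int_{r_1}^{r_2} |\dot\varrho_x|\,\mathrm{d}\mathcal{L} = r_2 - r_1$, forcing $|\dot\varrho_x| = 1$ almost everywhere; inserting the expansion $\phi(r) = r/v + o(r)$ into the classical differentiability of $\gamma$ at $t_0$ yields $\varrho_x(r) = x + (r/v)\dot\gamma(t_0) + o(r)$, so $\dot\varrho_x(0)$ exists and equals the unit vector $\dot\gamma(t_0)/v$. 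For property (3) I would use the modulus $\omega(r) := \sup_{0 < |t| \leq r} |\varrho_x(t) - \varrho_x(0) - t\dot\varrho_x(0)|/|t|$, which tends to $0$ as $r \searrow 0$ by (2), and combine the reverse triangle inequality with $|\dot\varrho_x(0)| = 1$ to obtain
\begin{equation*}
|\varrho_x(t_1) - \varrho_x(t_2)| \geq |t_1 - t_2| - (|t_1| + |t_2|)\,\omega(r) \geq |t_1 - t_2| - 2r\,\omega(r)
\end{equation*}
for all $t_1, t_2 \in [-r, r]$; choosing $r$ so small that $2\omega(r) \leq \varepsilon$ closes the argument.

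The main obstacle I anticipate is not any single hard step but the bookkeeping around possible non-invertibility of $s$: whenever $|\dot\gamma|$ vanishes on subintervals of $[-h_0, h_0]$, $s$ develops flat plateaus and $\phi$ has jumps, and one has to argue carefully that $\varrho_x$ remains continuous (and stays in $\gamma(J)$) across those jumps, that the arc-length identity nevertheless survives, and that the first-order information at $t_0$ propagates correctly through the composition $\gamma \circ (t_0 + \phi)$ to produce differentiability of $\varrho_x$ at $0$. Everything else reduces to Rademacher's theorem, Lebesgue differentiation, and the elementary image bound of \cref{measund}.
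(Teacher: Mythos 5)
Your proof is correct, and while it shares the paper's overall strategy (base the curve $\varrho_x$ at a point of differentiability and reparametrize by arc length), it differs in the key technical input. The paper takes the arc-length reparametrization $\eta$ of $\gamma$ as given and invokes Kirchheim's metric-differentiability theorem, which supplies the genuinely \emph{pairwise} expansion $|\eta(t_1)-\eta(t_2)|-\textup{md}(\eta,t)(t_1-t_2)=o(|t_1-t|+|t_2-t|)$ from which property (3) drops out immediately. You instead observe that for a curve that is $1$-Lipschitz with a unit derivative at $0$, the pairwise lower bound already follows from single-point classical differentiability via the reverse triangle inequality, $|\varrho_x(t_1)-\varrho_x(t_2)|\geq|t_1-t_2|\,|\dot\varrho_x(0)|-(|t_1|+|t_2|)\,\omega(r)$; this replaces Kirchheim's theorem by Rademacher plus the Lebesgue differentiation theorem. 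The price you pay is the explicit construction of the local arc-length parametrization through the generalized inverse $\phi$ of $s(h)=\int_{t_0}^{t_0+h}|\dot\gamma|\,\mathrm{d}\mathcal L$ and the plateau bookkeeping you flag at the end — all of which is routine and which you handle correctly (the Lebesgue-point hypothesis at $t_0$ with $v>0$ is exactly what makes $\phi(r)=r/v+o(r)$ and hence $\dot\varrho_x(0)=\dot\gamma(t_0)/v$ work). A small side benefit of your version: by choosing $[t_0-h_0,t_0+h_0]\subset J$ you guarantee $\varrho_x$ really maps into $\gamma(J)$, a point that is slightly glossed over in the paper, where $\varrho_x(t)=\eta(t_x+t)$ is built from the arc-length reparametrization of all of $\gamma$ rather than of $\gamma|_J$.
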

	\begin{proof}
		Let $\eta:[0,l]\to\mathcal{C}$ be a reparameterization of $\gamma$ by arc length, thus $l=\textup{len}(\gamma)$. By \cite[Thm.\,2]{Kirch} there exists a $\mathcal{L}$-null set $N_1\subset[0,l]$ such that for all $t\in[0,l]\setminus N_1$ the metric differential $\textup{md}(\eta,t)$ is a seminorm on $\R$ and
		\begin{equation*}
		|\eta(t_1)-\eta(t_2)|-\textup{md}(\eta,t)(t_1-t_2)=o(|t_1-t|+|t_2-t|)
		\end{equation*}
		for all $t_1,t_2\in[0,l]$. Moreover, by Rademacher's theorem there is some $\mathcal{L}$-null set $N_2\subset[0,l]$ such that $\dot{\eta}$ exists on $[0,l]\setminus N_2$. $N=N_1\cup N_2\cup\{ 0,l \}$ clearly satisfies $\Ha(\eta( N))\leq\textup{Lip}(\eta)\mathcal{L}( N)=0$. Fix any $x\in\gamma(J)\setminus\eta(N)$ and $t_x\in[0,l]\setminus N$ with $\eta(t_x)=x$. Choose $\delta>0$ sufficiently small such that $[t_x-\delta,t_x+\delta]\subset[0,l]$ and define $\varrho_x:[-\delta,\delta]\to\gamma(J)$ by $\varrho_x(t)=\eta(t_x+t)$. The first two statements follow from the properties of $\eta$\notinclude{ (we may also apply \cite[Thm.\,4.4.5]{AT} instead)}. Moreover, by the above identity we observe
		\begin{equation*}
		|\varrho_x(t_1)-\varrho_x(t_2)|-\textup{md}(\varrho_x,0)(t_1-t_2)=|\eta(t_x+t_1)-\eta(t_x+t_2)|-\textup{md}(\eta,t_x)(t_1-t_2)=o(|t_1|+|t_2|)
		\end{equation*}
		for all $t_1,t_2\in[-\delta,\delta]$. Thus, for every $\varepsilon>0$ there exists some $r>0$ such that
		\begin{equation*}
		|\varrho_x(t_1)-\varrho_x(t_2)|\geq\textup{md}(\varrho_x,0)(t_1-t_2)-\frac\varepsilon2(|t_1|+|t_2|)
		\end{equation*}
		for all $t_1,t_2\in[-r,r]$. Using $\textup{md}(\varrho_x,0)(t_1-t_2)=|t_1-t_2||\dot{\eta}(t_x)|=|t_1-t_2|$ we get
		\begin{equation*}
		|\varrho_x(t_1)-\varrho_x(t_2)|\geq-\frac\varepsilon2(|t_1|+|t_2|)+|t_1-t_2|\geq-\varepsilon r+|t_1-t_2|.\qedhere
		\end{equation*}
	\end{proof}
	Next, we use the idea in \cite[Thm.\,3.3]{PS} to prove a version Go\l\k{a}b's theorem for Lipschitz images of finitely many relatively open intervals in $I$ (\cref{versionGolab}). The result for general sets will then follow immediately by the regularity of the Lebesgue measure (\cref{Golabim}).
	\begin{lem}[Version of Go\l\k{a}b's theorem]
		\label{versionGolab}
		Let $(\gamma_j)\subset\Gamma_C$ with $\gamma_j\rightrightarrows\gamma\in\Gamma_C$. Further, let $O_1,\ldots,O_k\subset I$ be relatively open intervals. Then we have
		\begin{equation*}
		\Ha(\gamma(O_1\cup\ldots\cup O_k))\leq\liminf_j\Ha(\gamma_j(O_1\cup\ldots\cup O_k)).
		\end{equation*} 
	\end{lem}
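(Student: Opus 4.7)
The plan is to follow the density-based approach of \cite[Thm.\,3.3]{PS}. Set $O = O_1 \cup \cdots \cup O_k$. Passing to a subsequence if necessary, I may assume $\liminf_j \Ha(\gamma_j(O))$ is attained as a finite limit (otherwise the statement is trivial). Since $\Ha(\gamma_j(O)) \leq C \mathcal{L}(O)$ by \cref{measund}, the Radon measures $\mu_j := \Ha \mres \gamma_j(O)$ on the compact cube $\mathcal{C}$ have uniformly bounded total mass, so along a further subsequence $\mu_j \ws \mu$ for a positive Radon measure $\mu$. Testing against the constant function $1 \in C(\mathcal{C})$ gives $\mu(\mathcal{C}) = \lim_j \mu_j(\mathcal{C}) = \liminf_j \Ha(\gamma_j(O))$, so it suffices to establish $\mu \geq \Ha \mres \gamma(O)$.

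For this I aim to prove the pointwise density bound $\Theta^{*1}(\mu, x) \geq 1$ for $\Ha$-almost every $x \in \gamma(O)$, after which a standard density comparison theorem (e.g., \cite[2.10.19]{F}) yields the desired measure inequality. Fix $i \in \{1, \ldots, k\}$ and apply \cref{vrx} with $J = O_i$: for $\Ha$-a.e.\ $x \in \gamma(O_i)$ there is a unit-speed arc $\varrho_x \colon [-\delta, \delta] \to \gamma(O_i)$ with the almost-isometric property. Given $\varepsilon > 0$, pick $r > 0$ from the conclusion of \cref{vrx}, and choose $s_\pm \in O_i$ with $\gamma(s_\pm) = \varrho_x(\pm r)$; since $O_i$ is an interval, I may assume $[s_-, s_+] \subset O_i$. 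Uniform convergence gives $y_j^\pm := \gamma_j(s_\pm) \to \varrho_x(\pm r)$, and the connected set $K_j := \gamma_j([s_-, s_+]) \subset \gamma_j(O)$ joins these endpoints with $|y_j^+ - y_j^-| \geq 2r - r\varepsilon - o(1)$. Choosing $\rho$ slightly larger than $r$ so that $y_j^\pm \in \overline{B_\rho(x)}$ for large $j$, I split into two cases: if $K_j \subset \overline{B_\rho(x)}$, then $\Ha(K_j) \geq |y_j^+ - y_j^-|$ (by projection onto the line through $y_j^\pm$) lies entirely inside the ball; otherwise each excursion of $K_j$ out of the ball produces two connected sub-arcs joining $y_j^\pm$ to $\partial B_\rho(x)$, each of $\Ha$-length at least $\rho - r - o(1)$. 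A careful bookkeeping yields $\mu_j(\overline{B_\rho(x)}) \geq 2r(1 - O(\varepsilon))$ for $\rho$ close to $r$ and $j$ large; upper semi-continuity of $\mu$ on the closed ball, followed by $\varepsilon \to 0$, then gives $\Theta^{*1}(\mu, x) \geq 1$.

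The main obstacle I anticipate is making this density estimate quantitatively tight. The projection bound $\Ha(K_j) \geq |y_j^+ - y_j^-|$ is global along the arc, so when $K_j$ makes excursions out of $\overline{B_\rho(x)}$ one must argue that those excursions either contribute sufficient length \emph{inside} the ball (via the re-entry sub-arcs) or force $\Ha(K_j)$ to be strictly larger, with the surplus present as additional mass inside---balancing these contributions is the delicate part. Once this pointwise density bound is secured, the remaining ingredients (weak-$*$ compactness, uniform convergence, and the density comparison theorem) plug in cleanly to complete the proof.
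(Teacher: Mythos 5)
Your overall strategy coincides with the paper's: both pass to a weak-$*$ limit $\mu$ of the image measures, establish the density bound $\Theta^{*1}(\mu,x)\geq1$ at $\Ha$-almost every point of the limit image using the almost-isometric arc $\varrho_x$ from \cref{vrx}, and conclude with the density comparison theorem. The gap lies in how you prove the density bound. Your estimate of $\mu_j(\overline{B_\rho(x)})$ uses only that $K_j=\gamma_j([s_-,s_+])$ is connected and that its endpoints $y_j^\pm$ converge to $\varrho_x(\pm r)$. That information is not sufficient: a connected arc joining two nearly antipodal points of $\partial B_r(x)$ may leave $\overline{B_\rho(x)}$ immediately after $y_j^-$ and return only just before $y_j^+$, so that its trace inside the ball consists of two short entry/exit pieces. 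Your own bookkeeping exposes this: in the excursion case each sub-arc joining $y_j^\pm$ to $\partial B_\rho(x)$ is only guaranteed length $\rho-r-o(1)$, so for $\rho$ close to $r$ you obtain roughly $2(\rho-r)$ of mass inside the ball rather than $2r(1-O(\varepsilon))$. No balancing of contributions can repair this, because the desired inequality is simply false for general connected sets with prescribed endpoints.

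The missing ingredient is the full strength of uniform convergence. Every point $\varrho_x(t)$, $t\in[-r,r]$, lies in $\gamma(J)$, hence equals $\gamma(s)$ for some $s\in J$, and therefore $\textup{dist}(\varrho_x(t),\gamma_j(J))\leq|\gamma_j-\gamma|_{\infty,I}\leq r\varepsilon$ for $j$ large, \emph{uniformly in} $t$. Thus the set $C_j=\gamma_j(J)\cap\overline{B_r(x)}$ comes within $r\varepsilon$ of the \emph{entire} almost-straight arc of length close to $2r$, not merely of its two endpoints; together with $C_j\cap\partial B_r(x)\neq\emptyset$ (guaranteed for $r<\min_i\textup{diam}(\gamma(J_i))/2$) this yields $\Ha(C_j)\geq 2r-9r\varepsilon$. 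This is precisely the content of \cite[Lem.\,3.2]{PS}, which the paper invokes at this point; note also that the relevant preimages $s$ need not lie in $[s_-,s_+]$, so one should work with all of $\gamma_j(J)$ rather than with $K_j$ alone. Either cite that lemma or reprove it using proximity to all of $\varrho_x([-r,r])$; the remaining steps of your proposal (weak-$*$ compactness, the Portmanteau inequality on closed balls, and the density comparison theorem) are sound and match the paper.
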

	\begin{rem}[Straight limit path]
		If $\gamma$ maps onto a straight line $\ell=\textup{span}(p)$ for some $p\in \mathcal{S}^{n-1}$, then it is easy to see that
		\begin{equation*}
		\Ha(\gamma_j(O_1\cup\ldots\cup O_k))\geq\Ha( \textup{proj}_\ell(\gamma_j(O_1\cup\ldots\cup O_k)) )\to\Ha(\gamma(O_1\cup\ldots\cup O_k)),
		\end{equation*}
		where $\textup{proj}_\ell$ denotes the orthogonal projection onto $\ell$.
	\end{rem}
	\begin{proof}[Proof of \cref{versionGolab}]
		We follow the proof of \cite[Thm.\,3.3]{PS}.~It is easy to see that $\Ha(\gamma(O_1\cup\ldots\cup O_k))=\Ha(\gamma(\overline{O}_1\cup\ldots\cup \overline{O}_k))$. Thus, we can replace $O_1\cup\ldots\cup O_k$ by a union of closed intervals $J=J_1\cup\ldots\cup J_l\subset I$. Furthermore, without loss of generality we may assume $\textup{diam}(\gamma(J_i))>0$ for $i=1,\ldots,l$ since by ignoring a $J_i$ with $\textup{diam}(\gamma(J_i))=0$ we only decrease the right-hand side of the inequality to be proved, while the left-hand side stays unchanged. Define a sequence of Radon measures by $\mu_j(B)=\Ha(\gamma_j(J)\cap B)$ for $B\in\mathcal{B}(\R^n)$. Clearly, the $\mu_j$ are uniformly bounded by $C$, and the Banach\textendash Alaoglu theorem implies $\mu_j\ws\mu$ up to a subsequence. For $\Ha$-almost all $x\in\gamma(J)$ we can choose $\varrho_x$ as in \cref{vrx}. Fix any such $x$, and for every $\varepsilon>0$ let $r=r(\varepsilon)>0$ as in the third point of \cref{vrx}. The set $C_j=\gamma_j(J)\cap \overline{B_r(x)}$ is compact, and $\textup{dist}(\varrho_x(t),C_j)\leq r\varepsilon$ for all $t\in[-r,r]$ and $j$ sufficiently large due to the uniform convergence of the $\gamma_j$. Letting $r<\min_i\textup{diam}(\gamma(J_i))/2$ we have $C_j\cap\partial B_r(x)\neq\emptyset$ for large $j$. We can now apply \cite[Lem.\,3.2]{PS} which yields $\Ha(C_j)\geq 2r-9r\varepsilon$. We next use the Portmanteau Theorem to get the desired result. We have
		\begin{equation*}
		\mu(\overline{B_r(x)})\geq\limsup_j\mu_j(\overline{B_r(x)})=\limsup_j\Ha(C_j)\geq 2r-9r\varepsilon.
		\end{equation*}
		By $r=r(\varepsilon)\to 0$ for $\varepsilon\to 0$ we thus obtain
		\begin{equation*}
		\Theta^{*1}(\mu,x)=\lim_{\varepsilon\to 0}\frac{\mu(\overline{B_r(x)})}{2r}\geq 1.
		\end{equation*}
		This holds for $\Ha$-almost every $x\in\gamma(J)$. Hence, we end up with
		\begin{equation*}
		\Ha(\gamma(J))\leq\mu(\gamma(J))\leq\mu(\R^n)\leq\liminf_j\mu_j(\R^n)=\liminf_j\Ha(\gamma_j(J)),
		\end{equation*}
		where we used \cite[Ch.\,1, Thm.\,3.3]{LS} in the first inequality.
	\end{proof}
	\begin{prop}[Go\l\k{a}b's theorem for images of Lipschitz paths]
		\label{Golabim}
		Let $(\gamma_j)\subset\Gamma_C$ with $\gamma_j\rightrightarrows\gamma$. Then we have
		\begin{equation*}
		\Ha(\gamma(T))\leq\liminf_j\Ha(\gamma_j(T))
		\end{equation*}
		for all $\mathcal{L}$-measurable sets $T\subset I$.
	\end{prop}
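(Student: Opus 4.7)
The plan is to extend \cref{versionGolab} from finite unions of relatively open intervals to arbitrary $\mathcal{L}$-measurable subsets $T\subset I$, using outer regularity of the Lebesgue measure together with the Lipschitz bound from \cref{measund}.

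First I would fix $\varepsilon>0$ and, by outer regularity, choose an open set $U\subset I$ with $T\subset U$ and $\mathcal{L}(U\setminus T)<\varepsilon$. Since every open subset of $I$ is a countable union of relatively open intervals, write $U=\bigcup_{i=1}^\infty O_i$ and set $V_k=\bigcup_{i=1}^k O_i$. \Cref{versionGolab} applied to each finite union gives
\begin{equation*}
\Ha(\gamma(V_k))\leq\liminf_j\Ha(\gamma_j(V_k))\leq\liminf_j\Ha(\gamma_j(U)).
\end{equation*}
Since $\gamma(V_k)\nearrow\gamma(U)$, continuity of $\Ha$ from below yields $\Ha(\gamma(V_k))\to\Ha(\gamma(U))$, so passing to the limit in $k$ gives $\Ha(\gamma(U))\leq\liminf_j\Ha(\gamma_j(U))$.

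Next I would compare $\gamma_j(U)$ with $\gamma_j(T)$. Writing $\gamma_j(U)\subset\gamma_j(T)\cup\gamma_j(U\setminus T)$, the Lipschitz estimate from \cref{measund} gives
\begin{equation*}
\Ha(\gamma_j(U\setminus T))\leq\textup{Lip}(\gamma_j)\,\mathcal{L}(U\setminus T)\leq C\varepsilon,
\end{equation*}
hence $\Ha(\gamma_j(U))\leq\Ha(\gamma_j(T))+C\varepsilon$. Combining with $\Ha(\gamma(T))\leq\Ha(\gamma(U))$ and the previous step yields
\begin{equation*}
\Ha(\gamma(T))\leq\liminf_j\Ha(\gamma_j(T))+C\varepsilon.
\end{equation*}
Letting $\varepsilon\to 0$ concludes the proof.

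There is no real obstacle here; the only point requiring a little care is the passage from a countable union $U$ to the finite truncations $V_k$, which is handled by continuity of $\Ha$ from below on the increasing sequence of images $\gamma(V_k)$. The Lipschitz bound on $\Ha(\gamma_j(U\setminus T))$ is what prevents the outer approximation from spoiling the right-hand side of the inequality.
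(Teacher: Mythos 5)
Your proposal is correct and follows essentially the same route as the paper: outer regularity of $\mathcal{L}$, reduction to finite unions of relatively open intervals via \cref{versionGolab}, and control of the error set $U\setminus T$ through the Lipschitz bound of \cref{measund}. The only (harmless) difference is that you pass from the finite truncations $V_k$ to the full open set $U$ by continuity of $\Ha$ from below on $\gamma(V_k)\nearrow\gamma(U)$, whereas the paper instead truncates $U$ to a finite union $\tilde O$ with $\mathcal{L}(U\setminus\tilde O)$ small and absorbs the discrepancy into the $\varepsilon$-error using the Lipschitz constant once more.
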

	\begin{proof}
		Let $\varepsilon>0$. By the regularity of $\mathcal{L}$ we can choose some relatively open set $O\subset I$ such that $T\subset O$ and $\mathcal{L}(O\setminus T)<\varepsilon/(2C)$. Write $O$ as a countable union of relatively open and connected sets $O_i\subset I$ and choose $N$ sufficiently large such that $\tilde{O}=O_1\cup\ldots\cup O_N$ satisfies $\mathcal{L}(O\setminus\tilde{O})<\varepsilon/(2C)$. This procedure is possible due to the $\sigma$-continuity of $\mathcal{L}$. By \cref{measund} we have
		\begin{equation*}
		\Ha(\gamma_j(\tilde{O}))\leq\Ha(\gamma_j(O))\leq\Ha(\gamma_j(T))+\Ha(\gamma_j(O\setminus T))\leq\Ha(\gamma_j(T))+\textup{Lip}(\gamma_j)\mathcal{L}(O\setminus T)\leq\Ha(\gamma_j(T))+\varepsilon/2
		\end{equation*}
		and therefore using \cref{versionGolab} (and again \cref{measund})
		\begin{equation*}
		\liminf_j\Ha(\gamma_j(T))\geq\Ha(\gamma(\tilde{O}))-\varepsilon/2\geq\Ha(\gamma(O))-\Ha(\gamma(O\setminus\tilde{O}))-\varepsilon/2\geq\Ha(\gamma(T))-\textup{Lip}(\gamma)\mathcal{L}(O\setminus\tilde{O})-\varepsilon/2\geq\Ha(\gamma(T))-\varepsilon.
		\end{equation*}
		The result now follows from the arbitrariness of $\varepsilon$.
	\end{proof}
	We can now prove our main result for this section, the lower semi-continuity property of $L$ .
	\begin{proof}[Proof of \cref{Lalmostlow}]
		By \cref{linf} we have $|\dot\gamma|\leq\textup{Lip}(\gamma)\leq\liminf_j\textup{Lip}(\gamma_j)=\liminf_j|\dot\gamma_j|$ $\mathcal{L}$-almost everywhere.
		Furthermore, the lower semi-continuity of $b$ on $S$ and $b\leq a$ imply $b(\gamma(t))\leq\liminf_j b(\gamma_j(t))$ for all $t\in\gamma^{-1}(S)$.
		Thus, with Fatou's lemma we obtain
		\begin{equation*}
		\int_{\gamma^{-1}(S)}b(\gamma)|\dot{\gamma}|\,\mathrm{d}\mathcal{L}
		\leq\int_{\gamma^{-1}(S)}\liminf_jb(\gamma_j)\liminf_j|\dot{\gamma}_j|\,\mathrm{d}\mathcal{L}
		\leq\liminf_j\int_{\gamma^{-1}(S)}b(\gamma_j)|\dot{\gamma}_j|\,\mathrm{d}\mathcal{L}.
		\end{equation*} 
		Hence we have
		\begin{equation*}
		L(\gamma)\leq\liminf_j\int_{\gamma^{-1}(S)}b(\gamma_j)|\dot{\gamma}_j|\,\mathrm{d}\mathcal{L}+\int_{\gamma^{-1}(\mathcal{C}\setminus S)}b(\gamma)|\dot{\gamma}|\,\mathrm{d}\mathcal{L}
		\end{equation*}
		so that it suffices to show
		\begin{equation*}
		\int_Tb(\gamma)|\dot{\gamma}|\,\mathrm{d}\mathcal{L}\leq\liminf_j\int_Tb(\gamma_j)|\dot{\gamma}_j|\,\mathrm{d}\mathcal{L}
		\end{equation*}
		for $T=\gamma^{-1}(\mathcal{C}\setminus S)\cap\{ \dot{\gamma}\textup{ exists and }\dot{\gamma}\neq 0 \}$. Assume to the contrary that 
		\begin{equation*}
		\liminf_j\int_Tb(\gamma_j)|\dot{\gamma}_j|\,\mathrm{d}\mathcal{L}<\int_Tb(\gamma)|\dot{\gamma}|\,\mathrm{d}\mathcal{L}=\int_Ta|\dot{\gamma}|\,\mathrm{d}\mathcal{L},
		\end{equation*}
		where by restricting to a subsequence we may assume the limit inferior to actually be a limit.
		
		We first show that in this inequality we may actually replace $T$ with a subset $A\subset T$ on which $\gamma$ is injective.
		Indeed, if $a=\infty$ (thus the right-hand side is infinite and the left-hand side finite) we may simply pick $A=\{t\in T\,|\,\gamma(t)\notin\gamma(T\cap[0,t))\}$ and obtain
		\begin{equation*}
		\liminf_j\int_Ab(\gamma_j)|\dot{\gamma}_j|\,\mathrm{d}\mathcal{L}
		\leq\liminf_j\int_Tb(\gamma_j)|\dot{\gamma}_j|\,\mathrm{d}\mathcal{L}
		<\int_Ta|\dot{\gamma}|\,\mathrm{d}\mathcal{L}
		=\infty
		=\int_Aa|\dot{\gamma}|\,\mathrm{d}\mathcal{L},
		\end{equation*}
		where the last equality follows from $\int_A|\dot\gamma|\,\mathrm d\mathcal{L}>0$
		(otherwise $0=\int_A|\dot\gamma|\,\mathrm d\mathcal{L}=\Ha(\gamma(A))=\Ha(\gamma(T))$ due to $\gamma(A)=\gamma(T)$,
		which by \cref{cov} contradicts $\dot\gamma\neq0$ on $T$).
		If $a<\infty$, on the other hand, the functions $f_j=b(\gamma_j)|\dot{\gamma}_j|1_T$ are essentially bounded by $aC$, and thus $f_j\ws f\in L^\infty(I)$ for some subsequence using the Banach\textendash Alaoglu theorem. More precisely, for each $g\in L^1(I)$ we have
		\begin{equation*}
		\int_Tb(\gamma_j)|\dot{\gamma}_j|g\,\mathrm{d}\mathcal{L}\to\int_Tfg\,\mathrm{d}\mathcal{L},
		\quad\text{ in particular }
		\int_Tb(\gamma_j)|\dot{\gamma}_j|\,\mathrm{d}\mathcal{L}\to\int_Tf\,\mathrm{d}\mathcal{L}.
		\end{equation*}
		By our assumption, $\tilde T=\{t\in T\,|\,f(t)<a|\dot\gamma(t)|\}$ has positive Lebesgue measure.
		Likewise, $A=\{t\in\tilde T\,|\,\gamma(t)\notin\gamma(\tilde T\cap[0,t))\}$ has positive Lebesgue measure
		(again, otherwise $0=\int_A|\dot\gamma|\,\mathrm d\mathcal{L}=\Ha(\gamma(A))=\Ha(\gamma(\tilde T))$, contradicting $\dot\gamma\neq0$ on $\tilde T$).
		Therefore
		\begin{equation*}
		\lim_j\int_Ab(\gamma_j)|\dot{\gamma}_j|\,\mathrm{d}\mathcal{L}
		=\int_Af\,\mathrm{d}\mathcal{L}
		<\int_Aa|\dot{\gamma}|\,\mathrm{d}\mathcal{L},
		\end{equation*}
		as desired.
		
		We will now derive a contradiction.
		Let us set
		\begin{equation*}
		\lambda_0=\lim_j\int_Ab(\gamma_j)|\dot\gamma_j|\mathrm{d}\mathcal{L}\Big/\int_A|\dot\gamma|\,\mathrm{d}\mathcal L.
		\end{equation*}
		Since $\lambda_0<a$ by assumption, we can pick another $\lambda_1\in(\lambda_0,a)$.
		For all $\delta>0$ it holds (\cref{cov})
		\begin{equation*}
		\lambda_1\int_{A\cap \gamma_j^{-1}\left(\mathcal{C}\setminus S_{\lambda_1}\right)}|\dot{\gamma}_j|\,\mathrm{d}\mathcal{L}
		\leq\int_Ab(\gamma_j)|\dot{\gamma}_j|\,\mathrm{d}\mathcal{L}
		<\lambda_0\int_A|\dot\gamma|\,\mathrm{d}\mathcal L+\delta
		=\lambda_0\Ha(\gamma(A))+\delta
		\end{equation*}
		for $j$ large enough. Furthermore, by our version of Go\l\k{a}b's theorem (\cref{Golabim}) we have
		\begin{equation*}
		\Ha(\gamma(A))\leq\liminf_j\Ha(\gamma_j(A)).
		\end{equation*}
		Now choose $\delta,\varepsilon>0$ with $(\lambda_1-\lambda_0)\Ha(\gamma(A))-\delta>\varepsilon\lambda_1$ so that (using \cref{measund})
		\begin{multline*}
		\Ha(\gamma_j(A)\cap S_{\lambda_1})
		=\Ha(\gamma_j(A))-\Ha(\gamma_j(A)\setminus S_{\lambda_1})\\
		\geq\Ha(\gamma_j(A))-\int_{A\cap \gamma_j^{-1}\left(\mathcal{C}\setminus S_{\lambda_1}\right)}|\dot{\gamma}_j|\,\mathrm{d}\mathcal{L}
		\geq\Ha(\gamma(A))-\frac{\lambda_0}{\lambda_1}\Ha(\gamma(A))-\frac{\delta}{\lambda_1}
		>\varepsilon
		\end{multline*}
		for all $j$ large enough. On the other hand, using the regularity of $\mathcal{L}$ we find compact sets $K\subset A$ and $K_j\subset K\cap\gamma_j^{-1}(S_{\lambda_1})$ such that
		\begin{equation*}
		\mathcal{L}(A\setminus K)<\frac{\varepsilon}{4C}\textup{\qquad and\qquad}\mathcal{L}((K\cap\gamma_j^{-1}(S_{\lambda_1}))\setminus K_j)<\frac{\varepsilon}{4C}.
		\end{equation*}
		We have $\gamma_j(K_j)\subset S_{\lambda_1}$ and $\gamma(K)\cap S=\emptyset$ and therefore $\gamma_j(K_j)\cap\gamma(K)=\emptyset$ for all $j$. Let $d_j=\textup{dist}(\gamma_j(K_j),\gamma(K))$. By the uniform convergence of the $\gamma_j$ to $\gamma$ we can pick a subsequence such that $d_H(\gamma_j,\gamma)<d_{j-1}$ for all $j$. Hence, the $\gamma_j(K_j)$ are pairwise disjoint and thus by \cref{Sassump}
		\begin{equation*}
		\infty>\Ha(S_{\lambda_1})\geq\sum_j\Ha(\gamma_j(K_j)),
		\end{equation*}
		which implies $\Ha(\gamma_j(K_j))\to 0$. Finally, we get
		\begin{multline*}
		\Ha(\gamma_j(A)\cap S_{\lambda_1})\leq\Ha(\gamma_j(K_j))+\Ha(\gamma_j(K\setminus K_j)\cap S_{\lambda_1})+\Ha(\gamma_j(A\setminus K)\cap S_{\lambda_1})\\
		\leq\Ha(\gamma_j(K_j))+\textup{Lip}(\gamma_j)\mathcal{L}((K\cap\gamma_j^{-1}(S_{\lambda_1}))\setminus K_j)+\textup{Lip}(\gamma_j)\mathcal{L}(A\setminus K)
		\leq\Ha(\gamma_j(K_j))+\varepsilon/2
		\end{multline*}
		and consequently
		\begin{equation*}
		\liminf_j\Ha(\gamma_j(A)\cap S_{\lambda_1})\leq\varepsilon/2,
		\end{equation*}
		which is the desired contradiction.
	\end{proof}
	In the case $a=\infty$ a simpler proof is actually possible: The main idea is to show that $\liminf_jL(\gamma_j)<\infty$ implies $\Ha(\gamma\setminus S)=0$ and therefore
	\begin{equation*}
	L(\gamma)=\int_Ib(\gamma)|\dot{\gamma}|\,\mathrm{d}\mathcal{L}
	=\int_{\gamma^{-1}(S)}b(\gamma)|\dot{\gamma}|\,\mathrm{d}\mathcal{L}.
	\end{equation*}
	We will then use $b(\gamma)\leq\liminf_jb(\gamma_j)$ on $S$, which is true due to the lower semi-continuity of $b$ on $S$, to get the desired result. To prove that $\liminf_jL(\gamma_j)<\infty$ implies $\Ha(\gamma\setminus S)=0$ we need the following two results.
	\begin{lem}[Curves intersect $S$]
		\label{lamdel}
		Let $a=\infty$. If $(\gamma_j)\subset\Gamma$ is a sequence with $L(\gamma_j)$ uniformly bounded, then for each $\delta>0$ there exists some $\lambda\in [0,\infty)$ such that
		\begin{equation*}
		\Ha(\gamma_j\setminus S_\lambda)\leq\delta
		\qquad\text{for all }j.
		\end{equation*}
	\end{lem}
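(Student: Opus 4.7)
The plan is a Chebyshev-type argument exploiting that $b\geq\lambda$ on $\mathcal{C}\setminus S_\lambda$. Let $M$ be a uniform upper bound on $L(\gamma_j)$. Recall that in this section $b$ has been extended to $\mathcal{C}\setminus S$ by the constant value $a=\infty$, so that $S_\lambda=\{b\leq\lambda\}\cap S=\{b\leq\lambda\}$ and hence $b(z)\geq\lambda$ for every $z\in\mathcal{C}\setminus S_\lambda$.

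First I would restrict the length integral to the pre-image of the ``expensive'' part of $\mathcal C$: since $b(\gamma_j(t))\geq\lambda$ for $t\in\gamma_j^{-1}(\mathcal{C}\setminus S_\lambda)$, one obtains
\begin{equation*}
\lambda\int_{\gamma_j^{-1}(\mathcal{C}\setminus S_\lambda)}|\dot\gamma_j|\,\mathrm{d}\mathcal{L}
\;\leq\;\int_{\gamma_j^{-1}(\mathcal{C}\setminus S_\lambda)}b(\gamma_j)|\dot\gamma_j|\,\mathrm{d}\mathcal{L}
\;\leq\; L(\gamma_j)\;\leq\; M,
\end{equation*}
using the convention $\infty\cdot 0=0$ (which is consistent because $L(\gamma_j)<\infty$ forces $|\dot\gamma_j|=0$ $\mathcal L$-a.e.\ on $\gamma_j^{-1}(\mathcal C\setminus S)$, where $b(\gamma_j)=\infty$).

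Next I would convert this bound on the arc-length integral into a bound on the Hausdorff measure of the image by applying \cref{measund}, which gives
\begin{equation*}
\Ha\bigl(\gamma_j\setminus S_\lambda\bigr)
=\Ha\bigl(\gamma_j(\gamma_j^{-1}(\mathcal{C}\setminus S_\lambda))\bigr)
\leq\int_{\gamma_j^{-1}(\mathcal{C}\setminus S_\lambda)}|\dot\gamma_j|\,\mathrm{d}\mathcal{L}
\leq \frac{M}{\lambda}.
\end{equation*}
Finally, given $\delta>0$, choosing $\lambda\geq M/\delta$ yields $\Ha(\gamma_j\setminus S_\lambda)\leq\delta$ simultaneously for all $j$, as desired.

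There is essentially no real obstacle here; the only subtlety is the treatment of the extended-real value $b=\infty$ on $\mathcal{C}\setminus S$, but the finiteness of $L(\gamma_j)$ forces $|\dot\gamma_j|=0$ on the preimage of this set, so the Chebyshev inequality above is meaningful and \cref{Sassump} is not even needed for this lemma (it will matter only in subsequent applications that pass to the limit in $\lambda$).
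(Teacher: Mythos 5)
Your proposal is correct and is essentially the paper's own proof: both are the same Chebyshev-type argument using $b\geq\lambda$ off $S_\lambda$ together with the bound $\Ha(\gamma_j(A))\leq\int_A|\dot\gamma_j|\,\mathrm{d}\mathcal{L}$ (the paper phrases this via \cref{cov}, you via \cref{measund}, which is the same estimate), followed by choosing $\lambda\geq M/\delta$. Your side remarks on the convention $\infty\cdot 0=0$ and on \cref{Sassump} not being needed here are accurate.
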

	\begin{proof}
		For fixed $\delta>0$ and $\lambda\in (0,\infty)$ sufficiently large we have $\lambda\delta\geq L(\gamma_j)$ for all $j$. Additionally, by \cref{cov} we get 
		\begin{equation*}
		\lambda\Ha(\gamma_j\setminus S_\lambda)\leq \int_{\gamma_j\setminus S_\lambda }b\,\mathrm{d}\Ha\leq\int_{\gamma_j^{-1}(\mathcal{C}\setminus S_\lambda) }b(\gamma_j)|\dot{\gamma}_j|\,\mathrm{d}\mathcal{L}\leq L(\gamma_j)\leq\lambda\delta.\qedhere
		\end{equation*}
	\end{proof}
	\begin{prop}[Symmetric difference with limit path]
		\label{help}
		Let \cref{Sassump} be satisfied, $a=\infty$ and $(\gamma_j)\subset\Gamma_C$ with $L(\gamma_j)$ uniformly bounded. If $\gamma_j\rightrightarrows\gamma\in\Gamma_C$, then for any closed interval $J\subset I$ we have
		\begin{equation*}
		\Ha(\gamma_j(J)\setminus\gamma(J))\to 0
		\qquad\text{and}\qquad
		\Ha(\gamma(J)\setminus\gamma_j(J))\to 0.
		\end{equation*}
	\end{prop}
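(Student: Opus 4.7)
The plan is to reduce both set-theoretic differences to small quantities by splitting along $S_\lambda$ for a suitable $\lambda$, exploiting the fact that $\Ha\mres S_\lambda$ is a finite Borel measure. Fix an arbitrary $\delta>0$; by \cref{lamdel} (applicable since $L(\gamma_j)$ is uniformly bounded and $a=\infty$) pick $\lambda\in[0,\infty)$ such that $\Ha(\gamma_j\setminus S_\lambda)\leq\delta$ for all $j$, and observe $\Ha(S_\lambda)<\infty$ by \cref{Sassump}. In particular, $\Ha(\gamma_j(J))\leq\Ha(S_\lambda)+\delta<\infty$ uniformly in $j$, and $\Ha(\gamma(J))\leq\liminf_j\Ha(\gamma_j(J))<\infty$ by \cref{Golabim}.

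For the first convergence $\Ha(\gamma_j(J)\setminus\gamma(J))\to 0$, I would write
\begin{equation*}
\gamma_j(J)\setminus\gamma(J)\subset\bigl[(\gamma_j(J)\cap S_\lambda)\setminus\gamma(J)\bigr]\cup\bigl[\gamma_j(J)\setminus S_\lambda\bigr],
\end{equation*}
whose second piece has $\Ha$-measure at most $\delta$. Setting $\varepsilon_j=|\gamma_j-\gamma|_{\infty,I}\to 0$ gives $\gamma_j(J)\subset\gamma(J)+B_{\varepsilon_j}$, hence
\begin{equation*}
(\gamma_j(J)\cap S_\lambda)\setminus\gamma(J)\subset\bigl[(\gamma(J)+B_{\varepsilon_j})\setminus\gamma(J)\bigr]\cap S_\lambda.
\end{equation*}
Since $\gamma(J)$ is compact and hence closed, the function $\varepsilon\mapsto\Ha(((\gamma(J)+B_\varepsilon)\setminus\gamma(J))\cap S_\lambda)$ is monotone in $\varepsilon$ and tends to zero as $\varepsilon\to 0$ by continuity from above for the finite measure $\Ha\mres S_\lambda$ on the sets $(\gamma(J)+B_\varepsilon)\setminus\gamma(J)$, whose intersection over $\varepsilon>0$ is empty. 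Therefore $\limsup_j\Ha(\gamma_j(J)\setminus\gamma(J))\leq\delta$, and the arbitrariness of $\delta$ closes the argument.

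For the second convergence $\Ha(\gamma(J)\setminus\gamma_j(J))\to 0$, I would invoke the elementary identity (valid since all measures involved are finite)
\begin{equation*}
\Ha(\gamma(J)\setminus\gamma_j(J))=\Ha(\gamma_j(J)\setminus\gamma(J))+\Ha(\gamma(J))-\Ha(\gamma_j(J)).
\end{equation*}
Taking $\limsup_j$, the first summand on the right vanishes by the previous paragraph, and $\Ha(\gamma(J))-\liminf_j\Ha(\gamma_j(J))\leq 0$ by the Go\l\k{a}b-type \cref{Golabim}, so $\limsup_j\Ha(\gamma(J)\setminus\gamma_j(J))\leq 0$.

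The conceptual difficulty is concentrated in the first convergence: $\Ha$ is not continuous from above on arbitrary Borel sets, so the argument hinges on restricting to $S_\lambda$, whose finite $\Ha$-measure (granted by \cref{Sassump} together with \cref{lamdel}) promotes $\Ha\mres S_\lambda$ to a finite Borel measure on which continuity from above applies. Once this is in hand, the second convergence comes essentially for free through the symmetric-difference identity and \cref{Golabim}.
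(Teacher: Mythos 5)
Your proof is correct, and while the second limit is handled exactly as in the paper, the first half takes a genuinely different route. For $\Ha(\gamma_j(J)\setminus\gamma(J))\to 0$ the paper argues by contradiction: assuming a fixed amount $\delta$ of $\gamma_j(J)$ stays off $\gamma(J)$ along a subsequence, it chooses closed sets $B_j\subset\gamma_j^{-1}(\R^n\setminus\gamma(J))$ whose images carry mass $>\delta/2$ and lie at positive distance from $\gamma(J)$, passes to a further subsequence on which $d_H(\gamma_{j+1},\gamma)$ is below that distance so that the $\gamma_j(B_j)$ become pairwise disjoint, and then contradicts $\Ha(S_\lambda)<\infty$ by summing the uniformly positive masses $\Ha(\gamma_j(B_j)\cap S_\lambda)>\delta/4$ (with $\lambda$ from \cref{lamdel}). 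You instead trap $\gamma_j(J)\cap S_\lambda$ in the shrinking collar $(\gamma(J)+B_{\varepsilon_j})\setminus\gamma(J)$ and invoke continuity from above of the finite Borel measure $\Ha\mres S_\lambda$; this replaces the inner-regularity step and the double subsequence extraction by a direct monotone-convergence argument and is, if anything, more transparent. Both proofs rest on the same two pillars — \cref{lamdel} and the finiteness of $\Ha(S_\lambda)$ granted by \cref{Sassump} — and both deduce the second limit identically from the inclusion--exclusion identity together with \cref{Golabim}. Two cosmetic remarks: since $|\gamma_j-\gamma|_{\infty,I}=\varepsilon_j$ only places $\gamma_j(t)$ in the \emph{closed} ball around $\gamma(t)$, you should write $\gamma_j(J)\subset\gamma(J)+B_{2\varepsilon_j}$; and you treat a general closed $J$ directly where the paper first reduces to $J=I$ and reparameterizes — your shortcut is legitimate because \cref{lamdel} controls the full image $\gamma_j(I)\supset\gamma_j(J)$ and \cref{Golabim} already applies to arbitrary measurable subsets of $I$.
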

	\begin{proof}
		We prove the result for $J=I$, the general case then simply follows from considering reparameterizations of $\gamma_j(J),\gamma(J)$ as paths in $\Gamma_C$.
		We begin with the first limit, $\Ha(\gamma_j\setminus\gamma )\to 0$. Define $A_j= \{ t\in I\,|\,\gamma_j(t)\notin\gamma  \}$. For a contradiction, we assume that $\Ha(\gamma_j(A_j))>\delta$ (along a subsequence) for some $\delta>0$. The set $A_j=\gamma_j^{-1}(\R^n\setminus\gamma )$ is open in $I$. Hence, there exist closed sets $B_j\subset A_j$ such that $\Ha(\gamma_j(A_j\setminus B_j))<\delta/2$ (using that the arc length of $\gamma_j$ is bounded and the $\sigma$-continuity of $\Ha$) and thus $\Ha(\gamma_j(B_j))>\delta/2$. By choice of the $B_j$ we have 
		\begin{equation*}
		d_j=\textup{dist}(\gamma_j(B_j),\gamma )>0.
		\end{equation*}
		By $\gamma_j\rightrightarrows\gamma$ we can assume that $d_H(\gamma_{j+1},\gamma )<d_j$ for all $j$ by restricting to a subsequence. Thus, we get $\gamma_j(B_j)\cap\gamma_k(B_k)=\emptyset$ for $j\neq k$ by construction. Invoking \cref{lamdel} there exists some $\lambda\in[0,\infty)$ such that $\Ha(\gamma_j\setminus S_\lambda )<\delta/4$ for all $j$. Hence, we have
		\begin{equation*}
		\Ha(\gamma_j(B_j)\cap S_\lambda)=\Ha(\gamma_j(B_j))-\Ha(\gamma_j(B_j)\setminus S_\lambda)>\delta/4.
		\end{equation*}
		This yields the desired contradiction,
		\begin{equation*}
		\infty>\Ha(S_\lambda)\geq\Ha\left( \dot{\bigcup}_j\gamma_j(B_j)\cap S_\lambda\right)=\sum_j\Ha(\gamma_j(B_j)\cap S_\lambda)>\sum_j\delta/4=\infty.
		\end{equation*}
		As for the second limit, we note
		\begin{equation*}
		\limsup_j\Ha(\gamma\setminus\gamma_j)
		=\limsup_j\left(\Ha(\gamma)+\Ha(\gamma_j\setminus\gamma)-\Ha(\gamma_j)\right)
		=\Ha(\gamma)-\liminf_j\Ha(\gamma_j)
		\leq0,
		\end{equation*}
		where the inequality holds by Go\l\k{a}b's theorem (see for instance \cite[Thm.\,3.2]{BPSS} or our version \cref{Golabim}).
	\end{proof}
	\begin{prop}[Limit of paths with uniformly bounded costs]
		\label{corro}
		Let \cref{Sassump} be satisfied, $a=\infty$ and $(\gamma_j)\subset\Gamma_C$ with $L(\gamma_j)$ uniformly bounded. Assume that $\gamma_j\rightrightarrows\gamma\in\Gamma_C$. Then for each $\delta>0$ there exists a $\lambda\in[0,\infty)$ such that
		\begin{equation*}
		\Ha(\gamma\setminus S_\lambda)\leq\delta.
		\end{equation*}
		In particular, we have $\Ha(\gamma\setminus S)=0$.
	\end{prop}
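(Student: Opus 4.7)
The plan is to combine the two preceding results---\cref{lamdel} (which controls the portion of each $\gamma_j$ lying outside $S_\lambda$) and \cref{help} (which controls the symmetric difference between $\gamma_j$ and the uniform limit $\gamma$)---via the simple set-theoretic inclusion
\begin{equation*}
\gamma\setminus S_\lambda\subset(\gamma\setminus\gamma_j)\cup(\gamma_j\setminus S_\lambda),
\end{equation*}
which yields $\Ha(\gamma\setminus S_\lambda)\leq\Ha(\gamma\setminus\gamma_j)+\Ha(\gamma_j\setminus S_\lambda)$ for every $j$.

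For a given $\delta>0$, I would first apply \cref{lamdel} to the uniformly bounded sequence $L(\gamma_j)$ to obtain some $\lambda\in[0,\infty)$ with $\Ha(\gamma_j\setminus S_\lambda)\leq\delta/2$ for all $j$. Then I would invoke \cref{help} (whose hypotheses are exactly those given here) to pick $j$ large enough that $\Ha(\gamma\setminus\gamma_j)\leq\delta/2$. Plugging these into the inclusion above yields $\Ha(\gamma\setminus S_\lambda)\leq\delta$, which is the first assertion. No obstacle is expected here, since the two auxiliary lemmas do all the work; the only small care needed is to fix $\lambda$ uniformly in $j$ \emph{before} choosing $j$, which is precisely what \cref{lamdel} allows.

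For the \emph{in particular} statement, I would use the fact that $a=\infty$ and $b:S\to[0,\infty)$ together imply $S=\bigcup_{\lambda\geq 0}S_\lambda$ (each $z\in S$ lies in $S_{b(z)}$). Consequently $\gamma\setminus S\subset\gamma\setminus S_\lambda$ for every $\lambda$, so the already established bound gives $\Ha(\gamma\setminus S)\leq\Ha(\gamma\setminus S_\lambda)\leq\delta$. Since $\delta>0$ was arbitrary, $\Ha(\gamma\setminus S)=0$.
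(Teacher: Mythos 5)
Your proposal is correct and follows essentially the same argument as the paper: the same inclusion $\gamma\setminus S_\lambda\subset(\gamma\setminus\gamma_j)\cup(\gamma_j\setminus S_\lambda)$, with \cref{lamdel} fixing $\lambda$ uniformly in $j$ and \cref{help} sending $\Ha(\gamma\setminus\gamma_j)$ to zero. Your explicit justification of the \emph{in particular} part via $S=\bigcup_{\lambda\geq0}S_\lambda$ is a small but correct elaboration of what the paper leaves implicit.
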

	\begin{proof}
		Given $\delta>0$, take $\lambda\in[0,\infty)$ from \cref{lamdel}, then
		\begin{equation*}
		\Ha(\gamma\setminus S_\lambda)
		\leq\Ha(\gamma\setminus\gamma_j)+\Ha(\gamma_j\setminus S_\lambda)
		\leq\Ha(\gamma\setminus\gamma_j)+\delta.
		\end{equation*}
		The limit $j\to\infty$ together with \cref{help} now implies the result.
	\end{proof}
	Note that \cref{lamdel,help,corro} do not hold for $a<\infty$.
	We can now give an alternative proof of \cref{Lalmostlow} for the case $a=\infty$.
	\begin{proof}[Alternative proof of \cref{Lalmostlow} for $a=\infty$]
		If $\liminf_jL(\gamma_j)=\infty$, then there is nothing to show. Hence, we can assume $\liminf_jL(\gamma_j)<\infty$, and it is enough to prove the claim for a subsequence such that $\liminf_jL(\gamma_j)=\lim_jL(\gamma_j)$, which means that $L(\gamma_j)$ is uniformly bounded. \Cref{corro} implies $\Ha(\gamma\setminus S)=0$. We now invoke \cref{cov} and get $\dot{\gamma}=0$ $\mathcal{L}$-almost everywhere on $\gamma^{-1}(\mathcal{C}\setminus S)$. This yields the desired result,
		\begin{align*}
		L(\gamma)
		=\int_{\gamma^{-1}(S)}b(\gamma)|\dot{\gamma}|\,\mathrm{d}\mathcal{L}
		\leq\int_{\gamma^{-1}(S)}\liminf_jb(\gamma_j)\liminf_j|\dot{\gamma}_j|\,\mathrm{d}\mathcal{L}
		\leq\liminf_j\int_{\gamma^{-1}(S)}b(\gamma_j)|\dot{\gamma}_j|\,\mathrm{d}\mathcal{L}
		\leq\liminf_jL(\gamma_j)
		\end{align*}  
		using Fatou's lemma and the lower semi-continuity of $b$ on $S$ as well as $|\dot\gamma|\leq\liminf_j|\dot\gamma_j|$ $\mathcal{L}$-almost everywhere.
	\end{proof}
	\subsection{Properties of the generalized urban metric}
	\label{subs3}
	We now derive properties of the generalized urban metric based on the previous analysis of the path length.
	The following result is due to the fact that $d(x,y)$ can be written as an infimum over injective paths.
	\begin{lem}[Alternative formula for $d$]
		\label{alt}
		We have
		\begin{equation*}
		d(x,y)=\inf_{\gamma\in\Gamma^{xy}}L(\gamma)
		\end{equation*}
		for all $x,y\in\mathcal{C}$.
	\end{lem}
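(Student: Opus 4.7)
The plan is to prove the two inequalities $\inf_{\gamma\in\Gamma^{xy}} L(\gamma) \geq d(x,y)$ and $\inf_{\gamma\in\Gamma^{xy}} L(\gamma) \leq d(x,y)$ separately, using the area formula (\cref{cov}) to relate $L(\gamma)$ to $\int_\gamma b\,\mathrm{d}\Ha$, together with a loop-removal argument to reduce to injective paths.

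For the easy direction $L(\gamma)\geq d(x,y)$ for any $\gamma\in\Gamma^{xy}$: apply \cref{cov} with $f=b$ (using the extension $b\equiv a$ on $\mathcal C\setminus S$) to obtain
\begin{equation*}
L(\gamma)=\int_0^1 b(\gamma)|\dot\gamma|\,\mathrm{d}\mathcal L=\int_{\gamma([0,1])} b(x)\,\mathcal H^0(\gamma^{-1}(x))\,\mathrm{d}\Ha(x)\geq \int_{\gamma([0,1])} b\,\mathrm{d}\Ha,
\end{equation*}
since $\mathcal H^0(\gamma^{-1}(x))\geq 1$ whenever $x\in\gamma([0,1])$. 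The right-hand side equals $\int_{\gamma([0,1])\cap S}b\,\mathrm{d}\Ha+a\Ha(\gamma([0,1])\setminus S)$, which is one of the quantities over which $d(x,y)$ is an infimum, so $L(\gamma)\geq d(x,y)$.

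For the reverse inequality $d(x,y)\geq \inf_\gamma L(\gamma)$: given $\gamma\in\Gamma^{xy}$, only the case $\int_\gamma b\,\mathrm d\Ha<\infty$ needs attention. The key observation is that, as suggested by the remark preceding the lemma, we may extract an \emph{injective} Lipschitz path $\tilde\gamma\in\Gamma^{xy}$ with $\tilde\gamma([0,1])\subset\gamma([0,1])$. Granted such a $\tilde\gamma$, injectivity together with \cref{cov} yields $L(\tilde\gamma)=\int_{\tilde\gamma([0,1])}b\,\mathrm d\Ha\leq\int_{\gamma([0,1])}b\,\mathrm d\Ha$, and taking the infimum over $\gamma$ produces $\inf_{\gamma\in\Gamma^{xy}}L(\gamma)\leq d(x,y)$.

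The main obstacle is the construction of the injective Lipschitz subpath, which is a standard but nontrivial fact from geometric measure theory. The image $K=\gamma([0,1])$ is compact, connected and has finite $\Ha$-measure (bounded by $\textup{Lip}(\gamma)$). For such a continuum, any two points are joined by an injective arc contained in $K$, which when reparametrized by arc length becomes $1$-Lipschitz; a further affine rescaling of the domain puts it in $\Gamma^{xy}$. Concretely, one can proceed by (i) reparametrizing $\gamma$ by arc length so that it is $1$-Lipschitz on $[0,\textup{len}(\gamma)]$, (ii) running a transfinite loop-removal procedure—at each stage, whenever $\gamma(s)=\gamma(t)$ for $s<t$, excise the closed interval $[s,t]$ and glue the remainder—or invoking Zorn's lemma on the partially ordered set of monotone reparametrizations to extract a maximal injective representative, and (iii) checking that the resulting arc still connects $x$ to $y$ and inherits a Lipschitz bound. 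All subtleties (measurability of the excised set, preservation of Lipschitz regularity under gluing, and the fact that the transfinite process terminates at countable stage because $\Ha(K)<\infty$) are routine but require care; the rest of the argument is essentially a direct application of the area formula.
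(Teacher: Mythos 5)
Your proposal is correct and follows essentially the same route as the paper: one direction via the area formula (\cref{cov}) using $\mathcal H^0(\gamma^{-1}(x))\geq 1$, and the other by passing to an injective Lipschitz subpath of $\gamma([0,1])$ joining $x$ to $y$. The only difference is that the paper simply cites an existing lemma (Falconer) for the existence of that injective arc, whereas you sketch the loop-removal construction by hand; the sketch is consistent with the standard argument, so no gap.
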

	\begin{proof}
		Clearly, the claim is true for $x=y$. 
		Furthermore, by \cref{cov} for any $\gamma\in\Gamma^{xy}$ we have
		\begin{equation*}
		d(x,y)\leq\int_{\gamma}b\,\mathrm{d}\Ha\leq\int_{I}b(\gamma)|\dot{\gamma }|\,\mathrm{d}\mathcal{L}=L(\gamma)
		\end{equation*}
		so that the claim holds as well for $d(x,y)=\infty$.
		Thus, we can assume $d(x,y)<\infty$ and $x\neq y$. Let $\gamma\in\Gamma^{xy}$ such that
		\begin{equation*}
		\int_{\gamma}b\,\mathrm{d}\Ha<\infty. 
		\end{equation*}
		By \cite[Lem.\,3.1]{Fal} there exists a continuous injection $\psi:I\to\mathcal{C}$ such that $\psi\subset\gamma$ and $\psi (0)=x,\psi (1)=y$. Obviously, the arc length of $\psi$ is bounded by $\textup{Lip}(\gamma)$.
		Hence, we can assume that $\psi$ is Lipschitz continuous. Finally, by the injectivity and \cref{cov}
		\begin{equation*}
		L(\psi)=\int_{\psi}b\,\mathrm{d}\Ha\leq\int_{\gamma}b\,\mathrm{d}\Ha.\qedhere
		\end{equation*}
	\end{proof}
	The next statement shows that $d(x,y)=\inf_{\gamma\in\Gamma^{xy}} L(\gamma)$ admits a minimizer $\gamma_{xy}$ which satisfies $\textup{len}(\gamma_{xy})\leq C_1+C_2d(x,y)$ with constants $C_1,C_2 >0$ that do not depend on $x,y$. We will need this result in \cref{subs4} to show the existence of an optimal measurable path selection.
	\begin{prop}[Existence and arc length of minimizer for $d(x,y)$]
		\label{minimiz}
		Let \cref{Sassump} be satisfied. For $x,y\in\mathcal{C}$ the problem $d(x,y)=\inf_{\gamma\in\Gamma^{xy}}L(\gamma)$ has a minimizer if $d(x,y)$ is finite. Moreover, at least one minimizer $\psi$ is injective and satisfies
		\begin{equation*}
		\textup{len}(\psi)\leq\begin{cases*}
		\Ha(S_1)+d(x,y)&if $a=\infty$,\\
		\Ha(S_{a/2})+\frac{2}{a}d(x,y)& if $a<\infty$.
		\end{cases*}
		\end{equation*}
	\end{prop}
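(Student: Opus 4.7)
The plan is to first establish the claimed arc length estimate for \emph{any} injective Lipschitz path $\psi\in\Gamma^{xy}$ with $L(\psi)<\infty$, and then to use precisely this estimate to extract a convergent minimizing sequence via Arzel\`a--Ascoli. For the arc length bound on an injective $\psi$, by \cref{cov} applied to $f\equiv1$ one has $\textup{len}(\psi)=\Ha(\psi)$, and $L(\psi)=\int_\psi b\,\dHa$. If $a=\infty$, then $L(\psi)<\infty$ forces $|\dot\psi|=0$ $\Le$-almost everywhere outside $\psi^{-1}(S)$, which by \cref{cov} yields $\Ha(\psi\setminus S)=0$. Splitting $\psi\cap S=(\psi\cap S_1)\cup(\psi\cap(S\setminus S_1))$ and using $b>1$ on $S\setminus S_1$, one obtains $\Ha(\psi\cap(S\setminus S_1))\leq L(\psi)$, and hence $\textup{len}(\psi)\leq\Ha(S_1)+L(\psi)$. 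If $a<\infty$, I split $\psi=(\psi\cap S_{a/2})\cup(\psi\setminus S_{a/2})$; since the extended $b$ satisfies $b\geq a/2$ on $\psi\setminus S_{a/2}$, one has $\Ha(\psi\setminus S_{a/2})\leq(2/a)L(\psi)$, giving $\textup{len}(\psi)\leq\Ha(S_{a/2})+(2/a)L(\psi)$.

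Next, assuming $d(x,y)<\infty$, by \cref{alt} I pick a minimizing sequence $\gamma_j\in\Gamma^{xy}$ with $L(\gamma_j)\to d(x,y)$ consisting of injective paths (the injectivization at the end of the proof of \cref{alt} decreases $L$ and preserves the endpoints). Without loss of generality $L(\gamma_j)\leq d(x,y)+1$ for all $j$, and by the arc length bound above combined with \cref{Sassump} (which ensures $\Ha(S_1)<\infty$ when $a=\infty$ and $\Ha(S_{a/2})<\infty$ when $a<\infty$), one has $\textup{len}(\gamma_j)\leq C$ for some constant $C$ independent of $j$. Reparametrizing each $\gamma_j$ by constant speed does not change $L(\gamma_j)$ and places the new paths in $\Gamma_C$, so by \cref{unisub} a subsequence converges uniformly to some $\gamma\in\Gamma_C$ with $\gamma(0)=x$, $\gamma(1)=y$.

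Finally, \cref{Lalmostlow} yields $L(\gamma)\leq\liminf_jL(\gamma_j)=d(x,y)$, so $\gamma$ is a minimizer. Invoking once more the construction from \cref{alt} via \cite[Lem.\,3.1]{Fal}, I extract an injective Lipschitz path $\psi\in\Gamma^{xy}$ with $\psi\subset\gamma$, and by \cref{cov} together with injectivity
\begin{equation*}
L(\psi)=\int_\psi b\,\dHa\leq\int_\gamma b\,\dHa\leq L(\gamma)=d(x,y),
\end{equation*}
so $\psi$ itself is an injective minimizer. The arc length estimate from the first step then applies directly to $\psi$ and gives the claimed bound.

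The main obstacle is that injectivity alone does not yield any a priori bound on arc length; producing a uniform Lipschitz bound for the minimizing sequence requires the geometric lemma in the first step, and that lemma relies crucially on \cref{Sassump} to guarantee $\Ha(S_\lambda)<\infty$ for the relevant threshold $\lambda<a$. Without this, the Arzel\`a--Ascoli step fails and no minimizer need exist.
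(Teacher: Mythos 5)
Your proposal is correct and follows essentially the same route as the paper: injectivize a minimizing sequence via \cite[Lem.\,3.1]{Fal}, bound the arc lengths uniformly by splitting along $S_1$ (resp.\ $S_{a/2}$) and using \cref{Sassump}, reparameterize by constant speed, pass to a uniform limit by Arzel\`a--Ascoli, apply the lower semi-continuity of $L$ from \cref{Lalmostlow}, and finally injectivize the limit to get the stated length bound. The only cosmetic difference is your small detour through $\Ha(\psi\setminus S)=0$ in the case $a=\infty$, which the paper's direct estimate $\Ha(\gamma_j\setminus S_1)\leq\int_{\gamma_j\setminus S_1}b\,\dHa\leq L(\gamma_j)$ avoids.
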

	\begin{proof}
		We proceed by the direct method in the calculus of variations. Let $(\gamma_j)\subset\Gamma^{xy}$ be a sequence with $L(\gamma_j)\searrow\inf_{\gamma\in\Gamma^{xy}}L(\gamma)$. By \cite[Lem.\,3.1]{Fal} we can assume that each $\gamma_j$ is injective (this does not increase $L(\gamma_j)$). If $a=\infty$, then
		\begin{equation*}
		\textup{len}(\gamma_j)=\Ha(\gamma_j\cap S_1)+\Ha(\gamma_j\setminus S_1)\leq\Ha(S_1)+\int_{\gamma_j\setminus S_1}b\dHa\leq\Ha(S_1)+L(\gamma_j).
		\end{equation*}
		For the case $a<\infty$ we get
		\begin{equation*}
		\textup{len}(\gamma_j)=\Ha(\gamma_j\cap S_{a/2})+\Ha(\gamma_j\setminus S_{a/2})\leq\Ha(S_{a/2})+\frac{2}{a}\int_{\gamma_j\setminus S_{a/2}}b\dHa\leq \Ha(S_{a/2})+\frac{2}{a}L(\gamma_j).
		\end{equation*}
		Thus, the lengths of the $\gamma_j$ are uniformly bounded by some $C=C(a)$, and we can reparameterize the $\gamma_j$ such that each $\gamma_j$ has constant speed at most $C$. We further have $\gamma_j\rightrightarrows\gamma$ for some subsequence (see \cref{unisub}) and thus, using the lower semi-continuity property of $L$ from \cref{Lalmostlow},
		\begin{equation*}
		L(\gamma)\leq\liminf_jL(\gamma_j)=d(x,y),
		\end{equation*}
		which shows the optimality of $\gamma$. By the same argument as above we can choose an appropriate $\psi\in\Gamma^{xy}$ which satisfies the desired properties. More precisely, we replace $\gamma$ by an injective path and apply the same estimates as for the $\gamma_j$.
	\end{proof}
	The next result proves that $d$ is lower semi-continuous, which is important to make sure that the corresponding Wasserstein distance has a minimizer (see \cref{exoptplan} later).
	\begin{prop}[$d$ lower semi-continuous]
		\label{dLsc}
		If $a<\infty$, then $d$ is continuous. If $a=\infty$, then $d$ is lower semi-continuous under \cref{Sassump}.
	\end{prop}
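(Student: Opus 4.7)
The plan is to treat the two cases separately. For $a<\infty$ I would obtain continuity directly from the triangle inequality for the pseudometric $d$ together with the trivial upper bound $d(p,q)\leq a|p-q|$, which is valid for all $p,q\in\mathcal{C}$ by using the straight line segment from $p$ to $q$ as admissible Lipschitz path on which the extended $b$ is bounded by $a$. This gives
\begin{equation*}
|d(x_k,y_k)-d(x,y)|\leq d(x_k,x)+d(y_k,y)\leq a|x_k-x|+a|y_k-y|\to 0
\end{equation*}
for any $x_k\to x$ and $y_k\to y$ in $\mathcal{C}$, so $d$ is in fact Lipschitz continuous. No appeal to \cref{Sassump} is needed in this case.

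For $a=\infty$ only lower semi-continuity is claimed, and here \cref{Sassump} is essential. I would first restrict to a subsequence along which $\liminf_k d(x_k,y_k)$ is attained as a finite limit, since otherwise there is nothing to show. Using \cref{minimiz}, for each such $k$ I obtain an injective minimizing path $\gamma_k\in\Gamma^{x_ky_k}$ with $L(\gamma_k)=d(x_k,y_k)$ and arc length bounded by $\Ha(S_1)+d(x_k,y_k)$, which is uniformly bounded thanks to $\Ha(S_1)<\infty$ from \cref{Sassump}. Reparameterizing each $\gamma_k$ to have constant speed then produces a sequence in $\Gamma_C$ for a suitable common $C<\infty$.

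Arzel\`{a}\textendash Ascoli (\cref{unisub}) next yields a uniformly convergent subsequence $\gamma_k\rightrightarrows\gamma\in\Gamma_C$. Since $\gamma_k(0)=x_k\to x$ and $\gamma_k(1)=y_k\to y$, the limit satisfies $\gamma\in\Gamma^{xy}$. The lower semi-continuity property of $L$ on constant-speed sequences (\cref{Lalmostlow}) gives $L(\gamma)\leq\liminf_k L(\gamma_k)$, and combining this with \cref{alt} yields
\begin{equation*}
d(x,y)\leq L(\gamma)\leq\liminf_k L(\gamma_k)=\liminf_k d(x_k,y_k),
\end{equation*}
as required.

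The main subtlety lies in the case $a=\infty$: without the finiteness $\Ha(S_1)<\infty$ granted by \cref{Sassump} the arc lengths of the $\gamma_k$ could blow up even while $d(x_k,y_k)$ stays bounded, and the Arzel\`{a}\textendash Ascoli compactness step would fail. Once the uniform length bound is secured, every remaining ingredient is a direct application of the machinery developed earlier in this section.
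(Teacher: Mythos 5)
Your proposal is correct and follows essentially the same route as the paper: the triangle inequality together with $d(p,q)\leq a|p-q|$ for $a<\infty$, and for $a=\infty$ the extraction of constant-speed minimizers via \cref{minimiz} (with arc lengths uniformly bounded thanks to \cref{Sassump}), Arzel\`{a}\textendash Ascoli compactness, and the lower semi-continuity of $L$ from \cref{Lalmostlow}. No gaps.
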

	\begin{proof}
		Let $((x_j,y_j))\subset\mathcal{C}\times\mathcal{C}$ be a sequence with $(x_j,y_j)\to (x,y)\in\mathcal{C}\times\mathcal{C}$. For $a<\infty$ the triangle inequality implies
		\begin{equation*}
		\limsup_jd(x_j,y_j)\leq \limsup_jd(x,y)+a|x-x_j|+a|y-y_j|=d(x,y)\leq\liminf_ja|x_j-x|+d(x_j,y_j)+a|y_j-y|=\liminf_jd(x_j,y_j).
		\end{equation*}
		To show the lower semi-continuity for the case $a=\infty$ we suppose that $\liminf_jd(x_j,y_j)<\infty$ (since otherwise there is nothing to show) and extract a subsequence with $\liminf_jd(x_j,y_j)=\lim_jd(x_j,y_j)$. Using \cref{minimiz} there exists a sequence $\psi_j\in\Gamma^{x_jy_j}$ with $L(\psi_j)=d(x_j,y_j)$ and $\textup{len}(\psi_j)$ uniformly bounded. By \cref{unisub} we can further suppose that the $\psi_j$ have constant speed and $\psi_j\rightrightarrows\psi$ for some $\psi\in\Gamma^{xy}$. Application of \cref{Lalmostlow} yields
		\begin{equation*}
		d(x,y)\leq L(\psi)\leq\liminf_jL(\psi_j)=\liminf_jd(x_j,y_j).
		\end{equation*}
		Clearly, the inequality then also holds for the entire sequence.
	\end{proof}
	As the following two examples illustrate, the conditions are sharp.
	\begin{examp}[$d$ in general not upper semi-continuous if $a=\infty$]
		\label{exnotup}
		Let $a=\infty$. Assume that $S$ is given by a line segment $[x,y]\subset\mathcal{C}$, $b\equiv 1$ on $S$ and $(x_j,y_j)\in\mathcal{C}\times\mathcal{C}$ is any sequence with $(x_j,y_j)\to (x,y)$ such that $x_j,y_j\notin [x,y]$ (see \cref{sf:fig2a}). Then we have
		\begin{equation*}
		d(x,y)=\Ha([x,y])<\infty=\limsup_jd(x_j,y_j).
		\end{equation*}
		Thus, $d$ is not upper semi-continuous in $(x,y)$. This property may even be violated if the optimal paths between $x_j$ and $y_j$ lie entirely on $S$ (see \cref{sf:fig2b}): Set 
		\begin{equation*}
		S=[x,y]\cup\bigcup_{j=2}^\infty [x,x_j],
		\end{equation*}
		where $x_j$ is a sequence with $x_j\notin [x,y]$ and $x_j\to x$. Moreover, suppose that $b|_{[x,y]}=1$ and that $b$ is constant on each $(x,x_j]$ with $b|_{(x,x_j]}\Ha((x,x_j])=j$. Then we obtain
		\begin{equation*}
		d(x,y)=\Ha([x,y])<\infty=\lim_j\Ha([x,y])+j=\lim_jd(x_j,y).
		\end{equation*}
	\end{examp}
	\begin{examp}[$d$ in general not lower semi-continuous without \cref{Sassump}]
		\label{exnotlow}
		Assume that $a=\infty$, $n=2$. Let $b\equiv 1$ on
		\begin{equation*}
		S=\bigcup_j\{1/j \}\times [0,1]
		\end{equation*}
		(see \cref{sf:fig2c}) so that \cref{Sassump} is violated. We have $(x_j,y_j)=((1/j,0),(1/j,1))\to ((0,0),(0,1))=(x,y)$, but
		\begin{equation*}
		d(x_j,y_j)\equiv 1 <\infty = d(x,y),
		\end{equation*}
		because every path between $x$ and $y$ intersects a set of positive $\Ha$-measure in $\mathcal{C}\setminus S$.
	\end{examp}
	\begin{figure}
		\centering
		\begin{subfigure}[b]{0.3\textwidth}
			\centering
			\begin{tikzpicture}
			\draw (0,0) -- (3,1);
			\draw[dashed] plot [smooth] coordinates {(-0.3,-0.2)(0,0.2)(2,0.75)(3.3,1.3)};
			\node[circle,fill=black,inner sep=0.5pt,minimum size=0.1cm,label={-90: $x$}] at (0,0) {};
			\node[circle,fill=black,inner sep=0.5pt,minimum size=0.1cm,label={-90: $y$}] at (3,1) {};
			\node[circle,fill=black,inner sep=0.5pt,minimum size=0.1cm,label={180: $x_j$}] at (-0.3,-0.2) {};
			\node[circle,fill=black,inner sep=0.5pt,minimum size=0.1cm,label={90: $y_j$}] at (3.3,1.3) {};
			\end{tikzpicture}
			\caption{$(S,b)=([x,y],1)$.}
			\label{sf:fig2a}
		\end{subfigure}
		\begin{subfigure}[b]{0.3\textwidth}
			\centering
			\begin{tikzpicture}
			\draw (0,0) -- (0,4);
			\draw (0,0) -- (45:2);
			\draw (0,0) -- (22.5:1);
			
			\node[circle,fill=black,inner sep=0.5pt,minimum size=0.1cm,label={-90: $x$}] at (0,0) {};
			\node[circle,fill=black,inner sep=0.5pt,minimum size=0.1cm,label={90: $y$}] at (0,4) {};
			\node[circle,fill=black,inner sep=0.5pt,minimum size=0.1cm,label={0: $x_2$}] at (45:2) {};
			\node[circle,fill=black,inner sep=0.5pt,minimum size=0.1cm,label={0: $x_3$}] at (22.5:1) {};
			
			
			\node[label={[label distance=-0.45cm]45: $\iddots$}] at (0:0.5) {};
			\end{tikzpicture}	
			\caption{$b|_{(x,x_j]}\Ha((x,x_j])=j$.}
			\label{sf:fig2b}
		\end{subfigure}
		\begin{subfigure}[b]{0.3\textwidth}
			\centering
			\begin{tikzpicture}
			
			\draw (4,0) -- (4,4);
			\draw (2,0) -- (2,4);
			\draw ({4/3},0) -- ({4/3},4);
			\draw ({4/4},0) -- ({4/4},4);
			\draw ({4/5},0) -- ({4/5},4);
			\draw ({4/6},0) -- ({4/6},4);
			\draw ({4/7},0) -- ({4/7},4);
			\draw ({4/8},0) -- ({4/8},4);
			\draw ({4/9},0) -- ({4/9},4);
			\draw ({4/10},0) -- ({4/10},4);
			\draw ({4/11},0) -- ({4/11},4);
			\draw ({4/12},0) -- ({4/12},4);
			\draw ({4/13},0) -- ({4/13},4);
			\draw ({4/14},0) -- ({4/14},4);
			\draw ({4/15},0) -- ({4/15},4);
			\draw[black,dashed] plot [smooth] coordinates {(0,0)(0.4,1)(0.5,1.4)(0.6,2)(0.5,2.2)(0.4,2.8)(0.3,3.5)(0,4)};
			
			\node[circle,fill=black,inner sep=0.5pt,minimum size=0.1cm,label={-90: $x$}] at (0,0) {};
			\node[circle,fill=black,inner sep=0.5pt,minimum size=0.1cm,label={90: $y$}] at (0,4) {};
			\node[circle,fill=black,inner sep=0.5pt,minimum size=0.1cm,label={-90: $x_j$}] at (1,0) {};
			\node[circle,fill=black,inner sep=0.5pt,minimum size=0.1cm,label={90: $y_j$}] at (1,4) {};
			\end{tikzpicture}
			\caption{$d(x,y)=\infty>1\equiv d(x_j,y_j)$.}
			\label{sf:fig2c}
		\end{subfigure}
		\caption{Sketches for \cref{exnotup,exnotlow}.}
	\end{figure}
	Our final result in this section is that limit paths of sequences of optimal paths are again optimal. It will imply closedness of a certain subset $E\subset\mathcal{C}\times\mathcal{C}\times\{ \textup{paths} \}$ needed to prove the existence of a measurable path selection later in \cref{bpcor}. 
	\begin{prop}[Optimal path limit]
		\label{optpathlimit}
		Let \cref{Sassump} be satisfied and $((x_j,y_j))\subset\mathcal{C}\times\mathcal{C}$ be a sequence such that $d(x_j,y_j)$ is uniformly bounded. Further, let $\gamma_j\in\Gamma_C^{x_jy_j}$ such that $L(\gamma_j)=d(x_j,y_j)$ (cf.\ \cref{minimiz}) and assume that each $\gamma_j$ has constant speed. Suppose that $\gamma_j\rightrightarrows\gamma\in\Gamma_C^{xy}$. Then $L(\gamma)=d(x,y)$.
	\end{prop}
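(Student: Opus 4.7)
The plan is to sandwich $L(\gamma)$ between $d(x,y)$ and $\liminf_j d(x_j,y_j)$ using the lower semi-continuity machinery already established, and then close the gap separately in the two cases $a<\infty$ and $a=\infty$.

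Since each $\gamma_j$ has constant speed with $\textup{Lip}(\gamma_j)\leq C$ and $\gamma_j\rightrightarrows\gamma$, \cref{Lalmostlow} yields
\begin{equation*}
L(\gamma)\leq\liminf_j L(\gamma_j)=\liminf_j d(x_j,y_j).
\end{equation*}
Combined with $L(\gamma)\geq d(x,y)$, which follows from $\gamma\in\Gamma^{xy}$ and the infimum characterization \cref{alt}, this produces the sandwich $d(x,y)\leq L(\gamma)\leq\liminf_j d(x_j,y_j)$. If $a<\infty$, the continuity statement in \cref{dLsc} immediately collapses the outer terms to $d(x,y)$, so $L(\gamma)=d(x,y)$ and we are done in this case.

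The main obstacle is $a=\infty$, where \cref{dLsc} only gives lower semi-continuity and \cref{exnotup} shows that $d$ can genuinely fail to be upper semi-continuous. Here I would argue by contradiction: assuming $L(\gamma)>d(x,y)+3\varepsilon$ for some $\varepsilon>0$, pick via \cref{minimiz} an injective optimal path $\psi\in\Gamma^{xy}$ with $L(\psi)=d(x,y)$. If one can construct a competing sequence $\psi_j\in\Gamma^{x_j y_j}$ with $\limsup_j L(\psi_j)\leq L(\psi)+\varepsilon$, then the optimality $L(\gamma_j)\leq L(\psi_j)$ combined with $L(\gamma)\leq\liminf_j L(\gamma_j)$ delivers the contradiction $L(\gamma)\leq L(\psi)+\varepsilon=d(x,y)+\varepsilon<L(\gamma)-2\varepsilon$.

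Building such $\psi_j$ is the delicate step and the principal technical difficulty. The naive approach of prepending the Euclidean segments $[x_j,x]$ and $[y,y_j]$ to $\psi$, which works for $a<\infty$, fails here because these segments may leave $S$ and therefore carry infinite $L$-cost. Instead, I would use short initial and final arcs of $\gamma_j$ themselves as bridges: \cref{corro} together with the uniform bound on $L(\gamma_j)$ ensures $\mathcal{H}^1(\gamma_j\setminus S)=\mathcal{H}^1(\gamma\setminus S)=0$, so all endpoints lie in $\overline S$ and the arcs run essentially on $S$. Concretely, for a small $\tau>0$ the proposed $\psi_j$ is the concatenation of $\gamma_j|_{[0,\tau]}$, a short connecting arc on some $S_\lambda$ (with $\lambda<\infty$ selected via \cref{lamdel} so that $\mathcal{H}^1(S_\lambda)<\infty$ by \cref{Sassump}) from $\gamma_j(\tau)$ to $\psi(\tau)$, the main piece $\psi|_{[\tau,1-\tau]}$, a symmetric bridge back from $\psi(1-\tau)$ to $\gamma_j(1-\tau)$, and $\gamma_j|_{[1-\tau,1]}$. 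Since $\gamma_j\rightrightarrows\gamma$ and $\gamma(\tau),\psi(\tau)\to x$ as $\tau\searrow0$, a diagonal argument in $\tau\searrow0$ and $j\to\infty$ should make the bridging cost vanish in the limit, yielding the desired bound $\limsup_j L(\psi_j)\leq L(\psi)$. The controlled construction of the bridges on $S_\lambda$, where the local geometry of $d$ need not resemble Euclidean distance, is the heart of the argument.
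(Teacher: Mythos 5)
Your treatment of the case $a<\infty$ and the sandwich $d(x,y)\leq L(\gamma)\leq\liminf_j L(\gamma_j)=\liminf_j d(x_j,y_j)$ are correct and match the paper. The gap is in the case $a=\infty$, at precisely the step you flag as the ``heart of the argument'': the existence of a low-cost bridge on some $S_\lambda$ from $\gamma_j(\tau)$ to $\psi(\tau)$. Euclidean proximity of two points does not control their $d$-distance when $a=\infty$ --- this is exactly the content of \cref{exnotup}, where $x_j\to x$ with all optimal paths lying on $S$ and yet $d(x_j,x)=j\to\infty$. Knowing that $\gamma_j(\tau)$, $\gamma(\tau)$ and $\psi(\tau)$ all converge to $x$, and that $\Ha(S_\lambda)<\infty$, gives you no bound on the cost of connecting them (the relevant pieces of $S_\lambda$ may be disconnected, or connected only by arbitrarily expensive detours), so the claim that ``a diagonal argument in $\tau\searrow0$ and $j\to\infty$ should make the bridging cost vanish'' is unsubstantiated and false in general. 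Your competitor sequence $\psi_j$ therefore cannot be built as described.

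The paper closes this gap by avoiding bridges altogether. By \cref{help}, the symmetric difference $\Ha(\gamma\,\triangle\,\gamma_j)$ vanishes, so for large $j$ the images $\gamma([0,\alpha])$ and $\gamma_j([0,\alpha])$ (and likewise near the other endpoint) genuinely intersect; pick $p\in\gamma([0,\alpha])\cap\gamma_j([0,\alpha])$ and $q\in\gamma([\beta,1])\cap\gamma_j([\beta,1])$. Since $p,q$ lie \emph{on} $\gamma_j$, the subpath $\gamma_j|_{[t_p,t_q]}$ is an optimal path between $p$ and $q$ (subpaths of optimal paths are optimal), so $L(\gamma_j|_{[\alpha,\beta]})\leq d(p,q)$; on the other hand $d(p,q)\leq d(p,x)+d(x,y)+d(y,q)\leq L(\gamma|_{[0,\alpha]})+L(\tilde\gamma)+L(\gamma|_{[\beta,1]})$ because $p,q$ also lie on $\gamma$, where $\tilde\gamma$ is an optimal path for $d(x,y)$ from \cref{minimiz}. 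Combining this with the lower semi-continuity of $L$ on $[\alpha,\beta]$ and making $L(\gamma|_{[0,\alpha]}),L(\gamma|_{[\beta,1]})$ small yields $L(\gamma)\leq L(\tilde\gamma)+\delta=d(x,y)+\delta$. If you want to rescue your own write-up, replacing the constructed bridges by these intersection points (and the triangle inequality routed through $x$ and $y$) is the missing idea.
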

	\notinclude{
		The proof in the case $a=\infty$ makes use of the following auxiliary \namecref{ball}.
		\begin{lem}[]
			\label{ball}
			Let \cref{Sassump} be satisfied, $a=\infty$ and $(\gamma_j)\subset\Gamma$ with $\gamma_j\rightrightarrows\gamma\in\Gamma$, where the arc length of $\gamma$ is supposed to be positive. Further, assume that $L(\gamma_j)$ is uniformly bounded. Let $\alpha_0,\beta_0\in I$ such that
			\begin{equation*}
			\Ha(\gamma ([0,\alpha_0]))>0,\Ha(\gamma ([1-\beta_0,1]))>0.
			\end{equation*}
			Then for all $\alpha\geq\alpha_0,\beta\geq\beta_0$ there exists $j$ such that
			\begin{equation*}
			\gamma([0,\alpha])\cap\gamma_j([0,\alpha])\neq\emptyset\textup{\qquad and\qquad}\gamma([1-\beta,1])\cap\gamma_j([1-\beta,1])\neq\emptyset.
			\end{equation*}
		\end{lem}
		\begin{proof}
			Assume that the opposite is true. Hence, there exist $\alpha,\beta$ such that
			\begin{equation*}
			\gamma([0,\alpha])\cap\gamma_j([0,\alpha])=\emptyset\textup{\qquad or\qquad}\gamma([1-\beta,1])\cap\gamma_j([1-\beta,1])=\emptyset
			\end{equation*}
			for all $j$. Without loss of generality we can pass to a subsequence (keeping the notation) such that
			\begin{equation*}
			\gamma([0,\alpha])\cap\gamma_j([0,\alpha])=\emptyset
			\end{equation*}
			for all $j$ (the proof by contradiction is the same for the other case). Define $d_j,D_j>0$ by 
			\begin{equation*}
			d_j=\textup{dist}(\gamma ([0,\alpha]),\gamma_j ([0,\alpha]))
			\end{equation*}
			and
			\begin{equation*}
			D_j= d_H(\gamma ([0,\alpha]),\gamma_j([0,\alpha])).
			\end{equation*}
			We have $(d_j,D_j)\to (0,0)$ by $\gamma_j\rightrightarrows\gamma$. The $d_j,D_j$ are strictly positive. Hence, we can assume the sequence to be such that $D_{j+1}<d_j$. This implies that the sets $\gamma_j([0,\alpha])$ are pairwise disjoint. By Go\l\k{a}b's theorem \cite[Thm.\,3.2]{BPSS} we can again restrict to a subsequence such that $\Ha(\gamma_j([0,\alpha]))\geq\Ha(\gamma([0,\alpha]))/2>0$ for all $j$. Let $\delta=\Ha(\gamma([0,\alpha]))$. By the assumption that $L(\gamma_j)$ is uniformly bounded we can apply \cref{lamdel}. Thus, we have $\Ha(\gamma_j\setminus S_\lambda)\leq\delta/4$ for all $j$ and some $\lambda\in[0,\infty)$. Therefore, we get $\Ha(\gamma_j([0,\alpha])\cap S_\lambda)\geq\delta/4$ for all $j$. Finally, this yields the desired contradiction:
			\begin{equation*}
			\infty>\Ha(S_\lambda)\geq\Ha\left(\dot{\bigcup}_{j}\gamma_j([0,\alpha])\cap S_\lambda\right)=\sum_{j}\Ha(\gamma_j([0,\alpha])\cap S_\lambda)\geq\sum_{j}\frac{\delta}{4}=\infty.\qedhere
			\end{equation*}
		\end{proof}
	}
	\begin{proof}
		\underline{Case $a<\infty$:} 
		The claim follows directly from \cref{Lalmostlow} and the triangle inequality,
		\begin{equation*}
		d(x,y)\leq L(\gamma)\leq\liminf_jL(\gamma_j)=\liminf_jd(x_j,y_j)\leq \liminf_jd(x,y)+a|x-x_j|+a|y-y_j|=d(x,y).
		\end{equation*}
		\underline{Case $a=\infty$:} We can assume that the arc length of $\gamma$ is positive (otherwise the optimality is obvious) and $\liminf_jL(\gamma_j)=\lim_jL(\gamma_j)$ by restricting to a subsequence. Application of \cref{Lalmostlow} yields $d(x,y)\leq L(\gamma)\leq\lim_jL(\gamma_j)<\infty$. Hence, $d(x,y)$ is finite, and by \cref{minimiz} there exists $\tilde{\gamma}\in\Gamma^{xy}$ with $L(\tilde{\gamma})=d(x,y)$. We assume for a contradiction that $L(\tilde{\gamma})<L(\gamma)$. Let $\delta>0$ be arbitrary and pick $0<\alpha<\beta<1$ such that
		\begin{equation*}
		0<L(\gamma|_{[0,\alpha]})<\delta/6
		\qquad\text{and}\qquad
		0<L(\gamma|_{[\beta,1]})<\delta/6.
		\end{equation*}
		Furthermore, let $j$ be sufficiently large such that
		\begin{equation*}
		L(\gamma|_{[\alpha,\beta]})\leq L(\gamma_j|_{[\alpha,\beta]})+\delta/3
		\qquad\text{as well as}\qquad
		\gamma([0,\alpha])\cap\gamma_j([0,\alpha])\neq\emptyset
		\text{ and }
		\gamma([\beta,1])\cap\gamma_j([\beta,1])\neq\emptyset,
		\end{equation*}
		which is possible by the lower semi-continuity of $L$ from \cref{Lalmostlow} and by the vanishing symmetric difference between $\gamma$ and the sequence $\gamma_j$ due to \cref{help}.
		Let $p\in \gamma([0,\alpha])\cap\gamma_j([0,\alpha])$, $q\in \gamma([\beta,1])\cap\gamma_j([\beta,1])$, and $t_p\in[0,\alpha],t_q\in[\beta,1]$ such that $\gamma_j(t_p)=p$ and $\gamma_j(t_q)=q$. Then we can estimate (using that $\gamma_j|_{[t_p,t_q]}$ is an optimal path with respect to $L$ connecting $p$ and $q$):
		\begin{equation*}
		L(\gamma)=L(\gamma|_{[0,\alpha]})+L(\gamma|_{[\alpha,\beta]})+L(\gamma|_{[\beta,1]})\leq L(\gamma_j|_{[\alpha,\beta]})+\frac{2\delta}{3}\leq L(\gamma_j|_{[t_p,t_q]})+\frac{2\delta}{3}=d(p,q)+\frac{2\delta}{3}.
		\end{equation*}
		We further have
		\begin{equation*}
		d(p,q)\leq d(p,x)+d(x,y)+d(y,q)\leq L(\gamma|_{[0,\alpha]})+L(\tilde{\gamma})+L(\gamma|_{[\beta,1]})\leq L(\tilde{\gamma})+\frac{\delta}{3}
		\end{equation*}
		and therefore $L(\gamma)\leq L(\tilde{\gamma})+\delta$. This is in contradiction to $L(\tilde{\gamma})<L(\gamma)$ ($\delta>0$ was arbitrary).
	\end{proof}
	\subsection{Existence of measurable optimal path selection}
	\label{subs4}
	In this section we will prove a selection result:
	Given any $x,y\in\mathcal{C}$ we can select a path $\rho(x,y)$ with $d(x,y)=L(\rho(x,y))$ such that the resulting map $\rho$ is Borel measurable.
	To this end we apply a measurable selection theorem from \cite{BP}. Since $L$ is invariant with respect to curve reparameterization, we first define an equivalence relation on $\Gamma$ by
	\begin{equation*}
	\gamma_1\sim\gamma_2\qquad\textup{if and only if}\qquad d_\Theta (\gamma_1,\gamma_2)=0,
	\end{equation*}
	where
	\begin{equation*}
	d_\Theta (\gamma_1,\gamma_2)=\inf\left\{ |\gamma_1-\gamma_2\circ\varphi|_{\infty,I}\,|\,\varphi:I\to I \textup{ increasing and bijective} \right\}
	\end{equation*}
	and equivalence classes will be denoted by $[\cdot]_\sim$.
	Then $d_\Theta$ is a metric \cite[p.\,7]{BPSS} on 
	\begin{equation*}
	\Theta=\{ [\gamma]_\sim\,|\,\gamma\in\Gamma \}.
	\end{equation*}
	\begin{rem}[$\Theta$ not complete]
		The space $(\Theta,d_\Theta )$ is separable, which follows from the fact that every continuous function $I\to\R$ can be approximated in the uniform norm by a polynomial with rational coefficients. Unfortunately, it is not complete. A counterexample is given by the Hilbert curve, which is a space-filling and thus not Lipschitz continuous path mapping $I$ onto $[0,1]^2$. While it does not lie in $\Gamma$, it can be approximated in $d_\Theta$ by Lipschitz paths (cf.~construction in \cite[Fig.\,2]{A}).
	\end{rem}
	For $C>0$ and $x,y\in\mathcal{C}$ define
	\begin{equation*}
	\Theta_C=\{ \theta\in\Theta\, |\,\textup{len}(\theta)\leq C \}\textup{\qquad and\qquad}\Theta_C^{xy}=\{ \theta\in\Theta_C\, |\,\theta (0)=x,\theta (1)=y\}.
	\end{equation*}
	\begin{lem}[$\Theta_C$ complete]
		\label{TCcompl}
		For all $C>0$ and $x,y\in\mathcal{C}$ the metric spaces $\Theta_C$ and $\Theta_C^{xy}$ (equipped with $d_\Theta$) are complete.
	\end{lem}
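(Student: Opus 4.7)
The plan is to combine the direct method with an Arzel\`{a}--Ascoli argument applied to constant-speed representatives. Given a Cauchy sequence $(\theta_j)\subset\Theta_C$, I would first pick for each $j$ a constant-speed representative $\gamma_j\in\Gamma$; since the constant speed equals the arc length $\textup{len}(\gamma_j)\leq C$, this gives $\textup{Lip}(\gamma_j)\leq C$, so $\gamma_j\in\Gamma_C$.

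Next, \cref{unisub} produces a subsequence $\gamma_{j_k}$ converging uniformly to some $\gamma\in\Gamma_C$. Using the identity as competitor in the definition of $d_\Theta$ yields $d_\Theta(\theta_{j_k},[\gamma]_\sim)\leq|\gamma_{j_k}-\gamma|_{\infty,I}\to 0$, and since the original sequence is Cauchy this forces $\theta_j\to[\gamma]_\sim$ in $d_\Theta$. The bound $\textup{len}(\gamma)=\int_I|\dot\gamma|\,\mathrm{d}\mathcal{L}\leq\textup{Lip}(\gamma)\leq C$ then shows $[\gamma]_\sim\in\Theta_C$. For the variant $\Theta_C^{xy}$, the endpoint conditions $\gamma_{j_k}(0)=x$ and $\gamma_{j_k}(1)=y$ pass to the uniform limit, so $[\gamma]_\sim\in\Theta_C^{xy}$ as well.

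The main obstacle is establishing that every class $\theta_j$ admits a constant-speed representative, i.e., that the arc-length reparameterization lies in the same equivalence class. For a representative $\gamma$ with $\textup{len}(\gamma)>0$ one sets $s(t)=\frac{1}{\textup{len}(\gamma)}\int_0^t|\dot{\gamma}|\,\mathrm{d}\mathcal{L}$, which is non-decreasing, continuous, and surjective onto $I$, and then defines $\tilde\gamma$ on $I$ through $\tilde\gamma\circ s=\gamma$; this is well-defined since $\gamma$ is constant on any interval where $s$ is constant, and $\tilde\gamma$ has constant speed $\textup{len}(\gamma)$ by construction. Because $s$ is in general only weakly monotone, the relation $\gamma\sim\tilde\gamma$ has to be verified via approximation by the strictly increasing bijections $s_\varepsilon(t)=(1-\varepsilon)s(t)+\varepsilon t$; since $\tilde\gamma$ is Lipschitz and $s_\varepsilon\to s$ uniformly, one has $|\gamma-\tilde\gamma\circ s_\varepsilon|_{\infty,I}=|\tilde\gamma\circ s-\tilde\gamma\circ s_\varepsilon|_{\infty,I}\to 0$ as $\varepsilon\to 0$, giving $d_\Theta(\gamma,\tilde\gamma)=0$. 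If $\textup{len}(\gamma)=0$ the class contains a constant map and there is nothing to do.
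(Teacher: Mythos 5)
Your proof is correct and follows essentially the same route as the paper's: choose constant-speed representatives, apply the Arzelà--Ascoli compactness of \cref{unisub} to extract a uniformly convergent subsequence, and use the Cauchy property to upgrade subsequential to full convergence in $d_\Theta$. The final paragraph justifying that every class in $\Theta_C$ has a constant-speed representative — including the $s_\varepsilon$ trick to handle the fact that the arc-length reparameterization $s$ need not be strictly increasing while $d_\Theta$ is defined via increasing \emph{bijections} — supplies a detail the paper's proof takes for granted, and is a sound and welcome addition.
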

	\begin{proof}
		Let $C>0$. It suffices to prove the claim for $\Theta_C$. Assume that $(\theta_j)\subset\Theta_C$ is a Cauchy sequence. Let $\gamma_j\in\theta_j$ be the sequence of representations with constant speed and therefore $(\gamma_j)\subset\Gamma_C$. There exists a subsequence $(\gamma_{j'})\subset(\gamma_j)$ with $\gamma_{j'}\rightrightarrows\gamma$ for some $\gamma\in\Gamma_C$ by \cref{unisub}. Hence, we have $d_\Theta(\theta_{j'},\theta)\to 0$ with $\gamma\in\theta\in\Theta_C$. By the assumption that $\theta_j$ is a Cauchy sequence we must have $d_\Theta(\theta_{j},\theta)\to 0$ for the whole sequence.
	\end{proof}
	We want to apply the following measurable selection statement to prove the existence of a Borel measurable path selection $\rho:\mathcal{C}\times\mathcal{C}\to\Theta$ such that $d=L\circ\rho$.
	Note that $L(\theta)$ is well-defined for any $\theta\in\Theta$, because every representative of $\theta$ traverses in the same way.
	\begin{prop}[{{\cite[Thm.\,1]{BP}}}]
		\label{bp}
		Assume that $U$ and $V$ are separable and complete metric spaces and $E\subset U\times V$ is Borel measurable. If for each $u\in U$ the section $E_u=\{ v\in V\,|\,(u,v)\in E\}$ is $\sigma$-compact, then the projection of $E$ onto $U$, denoted by $\textup{proj}_U(E)$, is Borel measurable and there exists a Borel-selection $S\subset E$ of $E$, i.e.,
		\begin{itemize}
			\item $S$ is Borel measurable,
			\item $\textup{proj}_U(E)=\textup{proj}_U(S)$,
			\item there is a Borel measurable function $\rho:\textup{proj}_U(E)\to V$ which is uniquely defined by
			\begin{equation*}
			(u,\rho (u))\in S.
			\end{equation*}
		\end{itemize}
	\end{prop}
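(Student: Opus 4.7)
The plan is to prove the two conclusions in sequence: first Borel measurability of $\textup{proj}_U(E)$, then existence of the Borel selection $\rho$. Both rely crucially on the $\sigma$-compactness of fibers, which is what distinguishes the present statement from the general case in which projections of Borel sets are merely analytic.

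For the projection, I would first reduce to Borel sets with compact (rather than merely $\sigma$-compact) sections. Using a countable base of $V$ together with the $\sigma$-compactness of each $E_u$, the set $E$ can be decomposed as a countable union $E=\bigcup_k E^k$ with $E^k\in\mathcal{B}(U\times V)$ and each section $E^k_u$ compact (or empty). The remaining task is then the classical Arsenin\textendash Kunugui theorem: the projection of a Borel set with compact sections is Borel. I would prove this by transfinite induction along the Borel hierarchy on $U\times\bar V$, where $\bar V$ is a Polish compactification of $V$, keeping track of which sets have sections contained in a fixed compact piece of $V$.

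For the selection, fix a countable base $\{B_i\}$ of $V$ whose diameters tend to zero in a complete compatible metric, together with the decomposition above. For each $k$ and each finite tuple $(i_1,\dots,i_r)$ define
\begin{equation*}
U_{k,i_1,\dots,i_r}=\left\{u\in U\,\big|\,E^k_u\cap\overline{B_{i_1}}\cap\dots\cap\overline{B_{i_r}}\neq\emptyset\right\},
\end{equation*}
which is Borel in $U$ by the projection result just established, applied to $E^k\cap(U\times\overline{B_{i_1}}\cap\dots\cap\overline{B_{i_r}})$. For $u\in\textup{proj}_U(E)$ I would recursively pick the least $k(u)$ with $u\in\textup{proj}_U(E^k)$ and the least $i_r(u)$ with $\textup{diam}(B_{i_r(u)})\leq 1/r$ such that $u\in U_{k(u),i_1(u),\dots,i_r(u)}$. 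Borel measurability of $k$ and of each $i_r$ follows inductively from the Borel character of the sets $U_{k,i_1,\dots,i_r}$; completeness of $V$ and the shrinking diameters then force the nested intersection $\bigcap_r\overline{B_{i_r(u)}}\cap E^{k(u)}_u$ to collapse to a single point, which I would take to be $\rho(u)$. The graph $S$ of $\rho$ is then automatically a Borel subset of $E$.

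The hardest step will be setting up the Arsenin\textendash Kunugui induction correctly, because complementation does not preserve compactness of sections and therefore breaks a naive induction over the Borel hierarchy. The standard remedy is to work throughout with sections contained in a fixed compact $K\subset V$, so that complements are taken relative to $U\times K$ rather than $U\times V$; this keeps the inductive class closed under the Boolean operations and yields Borel projections at every level, and my full proof would spell out this compactification argument in detail.
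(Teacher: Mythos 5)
The paper does not prove \cref{bp} at all: it is quoted verbatim from the cited reference (Thm.~1 of [BP]) and used as a black box, so there is no in-paper argument to compare yours against. Judged on its own terms, your sketch follows the classical architecture of the Arsenin--Kunugui/Saint-Raymond proof (decompose $E$ into Borel pieces with compact sections, prove the projection theorem for compact sections, then run the shrinking-basis selection), and the final selection step is essentially correct as written: the sets $U_{k,i_1,\dots,i_r}$ are Borel once the projection theorem is available, compactness of $E^{k(u)}_u$ guarantees at each stage a basic set of diameter at most $1/r$ still meeting the running intersection, and the nested nonempty compact intersection collapses to a single point of $E_u$.

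The genuine gap is in the very first reduction. The claim that ``using a countable base of $V$ together with the $\sigma$-compactness of each $E_u$'' one can write $E=\bigcup_k E^k$ with each $E^k$ Borel and each section $E^k_u$ compact is not an easy preliminary step: it is precisely Saint-Raymond's theorem, and in the standard treatments it is established together with (and is at least as hard as) the projection statement itself. Pointwise each section decomposes as $E_u=\bigcup_n K_{u,n}$, but nothing guarantees that the compact pieces can be chosen measurably in $u$; the obvious candidates fail, since closed bounded subsets of a Polish space need not be compact and $V$ need not be $\sigma$-compact. Relatedly, your proposed remedy for the induction --- working ``with sections contained in a fixed compact $K\subset V$'' --- is not available: the compact sections $E^k_u$ vary with $u$ and are not contained in any single compact subset of $V$, and while passing to a Polish compactification $\bar V$ keeps each section compact, complements taken in $U\times\bar V$ still destroy compactness of sections, so the naive induction over the Borel hierarchy does not close up. As it stands, both deep ingredients (the Borel decomposition into compact-section pieces and the compact-section projection theorem) are asserted rather than proved; the honest options are to supply the full Arsenin--Kunugui/Saint-Raymond argument (via effective descriptive set theory or a reflection/unfolding argument) or to do what the paper does and simply cite the result.
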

	For the rest of this section we write $$T_\lambda=\{ (x,y)\in\mathcal{C}\times\mathcal{C} \,|\, d(x,y)\leq\lambda \}$$ for $\lambda\in[0,\infty)$. Those sets are closed, because $d$ is lower semi-continuous by \cref{dLsc}. Further, we define
	\begin{equation}
	\label{eq:Clam}
	C_\lambda=\begin{cases*}
	\Ha(S_1)+\lambda&if $a=\infty$,\\
	\Ha(S_{a/2})+\frac{2}{a}\lambda&if $a<\infty$
	\end{cases*}
	\end{equation}
	for all such $\lambda$ (cf.\ \cref{minimiz}). A direct consequence of \cref{bp} is the following statement.
	\begin{corr}[Measurable bounded optimal path map]
		\label{bpcor}
		Let \cref{Sassump} be satisfied. For each $\lambda\in[0,\infty)$ there exists a Borel measurable map $\rho_\lambda:T_\lambda\to\Theta_{C_\lambda}$ such that
		\begin{equation*}
		\rho_\lambda(x,y)\in\{ \theta\in\Theta_{C_\lambda}^{xy}\,|\, L(\theta)=d(x,y) \}
		\qquad\text{for all }(x,y)\in T_\lambda.
		\end{equation*}
	\end{corr}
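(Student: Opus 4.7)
The plan is to apply the measurable selection theorem \cref{bp} with $U = T_\lambda$ and $V = \Theta_{C_\lambda}$. First I would check that both are separable complete metric spaces: $T_\lambda \subset \mathcal{C}\times\mathcal{C}$ is closed (since $d$ is lower semi-continuous by \cref{dLsc}) as a subset of a compact metric space, while $\Theta_{C_\lambda}$ is complete by \cref{TCcompl} and separable as a subset of $\Theta$. Next I would define
\begin{equation*}
E = \{((x,y),\theta) \in T_\lambda \times \Theta_{C_\lambda} \mid \theta(0)=x,\ \theta(1)=y,\ L(\theta) \leq d(x,y)\}.
\end{equation*}
Since $L(\theta) \geq d(x,y)$ whenever $\theta \in \Theta^{xy}$ by \cref{alt}, the condition $L(\theta) \leq d(x,y)$ automatically upgrades to equality, so a Borel selection out of $E$ yields the desired $\rho_\lambda$.

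To verify the hypotheses of \cref{bp}, I would first check Borel measurability of $E$. The evaluation maps $\theta \mapsto \theta(0)$ and $\theta \mapsto \theta(1)$ are $1$-Lipschitz with respect to $d_\Theta$ (since every increasing bijection $\varphi:I\to I$ fixes the endpoints), so the set of $((x,y),\theta)$ with $\theta(0)=x,\theta(1)=y$ is closed. For the remaining constraint, both $L$ and $d$ are lower semi-continuous, hence Borel measurable: for $d$ I invoke \cref{dLsc}; for $L$ on $\Theta_{C_\lambda}$ I argue that if $\theta_j \to \theta$ in $d_\Theta$ with $L(\theta_{j_k}) \to \liminf_j L(\theta_j)$, then constant-speed representatives $\gamma_{j_k} \in \Gamma_{C_\lambda}$ admit a uniformly convergent sub-subsequence by \cref{unisub} whose limit is a reparameterization of $\theta$, so \cref{Lalmostlow} gives $L(\theta) \leq \liminf_j L(\theta_j)$. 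Then $\{((x,y),\theta) \mid L(\theta) \leq d(x,y)\}$ is the preimage of the closed set $\{(s,t) \in [0,\infty]^2 \mid s \leq t\}$ under the Borel map $((x,y),\theta) \mapsto (L(\theta), d(x,y))$, hence Borel.

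For the $\sigma$-compactness of each section $E_{(x,y)}$ I would note that $\Theta_{C_\lambda}^{xy}$ is compact: any sequence in it lifts to constant-speed representatives in $\Gamma_{C_\lambda}^{xy}$, which has a uniformly convergent subsequence by \cref{unisub}, giving $d_\Theta$-convergence. The section $E_{(x,y)}$ is then closed in $\Theta_{C_\lambda}^{xy}$ (since $d(x,y)$ is a fixed number and $L$ is lower semi-continuous), and non-empty by \cref{minimiz} — this is also what guarantees $\textup{proj}_{T_\lambda}(E) = T_\lambda$. Hence each section is compact, a fortiori $\sigma$-compact. Applying \cref{bp} produces the required Borel map $\rho_\lambda: T_\lambda \to \Theta_{C_\lambda}$.

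The main technical point is the lower semi-continuity of $L$ with respect to $d_\Theta$: the subtlety is that $d_\Theta$-convergence only provides near-optimal reparameterizations, so one cannot directly apply \cref{Lalmostlow}. The fix is the subsequence-of-subsequence argument above, which extracts from any $d_\Theta$-convergent sequence a uniformly convergent sequence of constant-speed representatives whose limit still represents the limit class $\theta$; routine but important to spell out. Everything else — closedness of $T_\lambda$, continuity of endpoint evaluation, completeness of $\Theta_{C_\lambda}$ — is already packaged in earlier results.
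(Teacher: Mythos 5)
Your proof is correct, and in the key measurability step it takes a genuinely different route from the paper's. The paper defines $E$ with the equality constraint $L(\theta)=d(x,y)$ and proves that $E$ is \emph{closed}: it passes to constant-speed representatives, extracts a uniform limit via \cref{unisub}, and then invokes \cref{optpathlimit} to conclude that the limit of optimal paths is again optimal — a non-trivial result whose $a=\infty$ case needs a separate contradiction argument. You instead relax the constraint to $L(\theta)\leq d(x,y)$ (equivalent by \cref{alt}, since $L\geq d$ on $\Theta^{xy}$) and establish only that $E$ is \emph{Borel}, which is all \cref{bp} requires; this follows from the continuity of the endpoint evaluations together with the separate lower semi-continuity of $d$ (\cref{dLsc}) and of $L$ on $\Theta_{C_\lambda}$, the latter deduced from \cref{Lalmostlow} by your subsequence-of-subsequences lifting to constant-speed representatives. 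You correctly identify this lifting as the delicate point and handle it properly: the uniform limit of the representatives is $d_\Theta$-equivalent to $\theta$, and $L$ is well defined on equivalence classes. The treatment of the sections (compactness of $\Theta_{C_\lambda}^{xy}$, closedness of the sublevel set of $L$) and the non-emptiness via \cref{minimiz} coincide with the paper's, and the choice $U=T_\lambda$ versus the paper's $U=\mathcal{C}\times\mathcal{C}$ with empty sections outside $T_\lambda$ is immaterial. What your route buys is that \cref{optpathlimit} becomes unnecessary for this corollary; what the paper's route buys is the slightly stronger conclusion that the set of optimal pairs is closed rather than merely Borel, with \cref{optpathlimit} retained as a result of independent interest.
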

	\begin{proof}
		Fix $\lambda\in[0,\infty)$ and define complete and separable metric spaces by $U=\mathcal{C}\times\mathcal{C}$ and $V=\Theta_{C_\lambda}$ (see \cref{TCcompl}). Furthermore, let
		\begin{equation*}
		E=\left\{ ((x,y),\theta)\in U\times V\,\big|\, (x,y)\in T_\lambda,\theta\in\Theta_{C_\lambda}^{xy},L(\theta)=d(x,y) \right\}.
		\end{equation*}	
		We show that $E$ is Borel measurable. Actually, we prove that $E$ is closed. Let $(((x_j,y_j),\theta_j))\subset E$ be a sequence such that $(x_j,y_j)\to (x,y)$ in $\mathcal{C}\times\mathcal{C}$ and $\theta_j\to\theta$ with respect to $d_\Theta$. We have $d(x,y)\leq\lambda$ by the lower semi-continuity of $d$ (\cref{dLsc}). By $\textup{len}(\theta_j)\leq C_\lambda$ we can represent the $\theta_j$ by paths with constant speed $(\gamma_j)\subset\Gamma_{C_\lambda}$. Using \cref{unisub} we have $\gamma_{j'}\rightrightarrows\gamma\in\Gamma_{C_\lambda}^{xy}$ for some subsequence $(\gamma_{j'})\subset (\gamma_j)$. This implies $d_\Theta (\gamma_{j'},\gamma)\to 0$ which yields $\gamma\in\theta$. Hence, $\theta$ is optimal by \cref{optpathlimit} on the optimal path limit. This shows that $E$ is closed. Now consider the section $E_{(x,y)}=\{ \theta\,|\, ((x,y),\theta)\in E \}$ for $(x,y)\in\mathcal{C}\times\mathcal{C}$. We claim that $E_{(x,y)}$ is compact. If $d(x,y)>\lambda$, then we have $E_{(x,y)}=\emptyset$. Therefore, we can assume that $d(x,y)\leq\lambda$ and pick any sequence $(\theta_j)\subset E_{(x,y)}$. Again, the lengths of the $\theta_j$ are uniformly bounded by $C_\lambda$ and we can represent by paths with constant speed and extract a subsequence which converges uniformly to some $\theta\in\Theta_{C_\lambda}^{xy}$. By \cref{optpathlimit} $\theta$ is optimal and thus $\theta\in E_{(x,y)}$. Hence, $E_{(x,y)}$ is compact. By the statement about minimizers for $d$ (\cref{minimiz}) we obtain $\textup{proj}_U(E)=T_\lambda$. Finally, we apply the previous measurable selection theorem (\cref{bp}) to get the desired result. 
	\end{proof}
	Using an appropriate partition of $\mathcal{C}\times\mathcal{C}$ we can now show our main result for this section.
	\begin{prop}[Measurable optimal path map]
		\label{final}
		Let \cref{Sassump} be satisfied. There exists a Borel measurable map
		\begin{equation*}
		\rho:\mathcal{C}\times\mathcal{C}\to\Theta
		\end{equation*}
		such that for all $(x,y)\in\mathcal{C}\times\mathcal{C}$ we have
		\begin{equation*}
		L(\rho(x,y))=d(x,y)\qquad\textup{and}\qquad\rho(x,y)\in\Theta_{C_{d(x,y)+1}}^{xy}, \textup{ i.e., } \textup{len}(\rho(x,y))\leq C_{d(x,y)+1}.
		\end{equation*}
	\end{prop}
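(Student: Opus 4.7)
The plan is to obtain $\rho$ by gluing together the bounded-length selections from \cref{bpcor}, using the level sets of $d$ as the pieces of a Borel partition of $\mathcal{C}\times\mathcal{C}$. Concretely, I would set $A_0 = T_1$ and, for each integer $k \ge 1$, $A_k = T_{k+1}\setminus T_k$. Since $d$ is lower semi-continuous by \cref{dLsc}, each $T_\lambda$ is closed, hence Borel, so the $A_k$ form a countable Borel partition of $\{(x,y)\in\mathcal{C}\times\mathcal{C}\,|\,d(x,y)<\infty\}$. If $a<\infty$, then $d$ is everywhere finite and this is already a partition of all of $\mathcal{C}\times\mathcal{C}$; if $a=\infty$, a leftover Borel set $A_\infty = \mathcal{C}\times\mathcal{C}\setminus\bigcup_k T_k$ must be handled separately.

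On each $A_k$ I would apply \cref{bpcor} with $\lambda = k+1$ to obtain a Borel measurable selection $\rho_{k+1}:T_{k+1}\to\Theta_{C_{k+1}}$ sending $(x,y)$ to an optimal $L$-minimizing path of length at most $C_{k+1}$. Its restriction to the Borel set $A_k$ is still Borel measurable. On $A_\infty$ (when nonempty), I would simply use the straight-line assignment $\rho_\infty(x,y) = [t\mapsto (1-t)x+ty]_\sim$, which is continuous (hence Borel) in $(x,y)$; since $d(x,y)=\infty$ on $A_\infty$, by \cref{alt} every path in $\Gamma^{xy}$ has $L(\gamma)=\infty$, so the equality $L(\rho_\infty(x,y))=d(x,y)=\infty$ holds trivially and the length bound is vacuous. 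Glue the pieces:
\begin{equation*}
\rho(x,y)=\begin{cases*}\rho_{k+1}(x,y)&if $(x,y)\in A_k$ for some $k\geq 0$,\\ \rho_\infty(x,y)&if $(x,y)\in A_\infty$.\end{cases*}
\end{equation*}

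To verify that $\rho$ is Borel measurable, note that for any open $U\subset\Theta$ one has $\rho^{-1}(U)=\bigcup_{k}\bigl(A_k\cap\rho_{k+1}^{-1}(U)\bigr)\cup\bigl(A_\infty\cap\rho_\infty^{-1}(U)\bigr)$, a countable union of Borel sets. The identity $L(\rho(x,y))=d(x,y)$ holds on each piece by construction. For the length bound: on $A_k$ (with $k\geq 1$) we have $d(x,y)>k$, hence $k+1\leq d(x,y)+1$; on $A_0$ we have $1\leq d(x,y)+1$ trivially. Since $\lambda\mapsto C_\lambda$ is non-decreasing (see \eqref{eq:Clam}), this yields $\textup{len}(\rho(x,y))\leq C_{k+1}\leq C_{d(x,y)+1}$ as required.

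No step is really a serious obstacle: the analytic heavy lifting, namely the closedness of the graph $E$ in \cref{bpcor} (which rests on \cref{optpathlimit,dLsc}) and the existence of length-bounded minimizers (\cref{minimiz}), has already been done. The remaining work is essentially bookkeeping: checking that the level-set partition is Borel (immediate from lower semi-continuity of $d$), that the glued map is Borel (countable union of Borel sets), and that the pieces fit together with the length bound (monotonicity of $C_\lambda$). The only subtlety is the case $a=\infty$ with $d(x,y)=\infty$, which is handled by any canonical continuous assignment such as the straight-line path.
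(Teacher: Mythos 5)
Your proposal is correct and follows essentially the same route as the paper: partition $\{d<\infty\}$ into the Borel sets $T_1$, $T_{i+1}\setminus T_i$, apply the selection of \cref{bpcor} on each piece, use monotonicity of $\lambda\mapsto C_\lambda$ for the length bound, and extend by the straight-line path on $\{d=\infty\}$. The only (harmless) differences are your indexing convention and your explicit check via \cref{alt} that $L(\rho(x,y))=\infty=d(x,y)$ on the leftover set, which the paper leaves implicit.
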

	\begin{proof}
		For each $\lambda\in[0,\infty)$ there is a Borel measurable function $\rho_\lambda:T_\lambda\to\Theta_{C_\lambda}$ by \cref{bpcor}. Clearly, $\rho_\lambda$ is Borel measurable as a function $\rho_\lambda:T_\lambda\to\Theta$, because $\Theta_{C_\lambda}$ is complete (and therefore closed) by \cref{TCcompl}. We define a mapping $\hat{\rho}:\{ d<\infty \}\to\Theta$. Consider the following partition of $\{ d<\infty \}$:
		\begin{equation*}
		P_1= T_1\textup{\qquad and\qquad}P_{i+1}= T_{i+1}\setminus T_i\textup{ for }i\in\mathbb{N}.
		\end{equation*}
		Then $P_i$ is Borel measurable for all $i$ by the Borel measurability of all $T_i$. For $(x,y)\in P_i$ for some $i$ let
		\begin{equation*}
		\hat{\rho} (x,y)=\rho_i(x,y).
		\end{equation*}
		We have $\textup{len}(\rho_i(x,y))\leq C_i\leq C_{d(x,y)+1}$ by definition of $P_i$ and \cref{bpcor}. Furthermore, for $B\in\mathcal{B}(\Theta)$ we get
		\begin{equation*}
		\hat{\rho}^{-1}(B)=\bigcup_i\rho_i^{-1}(B)\cap P_i,
		\end{equation*}
		which is Borel measurable.
		Finally, $\hat{\rho}$ can be continued to a Borel measurable function $\rho:\mathcal{C}\times\mathcal{C}\to\Theta$. Let $\theta_{xy}\in\Theta$ be the straight line connection from $x$ to $y$ and set
		\begin{equation*}
		\rho(x,y)=\begin{cases*}
		\hat{\rho} (x,y)&if $d(x,y)<\infty$,\\
		\theta_{xy}&else.
		\end{cases*}
		\end{equation*}
		Since $\rho$ equals the Borel measurable $\hat\rho$ on the Borel set $\{d<\infty\}=\bigcup_iP_i$ and $\rho$ is continuous and thus Borel measurable on the complement $\{d=\infty\}$,
		the map $\rho$ is Borel measurable on all of $\mathcal{C}\times\mathcal{C}$.
		\notinclude{Then for each $B\in\mathcal{B}(\Theta )$ we have
			\begin{equation*}
			\rho^{-1}(B)=\hat{\rho}^{-1}(B\setminus\{\theta_0\})\cup\rho^{-1}(B\cap\{\theta_0\}).
			\end{equation*}
			$\{\theta_0\}$ is closed in $\Theta$. Thus, $\hat{\rho}^{-1}(B\setminus\{\theta_0\})=\tilde{B}\cap\{ d<\infty \}$ for some $\tilde{B}\in\mathcal{B}(\mathcal{C}\times\mathcal{C})$ by the measurability of $\hat{\rho}$. This is a Borel measurable subset of $\mathcal{C}\times\mathcal{C}$, because $d$ is lower semi-continuous and thus Borel measurable (\cref{dLsc}). The second set is empty or equal to $\{d=\infty\}\cup\hat{\rho}^{-1}(\theta_0)$ and hence also Borel measurable.
		}
	\end{proof}
	\subsection{Wasserstein distance with generalized urban metric as Beckmann problem}
	\label{subs5}
	In this section we prove \cref{BWfinal}. 
	We will use the idea that a mass flux can be seen as a measure on paths, which is more accurately defined as follows (see \cite[Def.\,2.5]{BPSS}).
	\begin{defin}[Mass flux measure]
		\label{mfmeas}
		Any measure $\eta:(\Theta,\mathcal{B}(\Theta))\to[0,\infty)$ is called \textbf{mass flux measure} (recall the definition of $\Theta$ at the beginning of \cref{subs4}). Further, $\eta$ \textbf{moves $\mu_+$ onto $\mu_-$} if 
		\begin{equation*}
		\mu_+(B)=\eta(\{ \theta\in\Theta\,|\,\theta(0)\in B \})\qquad\textup{and}\qquad \mu_-(B)=\eta(\{ \theta\in\Theta\,|\,\theta(1)\in B \})
		\end{equation*}
		for all Borel sets $B\in\mathcal{B}(\mathcal{C})$, thus $(\mu_+,\mu_-)$ is the pushforward of $\eta$ under the map $\theta\mapsto(\theta(0),\theta(1))$.
	\end{defin}
	To translate back and forth between mass flux measures and mass fluxes we will need the following type of measures.
	\begin{lem}[Line integral measure]
		\label{lintm}
		Let $\gamma\in\Gamma$ be injective and define the Radon measure $\F_\gamma$ by
		\begin{equation*}
		\langle\varphi,\F_\gamma\rangle=\int_I\varphi(\gamma)\cdot\dot{\gamma}\,\mathrm{d}\mathcal{L}
		\qquad\text{for all }\varphi\in C(\mathcal{C};\R^n).
		\end{equation*}
		Then we have $|\F_\gamma|=\Ha\mres\gamma$ or equivalently
		\begin{equation*}
		\langle\varphi,|\F_\gamma|\rangle=\int_I\varphi(\gamma)|\dot{\gamma}|\,\mathrm{d}\mathcal{L}
		\qquad\text{for all }\varphi\in C(\mathcal{C}).
		\end{equation*}
	\end{lem}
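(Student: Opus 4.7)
The plan is to identify $\F_\gamma$ explicitly as a vector-valued measure of the form $\vec{t}\,\Ha\mres\gamma$ for a unit tangent field $\vec{t}$, from which the claim $|\F_\gamma|=\Ha\mres\gamma$ is immediate because the total variation of $f\mu$ (for a positive measure $\mu$) equals $|f|\mu$.

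More concretely, I would first split the parameter interval into $A_0=\{t\in I\mid |\dot\gamma(t)|=0\}$ and $A_+=\{t\in I\mid |\dot\gamma(t)|>0\}$ (Rademacher plus Borel measurability of $\dot\gamma$ via the difference quotients). The area formula (\cref{cov}) applied to $f\equiv 1$ gives $\Ha(\gamma(A_0))=\int_{A_0}|\dot\gamma|\,\mathrm{d}\mathcal{L}=0$, so $\gamma(A_0)$ is $\Ha$-negligible and contributes nothing to either side. On $\gamma(A_+)$, injectivity of $\gamma$ and continuity of $\gamma^{-1}:\gamma\to I$ (a continuous bijection from a compact space) make $\gamma^{-1}$ Borel, so the tangent
\begin{equation*}
\vec{t}(x)=\frac{\dot\gamma(\gamma^{-1}(x))}{|\dot\gamma(\gamma^{-1}(x))|}
\end{equation*}
is a well-defined Borel-measurable unit vector field $\Ha$-a.e.\ on $\gamma$.

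Next I would apply the area formula for injective Lipschitz paths (\cref{cov}) componentwise to any $\varphi\in C(\mathcal{C};\R^n)$ to convert the defining integral on $I$ into an integral on $\gamma$:
\begin{equation*}
\langle\varphi,\F_\gamma\rangle=\int_{A_+}\varphi(\gamma)\cdot\frac{\dot\gamma}{|\dot\gamma|}\,|\dot\gamma|\,\mathrm{d}\mathcal{L}=\int_\gamma\varphi\cdot\vec{t}\,\mathrm{d}\Ha.
\end{equation*}
This identifies $\F_\gamma=\vec{t}\,\Ha\mres\gamma$ as a vector-valued Radon measure. Because $|\vec{t}|=1$ holds $\Ha$-a.e.\ on $\gamma$, the total variation is $|\F_\gamma|=|\vec{t}|\,\Ha\mres\gamma=\Ha\mres\gamma$. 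Finally, another application of the area formula yields the equivalent statement
\begin{equation*}
\langle\varphi,|\F_\gamma|\rangle=\int_\gamma\varphi\,\mathrm{d}\Ha=\int_I\varphi(\gamma)|\dot\gamma|\,\mathrm{d}\mathcal{L}
\end{equation*}
for every $\varphi\in C(\mathcal{C})$.

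The only mildly delicate points are the measurability of $\vec{t}$ (handled by noting $\gamma^{-1}$ is a homeomorphism on the image of the injective path) and ensuring that the zero-speed set contributes nothing, both of which reduce to direct applications of \cref{cov}. The formula $|f\mu|=|f|\mu$ for the total variation of a vector measure that is absolutely continuous with respect to a positive reference measure is standard, so no subtlety arises from that side.
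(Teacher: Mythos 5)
Your proof is correct and follows essentially the same route as the paper: both identify $\F_\gamma=v\,\Ha\mres\gamma$ for a Borel unit tangent field $v$ via the area formula for injective paths and then conclude from $|v\,\mu|=|v|\,\mu$. The only difference is cosmetic — the paper reparameterizes $\gamma$ by arc length so that $|\dot\gamma|\equiv1$ and the normalization and zero-speed set never appear, whereas you keep the original parameterization and handle $\{|\dot\gamma|=0\}$ explicitly; both are valid.
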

	\begin{proof}
		Without loss of generality we can replace $I$ by $[0,\textup{len}(\gamma)]$ and assume that $\gamma$ is parameterized by arc length. Using the assumption that $\gamma$ is injective and the last formula in \cref{cov} we get
		\begin{equation*}
		(\Ha\mres\gamma)(B)=\Ha(\gamma(\gamma^{-1}(B)))=\int_{\gamma^{-1}(B)}|\dot{\gamma}|\,\mathrm{d}\mathcal{L}=\mathcal{L}(\gamma^{-1}(B))=(\gamma_\#(\Le\mres[0,\textup{len}(\gamma)]))(B)
		\end{equation*}
		for all $B\in\mathcal{B}(\mathcal{C})$. Hence, we have $\Ha\mres\gamma=\gamma_\#(\Le\mres[0,\textup{len}(\gamma)])$. Now define $v_\gamma(x)=\dot{\gamma}(\gamma^{-1}(x))\in\mathcal{S}^{n-1}$ for $\Ha$-almost all $x\in\gamma$. For $\varphi\in C(\mathcal{C};\R^n)$ we obtain
		\begin{equation*}
		\langle\varphi,\F_\gamma\rangle=\int_{[0,\textup{len}(\gamma)]}\varphi(\gamma)\cdot\dot{\gamma}\dLe=\int_{[0,\textup{len}(\gamma)]}\varphi(\gamma)\cdot v_\gamma(\gamma)\dLe=\int_{\mathcal{C}}\varphi\cdot v_\gamma\,\mathrm{d}(\gamma_\#(\Le\mres[0,\textup{len}(\gamma)]))=\int_{\mathcal{C}}\varphi\cdot v_\gamma\dHa\mres\gamma.
		\end{equation*}
		This yields $\F_\gamma=v_\gamma\Ha\mres\gamma$ and therefore $|F_\gamma|=|v_\gamma|\Ha\mres\gamma=\Ha\mres\gamma$. In particular, by \cref{cov} we get
		\begin{equation*}
		\langle\varphi,|\F_\gamma|\rangle=\int_{\mathcal{C}}\varphi\dHa\mres\gamma=\int_{[0,\textup{len}(\gamma)]}\varphi(\gamma)|\dot{\gamma}|\,\mathrm{d}\mathcal{L}
		\end{equation*}
		for $\varphi\in C(\mathcal{C})$.
	\end{proof}
	\dualityApproach{\todo[inline]{If Section 3 stays, add ``under \cref{Sassump}'' to next sentence and title of proof.}}
	We can now prove \cref{BWfinal}. First, we show that the minimum Beckman cost is no smaller than the Wasserstein distance: From an admissible mass flux for the Beckmann problem we substract the part with vanishing divergence and represent the remainder by a mass flux measure \cite[Thm.\,C]{S}. We then transform that mass flux measure into an admissible transport plan with no larger energy (as in \cite[Def.\,3.4.9]{Bran}). For the reverse inequality we pick an admissible optimal transport plan and push forward under the measurable path selection from \cref{final} to get a mass flux measure, which in turn induces a mass flux.
	\begin{proof}[Proof of \cref{BWfinal}]
		\underline{$W_d(\mu_+,\mu_-)\leq\inf_{\xi,\F^\perp}\int_{\mathcal{C}} b\,\mathrm{d}|\xi\Ha\mres S+\F^\perp|$:} We can assume that the Beckmann problem is finite. Thus, there exist $\xi\in L^1(\Ha\mres S;\R^n)$ and $\F^\perp\in\mathcal{M}^n(\mathcal{C})$ such that $\F^\perp\mres S=0$, $\textup{div}(\xi\Ha\mres S+\F^\perp)=\mu_+-\mu_-$ and
		\begin{equation*}
		\int_{\mathcal{C}} b\,\mathrm{d}|\xi\Ha\mres S+\F^\perp|<\infty.
		\end{equation*}
		We use the same idea as in the proof of \cite[Prop.\,4.1]{BW} to replace the mass flux $\xi\Ha\mres S+\F^\perp$ by a mass flux measure on $\Theta$. By \cite[Thm.\,C]{S} we have $\xi\Ha\mres S+\F^\perp=\hat{\F}+\G$ with $\textup{div}(\hat{\F})=0$, $\textup{div}(\G)=\mu_+-\mu_-$ and $|\xi\Ha\mres S+\F^\perp|=|\hat{\F}|+|\G|$. Hence, we get $|\G|\leq |\xi\Ha\mres S+\F^\perp|$ and thus
		\begin{equation*}
		\int_{\mathcal{C}} b\,\mathrm{d}|\G|\leq\int_{\mathcal{C}} b\,\mathrm{d}|\xi\Ha\mres S+\F^\perp|.
		\end{equation*}
		Again by \cite[Thm.\,C]{S} we get that $\G$ can be associated with a mass flux measure $\eta$ on $\Theta$ moving $\mu_+$ onto $\mu_-$, which is supported on loop-free paths \cite[(1.14)]{S}, i.e.,
		\begin{equation*}
		\int_{\mathcal{C}}\varphi\cdot\,\mathrm{d}\G
		=\int_{\Theta}\langle\varphi,\F_\gamma\rangle\,\mathrm{d}\eta (\gamma)
		=\int_{\Theta}\int_{I}\varphi (\gamma)\cdot\dot{\gamma}\,\mathrm{d}\mathcal{L}\,\mathrm{d}\eta (\gamma)
		\end{equation*}
		for all $\varphi\in C(\mathcal{C};\R^n)$, using the Radon measure $\F_\gamma$ from \cref{lintm} (note that for simplicity we identify paths $\gamma$ with their equivalence classes $[\gamma]_\sim\in\Theta$). By \cite[(1.10)]{S} we have
		\begin{equation}
		\label{eq:gfg}
		|\G|(B)=\int_{\Theta}|\F_\gamma|(B)\,\mathrm{d}\eta (\gamma)
		\end{equation}
		for all $B\in\mathcal{B}(\mathcal{C})$. As in \cite[Def.\,3.4.9]{Bran} we define a transport plan $\pi\in\Pi(\mu_+,\mu_-)$ (recall \cref{WdisTp}) by
		\begin{equation*}
		\int_{\mathcal{C}\times\mathcal{C}}\varphi\,\mathrm{d}\pi=\int_{\Theta}\varphi (\gamma (0),\gamma (1))\,\mathrm{d}\eta (\gamma)
		\end{equation*}
		for $\varphi\in C(\mathcal{C}\times\mathcal{C} )$. We show $\int d\,\mathrm{d}\pi\leq\int b\,\mathrm{d}|\G|$ to get the desired result. First, we prove
		\begin{equation*}
		\int_{\mathcal{C}}b\,\mathrm{d}|\G|=\int_{\Theta}\int_Ib(\gamma)|\dot{\gamma}|\,\mathrm{d}\mathcal{L}\,\mathrm{d}\eta (\gamma).
		\end{equation*}
		Let $\varphi\in C(\mathcal{C})$ with $\varphi\geq 0$. By construction of the Lebesgue integral there exist simple functions $\varphi_i=\sum_{j=1}^{N(i)}c_j^i1_{B_j^i}:\mathcal{C}\to [0,\infty)$ with $\varphi_i\nearrow\varphi$ pointwise, where $c_j^i\geq 0$ and $B_j^i\in\mathcal{B}(\mathcal{C})$. Using the monotone convergence theorem and the fact that $|F_\gamma|(B_j^i)=\Ha(\gamma\cap B_j^i)$ (\cref{lintm}) we get
		\begin{multline*}
		\int_{\mathcal{C}}\varphi\,\mathrm{d}|\G|
		=\lim_i\sum_{j=1}^{N(i)}c_j^i|\G|(B_j^i)
		=\lim_i\sum_{j=1}^{N(i)}c_j^i\int_{\Theta}|\F_\gamma|(B_j^i)\,\mathrm{d}\eta(\gamma)
		=\lim_i\sum_{j=1}^{N(i)}c_j^i\int_{\Theta}\int_I1_{B_j^i}(\gamma)|\dot{\gamma}|\,\mathrm{d}\mathcal{L}\,\mathrm{d}\eta(\gamma)\\
		=\lim_i\int_{\Theta}\int_I\varphi_i(\gamma)|\dot{\gamma}|\,\mathrm{d}\mathcal{L}\,\mathrm{d}\eta(\gamma)
		=\int_{\Theta}\int_I\varphi(\gamma)|\dot{\gamma}|\,\mathrm{d}\mathcal{L}\,\mathrm{d}\eta(\gamma).
		\end{multline*} 
		The same argumentation shows the formula for general $\varphi\in C(\mathcal{C})$ via decomposition into positive and negative part. To show the formula $\int b\,\mathrm{d}|\G|=\int\int b(\gamma)|\dot{\gamma}|\dLe\,\mathrm{d}\eta(\gamma)$ we now distinguish two cases. For the case $a<\infty$ let $S^N$ be an approximating sequence for $S$ (recall the definition from \cref{notdefs}). The difference $Z=S\setminus\bigcup_NS^N$ satisfies $\Ha(Z)=0$. We can thus assume $S=\bigcup_NS^N$, because the divergence constraint stays satisfied if we neglect the null set $Z$. We now introduce lower semi-continuous approximations (recall that $S^N$ is closed) of $b$ by
		\begin{equation*}
		b_N=\begin{cases*}
		b&on $S^N$,\\
		a&else.
		\end{cases*}
		\end{equation*}
		Each $b_N$ can be approximated by (Lipschitz) continuous functions $f_i^N:\mathcal{C}\to [0,\infty)$ with $f_i^N\leq f_{i+1}^N$ and $f_i^N\to b_N$ pointwise for $i\to\infty$ \cite[Box 1.5]{San}, for instance using the Moreau envelope. Therefore, the monotone convergence theorem yields
		\begin{equation*}
		\int_{\mathcal{C}}b_N\,\mathrm{d}|\G|=\int_{\Theta}\int_Ib_N(\gamma)|\dot{\gamma}|\,\mathrm{d}\mathcal{L}\,\mathrm{d}\eta(\gamma)
		\end{equation*}
		and thus the desired formula for $b$ using again the monotone convergence theorem as $b_N\searrow b$ pointwise. For the case $a=\infty$ we cannot apply monotone convergence, because the above integrals may not be finite and $b_N$ is decreasing in $N$. Due to $\int b\,\mathrm{d}|\G|<\infty$ we must have $\G=\xi\Ha\mres S$. The function $b$ is lower semi-continuous on $S$, and thus there exist Lipschitz functions $b_j:S\to[0,\infty)$ with $b_j\nearrow b$ pointwise on $S$ (again, see e.g.\ \cite[Box 1.5]{San}). We can now continuously extend each $b_j$ to $\mathcal{C}$ \cite[Ch.\,2, Thm.\,1.2]{LS}. Further, we have $|\G|(\mathcal{C}\setminus S)=0$ and thus $|\F_\gamma|(\mathcal{C}\setminus S)=0$ for $\eta$-almost all $\gamma\in\Theta$ by equation \eqref{eq:gfg} (again we identify paths with their equivalence classes in $\Theta$). Hence we obtain $\Ha(\gamma\setminus S)=0$ for $\eta$-almost all $\gamma\in\Theta$ using \cref{lintm}. By monotone convergence and the formula $\int\varphi\,\mathrm{d}|\G|=\int\int\varphi(\gamma)|\dot{\gamma}|\dLe\,\mathrm{d}\eta(\gamma)$ for continuous $\varphi$ we get
		\begin{multline*}
		\int_{\mathcal{C}}b\,\mathrm{d}|\G|
		=\int_Sb\,\mathrm{d}|\G|=\lim_j\int_Sb_j\,\mathrm{d}|\G|
		=\lim_j\int_{\mathcal{C}}b_j\,\mathrm{d}|\G|
		=\lim_j\int_{\Theta}\int_Ib_j(\gamma)|\dot{\gamma}|\,\mathrm{d}\Le\,\mathrm{d}\eta(\gamma)\\
		=\lim_j\int_{\Theta}\int_I1_S(\gamma)b_j(\gamma)|\dot{\gamma}|\dLe\,\mathrm{d}\eta(\gamma)
		=\int_{\Theta}\int_I1_S(\gamma)b(\gamma)|\dot{\gamma}|\dLe\,\mathrm{d}\eta(\gamma)
		=\int_{\Theta}\int_Ib(\gamma)|\dot{\gamma}|\dLe\,\mathrm{d}\eta(\gamma).
		\end{multline*}
		We can now finish the proof. By \cref{dLsc} $d$ is lower semi-continuous. Thus, again using Lipschitz approximations and the monotone convergence theorem, we obtain
		\begin{equation*}
		\int_{\mathcal{C}\times\mathcal{C}}d(x,y)\,\mathrm{d}\pi(x,y)=\int_{\Theta}d(\gamma(0),\gamma(1))\,\mathrm{d}\eta(\gamma)\leq\int_{\Theta}\int_Ib(\gamma)|\dot{\gamma}|\,\mathrm{d}\mathcal{L}\,\mathrm{d}\eta(\gamma)=\int_{\mathcal{C}}b\,\mathrm{d}|\G|.
		\end{equation*}
		\underline{$W_d(\mu_+,\mu_-)\geq\inf_{\xi,\F^\perp}\int_{\mathcal{C}} b\,\mathrm{d}|\xi\Ha\mres S+\F^\perp|$:}
		Assume that there exists a transport plan $\pi\in\Pi(\mu_+,\mu_-)$ such that
		\begin{equation*}
		\int_{\mathcal{C}\times\mathcal{C}}d(x,y)\,\mathrm{d}\pi (x,y)<\infty.
		\end{equation*}
		We consider the push-forward $\eta$ of $\pi$ under the Borel measurable function $\rho:\mathcal{C}\times\mathcal{C}\to\Theta$ from \cref{final} which maps onto optimal paths with respect to $d$. We get
		\begin{equation*}
		\int_{\mathcal{C}\times\mathcal{C}}d(x,y)\,\mathrm{d}\pi (x,y)=\int_{\mathcal{C}\times\mathcal{C}}L(\rho (x,y))\,\mathrm{d}\pi (x,y)=\int_{\Theta}L(\gamma )\,\mathrm{d}\eta (\gamma)=\int_{\Theta}\int_{I}b(\gamma )|\dot{\gamma}|\,\mathrm{d}\mathcal{L}\,\mathrm{d}\eta (\gamma ).
		\end{equation*}
		This motivates the definition of the functional $\F$ via
		\begin{equation*}
		\langle\varphi,\F\rangle=\int_{\Theta}\int_{I}\varphi(\gamma)\cdot\dot{\gamma}\,\mathrm{d}\mathcal{L}\,\mathrm{d}\eta (\gamma)
		\qquad\text{for all }\varphi\in C(\mathcal{C};\R^n). 
		\end{equation*}
		Clearly, $\F$ is linear. Its continuity follows from \cref{final} and the definition of $C_\lambda$ (equation \eqref{eq:Clam} in \cref{subs4}),
		\begin{multline*}
		|\langle\varphi,\F\rangle|
		\leq |\varphi|_{\infty,\mathcal{C}}\int_{\Theta}\int_{I}|\dot{\gamma}|\,\mathrm{d}\mathcal{L}\,\mathrm{d}\eta (\gamma)
		=|\varphi|_{\infty,\mathcal{C}}\int_{\mathcal{C}\times\mathcal{C}}\textup{len}(\rho(x,y))\,\mathrm{d}\pi(x,y)
		\leq|\varphi|_{\infty,\mathcal{C}}\int_{\mathcal{C}\times\mathcal{C}}C_{d(x,y)+1}\,\mathrm{d}\pi(x,y)\\
		=|\varphi|_{\infty,\mathcal{C}}\cdot\begin{cases*}
		\int_{\mathcal{C}\times\mathcal{C}}d(x,y)\,\mathrm{d}\pi(x,y)+\Ha(S_1)+1&if $a=\infty$,\\
		\frac{2}{a}\int_{\mathcal{C}\times\mathcal{C}}d(x,y)\,\mathrm{d}\pi(x,y)+\Ha(S_{a/2})+\frac{2}{a}&if $a<\infty$.
		\end{cases*}
		\end{multline*}
		Note that the constant on the right-hand side is finite due to \cref{Sassump} and the choice of $\pi$. In particular, $\F$ is a Radon measure and moreover a mass flux between $\mu_+$ and $\mu_-$, since for all $\varphi\in C^1(\mathcal{C})$ we have
		\begin{multline*}
		\langle\varphi ,\textup{div}(\F)\rangle
		=-\int_{\mathcal{C}}\nabla\varphi\cdot\,\mathrm{d}\F
		=-\int_{\Theta}\int_{I}(\nabla\varphi)(\gamma)\cdot\dot{\gamma}\,\mathrm{d}\mathcal{L}\,\mathrm{d}\eta (\gamma)
		=-\int_{\Theta}\int_{I}\frac{\,\mathrm{d}}{\,\mathrm{d}t}(\varphi\circ\gamma)\,\mathrm{d}\mathcal{L}\,\mathrm{d}\eta (\gamma)\\
		=\int_{\Theta}[\varphi (\gamma(0))-\varphi (\gamma(1))]\,\mathrm{d}(\rho_\#\pi)(\gamma)
		=\int_{\mathcal{C}}\varphi\,\mathrm{d}[(p_1)_\#\pi-(p_2)_\#\pi]
		=\langle\varphi,\mu_+-\mu_-\rangle
		\end{multline*}
		using the fundamental theorem of calculus in the fourth equality. We now show
		\begin{equation*}
		\int_{\Theta}\int_{I}\varphi (\gamma)|\dot{\gamma}|\,\mathrm{d}\mathcal{L}\,\mathrm{d}\eta (\gamma)\geq \int_{\mathcal{C}}\varphi\,\mathrm{d}|\F|
		\end{equation*}
		for all $\varphi\in C(\mathcal{C})$ with $\varphi\geq 0$. Let $f$ denote the Radon\textendash Nikodym derivative of $\F\in\mathcal{M}^n(\mathcal{C})$ with repect to $|\F|$, i.e.,
		\begin{equation*}
		\int_{\mathcal{C}}\varphi\cdot\,\mathrm{d}\F=\int_{\mathcal{C}}\varphi\cdot f\,\mathrm{d}|\F|
		\end{equation*}
		for all $\varphi\in C(\mathcal{C};\R^n)$. By \cite[Box 4.2]{San} we have $|f|=1$ $|\F|$-almost everywhere on $\mathcal{C}$. Additionally, using $f\in L^1(|\F|;\R^n)$ there exists\footnote{It is straightforwad to prove this using the regularity of $|\F|$ and Lusin's theorem (which is applicable due to \cite[Theorem \& Remark]{F81}).}  a sequence $(f_k)\subset C(\mathcal{C};\R^n)$ such that
		\begin{equation*}
		\int_{\mathcal{C}}|f-f_k|\,\mathrm{d}|\F|\to 0.
		\end{equation*}
		By restricting to a subsequence we have $f_k\to f$ pointwise $|\F|$-almost everywhere on $\mathcal{C}$. Further, we can suppose that $|f_k|\leq 1$, because the continuous functions
		\begin{equation*}
		\tilde{f}_k=\begin{cases*}
		f_k&if $|f_k|\leq 1$,\\
		\frac{1}{|f_k|}f_k&else
		\end{cases*}
		\end{equation*}
		are better approximations of $f$ ($|\F|$-almost everywhere we have $f\in\mathcal{S}^{n-1}$). Hence we get
		\begin{equation*}
		\int_{\Theta}\int_{I}\varphi (\gamma)|\dot{\gamma}|\,\mathrm{d}\mathcal{L}\,\mathrm{d}\eta (\gamma)\geq\int_{\Theta}\int_{I}\varphi (\gamma)|f_k(\gamma)||\dot{\gamma}|\,\mathrm{d}\mathcal{L}\,\mathrm{d}\eta (\gamma)
		\geq\int_{\Theta}\int_{I}(\varphi f_k)(\gamma)\cdot\dot{\gamma}\,\mathrm{d}\mathcal{L}\,\mathrm{d}\eta (\gamma)
		=\int_{\mathcal{C}}\varphi f_k\cdot\,\mathrm{d}\F
		=\int_{\mathcal{C}}\varphi f_k\cdot f\,\mathrm{d}|\F|
		\end{equation*}
		for all $\varphi\in C(\mathcal{C})$ with $\varphi\geq 0$. Moreover, $|\F|$-almost everywhere on $\mathcal{C}$ we have $\varphi f_k\cdot f\to\varphi f^2=\varphi$ as well as $|\varphi f_k\cdot f|\leq\varphi$. Thus, by Lebesgue's dominated convergence theorem we obtain
		\begin{equation*}
		\int_{\Theta}\int_{I}\varphi (\gamma)|\dot{\gamma}|\,\mathrm{d}\mathcal{L}\,\mathrm{d}\eta (\gamma)\geq\int_{\mathcal{C}}\varphi\,\mathrm{d}|\F|.
		\end{equation*}
		By \cite[Thm.\,3.1]{Sil} we get $\F=\xi\Ha\mres S+\F^\perp$ for some $\xi\in L^1(\Ha\mres S;\R^n)$ and $\F^\perp\in\mathcal{M}^n(\mathcal{C})$ with $\F^\perp\mres S=0$. The same argument as in the first half of the proof (assuming that $S$ is the set-theoretic limit of an approximating sequence $S^N$, defining the $b_N$, exploiting monotone convergence et cetera) shows
		\begin{equation*}
		\int_{\Theta}\int_Ib(\gamma)|\dot{\gamma}|\,\mathrm{d}\mathcal{L}\,\mathrm{d}\eta(\gamma)\geq\int_{\mathcal{C}}b\,\mathrm{d}|\F|.
		\end{equation*}
		Hence we get
		\begin{equation*}
		\int_{\mathcal{C}\times\mathcal{C}}d(x,y)\,\mathrm{d}\pi (x,y)=\int_{\Theta}\int_{I}b(\gamma)|\dot{\gamma}|\,\mathrm{d}\mathcal{L}\,\mathrm{d}\eta (\gamma)\geq\int_{\mathcal{C}}b\,\mathrm{d}|\F|.\qedhere
		\end{equation*}
	\end{proof}
	We close this section with a brief discussion of existence of minimizers for the Wasserstein and the Beckmann problem.
	First note that without \cref{Sassump} an optimal mass flux for the Beckmann problem may not exist as the next example shows.
	\begin{examp}[Non-existence of optimal mass flux]
		\label{exnotex}
		Let $n=2$ and set $x=(0,0),y=(1,0)$. Assume that $b\equiv 1$ on
		\begin{equation*}
		S=\bigcup_j([x,z_j]\cup [z_j,y]),
		\end{equation*}
		where $z_j=(1/2,1/j)$ (see \cref{fig3}). Clearly, \cref{Sassump} is not satisfied. Note that $[x,y]\cap S=\{ x,y \}$. If $\mu_+=\delta_x$ and $\mu_-=\delta_y$, then an optimal transport plan for the Wasserstein distance is clearly given by $\pi=\delta_{(x,y)}$. We have $d(x,y)=\inf_{\Gamma^{xy}}L=1$, where a sequence of minimizing paths is given by the injective paths $\gamma_j$ that parameterize $[x,z_j]\cup [z_j,y]$. The $\gamma_j$ induce a minimizing sequence $\F_j=\xi_j\Ha\mres([x,z_j]\cup [z_j,y])$ for the Beckmann problem. More specifically, we have
		\begin{equation*}
		\xi_j=\begin{cases*}
		(z_j-x)/|z_j-x|&$\Ha$-a.e. on $[x,z_j]$,\\
		(y-z_j)/|y-z_j|&$\Ha$-a.e. on $[z_j,y]$.
		\end{cases*}
		\end{equation*}
		Nevertheless, for all $a>1=b$ there does not exist an optimal mass flux for the Beckmann problem. In other words, there is no optimal path between $x$ and $y$ with respect to $d$.
	\end{examp}
	\begin{figure}[t]
		\centering
		\begin{tikzpicture}
		\draw (0,0) -- (2,4);
		\draw (2,4) -- (4,0);
		\draw (0,0) -- (2,2);
		\draw (2,2) -- (4,0);
		\draw (0,0) -- (2,1) -- (4,0);
		
		\node[label={-90: $\vdots$}] at (2,1) {};
		
		\node[circle,fill=black,inner sep=1pt,minimum size=0.1cm,label={180: $\mu_+=\delta_x$}] at (0,0) {};
		\node[draw=black,circle,fill=white,inner sep=1pt,minimum size=0.1cm,label={0: $\mu_-=\delta_y$}] at (4,0) {};
		\end{tikzpicture}
		\caption{Sketch for \cref{exnotex}. If $a>b\equiv const.>0$ on $S$ and $\Ha(S)=\infty$, then it may happen that there does not exist an optimal mass flux for the Beckmann problem.}
		\label{fig3}
	\end{figure}
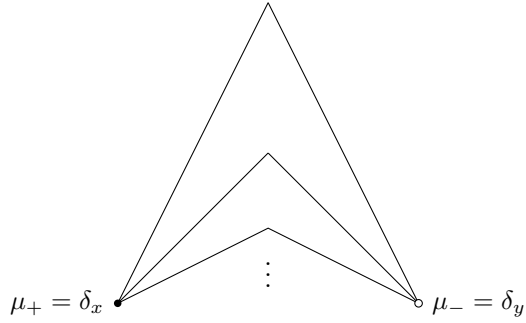
	We used \cref{Sassump} to prove that $d$ is lower semi-continuous for $a=\infty$ in \cref{dLsc}. From this property we get the existence of optimal transport plans (\cite[Thm.\,1.5]{San}).
	\begin{prop}[Existence of optimal transport plan]
		\label{exoptplan}
		Let \cref{Sassump} be satisfied or $a<\infty$. Then there exists an optimal transport plan $\pi\in\Pi(\mu_+,\mu_-)$ such that \begin{equation*}
		W_d(\mu_+,\mu_-)=\int_{\mathcal{C}\times\mathcal{C}}d(x,y)\,\mathrm{d}\pi(x,y).
		\end{equation*}
	\end{prop}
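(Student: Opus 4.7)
The plan is to apply the direct method of the calculus of variations, invoking the lower semi-continuity of $d$ established in \cref{dLsc}. First I would note that $\Pi(\mu_+,\mu_-)$ is nonempty since it contains the product measure $\mu_+\otimes\mu_-$. Because both $\mu_+$ and $\mu_-$ are supported in the compact set $\mathcal{C}$, every transport plan $\pi\in\Pi(\mu_+,\mu_-)$ is a probability measure on the compact Polish space $\mathcal{C}\times\mathcal{C}$, so the family $\Pi(\mu_+,\mu_-)$ is automatically tight and, by Prokhorov's theorem, sequentially relatively compact with respect to weak-$*$ convergence (in duality with $C(\mathcal{C}\times\mathcal{C})$).

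Next I would take a minimizing sequence $(\pi_k)\subset\Pi(\mu_+,\mu_-)$ with $\int d\,\mathrm{d}\pi_k\searrow W_d(\mu_+,\mu_-)$. By the compactness just mentioned, a subsequence (not relabelled) converges weak-$*$ to some probability measure $\pi$ on $\mathcal{C}\times\mathcal{C}$. The marginal constraints are preserved under weak-$*$ limits: for any $\varphi\in C(\mathcal{C})$ the function $(x,y)\mapsto\varphi(x)$ is continuous and bounded, so $\int\varphi\,\mathrm{d}(p_1)_\#\pi=\lim_k\int\varphi\,\mathrm{d}(p_1)_\#\pi_k=\int\varphi\,\mathrm{d}\mu_+$, and similarly for $\mu_-$. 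Hence $\pi\in\Pi(\mu_+,\mu_-)$.

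The key ingredient is then that $d$ is lower semi-continuous on $\mathcal{C}\times\mathcal{C}$, which under either hypothesis ($a<\infty$ or \cref{Sassump}) is precisely the content of \cref{dLsc}. A standard monotone approximation (write $d=\sup_j\varphi_j$ as the pointwise supremum of a nondecreasing sequence of nonnegative continuous functions $\varphi_j$ on $\mathcal{C}\times\mathcal{C}$, obtained for instance via Moreau envelopes as already used earlier in the paper) yields
\begin{equation*}
\int_{\mathcal{C}\times\mathcal{C}}d\,\mathrm{d}\pi=\sup_j\int_{\mathcal{C}\times\mathcal{C}}\varphi_j\,\mathrm{d}\pi=\sup_j\lim_k\int_{\mathcal{C}\times\mathcal{C}}\varphi_j\,\mathrm{d}\pi_k\leq\liminf_k\int_{\mathcal{C}\times\mathcal{C}}d\,\mathrm{d}\pi_k=W_d(\mu_+,\mu_-),
\end{equation*}
so $\pi$ is optimal.

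The argument is essentially routine; the only nontrivial input is the lower semi-continuity of $d$, which is exactly what \cref{Sassump} (or the simpler assumption $a<\infty$) was set up to guarantee via \cref{dLsc}. Indeed, \cref{exnotlow} shows that without \cref{Sassump} the metric $d$ can fail to be lower semi-continuous, which is why the hypothesis cannot be dropped in the infinite-$a$ case. Everything else is a direct invocation of Prokhorov compactness and the standard lower semi-continuity result for integral functionals against weak-$*$ converging measures, so no genuine obstacle is anticipated beyond citing \cref{dLsc}.
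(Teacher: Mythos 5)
Your proposal is correct and coincides with the paper's argument: the paper simply invokes the standard existence theorem for optimal plans with lower semi-continuous cost (citing \cite[Thm.\,1.5]{San}), whose proof is exactly the Prokhorov-compactness-plus-monotone-approximation argument you write out, with the only genuine input being the lower semi-continuity of $d$ from \cref{dLsc}. The one technicality worth keeping in mind is that for $a=\infty$ the cost $d$ is $[0,\infty]$-valued, but since $d$ is finite somewhere the inf-convolution approximation and monotone convergence still apply, so nothing breaks.
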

	Since in the proof of \cref{BWfinal}, from any transport plan we constructed a mass flux with no larger cost, this immediately implies the existence of an optimal mass flux for the Beckmann problem.
	\begin{corr}[Existence of optimal mass flux]\label{thm:existenceMassFlux}
		Let \cref{Sassump} be satisfied. Then there exists an optimal mass flux $\F=\xi\Ha\mres S+\F^\perp$
		with $\xi\in L^1(\Ha\mres S;\R^n)$ and $\F^\perp\in\mathcal{M}^n(\mathcal{C})$ such that $\F^\perp\mres S=0$ and $\textup{div}(\xi\Ha\mres S+\F^\perp)=\mu_+-\mu_-$ as well as
		\begin{equation*}
		W_d(\mu_+,\mu_-)=\int_{S}b|\xi|\,\mathrm{d}\Ha+a|\F^\perp|(\mathcal{C}).
		\end{equation*}
	\end{corr}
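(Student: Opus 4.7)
The plan is to reduce this corollary to the constructive content of the proof of \cref{BWfinal} combined with the existence of an optimal transport plan. Under \cref{Sassump}, \cref{exoptplan} provides a transport plan $\pi\in\Pi(\mu_+,\mu_-)$ with
\begin{equation*}
\int_{\mathcal{C}\times\mathcal{C}}d(x,y)\,\mathrm{d}\pi(x,y)=W_d(\mu_+,\mu_-).
\end{equation*}
If this value is $+\infty$, then every admissible $\F$ for the Beckmann problem is trivially optimal (by \cref{BWfinal} the infimum also equals $+\infty$), so there is nothing to show. Assume therefore that $W_d(\mu_+,\mu_-)<\infty$.

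Next, I would apply verbatim the construction used in the second half of the proof of \cref{BWfinal}. Namely, let $\rho:\mathcal{C}\times\mathcal{C}\to\Theta$ be the Borel measurable optimal path selection from \cref{final}, set $\eta=\rho_\#\pi$, and define a vector-valued Radon measure by
\begin{equation*}
\langle\varphi,\F\rangle=\int_{\Theta}\int_{I}\varphi(\gamma)\cdot\dot{\gamma}\,\mathrm{d}\mathcal{L}\,\mathrm{d}\eta(\gamma)
\qquad\text{for all }\varphi\in C(\mathcal{C};\R^n).
\end{equation*}
The boundedness estimate in that proof (using $\textup{len}(\rho(x,y))\leq C_{d(x,y)+1}$ from \cref{final} and \cref{Sassump}) guarantees that $\F\in\mathcal{M}^n(\mathcal{C})$, and the fundamental-theorem-of-calculus computation there shows $\textup{div}(\F)=\mu_+-\mu_-$. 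By \cite[Thm.\,3.1]{Sil} we obtain the decomposition $\F=\xi\Ha\mres S+\F^\perp$ with $\xi\in L^1(\Ha\mres S;\R^n)$ and $\F^\perp\in\mathcal{M}^n(\mathcal{C})$ satisfying $\F^\perp\mres S=0$, so $\F$ is admissible for the Beckmann problem.

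It remains to verify the optimality. The chain of inequalities already derived in the proof of \cref{BWfinal} yields
\begin{equation*}
\int_S b|\xi|\,\mathrm{d}\Ha+a|\F^\perp|(\mathcal{C})=\int_{\mathcal{C}}b\,\mathrm{d}|\F|\leq\int_{\Theta}\int_{I}b(\gamma)|\dot{\gamma}|\,\mathrm{d}\mathcal{L}\,\mathrm{d}\eta(\gamma)=\int_{\mathcal{C}\times\mathcal{C}}L(\rho(x,y))\,\mathrm{d}\pi(x,y)=\int_{\mathcal{C}\times\mathcal{C}}d(x,y)\,\mathrm{d}\pi(x,y),
\end{equation*}
where the first equality uses $\F^\perp\mres S=0$ together with $b\equiv a$ on $\mathcal{C}\setminus S$, the middle inequality is exactly the step proved via the Radon--Nikodym approximation argument in the proof of \cref{BWfinal}, and the last equality uses $L(\rho(x,y))=d(x,y)$ from \cref{final}. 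Combining this with \cref{BWfinal} and the optimality of $\pi$ gives
\begin{equation*}
\int_S b|\xi|\,\mathrm{d}\Ha+a|\F^\perp|(\mathcal{C})\leq W_d(\mu_+,\mu_-)=\inf_{\xi',\F'^\perp}\int_S b|\xi'|\,\mathrm{d}\Ha+a|\F'^\perp|(\mathcal{C}),
\end{equation*}
so $\F$ attains the Beckmann infimum, as required.

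There is essentially no new obstacle: the corollary is a bookkeeping consequence of the constructive proof of \cref{BWfinal} (which turns any transport plan into an admissible mass flux of nongreater cost) applied to the optimal plan supplied by \cref{exoptplan}. The only subtlety worth flagging is that one must indeed assume $W_d(\mu_+,\mu_-)<\infty$ to guarantee that $\pi$ places finite mass on the part of the construction where $\rho$ produces a bounded-length path; in the infinite case, \cref{exnotex} already shows a minimizer need not exist, but the statement holds vacuously in the sense above.
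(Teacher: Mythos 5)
Your proposal is correct and follows exactly the paper's intended argument: the paper derives this corollary in one sentence by applying the constructive second half of the proof of \cref{BWfinal} (transport plan $\mapsto$ mass flux of nongreater cost) to the optimal plan furnished by \cref{exoptplan}, which is precisely what you spell out. Your additional remarks on the infinite-cost case and the decomposition via \cite[Thm.\,3.1]{Sil} are consistent with the paper and introduce no gap.
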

	\dualityApproach{%
		\textcolor{blue}{\section{Alternative proof for the case $a<\infty$ using duality}
			\label{subs6}}
		\subsection{Version of the Kantorovich\textendash Rubinstein formula for the Wasserstein metric}
		\label{subs61}
		The main result of this section is the proof of \cref{BWfinal} for the case $a<\infty$ without making use of \cref{Sassump}. In this section we show a version of the Kantorovich\textendash Rubinstein formula \cite[Thm.\,4.1]{Edw}. For this purpose, we need some further restrictions.
		\begin{assump}
			\label{assumpmetric}
			Throughout \cref{subs61} we assume that $S\subset\mathcal{C}$ is closed with $\Ha(S)<\infty$. Additionally, we suppose that $a\in(0,\infty)$ and $b:S\to(0,a]$ is lower semi-continuous and bounded away from $0$, i.e., $b\geq c>0$ on $S$ for some $c\in(0,a]$. We continue $b$ to a function on $\R^n$ via $b\equiv a$ in $\R^n\setminus S$. 
		\end{assump}
		Note that \cref{assumpmetric} implies that $d=d_{S,a,b}$ is a metric. For $B\in\mathcal{B}(\R^n)$ and any Borel measurable function $f:B\to\R$ we write
		\begin{equation*}
		d_f(x,y)=\inf_{\substack{\gamma:I\to\R^n\textup{ Lipschitz}\\\gamma(0)=x,\gamma(1)=y}}\int_{\gamma\cap B}f\,\mathrm{d}\Ha+a\Ha(\gamma\setminus B)\textup{\qquad and\qquad}L_f(\eta)=\int_{\eta^{-1}(B)}f(\eta)|\dot{\eta}|\,\mathrm{d}\mathcal{L}+a\int_{\eta^{-1}(\R^n\setminus B)}|\dot{\eta}|\,\mathrm{d}\mathcal{L}
		\end{equation*}
		for $x,y\in\R^n$, $\eta:I\to\R^n$ Lipschitz. The definition has technical reasons: we will consider some neighbourhoods of $S$ which are the domains of certain functions and may not be contained in $\mathcal{C}$.
		\begin{rem}[Formula for $d_f$]
			The equation 
			\begin{equation*}
			d_f(x,y)=\inf_{\substack{\gamma:I\to\R^n\textup{ Lipschitz}\\\gamma(0)=x,\gamma(1)=y}}L_f(\gamma)
			\end{equation*}
			for $x,y\in\R^n$ follows analogously to the proof of \cref{alt} (which was independent of \cref{Sassump}).
		\end{rem}
		We prove that, under \cref{assumpmetric}, the Wasserstein-$1$-distance between $\mu_+$ and $\mu_-$ (as in \cref{sect2} we assume that the supports of $\mu_+$ and $\mu_-$ are contained in $\mathcal{C}$) can be written as
		\begin{equation*}
		W_d(\mu_+,\mu_-)=\sup_{\varphi\in D}\int_{\mathcal{C} }\varphi\,\mathrm{d}(\mu_+-\mu_-),
		\end{equation*}
		where the set $D$ is defined by
		\begin{equation*}
		D=\{\varphi\in C^1(\mathcal{C} )\,|\,|\nabla\varphi |\leq b\textup{ in }\mathcal{C}\}.
		\end{equation*}
		To this end, we show that $D$ is dense (with respect to $|.|_{\infty,\mathcal{C}}$) in 
		\begin{equation*}
		L=\{\varphi :\mathcal{C}\to\R\,|\,|\varphi(x)-\varphi (y)|\leq d(x,y)\textup{ for all }x,y\in\mathcal{C} \}
		\end{equation*}
		and apply the Kantorovich\textendash Rubinstein formula \cite[Thm.\,4.1]{Edw}. We use a version of the Stone\textendash Weierstraß theorem \cite[Appendix A]{SW} to prove the denseness. The first step is to provide a point separation statement for $D$, which we will show using mollification at the expense of an additional tolerance (\cref{psepa}). We will see that this loosening has no effect on our denseness result (\cref{stone}). For the point separation we will approximate $d(x,.):\mathcal{C}\to[0,\infty)$. In order to do that, we use the following statement, which is essentially based on the fact that lower semi-continuous functions can be approximated by Lipschitz functions.
		\begin{lem}[Lipschitz approximation of network weight]
			\label{lipapprox}
			Let \cref{assumpmetric} be satisfied. There exists a sequence of Lipschitz functions $b_k:S\to (0,\infty )$ with $c\leq b_k\leq b_{k+1}\leq b$ on $S$, $b_k\to b$ pointwise on $S$, $d_{b_k}\rightrightarrows d$ in $\mathcal{C}\times\mathcal{C}$ and
			\begin{equation*}
			\int_S(b-b_k)\,\mathrm{d}\Ha\to 0.
			\end{equation*}
		\end{lem}
		\begin{proof}
			We use the same idea as in \cite[Box 1.5]{San}. Let $b_k:S\to (0,\infty )$ be defined by
			\begin{equation}
			\label{eq:bkdef}
			b_k(z)=\inf\{b(\tilde{z})+k|\tilde{z}-z|\,|\,\tilde{z}\in S\}.
			\end{equation}
			It is straightforward to see that $b_k$ is $k$-Lipschitz continuous with $c\leq b_k\leq b_{k+1}\leq b$ on $S$. In order to show that $b_k\to b$ pointwise on $S$, let $z\in S$ and $\varepsilon>0$. By the lower semi-continuity of $b$ on $S$ there exists some $r>0$ such that $b(z)\leq b+\varepsilon$ on $S\cap B_r(z)$. Thus, we get
			\begin{equation*}
			b(z)-\varepsilon\leq\begin{cases}
			b(\tilde{z})\leq b(\tilde{z})+k|z-\tilde{z}|&\textup{if }\tilde{z}\in S\cap B_r(z),\\
			kr\leq k|z-\tilde{z}|\leq b(\tilde{z})+k|z-\tilde{z}|&\textup{if }\tilde{z}\in S\setminus B_r(z)
			\end{cases}
			\end{equation*}
			for all $k\geq (b(z)-\varepsilon)/r $. Taking the infimum over $y\in S$ yields $b(z)-\varepsilon\leq b_k(z)$ for all $k$ sufficiently large.  The pointwise convergence follows, because $z$ and $\varepsilon$ were arbitrary. In addition, we have $|b-b_k|\leq a-c$ on $S$ which implies the last statement by Lebesgue's dominated convergence theorem ($S$ satisfies $\Ha(S)<\infty$). It remains to prove the uniform convergence $d_{b_k}\rightrightarrows d$ in $\mathcal{C}\times\mathcal{C}$. Let $\varepsilon >0,x,y\in\mathcal{C}$ and $\gamma\in\Gamma^{xy}$ with
			\begin{equation*}
			\int_{\gamma\cap S}b_k\,\mathrm{d}\Ha+a\Ha(\gamma\setminus S)\leq d_{b_k}(x,y)+\varepsilon/2.
			\end{equation*}
			We can estimate
			\begin{align*}
			|d(x,y)-d_{b_k}(x,y)|&=d(x,y)-d_{b_k}(x,y)
			\leq d(x,y)-\int_{\gamma\cap S}b_k\,\mathrm{d}\Ha-a\Ha(\gamma\setminus S)+\varepsilon/2
			\\
			&=\inf_{\tilde{\gamma}\in\Gamma^{xy}}\int_{\tilde{\gamma }\cap S}b\,\mathrm{d}\Ha+a\Ha(\tilde{\gamma }\setminus S)-\int_{\gamma\cap S}b_k\,\mathrm{d}\Ha-a\Ha(\gamma\setminus S)+\varepsilon/2
			\\
			&\leq \int_{\gamma\cap S}(b-b_k)\,\mathrm{d}\Ha+\varepsilon/2
			\leq \int_{S}(b-b_k)\,\mathrm{d}\Ha+\varepsilon/2
			<\varepsilon
			\end{align*}
			for $k$ sufficiently large. Hence, we have $d_{b_k}\rightrightarrows d$ in $\mathcal{C}\times\mathcal{C}$, because $k$ was only depending on $\varepsilon$ in the last inequality.
		\end{proof}
		We can now show an approximation result for the generalized urban metric, which we will need to construct a suitable function to realize the point separation statement for certain point-value pairs $(x,t_1),(y,t_2)\in \mathcal{C}\times\R$.
		\begin{lem}[Approximation of generalized urban metric]
			\label{metapprox}
			Let \cref{assumpmetric} be satisfied. Assume that $f:S\to[c,a]$ is Lipschitz continuous. For each $\delta >0$ let $U_\delta(S)=\{ z\in\R^n\,|\,\textup{dist}(z,S)\leq\delta\}$. Then we can continue $f$ to Lipschitz functions $f_\delta :U_\delta(S)\to [c,a]$ with $f_{\delta_1}=f_{\delta_2}$ on $U_{\delta_1}(S)$ if $\delta_1\leq\delta_2$ such that
			\begin{equation*}
			d_{f_\delta}(x,.)\rightrightarrows d_f(x,.)\textup{ for }\delta\to 0
			\end{equation*}
			for all $x\in\mathcal{C}$.
		\end{lem}
		\begin{proof}
			We use the same idea as in the proof of \cref{lipapprox}. More specifically, we continue $f$ to Lipschitz functions $f_\delta:U_\delta(S)\to[c,a]$ by
			\begin{equation*}
			f_\delta(z)=\min\{ a,\inf\{ f(\tilde{z})+\textup{Lip}(f)|\tilde{z}-z|\,|\,\tilde{z}\in S \} \}.
			\end{equation*}
			for $z\in U_\delta(S)$. Let $x\in\mathcal{C}$ be arbitrary. By definition we have $d_{f_{\delta_2}}\leq d_{f_{\delta_1}}$ for $\delta_1\leq\delta_2$ and hence the uniform convergence of any subsequence of $g_\delta=d_{f_\delta}(x,.)|_\mathcal{C}$ implies uniform convergence of the whole sequence. The functions $g_\delta$ are uniformly equicontinuous with $\textup{Lip}(g_\delta)= a$ (using $g_\delta(z_1)-g_\delta(z_2)\leq d_{f_\delta}(z_1,z_2)$) and clearly pointwise bounded by $g\leq 2a \sqrt{n}$ in $\mathcal{C}=[-1,1]^n$. Thus, the $g_\delta$ converge uniformly using the Arzel\'{a}\textendash Ascoli theorem. Now it suffices to show that $g_\delta\to g=d_f(x,.)|_\mathcal{C}$ pointwise. By contradiction we assume that there exist some $z\in\mathcal{C}$ and $\varepsilon>0$ such that $g_\delta(z)\leq g(z)-\varepsilon$ for all $\delta>0$. Consequently, for each $\delta>0$ there is some Lipschitz path $\gamma_\delta:I\to\R^n$ with $\gamma_\delta(0)=x,\gamma_\delta(1)=z$ such that 
			\begin{equation*}
			c\cdot\textup{len}(\gamma_\delta)\leq L_{f_\delta}(\gamma_\delta)<g(z)-\varepsilon,
			\end{equation*}
			where we used that $f_\delta\geq c$. Therefore, the total length of the $\gamma_\delta$ is uniformly bounded. By the Arzel\'{a}\textendash Ascoli theorem we can thus reparameterize the $\gamma_\delta$ with constant speed and assume $\gamma_\delta\to\gamma$ (for some sequence $\delta\to 0$). Note that we implicitly used that the $\gamma_\delta$ can be assumed to be pointwise bounded, because each $\gamma_\delta$ starts in $x$ and the lengths of the $\gamma_\delta$ are uniformly bounded. Using \cref{linf} we have $|\dot{\gamma}(t)|\leq\liminf_\delta|\dot{\gamma}_\delta(t)|$ for $\mathcal{L}$-almost all $t\in I$. We continue $f,f_\delta$ to $f,f_\delta:\R^n\to[c,a]$ via $f\equiv a$ in $\R^n\setminus S$ and $f_\delta\equiv a$ in $\R^n\setminus U_\delta(S)$. For all $t\in I$ we get $f(\gamma(t))\leq\liminf_\delta f(\gamma_\delta(t))$, because $S$ is closed. Application of Fatou's lemma yields the desired contradiction:
			\begin{equation*}
			g(z)\leq\int_If(\gamma)|\dot{\gamma}|\,\mathrm{d}\mathcal{L}
			\leq\int_I\liminf_\delta f_\delta(\gamma_\delta)\liminf_{\tilde{\delta}}|\dot{\gamma}_{\tilde{\delta}}|\,\mathrm{d}\mathcal{L}
			\leq\liminf_\delta\int_I f_\delta (\gamma_\delta)|\dot{\gamma}_\delta|\,\mathrm{d}\mathcal{L}
			=\liminf_\delta L_{f_\delta}(\gamma_\delta)
			\leq g(z)-\varepsilon.\qedhere
			\end{equation*}
		\end{proof}
		We will now approximate $d(x,.):\mathcal{C}\to[0,\infty)$ by smooth functions to build an appropriate function for point separation.
		\begin{prop}[Point separation with tolerance]
			\label{psepa}
			Under \cref{assumpmetric} the following holds true. Let $x,y\in\mathcal{C},t_1,t_2\in\R$ with $|t_1-t_2|<d(x,y)$ and $\lambda>1$. Then there is a function $f\in\lambda D$ such that $f(x)=t_1$ and $f(y)=t_2$.
		\end{prop}
		\begin{proof}
			Define $b_k$ as in equation \ref{eq:bkdef}. Additionally, let $\varepsilon>0$ and $b_\delta^k$ as in \cref{metapprox} relating to $b_k$. Further, assume that $\varphi$ is a smooth and symmetric mollifier on $\R^n$ with scaling $\varphi_\varepsilon(z)=\varepsilon^{-n}\varphi(\varepsilon^{-1}z)$. We abbreviate $g_\delta^k=d_{b_\delta^k}(x,.)$. $g_\delta^k$ is clearly Lipschitz continuous and thus $\eta_{\delta ,\varepsilon}^k=g_\delta^k*\varphi_\varepsilon\rightrightarrows g_\delta^k$ for $\varepsilon\to 0$. In particular, we have $|\nabla\eta_{\delta ,\varepsilon}^k|\leq\textup{Lip}(g_\delta^k)\leq a$. Moreover, for each $z\in S$, $\mathcal{L}^n$-almost all $\tilde{z}\in B_\varepsilon (z)$, $\nu\in \mathcal{S}^{n-1}$, $\varepsilon\leq\delta$ and $h>0$ sufficiently small
			\begin{equation*}
			g_\delta^k(\tilde{z}+h\nu)-g_\delta^k(\tilde{z})=d_{b_\delta^k}(x,\tilde{z}+h\nu )-d_{b_\delta^k}(x,\tilde{z})
			\leq d_{b_\delta^k}(x,\tilde{z})+d_{b_\delta^k}(\tilde{z},\tilde{z}+h\nu )-d_{b_\delta^k}(x,\tilde{z})
			\leq\int_{[\tilde{z},\tilde{z}+h\nu ]}b_\delta^k\,\mathrm{d}\Ha
			\leq h(b_\delta^k(z)+\varepsilon\textup{Lip}(b_\delta^k))
			\end{equation*} 
			and likewise $g_\delta^k(\tilde{z})-g_\delta^k(\tilde{z}+h\nu)\leq h(b_\delta^k(z)+\varepsilon\textup{Lip}(b_\delta^k))$. Thus, $|\nabla g_\delta^k|\leq b_\delta^k(z)+\varepsilon\textup{Lip}(b_\delta^k)$ $\mathcal{L}^n$-almost everywhere on $B_\varepsilon (z)$. 
			For each $z\in S$ we have
			\begin{equation*}
			|\nabla\eta_{\delta,\varepsilon}^k(z)|=|((\nabla g_\delta^k)*\varphi_\varepsilon)(z)|\leq\int_{B_\varepsilon (z)}|\nabla g_\delta^k (\tilde{z})|\varphi_\varepsilon (z-\tilde{z})\,\mathrm{d}\mathcal{L}^n(\tilde{z})\leq b_\delta^k(z)+\varepsilon\textup{Lip}(b_\delta^k).
			\end{equation*}
			Invoking \cref{lipapprox,metapprox} we observe that
			\begin{equation*}
			\eta_{\delta ,\varepsilon}^k\overset{\varepsilon\searrow 0}{\rightrightarrows} g_\delta^k\overset{\delta\searrow 0}{\rightrightarrows} d_{b_k}(x,.)\overset{k\to \infty }{\rightrightarrows} d(x,.)
			\end{equation*}
			in $\mathcal{C}$. Write $\lambda=(1+\delta_1)(1+\delta_2)$ for some $\delta_1,\delta_2>0$. We choose $\delta,\varepsilon$ sufficiently small and $k$ sufficiently large such that
			\begin{equation*}
			\frac{d(x,y)}{|\eta_{\delta,\varepsilon}^k(y)-\eta_{\delta,\varepsilon }^k(x)|}\leq 1+\delta_1
			\end{equation*}
			and scale $\varepsilon$ down to get
			\begin{equation*}
			\varepsilon\textup{Lip}(b_\delta^k)\leq \delta_2c.
			\end{equation*}
			Now we define the function
			\begin{equation*}
			f_{\delta ,\varepsilon}^k(z)=t_1+(t_2-t_1)\frac{\eta_{\delta,\varepsilon}^k(z)-\eta_{\delta,\varepsilon}^k(x)}{\eta_{\delta,\varepsilon}^k(y)-\eta_{\delta,\varepsilon}^k(x)}.
			\end{equation*}
			By definition $f(x)=t_1$ and $f(y)=t_2$. For each $z\in\mathcal{C}$ we can estimate
			\begin{align*}
			|\nabla f_{\delta ,\varepsilon}^k(z)|\leq\frac{|t_2-t_1|}{|\eta_{\delta,\varepsilon}^k(y)-\eta_{\delta,\varepsilon}^k(x)|}|\nabla\eta_{\delta,\varepsilon}^k (z)|
			\leq \frac{d(x,y)}{|\eta_{\delta,\varepsilon}^k(y)-\eta_{\delta,\varepsilon }^k(x)|}|\nabla\eta_{\delta,\varepsilon}^k (z)|
			\leq (1+\delta_1)|\nabla\eta_{\delta,\varepsilon}^k (z)|
			\leq (1+\delta_1)a
			\leq\lambda a.
			\end{align*}
			If $z\in S$, then we obtain
			\begin{align*}
			|\nabla f_{\delta ,\varepsilon}^k(z)|\leq (1+\delta_1)|\nabla\eta_{\delta,\varepsilon}^k (z)|
			\leq (1+\delta_1)(b_\delta^k(z)+\varepsilon\textup{Lip}(b_\delta^k))
			\leq (1+\delta_1)(b_k(z)+\delta_2c)
			\leq (1+\delta_1)(b(z)+\delta_2b(z))
			=\lambda b(z).
			\end{align*}
			This yields $ f_{\delta ,\varepsilon}^k\in\lambda D$.
		\end{proof}
		We can now show a version of the Stone\textendash Weierstraß theorem which states that $D$ is a dense subset of $L$. We follow the proofs in \cite[Appendix A]{SW}. The first result (mollification of the Heaviside step function) will be needed to approximate $\min\{ f,g \}$ and $\max\{ f,g \}$ by smooth functions, where $f,g$ are $C^1$.
		\begin{lem}[{{\cite[Lem.\,A.3]{SW}}}]
			For all $\delta>0$ there is a (smoothed Heaviside step) function $H_\delta\in C^\infty (\R)$ such that $H_\delta=0$ on $(-\infty ,-\delta]$, $H_\delta=1$ on $[\delta,\infty )$ and $|tH_\delta'(t)|\leq\delta$ for all $t\in\R$.
		\end{lem}
		We define 
		\begin{equation*}
		f\wedge_\delta g=H_\delta (f-g)g+(1-H_\delta(f-g))f\qquad\textup{and}\qquad
		f\vee_\delta g=H_\delta (f-g)f+(1-H_\delta (f-g))g
		\end{equation*}
		for all $\delta >0$, $H_\delta$ as above and $f,g\in C^1(\R)$. Then $f\wedge_\delta g$ and $f\vee_\delta g$ are $C^1$ approximations of $\min\{f,g\}$ and $\max\{f,g\}$:
		\begin{lem}[{{Smooth min and max operation, cf.\ \cite[Lem.\,A.4]{SW}}}]
			\label{minmaxop}
			Let $\lambda>1$, $\delta>0$ and $f,g\in\lambda D$. Then we have $f\wedge_\delta g,f\vee_\delta g\in (1+2\delta)\lambda D,|f\wedge_\delta g-\min\{f,g\}|\leq\delta$ and $|f\vee_\delta g-\max\{f,g\}|\leq\delta$ on $\mathcal{C}$.
		\end{lem}
		\begin{proof}
			We have $f\wedge_\delta g\in C^1(\mathcal{C} )$ by definition. Furthermore, we can estimate
			\begin{align*}
			|\nabla (f\wedge_\delta g)|&=|\nabla gH_\delta (f-g)+(1-H_\delta (f-g))\nabla f+(g-f)H_\delta'(f-g)\nabla (f-g)|\\&\leq H_\delta (f-g)|\nabla g|+(1-H_\delta (f-g))|\nabla f|+|(g-f)H_\delta'(f-g)|(|\nabla f|+|\nabla g|)\\
			&\leq(1+2\delta)\lambda b.
			\end{align*}
			Hence, we get $f\wedge_\delta g\in (1+2\delta )\lambda D$. Additionally, we have $f\wedge_\delta g=\min\{f,g \}$ if $|f-g|\geq\delta$. $f\wedge_\delta g$ is a convex combination of $f$ and $g$ , therefore, we conclude $|f\wedge_\delta g-\min\{ f,g \}|\leq\delta$. The proof for $f\vee_\delta g$ is similar.
		\end{proof}
		We can now use the operations $\wedge_\delta$ and $\vee_\delta$ to overcome certain open covers of $\mathcal{C}$.
		\begin{prop}[{{Version of the Stone\textendash Weierstraß theorem, cf.\ \cite[Prop.\,A.5]{SW}}}]
			\label{stone}
			Let \cref{assumpmetric} be satisfied. Then $D$ is a dense subset of $L$ with respect to the uniform norm $|.|_{\infty ,\mathcal{C} }$.
		\end{prop}
		\begin{proof}
			To show that $D$ is a subset of $L$, let $\varphi\in D$. We have
			\begin{equation*}
			|\varphi (z_1)-\varphi (z_2)|\leq\int_\gamma |\nabla\varphi |\,\mathrm{d}\Ha\leq\int_{\gamma}b\,\mathrm{d}\Ha
			\end{equation*}
			for all $z_1,z_2\in\mathcal{C}$ and $\gamma\in\Gamma^{z_1z_2}$. By taking the infimum over such $\gamma$ we conclude
			\begin{equation*}
			|\varphi (z_1)-\varphi (z_2)|\leq d(z_1,z_2),
			\end{equation*}
			which implies $\varphi\in L$. For the denseness, let $\varepsilon>0$ and $g\in L$. By $\tilde{\lambda}\tilde{g}\overset{\tilde{\lambda}\nearrow 1}{\rightrightarrows}\tilde{g}$ for all $\tilde{g}\in L$ we can assume $g\in\tilde{\lambda}L$ for some $\tilde{\lambda}\in (0,1)$. Furthermore, let $x\in\mathcal{C}$ and choose $\lambda>1$ such that $|\lambda^{-1}g-g|_{\infty ,\mathcal{C} }<\varepsilon/32.$ By \cref{psepa} (point separation) for each $y\in\mathcal{C}\setminus\{ x\}$ there exists a function $f_y\in\lambda D$ such that $f_y(x)=g(x)$ and $f_y(y)=g(y)$. This follows from $|g(x)-g(y)|\leq\tilde{\lambda}d(x,y)<d(x,y)$. Define the (relatively) open sets $V_y=\{ f_y<g+\varepsilon/4\}$ ($f_y$ and $g$ are continuous). There exists a finite cover $V_{y_1},\ldots ,V_{y_k}$ of $\mathcal{C}$ by the compactness of $\mathcal{C}$ (Heine\textendash Borel theorem). For $\delta>0$ define $\tilde{F}_x=(\ldots ((f_{y_1}\wedge_\delta f_{y_2})\wedge_\delta f_{y_3})\ldots\wedge_\delta f_{y_k})$. Then $\tilde{F}_x\in (1+2\delta)^k\lambda D$ and $|\tilde{F}_x-\min\{ f_{y_1},\ldots ,f_{y_k}\} |\leq k\delta$ by \cref{minmaxop} and induction. The function $F_x=(1+2\delta )^{-k}\lambda^{-1}\tilde{F}_x\in D$ satisfies
			\begin{equation*}
			F_x\leq (1+2\delta)^{-k}\lambda^{-1}(\min\{ f_{y_1},\ldots ,f_{y_k}\} +k\delta )<(1+2\delta )^{-k}\lambda^{-1}(g+\varepsilon/4+k\delta )<g+\varepsilon/4+(1+2\delta )^{-k}\lambda^{-1}k\delta<g+\varepsilon/2
			\end{equation*}
			for $\delta$ sufficiently small. Now let $W_x=\{ F_x>g-\varepsilon/4\}$. Then we obtain
			\begin{align*}
			F_x(x)&\geq (1+2\delta )^{-k}\lambda^{-1}(\min\{ f_{y_1}(x)\ldots ,f_{y_k}(x)\} -k\delta )=(1+2\delta )^{-k}\lambda^{-1}(g(x)-k\delta )\\&\geq (1+2\delta )^{-k}(g(x)-\varepsilon/32)-(1+2\delta)^{-k}\lambda^{-1}k\delta> g(x)-\varepsilon/4
			\end{align*}
			for $\delta$ sufficiently small. Thus, we have $x\in W_x$. Again, there exists a finite open cover $W_{x_1},\ldots ,W_{x_l}$ of $\mathcal{C}$. For $\tilde{\delta}>0$ and $\tilde{f}=(\ldots ((F_{x_1}\vee_{\tilde{\delta }} F_{x_2})\vee_{\tilde{\delta }} F_{x_3})\ldots\vee_{\tilde{\delta }} F_{x_l})$ we have $f=(1+2\tilde{\delta})^{-l}\lambda^{-1}\tilde{f}\in D$,
			\begin{equation*}
			f\leq (1+2\tilde{\delta})^{-l}\lambda^{-1}(\max\{ F_{x_1},\ldots ,F_{x_l}\}+l\tilde{\delta})\leq (1+2\tilde{\delta})^{-l}\lambda^{-1}(g+\varepsilon/2+l\tilde{\delta})\leq g+\varepsilon/2+l\tilde{\delta}\leq g+\varepsilon
			\end{equation*}
			and
			\begin{align*}
			f&\geq (1+2\tilde{\delta})^{-l}\lambda^{-1}(\max\{F_{x_1},\ldots ,F_{x_l}\} -l\tilde{\delta})\geq (1+2\tilde{\delta})^{-l}\lambda^{-1}((g-\varepsilon/4)-l\tilde{\delta})\\&\geq (1+2\tilde{\delta})^{-l}(g-\varepsilon/32)-(1+2\tilde{\delta})^{-l}\lambda^{-1}\varepsilon/4-(1+2\tilde{\delta})^{-l}\lambda^{-1}l\tilde{\delta}\\&\geq (g-5\varepsilon/8)-\varepsilon/16-\varepsilon/2-\varepsilon/16=g-\varepsilon
			\end{align*}
			for $\tilde{\delta }$ sufficiently small. In conclusion, we get $f\in D$ and $|f-g|_{\infty ,\mathcal{C} }\leq\varepsilon$.
		\end{proof}
		\begin{examp}
			\label{exScl}
			The previous \cref{stone} is in general not true if $S$ is not closed (needed in the proof of \cref{metapprox}). As a counterexample take $S=\mathbb{Q}\cap [0,1]$ and $a,b\in (0,\infty )$ with $b<a$. The function $g(z)=az$ satisfies $|g(z)-g(\tilde{z})|=d(z,\tilde{z})$ for all $z,\tilde{z}\in [0,1]$. Thus, we have $g\in L$. Nevertheless, the inequality $|f-g|_{\infty ,\mathcal{C} }<\varepsilon$ for $f\in D$ is not true for $\varepsilon>0$ sufficiently small (cf.\ \cref{fig4}). To see this, let $z\in (0,1]$ and choose $\varepsilon>0$ such that $\varepsilon<(a-b)z/2$. Assume that there exists some $f\in D$ with $|f-g|_{\infty ,\mathcal{C} }<\varepsilon$. Then $|\nabla f|\leq b$ on $[0,1]$ by continuity and thus $f(z)\leq f(0)+bz\leq\varepsilon+bz<az-\varepsilon=g(z)-\varepsilon$, which is a contradiction. 
		\end{examp}
		\begin{figure}[t]
			\centering
			\begin{tikzpicture}[scale = .4]
			\pgfmathsetlengthmacro\MajorTickLength{
				\pgfkeysvalueof{/pgfplots/major tick length} * 1.5
			}		
			\begin{axis}[tick label style={font=\Huge},
			samples = 500, 
			domain = 0:5, 
			y label style={at={(-2,100)}},
			axis x line=bottom, 
			axis y line=left, 
			scale = 2,
			x label style={at={(axis description cs:1,0)},anchor=north},
			y label style={at={(axis description cs:-0.05,0.9)},rotate=270,anchor=south},
			xmax = 1.1,
			ymax = 2.1,
			xtick = {1},
			xticklabels = {$z=1$},
			ytick = {0.25},
			yticklabels = {$\varepsilon$},
			line width = 2pt,
			major tick length=\MajorTickLength,
			every tick/.style={
				black,
				thick,
			},
			]
			\addplot[black,domain=0:1.1] expression {2*x} node[below,pos=0.4,xshift=0.1cm] {\Huge $g$};
			\addplot[black,domain=0:1.1] expression {x+0.75};
			\addplot[black,domain=0.1:0.3] expression {0.3+0.75} node[above,pos=0.5] {\Huge $\textup{slope} =b$};
			\addplot[black]coordinates{(0.1,0.85)(0.1,1.05)};
			\addplot[black,domain=0:1.1] expression {1.75+0.8*(1-x)*(-x)} node[above,pos=0.4] {\Huge $f$};
			\addplot[dashed,black,domain=0:1.1] expression {2*x+0.25};
			\addplot[dashed,black,domain=0.125:1.1] expression {2*x-0.25};
			\addplot[only marks,mark=*,mark options={scale=1.2, fill=black},text mark as node=true] coordinates{(1,1.75)};
			\addplot[black,domain=0:1,line width = 0.2cm] expression {0} node[above,pos=0.5] {\Huge $S=\mathbb{Q}\cap [0,1]$};
			
			\end{axis}
			\end{tikzpicture}
			\caption{Sketch for \cref{exScl} with $z=1$.}	
			\label{fig4}
		\end{figure}
		Our main result of this subsection follows immediately from the previous density result and the Kantorovich\textendash Rubinstein formula \cite[Thm.\,4.1]{Edw}.
		\begin{thm}[Version of the Kantorovich\textendash Rubinstein formula]
			\label{KRformula}
			Under \cref{assumpmetric} the Wasserstein-$1$-distance between $\mu_+$ and $\mu_-$ is given by
			\begin{equation*}
			W_d(\mu_+,\mu_-)=\sup_{\varphi\in D}\int_{\mathcal{C} }\varphi\,\mathrm{d}(\mu_+-\mu_-).
			\end{equation*}
		\end{thm}
		\begin{proof}
			Let $d_{\textup{eucl}}$ denote the Euclidean distance. $d_{\textup{eucl}}$ and $d$ induce the same topology on $\mathcal{C}$ by $c\cdot d_{\textup{eucl}}(x,y)\leq d(x,y)\leq a\cdot d_{\textup{eucl}}(x,y)$ for all $x,y\in\mathcal{C}$. Additionally, $(\mathcal{C} ,d)$ is a Radon space (\cite[Def.\,5.1.4]{AGS} and \cite[p.\,343]{Els}). Thus, we can apply the Kantorovich\textendash Rubinstein formula \cite[Thm.\,4.1]{Edw} which yields
			\begin{equation*}
			W_d(\mu_+,\mu_-)=\sup_{\varphi\in L}\int_{\mathcal{C} }\varphi\,\mathrm{d}(\mu_+-\mu_-).
			\end{equation*}
			Furthermore, we can estimate
			\begin{equation*}
			\int_{\mathcal{C} }f\,\mathrm{d}(\mu_+-\mu_-)\leq |f|_{\infty , \mathcal{C} }|\mu_+-\mu_-|(\mathcal{C} )
			\end{equation*}
			for all $f\in C(\mathcal{C} )$. The claim now follows from \cref{stone}.
		\end{proof}
		\subsection{Wasserstein distance as Beckmann problem using Fenchel-duality}
		\label{subs62}
		We prove \cref{BWfinal} for the case $a<\infty$ without making use of \cref{Sassump}. The key idea is to use Fenchel's duality theorem \cite[Thm.\,4.4.18]{BV}. We will write the Beckmann-problem as the dual problem to the Wasserstein distance (see \cref{KRformula}). Hence, the primal variables lie in $C^1(\mathcal{C})$, whereas the dual variables correspond to Radon measures. Thus, we need to rewrite the Beckmann problem. 
		\begin{lem}[Version of the Beckmann problem]
			\label{beckradon}
			The Beckmann problem in \cref{BWfinal} can be seen as a problem on Radon measures, i.e.,
			\begin{equation*}
			B^{\mu_+,\mu_-}=\inf_{\Xi,\F}\int_Sb\,\mathrm{d}|\Xi|+a|\F|(\mathcal{C})+\iota_{\{\mu_+-\mu_-\}}(\textup{div}(\Xi+\F)),
			\end{equation*}
			where the infimum is taken over $\Xi\in\mathcal{M}^n(S)$ and $\F\in\mathcal{M}^n(\mathcal{C})$.
		\end{lem}
		\begin{proof}
			Clearly, the problem on the right-hand side is lower than or equal to $B^{\mu_+,\mu_-}$. For the reverse inequality we assume that there exist $\Xi\in\mathcal{M}^n(S)$ and $\F\in\mathcal{M}^n(\mathcal{C})$ such that $\textup{div}(\Xi+\F)=\mu_+-\mu_-$. By \cite[Thm.\,3.1]{Sil} we have $\Xi+\F=\vartheta\Ha\mres M+\G$ with $M\subset\mathcal{C}$ countably $1$-rectifiable, $\vartheta\in L^1(\Ha\mres M;\R^n)$ and $\Ha$-diffuse vector measure $\G\in\mathcal{M}^n(\mathcal{C})$. Let
			\begin{equation*}
			\xi=
			\begin{cases*}
			\vartheta&on $M\cap S$,\\
			0&on $S\setminus M$
			\end{cases*}
			\end{equation*}
			and $\tilde{\F}=\F\mres (\mathcal{C}\setminus S)$. Then $\xi\in L^1(\Ha\mres S;\R^n)$, $\tilde{\F}\in\mathcal{M}^n(\mathcal{C})$ with $\tilde{\F}\mres S=0$ and
			\begin{equation*}
			\xi\Ha\mres S+\tilde{\F}=\vartheta\Ha\mres (M\cap S)+\F-\F\mres S=\Xi+\F,
			\end{equation*}
			because $\Xi=\Xi\mres S=\vartheta\Ha\mres (M\cap S)-\F\mres S$. Therefore, $\xi\Ha\mres S+\tilde{\F}$ satisfies the divergence constraint. Finally, we can estimate
			\begin{align*}
			\int_{S}b|\xi|\,\mathrm{d}\Ha+a|\tilde{\F}|(\mathcal{C} )&=\int_{S}b\,\mathrm{d}|\Xi+\F\mres S|+a|\tilde{\F}|(\mathcal{C} )
			\leq \int_{S}b\,\mathrm{d}(|\Xi|+|\F\mres S|)+a|\F\mres (\mathcal{C}\setminus S)|(\mathcal{C} )\\
			&\leq\int_{S}b\,\mathrm{d}|\Xi|+a|\F\mres S|(\mathcal{C})+a|\F\mres (\mathcal{C}\setminus S)|(\mathcal{C} )
			=\int_{S}b\,\mathrm{d}|\Xi|+a|\F|(\mathcal{C}).\qedhere
			\end{align*}
		\end{proof}
		We can now prove strong duality for the Fenchel problems $W$ (primal) and $B$ (dual) using \cite[Thm.\,4.4.18, second constraint qualification]{BV}.
		\begin{prop}[Dual formulation of Beckmann problem under assumptions on $S$ and $b$]
			\label{Fenchel}
			Assume that $a<\infty$. If $S$ is closed and $b$ bounded away from $0$, then we have
			\begin{equation*}
			\inf_{\Xi,\F}\int_Sb\,\mathrm{d}|\Xi|+a|\F|(\mathcal{C} )+\iota_{\{\mu_+-\mu_-\}}(\textup{div}(\Xi+\F))=\sup_\varphi\int_{\mathcal{C} }\varphi\,\mathrm{d}(\mu_+-\mu_-),
			\end{equation*}
			where the infimum is taken over $\Xi\in\mathcal{M}^n(S),\F\in\mathcal{M}^n(\mathcal{C} )$ and the supremum over $\varphi\in D$.
		\end{prop}
		\begin{proof}
			We want to apply Fenchel's duality theorem \cite[Thm.\,4.4.18]{BV}. Consider the Banach spaces $X=C^1(\mathcal{C} )$ and $Y=C(S;\R^n)\times C(\mathcal{C} ;\R^n)$ equipped with $|\varphi |_X=|\varphi|_{\infty ,\mathcal{C}}+|\nabla\varphi|_{\infty,\mathcal{C} }$ for $\varphi\in X$ and $|(s,t)|_Y=|s|_{\infty, S}+|t|_{\infty ,\mathcal{C}}$ for $(s,t)\in Y$. Define the mappings
			\begin{align*}
			f(\varphi)&=\langle\varphi,\mu_+-\mu_-\rangle&\textup{ for }\varphi\in X,\\
			g(s,t)&=h(s)+k(t)=\iota_{\{|.|\leq b\textup{ on }S\}}(s)+\iota_{\{|.|\leq a\textup{ on }\mathcal{C}\}}(t)&\textup{ for }(s,t)\in Y,\\
			A\varphi&=-((\nabla\varphi)|_S,\nabla\varphi)&\textup{ for }\varphi\in X.
			\end{align*}
			Clearly, $f$ is convex and finite and $g$ is convex with $g\in\{ 0,\infty \}$ (it is straightforward to show that $f$ and $g$ are also lower semi-continuous, but this is not needed). Furthermore, $A$ is linear and bounded by $|A\varphi|_Y\leq 2|\varphi|_X$ for all $\varphi\in X$. Hence, by Fenchel's duality theorem \cite[Thm.\,4.4.18]{BV}
			\begin{equation*}
			\inf_{\varphi\in X}f(\varphi)+g(A\varphi)\geq\sup_{(\Xi,\F)\in Y^*}-f^*(A^*(\Xi,\F))-g^*(-(\Xi,\F)),
			\end{equation*}
			where $Y^*=\mathcal{M}^n(S)\times\mathcal{M}^n(\mathcal{C})$. In addition, we get
			\begin{equation*}
			f^*(\mu )=\sup_{\varphi\in X}\langle\varphi,\mu\rangle-f(\varphi)=\sup_{\varphi\in X}\langle\varphi,\mu-\mu_++\mu_-\rangle=\iota_{\{ \mu_+-\mu_-\}}(\mu)
			\end{equation*}
			for all $\mu\in X^*\supset\mathcal{M}^n(\mathcal{C})$. For $\varphi\in X$ we obtain
			\begin{equation*}
			\langle\varphi,A^*(\Xi,\F)\rangle=\langle A\varphi,(\Xi,\F)\rangle=-\langle((\nabla\varphi)|_S,\nabla\varphi),(\Xi,\F)\rangle=-\langle\nabla\varphi,\Xi+\F\rangle=\langle\varphi,\textup{div}(\Xi+\F)\rangle.
			\end{equation*}
			We now calculate $g^*$. For any $\F\in\mathcal{M}^n(\mathcal{C} )$ we observe
			\begin{equation*}
			k^*(\F)=\sup_{|t|_{\infty,\mathcal{C}}\leq a}\langle t,\F\rangle=\sup_{|t|_{\infty,\mathcal{C}}\leq 1}\langle at,\F\rangle=a|\F|(\mathcal{C} ),
			\end{equation*}
			where we use that $\infty\cdot 0=0$ in $\R\cup\{ \pm\infty \}$. Let $\Xi\in\mathcal{M}^n(S)$. Invoking \cite[p.\,130]{San} there is some Borel-function $\vartheta:S\to\R^n$ with $\Xi=\vartheta |\Xi|$ and $|\vartheta|=1$ $|\Xi|$-almost everywhere on $S$. Thus,
			\begin{equation*}
			h^*(\Xi )=\sup_{|s|\leq b}\langle s,\Xi\rangle\leq\sup_{|s|\leq b}\langle|s|,|\Xi|\rangle\leq \langle b,|\Xi|\rangle.
			\end{equation*}
			For the reverse inequality let $b_k:S\to [0,\infty)$ be a sequence of Lipschitz functions with $b_k\nearrow b$ pointwise on $S$ (cf.\ \cref{lipapprox}). For a moment, we assume that $k$ is fixed. By definition of the total variation there exists a sequence $(\tilde{s}_i)\subset C(S;\R^n)$ with $|\tilde{s}_i|_{\infty,S}\leq 1$ and $\langle\tilde{s}_i,\Xi\rangle\to|\Xi|(S)$ for $i\to\infty$. We define a sequence $(s_i)\subset C(S;\R^n)$ by $s_i=b_k\tilde{s}_i$ and estimate
			\begin{align*}
			|\langle s_i,\Xi\rangle-\langle b_k,|\Xi|\rangle|\leq \langle|s_i\cdot\vartheta-b_k|,|\Xi|\rangle=\langle|b_k\tilde{s}_i\cdot\vartheta-b_k|,|\Xi|\rangle\leq|b|_{\infty,S}\langle |1-\tilde{s}_i\cdot\vartheta|,|\Xi|\rangle=|b|_{\infty,S}|\Xi|(S)-\langle\tilde{s}_i,\Xi\rangle|
			\end{align*}
			using $1-\tilde{s}_i\cdot\vartheta\in [0,2]$ $|\Xi|$-almost everywhere. The choice of the $\tilde{s}_i$ leads to $\langle s_i,\Xi\rangle\to\langle b_k,|\Xi|\rangle$ for $i\to\infty$. We have $\langle b_k,|\Xi|\rangle\nearrow\langle b,|\Xi|\rangle$ by the monotone convergence theorem and therefore end up with
			\begin{equation*}
			h^*(\Xi)=\langle b,|\Xi|\rangle.
			\end{equation*}
			The function $g$ is continous in $(0,0)$ by the assumption that $b\geq \inf_Sb>0$. Additionally, we have $0\in\textup{dom}(f)$ and thus $(0,0)\in A\textup{dom}(f)$. By \cite[Thm.\,4.4.18, second constraint qualification]{BV} strong duality holds, i.e., 
			\begin{align*}
			\inf_{(\Xi,\F)\in Y^*}\langle b,|\Xi|\rangle+a|\F|(\mathcal{C} )+\iota_{\{ \mu_+-\mu_-\}}(\textup{div}(\Xi+\F))&=-\sup_{(\Xi,\F)\in Y^*}-f^*(A^*(\Xi,\F))-g^*(-(\Xi,\F))=-\inf_{\varphi\in X}f(\varphi )+g(A\varphi)\\
			&=-\inf_{\varphi\in X}\langle\varphi,\mu_+-\mu_-\rangle+\iota_{\{|.|\leq b\textup{ on }S\}}(-(\nabla\varphi)|_S)+\iota_{\{|.|\leq a\textup{ on }\mathcal{C}\}}(-\nabla\varphi)\\&=\sup_{\varphi\in D}\langle\varphi,\mu_+-\mu_-\rangle.\qedhere
			\end{align*}
		\end{proof}
		The next remark is relevant for the subsequent statement (\cref{Siconv}).
		\begin{rem}[Existence of minimizer for $W_d(\mu_+,\mu_-)$]
			\label{optplan}
			Recall that the existence statement of \cref{exoptplan} is true for $a<\infty$.
		\end{rem}
		In contrast to the proof in \cref{sect2}, we show $W=B$ using standard approximation techniques to include the assumptions in \cref{subs61} and \cref{Fenchel}. Henceforth, let $S^N$ be an approximating sequence for $S$ (see definition in \cref{notdefs}). We write $d^N=d_{b|_{S^N}}|_{\mathcal{C}\times\mathcal{C}}$. Recall that the $S^N$ are closed with $\Ha(S^N)<\infty$. We need the following lemma for the inequality $W\leq B$.
		\begin{lem}
			\label{Siconv}
			For $a<\infty$ we have $W_{d^N}(\mu_+,\mu_-)\to \langle d,\pi\rangle$ up to a subsequence for some $\pi\in\Pi(\mu_+,\mu_-)$. 
		\end{lem}
		\begin{proof}
			Using \cref{optplan} there exist optimal transport plans $\pi_N\in\Pi(\mu_+,\mu_-)$ such that $W_{d^N}(\mu_+,\mu_-)=\langle d^N,\pi_N\rangle$. The $\pi_N$ are probability measures on the compact space $\mathcal{C}\times\mathcal{C}$ and therefore tight. Application of Prokhorov's theorem yields $\pi_N\xrightharpoonup{*}\pi $ for some probability measure $\pi $ by passing to a subsequence. Clearly, $\pi\in\Pi(\mu_+,\mu_-)$ and it suffices to show $d^N\rightrightarrows d$ in $\mathcal{C}\times\mathcal{C}$ for some subsequence. The family of functions $d^N$ is uniformly equicontinuous and pointwise bounded in $\mathcal{C}\times\mathcal{C}$ by
			\begin{equation*}
			|d^N(z_1,z_2)-d^N(\tilde{z}_1,\tilde{z}_2)|\lesssim a|(z_1,z_2)-(\tilde{z}_1,\tilde{z}_2)|_{\mathcal{C}\times\mathcal{C} }\lesssim 2a\cdot\textup{diam}(\mathcal{C})<\infty
			\end{equation*}
			for all $(z_1,z_2),(\tilde{z}_1,\tilde{z}_2)\in\mathcal{C}\times\mathcal{C}$, where $|.|_{\mathcal{C}\times\mathcal{C} }$ is any norm on $\mathcal{C}\times\mathcal{C}$ (``$\lesssim$'' here indicates that the estimate is true up to a constant depending on the chosen norm). Using the Arzel\'{a}\textendash Ascoli theorem it suffices to show pointwise convergence $d^N\to d$ on $\mathcal{C}\times\mathcal{C}$. Let $\varepsilon>0$ and $z=(z_1,z_2)\in\mathcal{C}\times\mathcal{C}$. There exists a path $\gamma\in\Gamma^{z_1z_2}$ such that
			\begin{equation*}
			\int_{\gamma\cap S}b\,\mathrm{d}\Ha+a\Ha(\gamma\setminus S)\leq d(z)+\varepsilon/2.
			\end{equation*}
			Hence, we get
			\begin{align*}
			|d^N(z)-d(z)|\leq \int_{\gamma\cap S^N}b\,\mathrm{d}\Ha+a\Ha(\gamma\setminus S^N)-\int_{\gamma\cap S}b\,\mathrm{d}\Ha-a\Ha(\gamma\setminus S)+\varepsilon/2
			\leq a\Ha(\gamma\cap (S\setminus S^N))+\varepsilon/2
			<\varepsilon
			\end{align*}
			for $N$ sufficiently large. 
		\end{proof}
		The following statement will be used to show $W\geq B$.
		\begin{lem}
			\label{subS}
			Let $T\subset S$ be any Borel measurable set. Furthermore, let $\xi\in L^1(\Ha\mres T;\R^n)$ and $\tilde{\F}\in\mathcal{M}^n(\mathcal{C})$ with $\tilde{\F}\mres T=0$. Then we can define $\hat{\xi}\in L^1(\Ha\mres S;\R^n)$ and $\hat{\F}\in\mathcal{M}^n(\mathcal{C})$ with $\hat{\F}\mres S=0$ such that $\hat{\xi}\Ha\mres S+\hat{\F}=\xi\Ha\mres T+\tilde{\F}$ and 
			\begin{equation*}
			\int_{S}b|\hat{\xi}|\,\mathrm{d}\Ha+a|\hat{\F}|(\mathcal{C} )\leq \int_{T}b|\xi|\,\mathrm{d}\Ha+a|\tilde{\F}|(\mathcal{C}).
			\end{equation*}
		\end{lem}
		\begin{proof}
			By \cite[Thm.\,3.1]{Sil} we have $\xi\Ha\mres T+\tilde{\F}=\vartheta\Ha\mres M+\G$ with $M\subset\mathcal{C}$ countably $1$-rectifiable, $\vartheta\in L^1(\Ha\mres M;\R^n)$ and $\G\in\mathcal{M}^n(\mathcal{C})$ $\Ha$-diffuse. Define $\hat{\F}=\tilde{\F}\mres (\mathcal{C}\setminus S)$ and 
			\begin{equation*}
			\hat{\xi}=\begin{cases}
			\xi&\textup{on }T,\\
			\vartheta&\textup{on }M\cap (S\setminus T),\\
			0&\textup{else}.
			\end{cases}
			\end{equation*}
			Then we have $\hat{\F}\in\mathcal{M}^n(\mathcal{C})$ with $\hat{\F}\mres S=0$ and $\hat{\xi}\in L^1(\Ha\mres S;\R^n)$ by definition. Additionally, we get
			\begin{equation*}
			\hat{\xi}\Ha\mres S+\hat{\F}=\xi\Ha\mres T+\vartheta\Ha\mres (M\cap (S\setminus T))+\hat{\F}=\xi\Ha\mres T+\tilde{\F}\mres (S\setminus T)+\tilde{\F}\mres (\mathcal{C}\setminus S)=\xi\Ha\mres T+\tilde{\F}.
			\end{equation*}
			Finally, we can estimate
			\begin{align*}
			\int_{S}b|\hat{\xi}|\,\mathrm{d}\Ha+a|\hat{\F}|(\mathcal{C} )&=\int_{T}b|\xi|\,\mathrm{d}\Ha+\int_{M\cap (S\setminus T)}b|\vartheta|\,\mathrm{d}\Ha+a|\hat{\F}|(\mathcal{C} )\leq \int_{T}b|\xi|\,\mathrm{d}\Ha+a|\vartheta\Ha\mres (M\cap (S\setminus T))|(\mathcal{C} )+a|\hat{\F}|(\mathcal{C} )\\
			&=\int_{T}b|\xi|\,\mathrm{d}\Ha+a|\tilde{\F}|(\mathcal{C}),
			\end{align*}
			which is the desired inequality.
		\end{proof}
		\begin{proof}[Proof of \cref{BWfinal} for $a<\infty$ without \cref{Sassump}]
			\underline{$W_d(\mu_+,\mu_-)\leq B^{\mu_+,\mu_-}$:} Initially, we show the inequality for the case $\inf_Sb>0$. Let $\delta>0$ and fix $\xi$ and $\tilde{\F}$ like in the Beckmann problem. We define measures by $\G_N=\xi\Ha\mres (S\setminus S^N)+\tilde{\F}$. $|\xi|$ is integrable with respect to $\Ha\mres S$. Consequently, we have
			\begin{equation*}
			|\G_N-\tilde{\F}|(\mathcal{C} )=\int_{S\setminus S_N}|\xi|\,\mathrm{d}\Ha\to 0
			\end{equation*}
			for $N\to\infty$. Thus, we can choose $N$ sufficiently large such that $|\G_N-\tilde{\F}|(\mathcal{C})<\delta/(2a)$. By \cref{optplan} and \cref{Siconv} there exist optimal transport plans $\pi_N\in\Pi(\mu_+,\mu_-)$ such that $W_{d^N}(\mu_+,\mu_-)=\langle d^N,\pi_N\rangle\to\langle d,\pi\rangle$ up to a subsequence for some $\pi\in\Pi(\mu_+,\mu_-)$. From now on, we will only consider the corresponding subsequence. We choose $N$ sufficiently large such that $|\langle d^N,\pi_N\rangle-\langle d,\pi\rangle|<\delta/2$. Using $\textup{div}(\xi\Ha\mres  S^N+\G_N)=\textup{div}(\xi\Ha\mres S+\tilde{\F})=\mu_+-\mu_-$, \cref{Fenchel} and \cref{KRformula} we can estimate
			\begin{align*}
			\int_Sb|\xi|\,\mathrm{d}\Ha+a|\tilde{\F}|(\mathcal{C} )&\geq \int_{S_N}b|\xi|\,\mathrm{d}\Ha+a|\G_i|(\mathcal{C} )-\delta/2\geq \inf_{\substack{\Xi\in\mathcal{M}^n(S^N)\\ \Sigma\in\mathcal{M}^n(\mathcal{C})}} \int_{S^N}b\,\mathrm{d}|\Xi|+a|\Sigma|(\mathcal{C} )+\iota_{\{ \mu_+-\mu_- \}}(\textup{div}(\Xi+\Sigma))-\delta/2\\
			&=\sup_{\varphi\in D_N}\int_{\mathcal{C} }\varphi\,\mathrm{d}(\mu_+-\mu_-)-\delta/2=W_{d^N}(\mu_+,\mu_-)-\delta/2
			=\langle d^N,\pi_N\rangle-\delta/2
			\geq \langle d,\pi\rangle-\delta
			\\&\geq W_d(\mu_+,\mu_-)-\delta,
			\end{align*}
			where $D_N=\{ \varphi\in C^1(\mathcal{C})\,|\,|\nabla\varphi|\leq b\textup{ on }S^N,|\nabla\varphi|\leq a\textup{ in }\mathcal{C} \}$. Letting $\delta\to 0$ yields the desired result. Now we concentrate on the case $\inf_Sb=0$. Define $b_\lambda=\max\{\lambda,b\}$ for $\lambda\in (0,a]$. Then we obtain
			\begin{equation*}
			\int_Sb_\lambda|\xi|\,\mathrm{d}\Ha+a|\tilde{\F}|(\mathcal{C} )\geq W_d(\mu_+,\mu_-).
			\end{equation*} 
			Moreover, we have 
			\begin{equation*}
			\int_Sb_\lambda|\xi|\,\mathrm{d}\Ha\leq a\int_S|\xi|\,\mathrm{d}\Ha<\infty
			\end{equation*}
			and therefore (monotone convergence)
			\begin{equation*}
			\int_Sb|\xi|\,\mathrm{d}\Ha+a|\tilde{\F}|(\mathcal{C} )\geq W_d(\mu_+,\mu_-)
			\end{equation*}
			by letting $\lambda\to 0$. Taking the infimum over $\xi$ and $\tilde{\F}$ yields the desired inequality.
			\\\underline{$W_d(\mu_+,\mu_-)\geq B^{\mu_+,\mu_-}$:} Firstly, we assume that $\inf_Sb>0$. By \cref{optplan} there exists an optimal transport plan $\pi\in\Pi(\mu_+,\mu_-)$ such that $W_d(\mu_+,\mu_-)=\langle d,\pi\rangle$. Let $\delta>0$ be arbitrary. Then it is straightforward to see $d^N\searrow d$ pointwise for $N\to\infty$ (cf.\ proof of \cref{Siconv}). Using $\langle d^N,\pi\rangle\leq a\cdot\textup{diam}(\mathcal{C})<\infty$ the monotone convergence theorem implies the existence of $N=N(\delta)$ such that $|\langle d^N-d,\pi\rangle|\leq\delta$. Application of \cref{KRformula}, \cref{Fenchel}, \cref{beckradon} (refering to second equation) and \cref{subS} (last inequality) yields
			\begin{align*}
			W_d(\mu_+,\mu_-)&=\langle d,\pi \rangle
			\geq \langle d^N,\pi\rangle-\delta
			\geq W_{d^N}(\mu_+,\mu_-)-\delta
			=\inf_{\xi,\tilde{\F}}\int_{S^N}b|\xi|\,\mathrm{d}\Ha+a|\tilde{\F}|(\mathcal{C} )+\iota_{\{ \mu_+-\mu_-\}} (\xi\Ha\mres S^N+\F)-\delta\\
			&\geq\inf_{\hat{\xi},\hat{\F}}\int_{S}b|\hat{\xi}|\,\mathrm{d}\Ha+a|\hat{\F}|(\mathcal{C} )+\iota_{\{ \mu_+-\mu_-\}} (\hat{\xi}\Ha\mres S+\hat{\F})-\delta,
			\end{align*}
			where the infima are taken over functions $\xi,\tilde{\F},\hat{\xi},\hat{\F}$ as in \cref{subS} with $T=S^N$. Letting $\delta\to 0$ yields the desired inequality. For the case $\inf_Sb=0$ set $b_\lambda=\max\{\lambda,b \}$ for $\lambda\in (0,a]$. Let $\pi\in\Pi(\mu_+,\mu_-)$ and $\varepsilon>0$. It is straightforward to see that $d_{b_\lambda}\searrow d$ pointwise for $\lambda\to 0$. Again, by $\langle d_{b_\lambda},\pi\rangle<\infty$ we can apply the monotone convergence theorem and choose $\lambda$ sufficiently small such that
			\begin{align*}
			\langle d,\pi\rangle&\geq \langle d_{b_\lambda },\pi\rangle-\varepsilon
			\geq \inf_{\hat{\xi},\hat{\F}}\int_{S}b_\lambda|\hat{\xi}|\,\mathrm{d}\Ha+a|\hat{\F}|(\mathcal{C} )+\iota_{\{ \mu_+-\mu_-\}} (\hat{\xi}\Ha\mres S+\hat{\F})-\varepsilon\\
			&\geq \inf_{\hat{\xi},\hat{\F}}\int_{S}b|\hat{\xi}|\,\mathrm{d}\Ha+a|\hat{\F}|(\mathcal{C} )+\iota_{\{ \mu_+-\mu_-\}} (\hat{\xi}\Ha\mres S+\hat{\F})-\varepsilon
			\end{align*}
			with $\hat{\xi},\hat{\F}$ as above. Taking the infimum over $\pi $ and letting $\varepsilon\to 0$ we obtain the desired inequality.
		\end{proof}
	}
	
	\section{Bilevel formulation of the branched transport problem}
	\label{sec4}
	Let $\mu_+, \mu_-$ be given probability measures on $\mathcal{B}(\mathcal{C})$. In this section we will prove \cref{finalthm}: The branched transport problem (\cref{defags,defbtp}) of finding an optimal mass flux from $\mu_+$ to $\mu_-$ with respect to a (concave) transportation cost $\tau$ can be equivalently written as a generalized version of the urban planning problem (\cref{def:urbPlnCost,def:urbPlnProblem}). We briefly recapitulate the setting from \cref{intro}. In the urban planning problem one optimizes over countably $1$-rectifiable and Borel measurable networks $S\subset\mathcal{C}$ and lower semi-continuous friction coefficients $b:S\to [0,\infty)$ representing a street or pipe network,
	\begin{equation*}
	\inf_{S,b}\mathcal{U}^{c,\mu_+,\mu_-}[S,b]=\inf_{S,b}W_{d_{S,a,b}}(\mu_+,\mu_-)+\int_Sc(b)\dHa.
	\end{equation*}
	The optimization depends on a fixed, decreasing maintenance cost $c:\R\to[0,\infty]$, and the cost for motion outside the network is defined by $a=\inf c^{-1}(0)$. In the branched transport problem on the other hand the transportation cost is a concave function $\tau:[0,\infty)\to[0,\infty)$ with $\tau(0)=0$, and one looks for an optimal mass flux $\F\in\mathcal{DM}^n(\R^n)$ with $\textup{div}(\F)=\mu_+-\mu_-$,
	\begin{equation*}
	\inf_\F\mathcal{J}^{\tau,\mu_+,\mu_-}[\F],
	\end{equation*}
	where $\mathcal{J}^{\tau,\mu_+,\mu_-}$ is defined via relaxation of a discrete energy.
	We extend $\tau$ to a function $\R\to[-\infty,\infty)$ by setting $\tau(m)=-\infty$ for $m<0$. Moreover, we set 
	\begin{equation*}
	\tau'(0)=\lim_{m\searrow 0}\frac{\tau(m)}{m}\in[0,\infty].
	\end{equation*}
	We use the convex conjugate of $-\tau$ to define a maintenance cost for our generalized urban planning problem which will be shown to be equivalent to the branched transport problem for $\tau$,
	\begin{equation*}
	\varepsilon(v)=(-\tau)^*(-v)=\sup_{m\geq 0}\tau(m)-mv
	\qquad\text{for }v\in\R.
	\end{equation*}
	We observe that by definition
	\begin{equation*}
	\tau'(0)=\inf\varepsilon^{-1}(0)=a.
	\end{equation*}
	The actual statement of \cref{finalthm} to be shown in this section is
	\begin{equation*}
	\inf_{\F}\mathcal{J}^{\tau,\mu_+,\mu_-}[\F]=\inf_{S,b}\mathcal{U}^{\varepsilon,\mu_+,\mu_-}[S,b].
	\end{equation*}
	In \cref{subs31} we will formulate an appropriate version of the branched transport problem that will later naturally lead to our Beckmann formulation of the Wasserstein distance from \cref{BWfinal}.
	\dualityApproach{\todo[inline]{If Section 4.2 stays, uncomment lines referring to that.}
		\notinclude{
			The formula in \cite[Prop.\,2.32]{BW} shows that the cost of an admissible mass flux for the branched transport problem can be decomposed into costs for the mass flux on a network $S$ like above and a diffuse part for the costs of the flux outside the network. \textcolor{purple}{In \cref{subs32} we will prove a dual formula for the first term which will be used to introduce the friction coefficient functions $b$ (\cref{subst}). Based on these preparations},
		}
	}
	In \cref{bilevel} we then establish the equivalence between the branched transport and the urban planning problem, and we discuss the relation between minimizers of each.
	\subsection{Version of the branched transport problem}
	\label{subs31}
	In this section we introduce the reformulation of the branched transport problem from \cite[Prop.\,2.3]{BW} as a generalized Gilbert energy in order to prepare the equivalence proof in \cref{bilevel}. Further, we highlight some properties of the variables appearing in the reformulation. We first note that it suffices to concentrate on mass fluxes with support in $\mathcal{C}$ (in fact, one may replace $\mathcal{C}$ by the convex hull of $\textup{supp}(\mu_+)\cup\textup{supp}(\mu_-)$, see \cite[Lem.\,5.15]{BCM}).
	\begin{lem}[{{\cite[Def.\,2.2 \& Lem.\,2.4]{BW}}}]
		\label{supplem}
		We have
		\begin{equation*}
		\inf_{\F\in\mathcal{DM}^n(\R^n) }\mathcal{J}^{\tau,\mu_+,\mu_-}[\F]=\inf_{\F\in\mathcal{DM}^n(\mathcal{C})}\mathcal{J}^{\tau,\mu_+,\mu_-}[\F].
		\end{equation*}
	\end{lem}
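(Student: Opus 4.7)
The plan is to prove the two inequalities separately. The direction $\inf_{\F\in\mathcal{DM}^n(\R^n)}\mathcal{J}^{\tau,\mu_+,\mu_-}[\F]\leq\inf_{\F\in\mathcal{DM}^n(\mathcal{C})}\mathcal{J}^{\tau,\mu_+,\mu_-}[\F]$ is immediate: any $\F\in\mathcal{DM}^n(\mathcal{C})$ extends by zero to an element of $\mathcal{DM}^n(\R^n)$ with the same divergence, the same admissible approximating graph sequences, and therefore the same cost.

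For the nontrivial direction, the key tool is the nearest-point projection $P:\R^n\to\mathcal{C}$, which is $1$-Lipschitz because $\mathcal{C}$ is closed and convex, and which fixes $\mathcal{C}$ pointwise. I will first handle polyhedral mass fluxes: given $\F_0=\sum_e m_e\vec{e}\,\Ha\mres e$ with $\textup{div}(\F_0)=\nu_+-\nu_-$ for discrete measures $\nu_\pm$ not necessarily supported in $\mathcal{C}$, define $\tilde\F_0$ by replacing each edge $e=[x_e,y_e]$ with the (possibly degenerate) segment $[P(x_e),P(y_e)]$ carrying the same weight $m_e$. Kirchhoff's law at each projected vertex $P(v)$ follows by summing the contributions of the original vertices mapping to $P(v)$, so $\textup{div}(\tilde\F_0)=P_\#\nu_+-P_\#\nu_-$. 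The $1$-Lipschitz property of $P$ yields
\[
\mathcal{J}^{\tau,P_\#\nu_+,P_\#\nu_-}[\tilde\F_0]=\sum_e\tau(m_e)\,|P(x_e)-P(y_e)|\leq\sum_e\tau(m_e)\,|x_e-y_e|=\mathcal{J}^{\tau,\nu_+,\nu_-}[\F_0].
\]

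For a general admissible $\F\in\mathcal{DM}^n(\R^n)$ with $\mathcal{J}^{\tau,\mu_+,\mu_-}[\F]<\infty$, fix $\varepsilon>0$ and pick an approximating graph sequence $(\F_k,\mu_+^k,\mu_-^k)\xrightharpoonup{*}(\F,\mu_+,\mu_-)$ with $\liminf_k\mathcal{J}^{\tau,\mu_+^k,\mu_-^k}[\F_k]\leq\mathcal{J}^{\tau,\mu_+,\mu_-}[\F]+\varepsilon$. Apply the construction above to each $\F_k$ to obtain polyhedral mass fluxes $\tilde\F_k$ supported in $\mathcal{C}$ with $\textup{div}(\tilde\F_k)=P_\#\mu_+^k-P_\#\mu_-^k$ and cost no greater than that of $\F_k$. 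Since $P$ is continuous and since $\mu_\pm$ are supported in $\mathcal{C}$ so that $P_\#\mu_\pm=\mu_\pm$, the boundary data converge, $P_\#\mu_\pm^k\xrightharpoonup{*}\mu_\pm$. Extracting a weak-$*$ convergent subsequence $\tilde\F_k\xrightharpoonup{*}\tilde\F\in\mathcal{M}^n(\mathcal{C})$ (discussed below) and invoking the relaxation definition of $\mathcal{J}$ gives
\[
\mathcal{J}^{\tau,\mu_+,\mu_-}[\tilde\F]\leq\liminf_k\mathcal{J}^{\tau,P_\#\mu_+^k,P_\#\mu_-^k}[\tilde\F_k]\leq\liminf_k\mathcal{J}^{\tau,\mu_+^k,\mu_-^k}[\F_k]\leq\mathcal{J}^{\tau,\mu_+,\mu_-}[\F]+\varepsilon,
\]
and letting $\varepsilon\to0$ completes the proof.

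The main obstacle is securing weak-$*$ precompactness of the projected sequence $(\tilde\F_k)$ in $\mathcal{M}^n(\mathcal{C})$, i.e.\ a uniform bound on $|\tilde\F_k|(\mathcal{C})=\sum_e m_e|P(x_e)-P(y_e)|$. This follows from the a priori estimate $|\tilde\F_k|(\mathcal{C})\leq|\F_k|(\R^n)$ together with a standard total-mass bound on minimizing (approximating) sequences of polyhedral mass fluxes, which exploits the concavity of $\tau$ and $\tau(0)=0$ (so $\tau(m)/m$ is non-increasing) and the fact that the boundary measures $\mu_\pm^k$ are probability measures with supports converging to $\textup{supp}(\mu_\pm)\subset\mathcal{C}$; once this uniform bound is in place, Banach--Alaoglu yields the required subsequential limit and the argument concludes as above.
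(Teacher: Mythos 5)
Your argument is sound, and it is essentially the canonical one: the paper itself gives no proof of this lemma but defers entirely to the citation \cite[Def.\,2.2 \& Lem.\,2.4]{BW}, where the same $1$-Lipschitz nearest-point projection onto the convex set $\mathcal{C}$ is the underlying mechanism. Two small points deserve one extra line each if you write this out in full. First, after choosing the approximating graph sequence you should pass to a subsequence along which $\mathcal{J}^{\tau,\mu_+^k,\mu_-^k}[\F_k]$ actually converges to its $\liminf$ \emph{before} extracting the weak-$*$ convergent subsequence of $(\tilde\F_k)$; otherwise the chain $\mathcal{J}^{\tau,\mu_+,\mu_-}[\tilde\F]\leq\liminf_k\mathcal{J}[\tilde\F_k]\leq\liminf_k\mathcal{J}[\F_k]$ is only valid along the extracted subsequence, whose $\liminf$ could a priori exceed that of the full sequence. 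Second, the "standard total-mass bound" you invoke needs the reduction to edge weights $m_e\in[0,1]$ (flip orientations of negative edges and remove cycles, which by monotonicity of $\tau$ does not increase the cost and preserves the divergence), after which concavity gives $m_e\leq\tau(m_e)/\tau(1)$ and hence $\sum_e m_e\Ha(e)\leq\mathcal{J}^{\tau,\mu_+^k,\mu_-^k}[\F_k]/\tau(1)$; the degenerate case $\tau(1)=0$ forces $\tau\equiv0$ and the lemma is then trivial. With these routine repairs the proof is complete.
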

	We will in this section work with the following expression for the branched transport cost.
	\begin{prop}[{{\cite[Prop.\,2.32 and its proof]{BW}}}]
		\label{gengilbert} Every $\F\in\mathcal{DM}^n(\mathcal{C} )$ satisfies $\mathcal{J}^{\tau,\mu_+,\mu_-}[\F]<\infty$ if and only if
		\begin{itemize}
			\item $\textup{div}(\F)=\mu_+-\mu_-$,
			\item $\F=\xi\Ha \mres S+\F^\perp $ with countably $1$-rectifiable $S\subset\mathcal{C}$, $\xi :S\to\R^n$ $\Ha\mres S$-measurable (tangent to $S$ $\Ha$-almost everywhere) and $\F^\perp $ singular with respect to $\Ha\mres R$ for any countably $1$-rectifiable set $R\subset\mathcal{C} $.
		\end{itemize}
		Assume that $\mathcal{J}^{\tau,\mu_+,\mu_-}[\F]<\infty$ and $\F=\xi\Ha \mres S+\F^\perp$ as above. Then the branched transport cost of $\F$ is given by
		\begin{equation*}
		\mathcal{J}^{\tau,\mu_+,\mu_-}[\F]=\int_S\tau (|\xi|)\,\mathrm{d}\Ha+\tau'(0)|\F^\perp|(\mathcal{C}).
		\end{equation*}
		Moreover, we can always choose $S=\{ \Theta^{*1}(|\F|,.)>0\}$ and $\F^\perp=\F\mres (\mathcal{C}\setminus S)$.
	\end{prop}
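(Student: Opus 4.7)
The plan is to prove the three claims---divergence constraint and decomposition, finite-energy formula, canonical representatives---by coupling Silhavý's structure theorem for divergence-measure vector fields with a direct two-scale discretization tied to the relaxation in \cref{defags}.

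First I would establish the decomposition. For any $\F\in\mathcal{DM}^n(\mathcal{C})$, \cite[Thm.\,3.1]{Sil} (already cited in the paper) yields the splitting $\F=\xi\Ha\mres S+\F^\perp$ with $S$ countably $1$-rectifiable, $\xi\in L^1(\Ha\mres S;\R^n)$ tangent to $S$ $\Ha$-a.e., and $\F^\perp$ $\Ha$-diffuse. The canonical choices $S=\{\Theta^{*1}(|\F|,\cdot)>0\}$ and $\F^\perp=\F\mres(\mathcal{C}\setminus S)$ come from standard density theory: the upper $1$-density vanishes $|\F^\perp|$-a.e.\ yet is strictly positive on the rectifiable support of $\xi\Ha\mres S$. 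The divergence constraint $\textup{div}(\F)=\mu_+-\mu_-$ is inherited from any approximating graph sequence via weak-$*$ continuity of $\textup{div}$ against $C^1$ test functions, giving the ``$\Rightarrow$'' direction of the iff.

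Next I would tackle the cost formula. For the upper bound, I would construct an explicit approximating graph sequence in two stages. The rectifiable contribution $\xi\Ha\mres S$ is discretized along the bi-Lipschitz pieces $S^N$ of $S$ (\cref{notdefs}) by polyhedral edges tangent to $S^N$ carrying local mass close to $|\xi|$; the cost converges to $\int_S\tau(|\xi|)\dHa$ by dominated convergence and $S^N\nearrow S$. The diffuse part $\F^\perp$ is discretized on a Euclidean grid of mesh $h\to 0$; since $\F^\perp$ is $\Ha$-diffuse, each edge carries mass $m\to 0$, so by monotonicity of $m\mapsto\tau(m)/m$ (a consequence of concavity with $\tau(0)=0$) together with $\tau(m)/m\to\tau'(0)$, the total edge cost tends to $\tau'(0)|\F^\perp|(\mathcal{C})$. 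For the lower bound, I would pick an approximating sequence $\F_k=\sum_e m_e^k\vec e\,\Ha\mres e$ realizing (up to $\varepsilon$) the infimum in \cref{defags} and split its cost into contributions from edges aligned with $S$ versus remaining edges. The former is controlled by concavity of $\tau$ together with a Go\l\k{a}b-type lower semi-continuity argument (\cref{Golabim}) applied to $\Ha(\cdot\cap S)$, yielding $\liminf_k\int\tau(|\xi_k|)\dHa\geq\int_S\tau(|\xi|)\dHa$; the latter dominates $\tau'(0)$ times the corresponding mass via the inequality $\tau(m)\geq\tau'(0)m-o(m)$ from concavity, combined with $|\F_k|\mres(\mathcal{C}\setminus S)\xrightharpoonup{*}|\F^\perp|$. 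The ``$\Leftarrow$'' direction of the iff then follows from the upper bound applied to the canonical decomposition.

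The main obstacle is the clean attribution of edges to $S$ versus its complement in the lower-bound step: polyhedral edges with small mass clustering near but not on $S$ could be miscounted between the two contributions in the limit. I would address this by thickening $S$ at a scale comparable to the polyhedral resolution, using $\Theta^{*1}$ to discriminate the two regimes, and exploiting monotonicity of $m\mapsto\tau(m)/m$ so that any edge wrongly assigned to the ``diffuse'' bucket still contributes at least as much as it would to the ``network'' bucket, preventing undercounting when the two contributions are combined.
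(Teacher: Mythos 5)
The first thing to say is that the paper does not prove this proposition: it is imported wholesale, proof included, from \cite[Prop.\,2.32]{BW} (the bracketed citation in the theorem header is the entire justification, and no proof environment follows the statement). So you are not reproving the paper's argument but attempting to reprove a substantial result of the reference. Your treatment of the decomposition and of the recovery sequence is broadly the right strategy --- Silhav\'y's structure theorem for $\mathcal{DM}^n$ plus an explicit polyhedral approximation handled separately on the rectifiable and diffuse parts --- though even the upper bound for $\F^\perp$ needs more than ``each edge carries mass $m\to 0$'': one must control how much diffuse flux is routed through each grid edge and sum $\tau(m_e)\Ha(e)$ over all of them, which is where the actual work in \cite{BW} lies.

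The genuine gap is the lower bound, which is the hard half of the statement. Lower semicontinuity of $\F\mapsto\int_S\tau(|\xi|)\,\mathrm{d}\Ha+\tau'(0)|\F^\perp|(\mathcal{C})$ along weak-$*$ convergent polyhedral sequences cannot be extracted from \cref{Golabim}: that result controls $\Ha(\gamma(T))$ for uniformly Lipschitz curves converging uniformly and says nothing about the multiplicity $|\xi|$ of a weak-$*$ limit of vector measures; and concavity of $\tau$ works against you here, since $\tau(|\xi|)$ can strictly decrease when separate polyhedral strands merge in the limit, so a lower bound on $\Ha(\cdot\cap S)$ does not yield one on $\int_S\tau(|\xi|)\,\mathrm{d}\Ha$. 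The step ``$|\F_k|\mres(\mathcal{C}\setminus S)\ws|\F^\perp|$'' is false in general: total variation is only weak-$*$ lower semicontinuous, and restriction to a fixed Borel set is not weak-$*$ continuous, so mass migrates between the two buckets in the limit --- precisely the miscounting you identify as the main obstacle --- and thickening $S$ at the polyhedral resolution does not fix it, because the limiting density on $S$ is assembled from edges at all distances from $S$. The known proofs (in \cite{BW} and in the flat-chain literature it builds on) go through blow-up and slicing arguments exploiting subadditivity of $\tau$ on one-dimensional slices; some such machinery is unavoidable, and your sketch does not supply a substitute. For the purposes of this paper the correct move is the one the authors make: cite \cite[Prop.\,2.32]{BW} and do not reprove it.
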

	\begin{rem}[$S$ is Borel]
		\label{decomp}
		The set $S=\{ \Theta^{*1}(|\F|,.)>0\}$ is Borel measurable by \cite[Prop.\,1.1]{Edg}.
	\end{rem}
	\begin{examp}[$S$ not closed in general]
		\label{exclos}
		For polyhedral mass fluxes (that are supported on finitely many line segments) the set $S$ can clearly be chosen to be closed. In general this is not the case. A simple example is given by $\tau (m)=m$, $\mu_+=\sum_{x\in\mathbb{Q}\cap [0,1]}\varphi (x)\delta_{(x,0)}$ and $\mu_-=\sum_{x\in\mathbb{Q}\cap [0,1]}\varphi (x)\delta_{(x,1)}$, where $\varphi:\mathbb{Q}\cap [0,1]\to [0,1]$ satisfies $\sum_x\varphi (x)=1$ (see \cref{fig5}).
	\end{examp}
	\begin{figure}[t]
		\centering
		\begin{tikzpicture}
		\draw[->] (1,0) -- (1,0.3);
		\draw[->] (2,0) -- (2,0.8);
		\draw[->] (2.5,0) -- (2.5,0.6);
		\draw[->] (3,0) -- (3,0.5);
		
		\draw (1,0) -- (1,4);
		\draw (2,0) -- (2,4);
		\draw (2.5,0) -- (2.5,4);
		\draw (3,0) -- (3,4);
		
		\node[label={[label distance=-0.225cm]180: $\varphi(x)$}] at (2,0.4) {};
		\node[label={-90: $S=(\mathbb{Q}\cap[0,1])\times[0,1]$}] at (2,-0.2) {};
		
		\node[circle,fill=black,inner sep=0.5pt,minimum size=0.1cm] at (1,0) {};
		\node[circle,fill=black,inner sep=0.5pt,minimum size=0.1cm,label={-90: $x$}] at (2,0) {};
		\node[circle,fill=black,inner sep=0.5pt,minimum size=0.1cm] at (2.5,0) {};
		\node[circle,fill=black,inner sep=0.5pt,minimum size=0.1cm] at (3,0) {};
		
		\node[circle,fill=black,inner sep=0.5pt,minimum size=0.01cm] at (0.5,0) {};
		\node[circle,fill=black,inner sep=0.5pt,minimum size=0.01cm] at (1.5,0) {};
		\node[circle,fill=black,inner sep=0.5pt,minimum size=0.01cm] at (2.25,0) {};
		\node[circle,fill=black,inner sep=0.5pt,minimum size=0.01cm] at (2.75,0) {};
		\node[circle,fill=black,inner sep=0.5pt,minimum size=0.01cm] at (3.5,0) {};
		
		\node[circle,fill=black,inner sep=0.5pt,minimum size=0.01cm] at (0.25,0) {};
		\node[circle,fill=black,inner sep=0.5pt,minimum size=0.01cm] at (1.25,0) {};
		\node[circle,fill=black,inner sep=0.5pt,minimum size=0.01cm] at (2.125,0) {};
		\node[circle,fill=black,inner sep=0.5pt,minimum size=0.01cm] at (2.625,0) {};
		\node[circle,fill=black,inner sep=0.5pt,minimum size=0.01cm] at (3.25,0) {};
		
		\node[circle,fill=black,inner sep=0.5pt,minimum size=0.01cm] at (0.75,0) {};
		\node[circle,fill=black,inner sep=0.5pt,minimum size=0.01cm] at (1.75,0) {};
		\node[circle,fill=black,inner sep=0.5pt,minimum size=0.01cm] at (2.375,0) {};
		\node[circle,fill=black,inner sep=0.5pt,minimum size=0.01cm] at (2.875,0) {};
		\node[circle,fill=black,inner sep=0.5pt,minimum size=0.01cm] at (3.75,0) {};
		
		\node[draw=black,circle,fill=white,inner sep=0.5pt,minimum size=0.1cm] at (1,4) {};
		\node[draw=black,circle,fill=white,inner sep=0.5pt,minimum size=0.1cm] at (2,4) {};
		\node[draw=black,circle,fill=white,inner sep=0.5pt,minimum size=0.1cm] at (2.5,4) {};
		\node[draw=black,circle,fill=white,inner sep=0.5pt,minimum size=0.1cm] at (3,4) {};
		
		\node[circle,fill=black,inner sep=0.5pt,minimum size=0.01cm] at (0.5,4) {};
		\node[circle,fill=black,inner sep=0.5pt,minimum size=0.01cm] at (1.5,4) {};
		\node[circle,fill=black,inner sep=0.5pt,minimum size=0.01cm] at (2.25,4) {};
		\node[circle,fill=black,inner sep=0.5pt,minimum size=0.01cm] at (2.75,4) {};
		\node[circle,fill=black,inner sep=0.5pt,minimum size=0.01cm] at (3.5,4) {};
		
		\node[circle,fill=black,inner sep=0.5pt,minimum size=0.01cm] at (0.25,4) {};
		\node[circle,fill=black,inner sep=0.5pt,minimum size=0.01cm] at (1.25,4) {};
		\node[circle,fill=black,inner sep=0.5pt,minimum size=0.01cm] at (2.125,4) {};
		\node[circle,fill=black,inner sep=0.5pt,minimum size=0.01cm] at (2.625,4) {};
		\node[circle,fill=black,inner sep=0.5pt,minimum size=0.01cm] at (3.25,4) {};
		
		\node[circle,fill=black,inner sep=0.5pt,minimum size=0.01cm] at (0.75,4) {};
		\node[circle,fill=black,inner sep=0.5pt,minimum size=0.01cm] at (1.75,4) {};
		\node[circle,fill=black,inner sep=0.5pt,minimum size=0.01cm] at (2.375,4) {};
		\node[circle,fill=black,inner sep=0.5pt,minimum size=0.01cm] at (2.875,4) {};
		\node[circle,fill=black,inner sep=0.5pt,minimum size=0.01cm] at (3.75,4) {};
		
		\end{tikzpicture}
		\caption{Sketch for \cref{exclos}.}
		\label{fig5}
	\end{figure}
	Using \cite[Thm.\,3.1]{Sil} it is easy to see that the following properties of $\xi$ and $\F^\perp$ hold true.
	\begin{corr}[Integrability of mass density and property of diffuse part]
		\label{intdens}
		Assume that $\F\in\mathcal{DM}^n(\mathcal{C})$ satisfies $\mathcal{J}^{\tau,\mu_+,\mu_-}[\F]<\infty$ and write $\F=\xi\Ha\mres S+\F^\perp$ as in \cref{gengilbert}. Then the function $\xi$ is integrable with respect to $\Ha\mres S$ and $\F^\perp\mres S=0$. Those properties are independent of the triple $(\xi,S,\F^\perp)$.
	\end{corr}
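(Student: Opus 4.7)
The plan is to compare the decomposition from \cref{gengilbert} with the canonical one provided by \cite[Thm.\,3.1]{Sil}, using uniqueness of Lebesgue decomposition to identify the two and thereby transfer the properties from the canonical decomposition to ours.

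First, I would invoke \cite[Thm.\,3.1]{Sil} on $\F\in\mathcal{DM}^n(\mathcal{C})$ to obtain a decomposition $\F=\vartheta\Ha\mres M+\G$ where $M\subset\mathcal{C}$ is countably $1$-rectifiable, $\vartheta\in L^1(\Ha\mres M;\R^n)$, and $\G\in\mathcal{M}^n(\mathcal{C})$ is $\Ha$-diffuse, i.e., $|\G|(B)=0$ whenever $\Ha(B)<\infty$. In particular, a countable-union argument shows $|\G|(R)=0$ for every countably $1$-rectifiable $R\subset\mathcal{C}$ (each such $R$ is a countable union of Lipschitz images of bounded subsets of $\R$, hence of sets of finite $\Ha$-measure).

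Next, set $R=S\cup M$; this is still countably $1$-rectifiable and $\nu:=\Ha\mres R$ is $\sigma$-finite (since $R\subset\mathcal{C}$ is bounded). I would then observe that both sides of the identity
\begin{equation*}
\xi\Ha\mres S+\F^\perp=\vartheta\Ha\mres M+\G
\end{equation*}
admit a Lebesgue decomposition with respect to $\nu$: the two terms $\xi\Ha\mres S$ and $\vartheta\Ha\mres M$ are absolutely continuous with respect to $\nu$, while $\F^\perp$ is singular by the defining property in \cref{gengilbert} (with $R=S\cup M$) and $\G$ is singular since $|\G|$ vanishes on $R$. Uniqueness of the Lebesgue decomposition then forces $\xi\Ha\mres S=\vartheta\Ha\mres M$ and $\F^\perp=\G$.

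From the first identity, restricting to $S$ and $S\cap M$, one concludes that $\xi$ coincides $\Ha$-a.e.\ on $S$ with the restriction of $\vartheta$, so $\xi\in L^1(\Ha\mres S;\R^n)$ as $\vartheta\in L^1(\Ha\mres M;\R^n)$. From the second identity, $|\F^\perp|=|\G|$, and since $S$ is countably $1$-rectifiable, $|\F^\perp|(S)=|\G|(S)=0$, i.e., $\F^\perp\mres S=0$. Independence from the triple follows automatically: the argument starts from an arbitrary representation $(\xi,S,\F^\perp)$ of $\F$ from \cref{gengilbert} and compares it to the intrinsic Silhavý decomposition of $\F$, so the conclusions hold regardless of the choice. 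The only subtle point to verify carefully is that the singularity of $\F^\perp$ in the definition of \cref{gengilbert} really applies to $R=S\cup M$ (not just to $S$), which is immediate since a finite union of countably $1$-rectifiable sets is itself countably $1$-rectifiable.
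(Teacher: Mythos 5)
Your proof is correct and takes essentially the same route as the paper: both compare the decomposition of \cref{gengilbert} with the decomposition $\F=\vartheta\Ha\mres M+\G$ from \cite[Thm.\,3.1]{Sil} and identify the two via the uniqueness of the absolutely continuous/singular splitting with respect to $\Ha$ restricted to a countably $1$-rectifiable set (the paper restricts to $S$ and distinguishes $n>1$ from $n=1$ because it keeps the extra Lebesgue term $\psi\mathcal{L}^n\mres\mathcal{C}$, which your two-term formulation absorbs). One small repair: $\Ha\mres(S\cup M)$ is $\sigma$-finite because $S\cup M$ is countably $1$-rectifiable, hence a countable union of sets of finite $\Ha$-measure, not because it is bounded.
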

	\begin{proof}
		By \cite[Thm.\,3.1]{Sil} every $\F\in\mathcal{DM}^n(\mathcal{C})$ can be written as $\F=\vartheta\Ha\mres M+\G+\psi\mathcal{L}^n\mres\mathcal{C}$ with $M\subset\mathcal{C}$ countably $1$-rectifiable, $\vartheta\in L^1(\Ha\mres M;\R^n)$ tangent to $M$ $\Ha$-almost everywhere, $\G\in\mathcal{M}^n(\mathcal{C})$ $\Ha$-diffuse and $\mathcal{L}^n$-singular as well as $\psi\in L^1(\mathcal{L}^n\mres\mathcal{C};\R^n)$. If $\mathcal{J}^{\tau,\mu_+,\mu_-}[\F]<\infty$, we have 
		\begin{equation*}
		\F=\xi\Ha\mres S+\F^\perp=\vartheta\Ha\mres M+\G+\psi\mathcal{L}^n\mres\mathcal{C}
		\end{equation*}
		with $S,\xi,\F^\perp$ as in \cref{gengilbert}. If $n>1$, we get $(\G+\psi\mathcal{L}^n\mres\mathcal{C})\mres S=0$. Using this and $\F^\perp\perp\Ha\mres S$ (\cref{gengilbert}) we obtain $\F^\perp\mres S=0$ and $\xi\Ha\mres S=\vartheta\Ha\mres (M\cap S)$, which yields $\xi\in L^1(\Ha\mres S;\R^n)$. For the case $n=1$ we can write $\xi\Ha\mres S+\F^\perp=\zeta\Ha\mres\mathcal{C}$ with the $(\Ha\mres\mathcal{C})$-integrable function
		\begin{equation*}
		\zeta=\begin{cases*}
		\vartheta+\psi&on $M$,\\
		\psi&\textup{on} $\mathcal{C}\setminus M$.
		\end{cases*}
		\end{equation*}
		The same argument as for the case $n>1$ then yields $\xi\in L^1(\Ha\mres S;\R^n)$ and $\F^\perp\mres S=0$.
	\end{proof}
	For the next result we will need the notion of irrigation patterns (see \cite{MSM03,BCM05,MS13}), which are an alternative way to describe mass fluxes involving a time dependency. Broadly speaking, a mass flux between $\mu_+$ and $\mu_-$ can be seen as a superposition of particle trajectories. The so-called standard space $([0,1],\mathcal{B}([0,1]),\mathcal{L}\mres [0,1])$ can be used as a parameterization of all particles \cite[Ch.\,15, Thm.\,16]{R88}.
	\begin{defin}[Reference space, irrigation pattern between $\mu_+,\mu_-$, total mass flux through $x$]
		We define a map $\chi:[0,1]\times I\to\mathcal{C}$ such that $\chi(p,t)$ describes the position of particle $p$ at time $t$.
		\begin{itemize}
			\item The \textbf{reference space} for particles is the measure space $([0,1],\mathcal{B}([0,1]),\mathcal{L}\mres [0,1])$.
			\item An \textbf{irrigation pattern} is a Borel measurable map $\chi:[0,1]\times I\to\mathcal{C}$ such that $\chi(p,.)$ is absolutely continuous for $\mathcal{L}$-almost all $p\in [0,1]$.
		\end{itemize}
		Let $\chi$ be an irrigation pattern.
		\begin{itemize}
			\item We say that $\chi$ is an \textbf{irrigation pattern between the probability measures} $\mu_+$ and $\mu_-$ if and only if
			\begin{equation*}
			\mu_+(B)=\mathcal{L}(\{ p\in [0,1]\,|\,\chi(p,0)\in B \})\qquad\textup{and}\qquad\mu_-(B)=\mathcal{L}(\{ p\in [0,1]\,|\,\chi(p,1)\in B\})
			\end{equation*}
			for all $B\in\mathcal{B}(\mathcal{C})$.
			\item For $x\in\mathcal{C}$ we set $[x]_\chi=\{ p\in [0,1]\,|\, x\in\chi(p,I) \}$. The \textbf{total mass flux} through $x$ is defined by $m_\chi(x)=\mathcal{L}([x]_\chi)$.
		\end{itemize}
	\end{defin}
	The following summarizing statement will be used in \cref{bilevel} to explicitly construct a lower semi-continuous friction coefficient $b:S\to[0,\infty)$ based on $x\mapsto|\xi(x)|$. Note that the first two points follow directly from \cref{gengilbert,decomp,intdens}.
	\begin{corr}[Other properties of mass density]
		\label{propmassdens}
		Assume that there is a mass flux $\G\in\mathcal{DM}^n(\mathcal{C} )$ with $\mathcal{J}^{\tau,\mu_+,\mu_-}[\G]<\infty$. Then there exists some $\F\in\mathcal{DM}^n(\mathcal{C} )$ with $\mathcal{J}^{\tau,\mu_+,\mu_-}[\F]\leq\mathcal{J}^{\tau,\mu_+,\mu_-}[\G]$ and decomposition $\F=\xi\Ha \mres S+\F^\perp $ as in \cref{gengilbert} such that
		\begin{itemize}
			\item $S$ is countably $1$-rectifiable and Borel measurable,
			\item $\xi\in L^1(\Ha\mres S;\R^n)$ and $\F^\perp\mres S=0$,
			\item we can choose a representative of $\xi$ such that $|\xi|$ is bounded by the total mass $\mu_+(\mathcal{C})=1$ on $S$, $S\ni x\mapsto |\xi(x)|$ is upper semi-continuous, $\{ |\xi |\geq m \}=\{ x\in S\,|\, |\xi(x)|\geq m \}$ is closed in $\mathcal{C}$ and $\Ha(\{ |\xi |\geq m \})<\infty$ for every $m>0$,
			\item we can write $S=\bigcup_{m>0}\{ |\xi |\geq m \}$.
		\end{itemize}
	\end{corr}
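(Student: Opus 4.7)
The first two bullet points are immediate from \cref{gengilbert,decomp,intdens}; the substance lies in producing a representative of $\xi$ satisfying the remaining two properties. My plan is to first replace $\G$ by an $\F$ coming from an irrigation pattern, and then select $|\xi|$ to equal the associated multiplicity.

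Starting from an approximating graph sequence $(\G_k,\mu_+^k,\mu_-^k)\ws(\G,\mu_+,\mu_-)$ realizing the infimum in \cref{defags}, each polyhedral $\G_k$ is canonically described by an irrigation pattern $\chi_k:[0,1]\times I\to\mathcal{C}$ in which Lebesgue-many particles traverse the edges of $\G_k$ at constant speed. A standard compactness argument (reparameterize so that the Lipschitz constants of the trajectories are uniformly bounded, then apply Arzel\`a--Ascoli fiberwise together with an extraction along a countable dense subset of $[0,1]$) produces a Lagrangian limit $\chi$ transporting $\mu_+$ onto $\mu_-$. Its Eulerian counterpart $\F\in\mathcal{DM}^n(\mathcal{C})$ satisfies $\mathcal{J}^{\tau,\mu_+,\mu_-}[\F]\leq\mathcal{J}^{\tau,\mu_+,\mu_-}[\G]$ by lower semi-continuity of the Lagrangian energy under the Eulerian--Lagrangian equivalence \cite{MSM03,BCM05}.

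With the decomposition $\F=\xi\Ha\mres S+\F^\perp$ from \cref{gengilbert}, I would select the representative
\begin{equation*}
|\xi|(x):=m_\chi(x)=\mathcal{L}([x]_\chi),\qquad x\in\mathcal{C}.
\end{equation*}
Three facts need checking. (i) The identification $m_\chi=|\xi|$ holds $\Ha\mres S$-almost everywhere on $S$, via the Eulerian--Lagrangian correspondence together with the characterization $|\xi|=\Theta^{*1}(|\F|,\cdot)$ $\Ha$-a.e.\ on $S=\{\Theta^{*1}(|\F|,\cdot)>0\}$. (ii) $m_\chi\leq 1$ is immediate from $[x]_\chi\subseteq[0,1]$. (iii) For upper semi-continuity, let $x_n\to x$: continuity of $\chi(p,\cdot)$ and compactness of $\chi(p,I)$ imply that every $p$ lying in $[x_n]_\chi$ for infinitely many $n$ also satisfies $x\in\chi(p,I)$, so $\limsup_n[x_n]_\chi\subseteq[x]_\chi$, and reverse Fatou (applicable since $[x_n]_\chi\subseteq[0,1]$) gives
\begin{equation*}
\limsup_n m_\chi(x_n)\leq\mathcal{L}\bigl(\limsup_n[x_n]_\chi\bigr)\leq m_\chi(x).
\end{equation*}

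Upper semi-continuity makes each super-level set $\{|\xi|\geq m\}$ closed, and the bound
\begin{equation*}
\tau(m)\,\Ha(\{|\xi|\geq m\})\leq\int_S\tau(|\xi|)\,\mathrm{d}\Ha\leq\mathcal{J}^{\tau,\mu_+,\mu_-}[\F]<\infty,
\end{equation*}
valid because $\tau(m)>0$ for every $m>0$ (from concavity, monotonicity, and $\tau(0)=0$), yields the finite Hausdorff measure. Replacing $S$ by $\{|\xi|>0\}\subseteq S$ preserves $\F^\perp\mres S=0$ and the measure $\xi\Ha\mres S$, and yields $S=\bigcup_{m>0}\{|\xi|\geq m\}$. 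The main obstacle is fact (i): rigorously identifying the Lagrangian multiplicity $m_\chi$ with the Eulerian mass density $|\xi|$ $\Ha$-almost everywhere requires the full Eulerian--Lagrangian equivalence machinery combined with an area-formula argument along the rectifiable structure of $S$.
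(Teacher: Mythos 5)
Your overall strategy---pass to an irrigation pattern $\chi$, take $|\xi|=m_\chi$ as the representative, and deduce upper semi-continuity, boundedness and finiteness of the superlevel sets from properties of the multiplicity---is exactly the idea of the paper's proof, and your direct argument for upper semi-continuity of $m_\chi$ (via $\limsup_n[x_n]_\chi\subseteq[x]_\chi$ and reverse Fatou) is a correct self-contained version of what the paper simply cites. The boundedness and the estimate $\tau(m)\,\Ha(\{|\xi|\geq m\})\leq\mathcal{J}^{\tau,\mu_+,\mu_-}[\F]$ also match the paper.

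However, the route by which you obtain $\chi$ creates a genuine gap precisely at the point you flag as ``fact (i)''. You construct $\chi$ as a Lagrangian limit of patterns associated with an approximating graph sequence and only afterwards pass to the Eulerian flux $\F$. For such a limit pattern the identification $m_\chi=|\xi|$ $\Ha$-a.e.\ on $S$ is in general \emph{false}: particles may traverse a piece of the network in opposite directions (or double back), in which case the Eulerian density $|\xi|$ cancels while the multiplicity $m_\chi$ does not, so $|\xi|<m_\chi$ on a set of positive $\Ha$-measure. No amount of ``Eulerian--Lagrangian equivalence machinery'' closes this; one must first remove the cancelling/divergence-free part. The paper does this by applying Smirnov's decomposition theorem to $\G$, writing $\G=\F_0+\F$ with $\operatorname{div}(\F_0)=0$ and $\F$ decomposed into simple oriented curves \emph{without cancellation} (in the sense $|\F|(B)=\int_\Theta|\F_\gamma|(B)\,\mathrm{d}\eta(\gamma)$), and only then produces $\chi_\F$ from $\eta$ via Skorohod; this order of operations is what makes $m_{\chi_\F}=|\xi|$ $\Ha$-a.e.\ and simultaneously yields $\mathcal{J}^{\tau,\mu_+,\mu_-}[\F]\leq\mathcal{J}^{\tau,\mu_+,\mu_-}[\G]$ from $|\xi|\leq|\xi_\G|$ and $|\F^\perp|(\mathcal{C})\leq|\G^\perp|(\mathcal{C})$. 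A secondary problem is your compactness step: irrigation patterns are not continuous in the particle variable $p$, so ``Arzel\`a--Ascoli fiberwise plus extraction along a countable dense subset of $[0,1]$'' does not produce a convergent subsequence; the correct compactness statement for patterns is proved by tightness of the induced measures on path space, and you would in any case be re-proving results you could instead cite. I recommend restructuring the proof to start from the Smirnov decomposition of $\G$ (as the paper does), after which your arguments for the remaining bullet points go through.
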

	\begin{proof}
		By \cref{gengilbert,decomp,intdens} we can write $\G=\xi_\G\Ha\mres S+\G^\perp$ with $S$ countably $1$-rectifiable and Borel measurable, $\xi_\G\in L^1(\Ha\mres S;\R^n)$ and $\G^\perp\in\mathcal{M}^n(\mathcal{C})$ with $\G^\perp\mres S=0$. As in the proof of the first inequality of \cref{BWfinal} in \cref{subs5} we use the idea in the proof of \cite[Prop.\,4.1]{BW}; we briefly recapitulate the steps: By \cite[Thm.\,C]{S} we have $\G=\F_0+\F$, where $\textup{div}(\F_0)=0$ and $\F$ can be decomposed into simple oriented curves of finite length, i.e., into measures of type $\F_\gamma$ from \cref{lintm} (see also \cite[Exm.\,1]{S}). The measure $\F$ can be decomposed as $\F=\xi\Ha \mres S+\F^\perp\in\mathcal{DM}^n(\mathcal{C})$ with $|\xi|\leq|\xi_\G|$ and $|\F^\perp|(\mathcal{C})\leq|\G^\perp|(\mathcal{C})$ and satisfies $\textup{div}(\F)=\mu_+-\mu_-$ as well as $\mathcal{J}^{\tau,\mu_+,\mu_-}[\F]\leq\mathcal{J}^{\tau,\mu_+,\mu_-}[\G]$. Furthermore, $\F$ can be associated with a mass flux measure $\eta$ on $\Theta$ moving $\mu_+$ onto $\mu_-$ (recall \cref{mfmeas}) by \cite[Thm.\,C]{S}. More precisely, we have
		\begin{align*}
		\int_{\mathcal{C}}\varphi\cdot\,\mathrm{d}\F&=\int_{\Theta }\int_I\varphi (\gamma (t) )\cdot\dot{\gamma }(t)\,\mathrm{d}\Le(t)\,\mathrm{d}\eta (\gamma )\quad\textup{ for all }\varphi\in C(\mathcal{C};\R^n)\textup{ and}\\
		\int_{\mathcal{C} }\psi\,\mathrm{d}|\F|&=\int_{\Theta }\int_I\psi (\gamma (t))|\dot{\gamma (t)}|\,\mathrm{d}\Le(t)\,\mathrm{d}\eta (\gamma )\quad\textup{ for all }\psi\in C(\mathcal{C}).
		\end{align*}
		Using Skorohod's theorem \cite[Thm.\,6.7]{B} there is an irrigation pattern $\chi_\F$ between $\mu_+$ and $\mu_-$ which induces $\eta$,
		\begin{align*}
		\int_{\mathcal{C}}\varphi\cdot\,\mathrm{d}\F&=\int_{[0,1] }\int_I\varphi (\chi_\F(p,t) )\cdot \dot{\chi }_\F(p,t)\,\mathrm{d}\Le(t)\,\mathrm{d}\Le (p)\quad\textup{ for all }\varphi\in C(\mathcal{C};\R^n)\textup{ and}\\
		\int_{\mathcal{C} }\psi\,\mathrm{d}|\F|&=\int_{[0,1] }\int_I\psi (\chi_\F(p,t))|\dot{\chi }_\F(p,t)|\,\mathrm{d}\Le(t)\,\mathrm{d}\Le (p) \quad\textup{ for all }\psi\in C(\mathcal{C}).
		\end{align*}
		Here $\dot{\chi}_\F$ denotes the derivative with respect to the second argument (which exists $\Le\mres I$-almost everywhere). By \cite[Prop.\,4.2]{BW} we may assume $S=\{ m_{\chi_\F}>0\}$. Additionally, $m_{\chi_\F}(x)=|\xi (x)|$ for $\Ha$-almost every $x\in S$ by the proof of \cite[Prop.\,4.1]{BW}. This shows the desired formula for $S$ by changing $\xi$ such that $m_{\chi_\F}=|\xi |$ on $S$. More specifically, represent $\xi$ by
		\begin{equation*}
		\tilde{\xi}=\begin{cases}
		\xi&\textup{on }\{ |\xi| =m_{\chi_\F}\},\\
		(m_{\chi_\F},0,\ldots ,0)^T&\textup{on }\{ |\xi| \neq m_{\chi_\F}\}.
		\end{cases}
		\end{equation*}
		Now fix $m>0$ and let $(x_i)\subset\{|\xi|\geq m\}$ be a sequence with $x_i\to x\in\mathcal{C}$. By \cite[Lem.\,3.25]{BCM} the function $\mathcal{C}\ni y\mapsto m_{\chi_\F}(y)$ is upper semi-continuous. This implies $m_{\chi_\F}(x)\geq\limsup_im_{\chi_\F}(x_i)=\limsup_i|\xi (x_i)|\geq m$. In particular, we have $m_{\chi_\F}(x)>0$ and thus $x\in S$, which implies $|\xi(x)|=m_{\chi_\F}(x)\geq m$. This proves the closedness of $\{|\xi|\geq m\}$. Moreover, $|\xi|$ is upper semi-continuous by $|\xi|=m_{\chi_\F}$ on $S$. The boundedness of $|\xi|$ follows from
		\begin{equation*}
		|\xi|=m_{\chi_\F}=\mathcal{L}([.]_{\chi_\F})\leq 1.
		\end{equation*}
		Finally, we have
		\begin{align*}
		\infty>\mathcal{J}^{\tau,\mu_+,\mu_-}[\F]=\int_S\tau (|\xi |)\,\mathrm{d}\Ha+\tau'(0)|\F^\perp |(\mathcal{C} )\geq\int_{\{ |\xi|\geq m\}}\tau (|\xi |)\,\mathrm{d}\Ha\geq \int_{\{ |\xi|\geq m\}}\tau (m)\,\mathrm{d}\Ha=\tau (m)\Ha(\{ |\xi|\geq m\})
		\end{align*}
		and thus $\Ha(\{ |\xi|\geq m\})<\infty$ for all $m>0$.
	\end{proof}
	We end this subsection by reformulating the branched transport problem such that the variables to be optimized are as in the setting of the Beckmann problem from \cref{BWfinal}.
	\begin{lem}[Version of the branched transport problem]
		\label{version}
		The branched transport problem can be written as
		\begin{equation*}
		\inf_{\F\in\mathcal{DM}^n(\mathcal{C}) }\mathcal{J}^{\tau,\mu_+,\mu_-}[\F]=\inf_{S,\xi,\F^\perp}\int_S\tau (|\xi|)\,\mathrm{d}\Ha+\tau'(0)|\F^\perp|(\mathcal{C})
		\end{equation*}
		with $S\subset \mathcal{C}$ countably $1$-rectifiable and Borel measurable, $\xi\in L^1(\Ha\mres S;\R^n)$ and $\F^\perp\in\mathcal{M}^n(\mathcal{C})$ with $\F^\perp\mres S=0$ and $\textup{div}(\xi\Ha\mres S+\F^\perp)=\mu_+-\mu_-$.
	\end{lem}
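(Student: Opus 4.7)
The claim is an equality of two infima, and the plan is to establish the two inequalities separately, with the ``$\geq$'' direction being essentially an application of \cref{gengilbert} and the ``$\leq$'' direction requiring a little extra bookkeeping.

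For $\inf_{\F}\mathcal{J}^{\tau,\mu_+,\mu_-}[\F]\geq\inf_{S,\xi,\F^\perp}(\ldots)$, I would take any $\F\in\mathcal{DM}^n(\mathcal{C})$ with finite branched transport cost and invoke the ``moreover'' part of \cref{gengilbert} to use the canonical decomposition with $S=\{\Theta^{*1}(|\F|,\cdot)>0\}$ (Borel measurable by \cref{decomp} and countably $1$-rectifiable by \cref{gengilbert}) and $\F^\perp=\F\mres(\mathcal{C}\setminus S)$. By \cref{intdens} this decomposition automatically satisfies $\xi\in L^1(\Ha\mres S;\R^n)$ and $\F^\perp\mres S=0$, so $(S,\xi,\F^\perp)$ is admissible for the right-hand side, and the cost formula in \cref{gengilbert} identifies $\mathcal{J}^{\tau,\mu_+,\mu_-}[\F]$ with the right-hand side value, yielding the inequality.

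For the reverse inequality, let $(S,\xi,\F^\perp)$ be admissible for the right-hand side with finite cost and set $\F=\xi\Ha\mres S+\F^\perp\in\mathcal{DM}^n(\mathcal{C})$. To apply \cref{gengilbert} I need a decomposition of $\F$ whose diffuse part is singular with respect to $\Ha\mres R$ for \emph{every} countably $1$-rectifiable $R$, a strictly stronger condition than the assumed $\F^\perp\mres S=0$. I would therefore refine $\F^\perp$ via Silhav\'y's theorem \cite[Thm.\,3.1]{Sil} as $\F^\perp=\vartheta\Ha\mres M+\G$ with $M$ countably $1$-rectifiable, $\vartheta\in L^1(\Ha\mres M;\R^n)$ tangent to $M$ $\Ha$-almost everywhere, and $\G$ genuinely $\Ha$-diffuse (absorbing the Lebesgue part of the Silhav\'y decomposition into $\G$ for $n\geq 2$, and into $M=\mathcal{C}$ for $n=1$, since $\mathcal{C}=[-1,1]$ is itself $1$-rectifiable). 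The assumption $\F^\perp\mres S=0$ forces $\vartheta=0$ $\Ha$-almost everywhere on $M\cap S$, so up to an $\Ha$-null set I may take $M\cap S=\emptyset$. The combined decomposition $\F=\xi\Ha\mres S+\vartheta\Ha\mres M+\G$ on the countably $1$-rectifiable set $S\cup M$ with diffuse part $\G$ then feeds \cref{gengilbert}, yielding
\[
\mathcal{J}^{\tau,\mu_+,\mu_-}[\F]=\int_S\tau(|\xi|)\,\mathrm{d}\Ha+\int_M\tau(|\vartheta|)\,\mathrm{d}\Ha+\tau'(0)|\G|(\mathcal{C}).
\]
The concavity of $\tau$ together with $\tau(0)=0$ gives the affine majorant $\tau(m)\leq\tau'(0)m$, so $\int_M\tau(|\vartheta|)\,\mathrm{d}\Ha\leq\tau'(0)\int_M|\vartheta|\,\mathrm{d}\Ha$, and the mutual singularity of $\vartheta\Ha\mres M$ and $\G$ yields $|\F^\perp|(\mathcal{C})=\int_M|\vartheta|\,\mathrm{d}\Ha+|\G|(\mathcal{C})$. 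Combining these two observations gives the desired bound $\mathcal{J}^{\tau,\mu_+,\mu_-}[\F]\leq\int_S\tau(|\xi|)\,\mathrm{d}\Ha+\tau'(0)|\F^\perp|(\mathcal{C})$.

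The main technical obstacle is justifying that the combined decomposition meets the tangency hypothesis of \cref{gengilbert}, i.e.\ that $\xi$ is tangent to $S$ $\Ha$-almost everywhere. This should be forced by the admissibility constraints themselves: any non-trivial normal component of $\xi$ would generate a distributional divergence contribution concentrated on $S$ that is strictly of higher order than a Radon measure, and the hypothesis $\F^\perp\mres S=0$ prevents $\F^\perp$ from cancelling any such contribution. I expect a short local argument on an approximating sequence for $S$ (in the sense of \cref{notdefs}), combined with the approximate tangent property, to suffice; everything else then reduces to bookkeeping with Silhav\'y's decomposition and the concavity of $\tau$.
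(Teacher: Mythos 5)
Your first direction (take the canonical decomposition $S=\{\Theta^{*1}(|\F|,\cdot)>0\}$, $\F^\perp=\F\mres(\mathcal{C}\setminus S)$ and read off the cost from \cref{gengilbert}, \cref{decomp}, \cref{intdens}) is exactly what the paper does. The problem is in your reverse direction, and it is precisely the point you flag yourself as "the main technical obstacle": to evaluate $\mathcal{J}^{\tau,\mu_+,\mu_-}$ on $\F=\xi\Ha\mres S+\F^\perp$ via \cref{gengilbert} you need a decomposition whose rectifiable density is tangent to its carrying set $\Ha$-almost everywhere, and nothing in the admissibility conditions of the lemma gives you tangency of $\xi$ to $S$. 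Your proposed fix — that a normal component of $\xi$ would produce a divergence "of higher order than a Radon measure", hence is excluded by $\operatorname{div}(\F)\in\mathcal{M}^1(\mathcal{C})$ — is only a heuristic: the dipole picture is clear for a single $C^1$ curve, but upgrading it to a statement about a general countably $1$-rectifiable $S$ and a general $\xi\in L^1(\Ha\mres S;\R^n)$ is essentially a structure theorem for divergence-measure fields, i.e.\ the very content of \cite[Thm.\,3.1]{Sil}. Note also that the divergence constraint alone cannot localize onto $S$: a normal component of $\xi\Ha\mres S$ could in principle be compensated by the (merely $\Ha\mres S$-singular, not a priori harmless) measure $\F^\perp$ elsewhere, so the "short local argument" would have to handle the interaction of the two pieces as well. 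As written, this step is a genuine gap.

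The paper closes exactly this gap by applying Silhav\'y's theorem to the \emph{whole} measure $\xi\Ha\mres S+\F^\perp\in\mathcal{DM}^n(\mathcal{C})$ rather than to $\F^\perp$ alone: one obtains $\xi\Ha\mres S+\F^\perp=\vartheta\Ha\mres M+\G$ with $\vartheta$ tangent to $M$ and $\G$ $\Ha$-diffuse, which \emph{is} an admissible decomposition for \cref{gengilbert} with no further argument. Since $\G$ and the restriction of $\vartheta\Ha\mres M$ vanish on $\Ha$-finite sets off $M$, one gets $\xi\Ha\mres S=\vartheta\Ha\mres(M\cap S)$, hence $\xi=\vartheta$ $\Ha$-a.e.\ on $M\cap S$ and $\xi=0$ $\Ha$-a.e.\ on $S\setminus M$ (so $\tau(|\xi|)=0$ there), and $\F^\perp=\vartheta\Ha\mres(M\setminus S)+\G\mres(\mathcal{C}\setminus S)$. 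From this point your computation is the paper's: the concavity bound $\tau(|\vartheta|)\leq\tau'(0)|\vartheta|$ on $M\setminus S$ and additivity of total variation of mutually singular measures give $\mathcal{J}^{\tau,\mu_+,\mu_-}[\vartheta\Ha\mres M+\G]\leq\int_S\tau(|\xi|)\,\mathrm{d}\Ha+\tau'(0)|\F^\perp|(\mathcal{C})$. I recommend you restructure the second half of your proof along these lines; the tangency issue then never arises.
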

	\begin{proof}
		By \cref{supplem,gengilbert,intdens,decomp} the right-hand side is automatically smaller than or equal to the left-hand side. For the reverse inequality, let $S,\xi,\F^\perp$ satisfy the stated properties. We can assume that $\textup{div}(\xi\Ha\mres S+\F^\perp)=\mu_+-\mu_-$ (otherwise the inequality is obvious). By \cite[Thm.\,3.1]{Sil} we have $\xi\Ha\mres S+\F^\perp=\vartheta\Ha\mres M+\G$ with $M$ countably $1$-rectifiable, $\vartheta\in L^1(\Ha\mres M;\R^n)$ tangent to $M$ $\Ha\mres M$-almost everywhere and $\G$ $\Ha$-diffuse. This is an admissible decomposition in the sense of \cref{gengilbert}. We have $\xi\Ha\mres S=\vartheta\Ha\mres (M\cap S)$ and $\xi=\vartheta$ $\Ha$-almost everywhere on $M\cap S$ as well as $\xi=0$ $\Ha$-almost everywhere on $S\setminus M$. Additionally, we get $\F^\perp=\F^\perp\mres (\mathcal{C}\setminus S)=\vartheta\Ha\mres (M\setminus S)+\G\mres (\mathcal{C}\setminus S)$. To conclude, we estimate 
		\begin{align*}
		\mathcal{J}^{\tau,\mu_+,\mu_-}[\vartheta\Ha\mres M+\G]&=\int_M\tau (|\vartheta|)\,\mathrm{d}\Ha+\tau'(0)|\G|(\mathcal{C})=\int_{M\cap S}\tau (|\xi|)\,\mathrm{d}\Ha+\int_{M\setminus S}\tau (|\vartheta|)\,\mathrm{d}\Ha+\tau'(0)|\G|(\mathcal{C})\\
		&\leq \int_{M\cap S}\tau (|\xi|)\,\mathrm{d}\Ha+\tau'(0)\int_{M\setminus S} |\vartheta|\,\mathrm{d}\Ha+\tau'(0)|\G|(\mathcal{C})\\
		&= \int_{S}\tau (|\xi|)\,\mathrm{d}\Ha+\tau'(0)|\vartheta\Ha\mres (M\setminus S)+\G|(\mathcal{C})\\
		&=\int_{S}\tau (|\xi|)\,\mathrm{d}\Ha+\tau'(0)|\F^\perp|(\mathcal{C}).\qedhere
		\end{align*}
	\end{proof}
	\dualityApproach{%
		\textcolor{purple}{\subsection{Duality for the total network transportation cost}}
		\label{subs32}
		We apply a generalization of Rockafellar's duality theorem between a pair of decomposable spaces of vector-valued functions \cite[Thm.\,VII-7]{CV}. Let $S\subset\mathcal{C}$ be countably $k$-rectifiable and $\mathcal{H}^k$-measurable. We denote the $\sigma$-algebra of $\mathcal{H}^k$-measurable subsets of $S$ by $\mathscr{H}^k(S)$. Using that the restriction of an outer measure on $\R^n$ to the Carath\'{e}odory measurable sets yields a complete measure space this property is adopted by $(S,\mathscr{H}^k(S),\mathcal{H}^k)$. Furthermore, the measure space $(S,\mathscr{H}^k(S),\mathcal{H}^k)$ is $\sigma$-finite (every member of an approximating sequence for $S$ has finite measure). 
		\begin{conv}
			In this section ``measurability'' and ``integrability'' will refer to the measure space $(S,\mathscr{H}^k(S),\mathcal{H}^k)$.
		\end{conv}	
		In our case, the ``vector-valued functions'' are given through integrable and real-valued functions on $S$. We consider the following vector spaces:
		\begin{align*}
		\mathscr{L}&=\{ f:S\to\R\textup{ measurable and integrable} \} ,\\
		\mathscr{L}_{\textup{b}}&=\{ f\in\mathscr{L}\, |\, f\textup{ bounded}\},\\
		\mathscr{M}_{\textup{b}}&=\{ f:S\to\R\textup{ measurable and bounded}\}.
		\end{align*}
		\begin{defin}[{{Decomposable \cite[Def.\,VII-3]{CV}}}]
			\label{defdec}
			Let $V\subset\mathscr{L}$ be a subspace. $V$ is said to be decomposable if $1_Af+1_{S\setminus A}g\in V$ for all $A\in\mathscr{H}^k(S)$ with $\mathcal{H}^k(A)<\infty$, $f\in\mathscr{M}_{\textup{b}}$ and $g\in V$.
		\end{defin}
		The vector spaces $\mathscr{L}$ and $\mathscr{L}_{\textup{b}}$ are decomposable: Let either $V=\mathscr{L}$ or $V=\mathscr{L}_{\textup{b}}$ and $A,f,g$ as in \cref{defdec}. Then we have
		\begin{equation*}
		\int_S|1_Af+1_{S\setminus A}g|\,\mathrm{d}\mathcal{H}^k\leq\mathcal{H}^k(A)\sup_A|f|+\int_{S\setminus A}|g|\,\mathrm{d}\mathcal{H}^k<\infty
		\end{equation*}
		and thus $1_Af+1_{S\setminus A}g\in\mathscr{L}$. Further, if $V=\mathscr{L}_{\textup{b}}$, then we obtain $|1_Af+1_{S\setminus A}g|\leq |f|+|g|\leq const.<\infty$. Moreover, we get $x\mapsto f(x)g(x)\in\mathscr{L}$ for all $f\in\mathscr{L},g\in \mathscr{L}_{\textup{b}}$ (cf.\ \cite[Def.\,VII-3]{CV}), which needs to be satisfied to apply \cite[Thm.\,VII-7]{CV}. 
		For $g\in\mathscr{L}$ and lower semi-continuous functions $f:\R\to (-\infty,\infty ]$ we write
		\begin{equation*}
		I_f(g)=\begin{cases}
		\int_Sf(g)\,\mathrm{d}\mathcal{H}^k& \textup{if }\int_Sf(g)^+\,\mathrm{d}\mathcal{H}^k<\infty,\\
		\infty&\textup{else},
		\end{cases}
		\end{equation*} 
		where $f(g)^+$ denotes the positive part of $f(g):S\to  (-\infty,\infty ]$. By our construction we can apply the following statement to generate the variables $b:S\to[0,\infty)$ of the urban planning problem.
		\begin{prop}[{{First part of \cite[Thm.\,VII-7]{CV}}}]
			\label{cv}
			Let $f:\R\to (-\infty,\infty ]$ be lower semi-continuous. Assume that $I_f(g_0)$ is finite for at least one $g_0\in\mathscr{L}_{\textup{b}}$. Then we obtain
			\begin{equation*}
			I_{f^*}(h)=\sup_{g\in\mathscr{L}_{\textup{b}}}\int_Sgh\,\mathrm{d}\mathcal{H}^k-I_f(g)
			\end{equation*}
			for all $h\in\mathscr{L}$.
		\end{prop}
		\begin{rem}
			Recall that $f^*$ is convex and lower semi-continuous \cite[Corollary VII-2]{CV}. 
		\end{rem}
		Fix a maintenance cost $\varepsilon$ induced by a transportation cost $\tau$, i.e., $\varepsilon(b)=(-\tau)^*(-b)$ ($\tau(m)=-\infty$ if $m<0$). Then the following formula will be used to replace the network-cost term in \cref{version}.
		\begin{corr}[Substitution of maintenance cost]
			\label{subst}
			Assume that $S\subset \mathcal{C}$ is countably $k$-rectifiable and measurable with $\mathcal{H}^k(S)<\infty$. Furthermore, let $\xi :S\to\R^n$ be measurable with $\int_S|\xi|\,\mathrm{d}\mathcal{H}^k<\infty$. Assume that $\tau $ is right-continuous in $0$. Then we have
			\begin{equation*}
			\int_S\tau (|\xi |)\,\mathrm{d}\mathcal{H}^k=\inf_b\int_Sb|\xi |\,\mathrm{d}\mathcal{H}^k+\int_S\varepsilon (b)\,\mathrm{d}\mathcal{H}^k,
			\end{equation*}
			where the infimum is taken over $b\in\mathscr{L}_{\textup{b}}$ with $c=\inf\textup{dom}(\varepsilon)\leq b\leq\tau'(0)$.
		\end{corr}
		\begin{proof}
			The biconjugate equals the convex and lower semi-continuous envelope \cite[Prop.\,2.28]{R}. The function $-\tau$ is already convex and lower semi-continuous by the assumption that it is right-continuous in $0$. Thus, we conclude
			\begin{equation*}
			\tau (m)=-(-\tau )^{**}(m)=-\left(\sup_{v\in\R}vm-(-\tau )^*(v)\right)=-\left(\sup_{v\in\R}-vm-\varepsilon (v)\right)=-\varepsilon^*(-m).
			\end{equation*}
			Furthermore, we have $I_{\varepsilon}(v)<\infty$ for all $v\in\R$ with $\varepsilon (v)<\infty$ using $\mathcal{H}^k(S)<\infty$. We presumed $|\xi|\in\mathscr{L}$ and can therefore apply \cref{cv}:
			\begin{align*}
			\int_S\tau (|\xi|)\,\mathrm{d}\mathcal{H}^k=-\int_S\varepsilon^*(-|\xi|)\,\mathrm{d}\mathcal{H}^k=-I_{\varepsilon^*}(-|\xi|)=-\sup_{b\in\mathscr{L}_{\textup{b}}}-\int_Sb|\xi|\,\mathrm{d}\mathcal{H}^k-I_\varepsilon (b)=\inf_{b\in\mathscr{L}_{\textup{b}}}\int_Sb|\xi|\,\mathrm{d}\mathcal{H}^k+\int_S\varepsilon (b)\,\mathrm{d}\mathcal{H}^k.
			\end{align*}
			We can require $b\geq c$, because $\varepsilon (v)=\infty$ for $v<c$. Moreover, if $b\in\mathscr{L}_{\textup{b}}$ with $b\geq c$ such that the expression on the right-hand side is finite, then $\tilde{b}=\min\{ b,\tau'(0) \}$ satisfies $\tilde{b}\leq\tau'(0)$ and
			\begin{equation*}
			\int_S\tilde{b}|\xi|\,\mathrm{d}\mathcal{H}^k+\int_S\varepsilon (\tilde{b})\,\mathrm{d}\mathcal{H}^k\leq \int_Sb|\xi|\,\mathrm{d}\mathcal{H}^k+\int_S\varepsilon (\tilde{b})\,\mathrm{d}\mathcal{H}^k=\int_Sb|\xi|\,\mathrm{d}\mathcal{H}^k+\int_S\varepsilon (b)\,\mathrm{d}\mathcal{H}^k
			\end{equation*}
			by using $\varepsilon(v)=(-\tau )^*(-v)=0$ for $v\geq \tau'(0)$. 
		\end{proof}
		\begin{rem}
			The condition $\mathcal{H}^k(S)<\infty$ is necessary for the case $\tau'(0)=\infty$: Assume that $b\in\mathscr{L}_{\textup{b}}$ and $\mathcal{H}^k(S)=\infty$, then 
			\begin{equation*}
			\int_S\varepsilon(b)\,\mathrm{d}\mathcal{H}^k\geq \varepsilon (\sup b)\mathcal{H}^k(S)=\infty
			\end{equation*}
			if $\tau'(0)=\infty$, but $\int_S\tau (|\xi|)\,\mathrm{d}\mathcal{H}^k$ can be finite for appropriate $\xi\in L^1(\mathcal{H}^k\mres S;\R^n)$. Nevertheless, we get the desired result for $\mathcal{H}^k(S)=\infty$ by relaxing the properties of $b$.
		\end{rem}
		\begin{prop}
			\label{subst2}
			Let $S\subset\mathcal{C}$ be countably $k$-rectifiable and measurable. Furthermore, let $\xi :S\to\R^n$ be measurable with $\int_S|\xi|\,\mathrm{d}\mathcal{H}^k<\infty$. Assume that $\tau $ is right-continuous in $0$. Then we have
			\begin{equation*}
			\int_S\tau (|\xi |)\,\mathrm{d}\mathcal{H}^k=\inf_b\int_Sb|\xi |\,\mathrm{d}\mathcal{H}^k+\int_S\varepsilon (b)\,\mathrm{d}\mathcal{H}^k,
			\end{equation*}
			where the infimum is taken over measurable functions $b: S\to [0,\infty )$ with $\inf\textup{dom}(\varepsilon)\leq b\leq\tau'(0)$.
		\end{prop}
		\begin{proof}
			Using the procedure in \cref{notdefs} we can assume that $S$ is a disjoint countable union of finitely $k$-rectifiable sets $S_i$, i.e., $S_i\subset S$ are measurable with $\mathcal{H}^k(S_i)<\infty$ (we neglect zero sets due to the integration). Let $\delta>0$ and $\delta_i>0$ such that $\sum_i\delta_i=\delta$. Assume that $b_i$ are as in \cref{subst} with
			\begin{equation*}
			\int_{S_i}\tau (|\xi |)\,\mathrm{d}\mathcal{H}^k\geq\int_{S_i}|\xi|b_i+\varepsilon (b_i)\,\mathrm{d}\mathcal{H}^k-\delta_i.
			\end{equation*}
			Define $\tilde{b}:S\to\R$ by $\tilde{b}=b_i$ on $S_i$. Then $\tilde{b}$ has the desired properties, i.e., $\tilde{b}$ is measurable with $\inf\textup{dom}(\varepsilon)\leq \tilde{b}\leq\tau'(0)$. Application of the monotone convergence theorem yields
			\begin{align*}
			\int_S\tau (|\xi|)\,\mathrm{d}\mathcal{H}^k&= \sum_i\int_{S_i}\tau (|\xi|)\,\mathrm{d}\mathcal{H}^k
			\geq \sum_i\int_{S_i}|\xi|b_i+\varepsilon (b_i)\,\mathrm{d}\mathcal{H}^k-\delta
			=\int_{S}|\xi|\tilde{b}+\varepsilon (\tilde{b})\,\mathrm{d}\mathcal{H}^k-\delta\\
			&\geq\inf_b\int_{S}|\xi|b+\varepsilon (b)\,\mathrm{d}\mathcal{H}^k-\delta
			=\inf_b\int_{S}|\xi|b\,\mathrm{d}\mathcal{H}^k+\int_S\varepsilon (b)\,\mathrm{d}\mathcal{H}^k-\delta,
			\end{align*}
			where the infima are taken over functions $b$ like in the statement. Notice that $\varepsilon,b\geq 0$ was used in the last equation. The reverse inequality is a direct consequence of the definition of the convex conjugate. More precisely, let $b:S\to [0,\infty)$ be any measurable function with $\inf\textup{dom}(\varepsilon)\leq b\leq\tau'(0)$. Then we obtain
			\begin{equation*}
			\int_S\tau (|\xi|)\,\mathrm{d}\mathcal{H}^k=-\int_S\varepsilon^*(-|\xi|)\,\mathrm{d}\mathcal{H}^k=-\int_S\sup_{v\in\R}-v|\xi (x)|-\varepsilon (v)\,\mathrm{d}\mathcal{H}^k(x)\leq\int_Sb|\xi|\,\mathrm{d}\mathcal{H}^k+\int_S\varepsilon(b)\,\mathrm{d}\mathcal{H}^k
			\end{equation*}
			by using $v=v(x)=b(x)$ in the inequality. Taking the infimum over $b$ shows the stated formula.
		\end{proof}
		In the setting of this subsection we can assume that the functions $b$ in \cref{subst2} are lower semi-continuous.
		\begin{lem}[Network friction coefficient lower semi-continuous]
			\label{lowsem}
			The functions $b$ in \cref{subst2} can be assumed to be lower semi-continuous if $\tau'(0)<\infty$.
		\end{lem}
		\begin{proof}
			As a first step we simplify the the statement to be shown. Write $c=\inf\textup{dom}(\varepsilon)$. For $f\in\mathscr{M}_{\textup{b}}$ with $f\geq 0$ define
			\begin{equation*}
			F(f)=\int_Sf|\xi|+\varepsilon (f)\,\mathrm{d}\mathcal{H}^k\in [0,\infty ].
			\end{equation*}
			Observe that there exists a function $b\in\mathscr{M}_{\textup{b}}$ with $F(b)<\infty$ (e.g., $b\equiv\tau'(0)$). Let $S^N$ be an approximating sequence for $S$. Define functions belonging to $S^N$ by
			\begin{equation*}
			b_N=\begin{cases}
			b&\textup{on }S^N,\\
			\tau'(0)&\textup{on }S\setminus S^N.
			\end{cases}
			\end{equation*}
			We have $\varepsilon (b_N)\nearrow\varepsilon (b)$ and $b_N|\xi|\searrow b|\xi|$ for $N\to\infty$ pointwise $\mathcal{H}^k\mres S$-almost everywhere. Furthermore, the $b_N|\xi|$ satisfy
			\begin{equation*}
			\int_Sb_N|\xi|\,\mathrm{d}\mathcal{H}^k\leq\tau'(0)\int_S|\xi|\,\mathrm{d}\mathcal{H}^k<\infty. 
			\end{equation*}
			Hence, $F(b_N)\to F(b)$ for $N\to\infty$ by the monotone convergence theorem. Thus, for fixed $N$ it is enough to find a lower semi-continuous funcion $l_N:S\to [c,\tau'(0)]$ such that $F(l_N)$ is arbitrarily close to $F(b_N)$. This reduces to the problem of finding a lower semi-continuous function $l_N:S^N\to [c,\tau'(0)]$ with $F_N(l_N)$ being arbitrarily close to $F_N(b_N)$, where
			\begin{equation*}
			F_N(f)=\int_{S^N}f|\xi|+\varepsilon (f)\,\mathrm{d}\mathcal{H}^k
			\end{equation*}
			for $f\in\mathscr{M}_{\textup{b}}$ with $f\geq 0$, because we can then replace $l_N$ by the lower semi-continuous function ($S^N$ is closed)
			\begin{equation*}
			\tilde{l}_N=\begin{cases}
			l_N&\textup{on }S^N,\\
			\tau'(0)&\textup{on }S\setminus S^N.
			\end{cases}
			\end{equation*}
			For simplicity, we write $S,F,b$ instead of $S^N,F_N,b_N$ and assume that $S$ is closed with $\mathcal{H}^k(S)<\infty$ (like the $S^N$) for the rest of the proof. Now the first task is to show that $b$ can be assumed to be Borel measurable on $S$ ($b$ is measurable with respect to $\mathcal{H}^k\mres S$). Write $b_\delta=b+\delta$ for $\delta>0$. By \cref{subst2} we can assume $b_\delta\in I_\delta= [c+\delta,\tau'(0)+\delta ]$. For $K\in\mathbb{N}$ divide $I_\delta$ into disjoint intervals $I_1^K,\ldots ,I_K^K$ such that $|I_i^K|=(\tau'(0)-c)/K$ for $i=1,\ldots ,K$ and $t_1<t_2$ if $t_1\in I_i^K,t_2\in I_j^K$ and $i<j$. By \cite[Thm.\,1.6 (b)]{Fal} we can write $b_\delta^{-1}(I_i^K)=A_i^K\dot{\cup }B_i^K$ with $B_i^K$ Borel measurable and $\mathcal{H}^k(A_i^K)<1/K^2$ (by assumption $\mathcal{H}^k(b_\delta^{-1}(I_i^K))\leq\mathcal{H}^k(S)<\infty$). For $x\in S$ define the functions
			\begin{equation*}
			g_\delta^K(x)=\begin{cases}
			c+\delta&\textup{if }x\in\bigcup_{i=1}^KA_i^K,\\
			\inf I_i^K&\textup{if }x\in B_i^K\textup{ for some } i\in\{ 1,\ldots ,K\}.
			\end{cases}
			\end{equation*} 
			For all $K\in\mathbb{N}$ and $t\geq c+\delta$ we have
			\begin{equation*}
			\{ g_\delta^K\leq t\} =\left(\bigcup_{i=1}^KA_i^K\right)\cup\bigcup_{\inf I_i^K\leq t}B_i^K=\left(S\bigg\backslash\bigcup_{i=1}^KB_i^K\right)\cup\bigcup_{\inf I_i^K\leq t}B_i^K.
			\end{equation*}
			This shows that $g_\delta^K$ is Borel measurable. Moreover, we get
			\begin{align*}
			\int_S|g_\delta^K-b_\delta|\,\mathrm{d}\mathcal{H}^k&=\sum_{i=1}^{K}\left(\int_{A_i^K}|g_\delta^K-b_\delta|\,\mathrm{d}\mathcal{H}^k+\int_{B_i^K}|g_\delta^K-b_\delta|\,\mathrm{d}\mathcal{H}^k\right)
			\leq\sum_{i=1}^{K}(\tau'(0)-c)\mathcal{H}^k(A_i^K)+\frac{\tau'(0)-c}{K}\mathcal{H}^k(S)\\
			&\leq \frac{1}{K}(\tau'(0)-c)(1+\mathcal{H}^k(S))
			\end{align*}
			which tends to $0$ for $K\to\infty$. This yields the existence of a subsequence $g_\delta^{K_i}$ with $g_\delta^{K_i}\to b_\delta$ pointwise $(\mathcal{H}^k\mres S)$-almost everywhere for $i\to\infty$. Now let $f_\delta^{K_i}=g_\delta^{K_i}|\xi|+\varepsilon (g_\delta^{K_i})$. We obtain $|f_\delta^{K_i}|\leq (\tau'(0)+\delta)|\xi|+\varepsilon (c+\delta)\in\mathscr{L}=\mathscr{L}(S)$ and $f_\delta^{K_i}\to b_\delta|\xi|+\varepsilon (b_\delta )$ pointwise $\mathcal{H}^k\mres S$-almost everywhere for $i\to\infty$ ($\varepsilon$ is continuous on $(c,\infty )$). Thus, we have $F(g_\delta^{K_i})\to F(b_\delta)$ for $i\to\infty$ by Lebesgue's dominated convergence theorem. In addition, we have $h_\delta=b_\delta|\xi|+\varepsilon (b_\delta)\to b|\xi|+\varepsilon (b)$ for $\delta\to 0$ ($\varepsilon$ is right-continuous in $c$) and $|h_\delta|\leq (\tau'(0)+\delta)|\xi|+\varepsilon (c)\in\mathscr{L}$ which yields $F(b_\delta )\to F(b)$ for $\delta\to 0$. This shows that $b$ can be assumed to be Borel measurable by choosing $\delta$ sufficiently small and then $i$ sufficiently large (for the corresponding sequence $g_\delta^{K_i}$). For the approximation with lower semi-continuous functions we need some regularity properties of $\mathcal{H}^k\mres S$. Let $B\in\mathcal{B}(S)$. By \cite[Thm.\,1.6 (a)]{Fal} there exists a countable intersection of open sets $U=\bigcap_iU_i$ such that 
			\begin{equation*}
			\mathcal{H}^k(B)=\mathcal{H}^k(S\cap U)=\lim_{M\to\infty }\mathcal{H}^k\left( S\cap\bigcap_{i=1}^MU_i\right).
			\end{equation*}
			By the fact that $S\cap\bigcap_{i=1}^MU_i$ is open in the subspace topology we get that $\mathcal{H}^k$ is outer regular on $S$. Additionally, $\mathcal{H}^k$ is tight on $S$ \cite[S. 155]{Brie}. Hence, we can apply the Vitali\textendash Carath\'{e}odory theorem \cite[\nopp 2.25]{Rud} which yields: for all $\delta>0$ there exists a sequence of lower semi-continuous functions $l_i$ with $l_i\leq b_\delta$ on $S$, $l_i\geq c+\delta/2$ and
			\begin{equation*}
			\int_S(b_\delta-l_i)\,\mathrm{d}\mathcal{H}^k\to 0\textup{ for }i\to\infty.
			\end{equation*}
			Likewise, the same argumentation as above yields the desired result.
		\end{proof}
	}
	
	\subsection{Branched transport problem as generalized urban planning problem}
	\label{bilevel}
	In this section we finally prove \cref{finalthm}, essentially by constructing a minimizer for one problem from one of the other. We will also discuss a few examples illustrating the relation between the minimizers. Throughout we assume that $(\tau,\mu_+,\mu_-)$ is a triple for the branched transport problem and the corresponding maintenance cost $\varepsilon$ is defined via $\tau$ as in \cref{maintcost}.
	Let $\tau_+(0)=\lim_{m\searrow 0}\tau(m)$. We define 
	\begin{equation*}
	\tilde{\tau}(m)=\begin{cases*}
	\tau(m)&if $m\leq 0$,\\
	\tau(m)-\tau_+(0)&else
	\end{cases*}
	\quad\text{for }m\in\R
	\qquad\text{and}\qquad
	\tilde{\varepsilon}(v)=(-\tilde{\tau})^*(-v)
	\quad\text{for }v\in\R.
	\end{equation*}
	We will need the following properties of $\tilde{\tau}$.
	\begin{lem}[Properties of $\tilde{\tau}$]
		\label{tildetau}
		The transportation cost $\tilde{\tau}$ is right-continuous in $0$. Furthermore, we have $\tilde{\varepsilon}=\varepsilon-\tau_+(0)$ and 
		\begin{equation*}
		\int_S\tau(|\xi|)\dHa=\int_S\tilde{\tau}(|\xi|)\dHa+\tau_+(0)\Ha(\{ |\xi|>0 \})
		\end{equation*}
		for all $\xi\in L^1(\Ha\mres S;\R^n)$.
	\end{lem}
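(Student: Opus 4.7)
The proof of \cref{tildetau} is a direct computation in three steps. The whole point of introducing $\tilde\tau$ and $\tilde\varepsilon$ is to isolate the jump of $\tau$ at $0$ (which occurs for Steiner-type costs such as $\tau=\bar c\cdot 1_{(0,\infty)}$) from the continuous portion. Once this decomposition is established, the proof of \cref{finalthm} can be reduced to a transportation cost that is right-continuous in $0$, for which the previously developed machinery applies cleanly.

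My plan is as follows. First, I would verify right-continuity at $0$ by pure bookkeeping: by the defining case distinction $\tilde\tau(0)=\tau(0)=0$, while for $m>0$ we have $\tilde\tau(m)=\tau(m)-\tau_+(0)$, so $\lim_{m\searrow0}\tilde\tau(m)=\tau_+(0)-\tau_+(0)=0=\tilde\tau(0)$. Second, for the identity $\tilde\varepsilon=\varepsilon-\tau_+(0)$ I would plug the piecewise definition of $\tilde\tau$ into the Legendre--Fenchel formula from \cref{maintcost}, obtaining
\begin{equation*}
\tilde\varepsilon(v)=\sup_{m\geq 0}\tilde\tau(m)-mv=\max\bigl(0,\ \sup_{m>0}\tau(m)-\tau_+(0)-mv\bigr).
\end{equation*}
The subtle point is that the inner supremum over $m>0$ is always non-negative: the monotonicity of $\tau$ combined with $\tau_+(0)=\inf_{m>0}\tau(m)$ yields $\tau(m)-mv\to\tau_+(0)$ as $m\searrow 0$, so $\sup_{m>0}\tau(m)-mv\geq\tau_+(0)$. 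Consequently the $\max$ is attained by its second argument and equals $\varepsilon(v)-\tau_+(0)$. (An analogous argument shows that $\varepsilon(v)=\sup_{m>0}\tau(m)-mv$, so the $m=0$ case does not affect $\varepsilon$ either.)

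Third, I would deduce the integral formula from a pointwise identity. For every $x\in S$ one has $\tau(|\xi(x)|)=\tilde\tau(|\xi(x)|)+\tau_+(0)1_{\{|\xi|>0\}}(x)$: the equality is trivial where $|\xi(x)|=0$ (both added terms vanish) and is the definition of $\tilde\tau$ where $|\xi(x)|>0$. Integrating this identity against $\Ha\mres S$ and noting that $1_{\{|\xi|>0\}}$ is Borel measurable (since $|\xi|$ is) yields
\begin{equation*}
\int_S\tau(|\xi|)\dHa=\int_S\tilde\tau(|\xi|)\dHa+\tau_+(0)\Ha(\{|\xi|>0\}),
\end{equation*}
with the convention $0\cdot\infty=0$ handling the case $\tau_+(0)=0$. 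The only mildly delicate point in the whole lemma is the monotonicity-based observation in the middle step; everything else is unwinding definitions.
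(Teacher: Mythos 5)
Your proof is correct and follows essentially the same route as the paper's: all three claims are verified by unwinding the definitions, splitting the supremum at $m=0$, and restricting the integral to $\{|\xi|>0\}$. The only (cosmetic) difference is that you avoid the paper's case distinction $\tilde{\varepsilon}(v)>0$ versus $\tilde{\varepsilon}(v)=0$ by observing directly that the $m=0$ term never contributes to either conjugate, since $\sup_{m>0}\tau(m)-mv\geq\tau_+(0)\geq 0$; both arguments are valid.
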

	\begin{proof}
		The first property follows by definition. Let $v\in\R$ and assume that $\tilde{\varepsilon}(v)>0$. We have
		\begin{equation*}
		\tilde{\varepsilon}(v)=\sup_{m>0}\tilde{\tau}(m)-mv=\sup_{m>0}\tau(m)-\tau_+(0)-mv=\sup_{m\in\R}\tau(m)-mv-\tau_+(0)=\varepsilon(v)-\tau_+(0).
		\end{equation*}
		If $\tilde{\varepsilon}(v)=0$, then we get $v\geq\tilde{\tau}'(0)$ and therefore $\varepsilon(v)=\tau_+(0)$. Finally, we observe that
		\begin{equation*}
		\int_S\tau(|\xi|)\dHa=\int_{\{ |\xi|>0 \}}\tau(|\xi|)\dHa=\int_{\{ |\xi|>0 \}}\tilde{\tau}(|\xi|)+\tau_+(0)\dHa=\int_{\{ |\xi|>0 \}}\tilde{\tau}(|\xi|)\dHa+\tau_+(0)\Ha(\{ |\xi|>0 \}).\qedhere
		\end{equation*}
	\end{proof}
	For the inequality $\inf\mathcal{J}^{\tau,\mu_+,\mu_-}\geq\inf\mathcal{U}^{\varepsilon,\mu_+,\mu_-}$ we will use the following standard composition property which we state without proof.
	\begin{lem}[Composition of semi-continuous functions]
		\label{uplow}
		Let $(X,d_X)$ be a metric space and $A\subset\R$. If $g:X\to A$ is upper semi-continuous and $f:A\to\R$ lower semi-continuous and decreasing, then $f\circ g$ is lower semi-continuous.
	\end{lem}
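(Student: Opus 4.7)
The plan is to verify lower semi-continuity of $f\circ g$ at an arbitrary point $x_0\in X$ by a direct $\varepsilon$-$\delta$ argument, combining the two hypotheses at their respective points: the lower semi-continuity of $f$ is applied at the real number $g(x_0)\in A$, and the upper semi-continuity of $g$ at $x_0\in X$. The monotonicity of $f$ then bridges the two by allowing us to replace the two-sided lsc-bound for $f$ near $g(x_0)$ by a one-sided bound, which is exactly what matches the one-sided control $g(x)<g(x_0)+\varepsilon$ given by upper semi-continuity of $g$.

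More concretely, I would fix $x_0\in X$ and any $\alpha<f(g(x_0))$, and show that $f(g(x))>\alpha$ in a neighbourhood of $x_0$. By lower semi-continuity of $f$ at $g(x_0)\in A$, there exists $\varepsilon>0$ with $f(t)>\alpha$ for all $t\in A$ with $|t-g(x_0)|<\varepsilon$. The crucial additional step uses that $f$ is decreasing: for any $t\in A$ with $t\leq g(x_0)$ we have $f(t)\geq f(g(x_0))>\alpha$. Combining these two observations yields $f(t)>\alpha$ for every $t\in A$ satisfying the one-sided bound $t<g(x_0)+\varepsilon$. Now upper semi-continuity of $g$ at $x_0$ provides $\delta>0$ such that $g(x)<g(x_0)+\varepsilon$ whenever $d_X(x,x_0)<\delta$; since $g$ takes values in $A$, this gives $f(g(x))>\alpha$ on the $\delta$-ball around $x_0$. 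Letting $\alpha\nearrow f(g(x_0))$ yields $\liminf_{x\to x_0}f(g(x))\geq f(g(x_0))$, which is lower semi-continuity of $f\circ g$ at $x_0$.

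There is really no hard part here; the only small subtlety is that one cannot apply the lsc of $f$ at points outside $A$, so it is essential to use monotonicity (rather than lsc) to handle values $t<g(x_0)-\varepsilon$, which may or may not lie in $A$. The argument works uniformly whether ``decreasing'' is read as non-increasing or strictly decreasing, and no completeness or metric-specific property of $X$ is used beyond having a notion of convergence, so the same proof also covers a general topological space $X$.
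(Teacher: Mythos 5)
Your proof is correct and follows essentially the same route as the paper's: upper semi-continuity of $g$ yields the one-sided bound $g(x)<g(x_0)+\varepsilon$, monotonicity of $f$ converts it into a lower bound on $f\circ g$, and lower semi-continuity of $f$ closes the argument in the limit. If anything, your version is slightly more careful than the paper's, which evaluates $f$ at the point $g(x)+\delta$ that need not lie in $A$, whereas you apply $f$ only to points of $A$ and handle the values below $g(x_0)$ purely by monotonicity.
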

	\notinclude{\begin{proof}
			Let $\delta>0$ and $x,x_j\in X$ such that $d_X(x,x_j)\to 0$. We have $g(x_j)<g(x)+\delta$ for $j$ sufficiently large, because $g$ is upper semi-continuous. Hence, we get $f(g(x)+\delta)\leq f(g(x_j))$ for all such $j$ by the assumption that $f$ is decreasing. Thus, we have
			\begin{equation*}
			f(g(x)+\delta)\leq\liminf_j f(g(x_j))
			\end{equation*}
			for all $\delta>0$ and therefore, using that $f$ is lower semi-continuous,
			\begin{equation*}
			f(g(x))\leq\liminf_{\delta\to 0}f(g(x)+\delta)\leq\liminf_j f(g(x_j)).\qedhere
			\end{equation*}
		\end{proof}
	}
	\begin{proof}[Proof of \cref{finalthm}]
		\underline{$\inf\mathcal{J}^{\tau,\mu_+,\mu_-}\leq\inf\mathcal{U}^{\varepsilon,\mu_+,\mu_-}$:} Assume that
		\begin{equation*}
		\mathcal{U}^{\varepsilon,\mu_+,\mu_-}[S,b]<\infty
		\end{equation*}
		for some admissible pair $(S,b)$.
		As discussed in the introduction of \cref{sect2}, this automatically implies the validity of \cref{Sassump}.
		Thus by \cref{thm:existenceMassFlux} we have
		\begin{equation*}
		W_{d_{S,\tau'(0),b}}(\mu_+,\mu_-)=\int_Sb|\xi|\,\mathrm{d}\Ha+\tau'(0)|\F^\perp|(\mathcal{C})
		\end{equation*}
		for some $\xi\in L^1(\Ha\mres S;\R^n)$ and $\F^\perp\in\mathcal{M}^n(\mathcal{C})$ with $\F^\perp\mres S=0$.
		We show that $\mathcal{J}^{\tau,\mu_+,\mu_-}[\F]\leq\mathcal{U}^{\varepsilon,\mu_+,\mu_-}[S,b]$ for $\F=\xi\Ha\mres S+\F^\perp$. If $\tau$ is right-continuous in $0$, then $-\tau$ is lower semi-continuous and convex and thus equals its biconjugate \cite[Prop.\,2.28]{R}. This yields 
		\begin{equation*}
		\tau (m)=-(-\tau )^{**}(m)=-\left(\sup_{v\in\R}vm-(-\tau )^*(v)\right)=-\left(\sup_{v\in\R}-vm-\varepsilon (v)\right)=-\varepsilon^*(-m)
		\end{equation*}
		and thus
		\begin{multline*}
		\mathcal{U}^{\varepsilon,\mu_+,\mu_-}[S,b]=\int_Sb|\xi|\dHa+\tau'(0)|\F^\perp|(\mathcal{C})+\int_S\varepsilon(b)\dHa\geq\int_S\inf_{v\in\R}|\xi|v+\varepsilon(v)\,\mathrm{d}\Ha+\tau'(0)|\F^\perp|(\mathcal{C})\\
		=-\int_S\varepsilon^*(-|\xi|)\,\mathrm{d}\Ha+\tau'(0)|\F^\perp|(\mathcal{C})=\int_S\tau(|\xi|)\dHa+\tau'(0)|\F^\perp|(\mathcal{C})=\mathcal{J}^{\tau,\mu_+,\mu_-}[\F]
		\end{multline*}
		by \cref{gengilbert}.
		If $\tau$ is not right-continuous, then we have $\tau'(0)=\infty$ and therefore $\F^\perp=0$. Moreover, using \cref{tildetau} and the previous estimate we obtain
		\begin{multline*}
		\mathcal{U}^{\varepsilon,\mu_+,\mu_-}[S,b]
		=\int_Sb|\xi|\dHa+\int_S\tilde{\varepsilon}(b)\dHa+\tau_+(0)\Ha(S)
		\geq\int_S\tilde{\tau}(|\xi|)\dHa+\tau_+(0)\Ha(S)\\
		\geq\int_S\tilde{\tau}(|\xi|)\dHa+\tau_+(0)\Ha(\{ |\xi|>0 \})
		=\int_S\tau(|\xi|)\dHa
		=\mathcal{J}^{\tau,\mu_+,\mu_-}[\F].
		\end{multline*}
		\underline{$\inf\mathcal{J}^{\tau,\mu_+,\mu_-}\geq\inf\mathcal{U}^{\varepsilon,\mu_+,\mu_-}$:} Assume that there exists some $\G\in\mathcal{DM}^n(\mathcal{C})$ with $\mathcal{J}^{\tau,\mu_+,\mu_-}[\G]<\infty$. Let $\F=\xi\Ha\mres S+\F^\perp$ be as in \cref{propmassdens} ($\F$ can be constructed by removing divergence-free parts of $\G$). The function $S\ni x\mapsto g(x)=|\xi(x)|$ is upper semi-continuous by \cref{propmassdens}. Thus, $\{ |\xi|=0 \}\subset S$ is Borel measurable and we can assume without loss of generality that $S=\{ |\xi|>0 \}$. For $m>0$ we define
		\begin{equation*}
		f(m)=-\max(\partial(-\tau)(m)),
		\end{equation*}
		which is well-defined, because the subdifferential is closed. Furthermore, $f$ is decreasing and lower semi-continuous on $(0,\infty)$ by construction. Using \cref{uplow} the function $b:S\to[0,\infty)$ defined by
		\begin{equation*}
		b(x)=f(g(x))=-\max(\partial(-\tau)(|\xi(x)|))
		\end{equation*}
		is lower semi-continuous on $S$. Additionally, we have $\varepsilon(b)=\tau(|\xi|)-|\xi|b$ by definition. Therefore, by \cref{gengilbert} we get
		\begin{equation*}
		\infty>\mathcal{J}^{\tau,\mu_+,\mu_-}[\G]\geq\mathcal{J}^{\tau,\mu_+,\mu_-}[\F]=\int_S\tau(|\xi|)\dHa+\tau'(0)|\F^\perp|(\mathcal{C})=\int_Sb|\xi|\dHa+\tau'(0)|\F^\perp|(\mathcal{C})+\int_S\varepsilon(b)\dHa.
		\end{equation*}
		Thus we must have $\int_S\varepsilon(b)\dHa<\infty$ which shows that \cref{Sassump} is satisfied. Hence we can apply \cref{BWfinal} and continue the estimation,
		\begin{equation*}
		\mathcal{J}^{\tau,\mu_+,\mu_-}[\G]\geq\int_Sb|\xi|\dHa+\tau'(0)|\F^\perp|(\mathcal{C})+\int_S\varepsilon(b)\dHa\geq W_{d_{S,\tau'(0),b}}(\mu_+,\mu_-)+\int_S\varepsilon(b)\dHa=\mathcal{U}^{\varepsilon,\mu_+,\mu_-}[S,b].\qedhere
		\end{equation*}
	\end{proof}
	In the remainder of the section we discuss a few consequences of the proof.
	In \cite[Thm.\,2.10]{BW} it is shown that the generalized branched transport problem either has a minimizer or is infeasible,
	i.e., there is no mass flux of finite energy transporting $\mu_+$ to $\mu_-$.
	(Note that under additional growth conditions on $\tau$ near $0$ one can always obtain existence of a minimizer independent of $\mu_+$ and $\mu_-$ \cite[Cor.\,2.20]{BW}.)
	Since in the proof of \cref{finalthm} we constructed from each feasible candidate for one problem a feasible candidate for the other, this immediately implies the following.
	\begin{corr}[Existence of optimizers]
		The generalized branched transport problem and the associated generalized urban planning problem either both admit a minimizer or are both infeasible.
	\end{corr}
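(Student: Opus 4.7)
The plan is to use the two explicit constructions embedded in the proof of \cref{finalthm} together with the branched transport existence result \cite[Thm.\,2.10]{BW}, which states that the branched transport problem either admits a minimizer or is infeasible. The core observation is that the constructions in the proof of \cref{finalthm} translate finite-cost candidates for one problem into finite-cost candidates for the other with no greater energy, so feasibility transfers in both directions.

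First, I would prove the equivalence of feasibility. If the branched transport problem admits a mass flux $\G$ with $\mathcal{J}^{\tau,\mu_+,\mu_-}[\G]<\infty$, then the second half of the proof of \cref{finalthm} produces a pair $(S,b)$ with $\mathcal{U}^{\varepsilon,\mu_+,\mu_-}[S,b]\leq\mathcal{J}^{\tau,\mu_+,\mu_-}[\G]<\infty$, so the urban planning problem is feasible. Conversely, if the urban planning problem admits $(S,b)$ with $\mathcal{U}^{\varepsilon,\mu_+,\mu_-}[S,b]<\infty$, then as noted at the start of \cref{sect2} this automatically ensures \cref{Sassump}, and the first half of the proof of \cref{finalthm} yields a mass flux $\F=\xi\Ha\mres S+\F^\perp$ with $\mathcal{J}^{\tau,\mu_+,\mu_-}[\F]\leq\mathcal{U}^{\varepsilon,\mu_+,\mu_-}[S,b]<\infty$, so the branched transport problem is feasible.

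Next, assuming both problems are feasible (by the above), I would deduce the existence of minimizers. By \cite[Thm.\,2.10]{BW} the branched transport problem possesses a minimizer $\F^*$. Applying the second-half construction of \cref{finalthm} to $\F^*$ produces a pair $(S^*,b^*)$ admissible for the urban planning problem with
\begin{equation*}
\mathcal{U}^{\varepsilon,\mu_+,\mu_-}[S^*,b^*]\leq\mathcal{J}^{\tau,\mu_+,\mu_-}[\F^*]=\inf_{\F}\mathcal{J}^{\tau,\mu_+,\mu_-}[\F]=\inf_{S,b}\mathcal{U}^{\varepsilon,\mu_+,\mu_-}[S,b],
\end{equation*}
where the last equality is \cref{finalthm}. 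Hence $(S^*,b^*)$ is a minimizer of the urban planning problem. The reverse direction is not needed separately, since the existence of a branched transport minimizer is already provided by \cite[Thm.\,2.10]{BW}.

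There is no real obstacle here beyond carefully citing the two halves of the proof of \cref{finalthm}; the only subtlety is to note that finite urban planning cost implies \cref{Sassump}, which is precisely the hypothesis needed to invoke the Beckmann formula \cref{BWfinal} in the construction of the candidate mass flux, and that the branched transport side has an unconditional existence theorem so the dichotomy ``minimizer or infeasible'' transfers automatically once feasibility has been shown equivalent.
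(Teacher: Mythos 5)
Your proposal is correct and follows exactly the argument the paper intends: the paper's "proof" is the preceding sentence invoking \cite[Thm.\,2.10]{BW} together with the observation that the two constructions in the proof of the equivalence theorem transfer feasibility (and, applied to a branched transport minimizer, yield an urban planning minimizer). Your write-up simply makes this explicit, including the correct remark that finite urban planning cost implies the assumption needed for the Beckmann formula.
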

	While for $\tau'(0)=\infty$ the optimal mass flux of the generalized branched transport problem is known to be rectifiable \cite[Thm.\,7.1]{Wh99},
	one gets an even stronger result if $\tau$ is not right-continuous in $0$.
	\begin{rem}[Finite network length]
		If $\tau$ is not right-continuous in $0$, then any street network $(S,b)$ with finite urban planning cost satisfies $\Ha(S)<\infty$.
		Indeed, from \cref{maintcost} of $\varepsilon$ we obtain $\varepsilon\geq\tau_+(0)$ so that
		\begin{equation*}
		\infty
		>\mathcal{U}^{\varepsilon,\mu_+,\mu_-}[S,b]
		\geq\int_S\varepsilon(b)\,\mathrm{d}\Ha
		\geq\tau_+(0)\Ha(S).
		\end{equation*}
	\end{rem}
	Finally, let us briefly discuss and illustrate the relation between minimizers of the branched transport and the urban planning problem.
	A natural question is whether they are in one-to-one correspondence.
	However, this is not to be expected for the following reason.
	In our equivalence proof, the central step to switch between the mass flux and the friction coefficient as variables was the relation
	\begin{equation*}
	\tau(|\xi|)=b|\xi|+\varepsilon(b),
	\end{equation*}
	which we exploited to construct one variable from the other.
	Note that $\tau(|\xi|)\leq b|\xi|+\varepsilon(b)$ is nothing else than the Fenchel--Young inequality,
	and if both $\xi$ and $b$ should be optimal one needs to have equality.
	However, this equality only yields a one-to-one relation between $|\xi|$ and $b$ if both $\tau$ and $\varepsilon$ are differentiable or equivalently strictly convex.
	If, however, $\tau$ has a kink at $|\xi|$, then there exist multiple $b$ satisfying the equality.
	Likewise, if $\varepsilon$ has a kink at $b$, then there exist multiple solutions $|\xi|$.
	Consequently, a single minimizer of the branched transport problem will sometimes correspond to multiple minimizers of the urban planning problem and vice versa.
	We close this section by illustrating this fact with three examples.
	The first example is standard in classical optimal transport theory and illustrates
	that there may be multiple optimal mass fluxes, while the optimal friction coefficient is unique.
	In fact, the classical Wasserstein cost is a rather degenerate case of branched transport for which $\inf\{ \varepsilon<\infty \}=\tau'(0)$ (see \cref{fig1})
	so that the friction coefficient (which has to lie in between both values) is uniquely fixed a priori.
	We provide a less degenerate example directly after, in which the uniqueness of the friction coefficient comes from a kink in $\varepsilon$.
	\begin{examp}[Infinitely many optimal mass fluxes, but unique friction coefficient]
		\label{infitF}
		Let $\tau(m)=m$ and consider 
		\begin{equation*}
		\mu_+=\tfrac{1}{2}\delta_{x_1}+\tfrac{1}{2}\delta_{x_2}\textup{\qquad and\qquad}\mu_-=\tfrac{1}{2}\delta_{y_1}+\tfrac{1}{2}\delta_{y_2}
		\end{equation*}
		for $x_1=(0,0)$, $x_2=(2,0)$, $y_1=(1,1)$, $y_2=(1,-1)$ (see \cref{fig6}). Then the optimal solution for the urban planning problem is given by $b=1$ $\Ha$-almost everywhere ($S$ arbitrary). Since this choice of $b$ is the only feasible one, it is unique.
		Nevertheless, there exist infinitely many solutions to the branched transport problem. Each one is associated with an optimal transport plan for the Wasserstein-1-distance between $\mu_+$ and $\mu_-$, where the family of optimal transport plans can be parameterized by 
		\begin{equation*}
		m=\pi(\{  x_1\}\times\{ y_1 \})\in[0,\tfrac12].
		\end{equation*}
	\end{examp}
	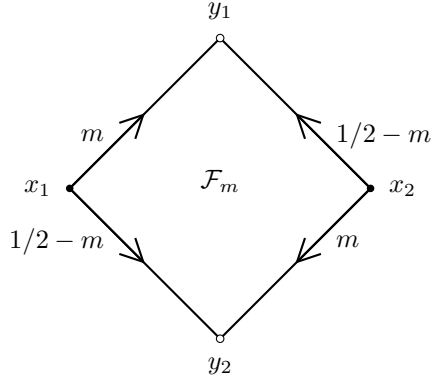
\begin{figure}
		\centering
		\begin{tikzpicture}[scale=2]
		\draw[thick] (0,0) -- (1,1) -- (2,0) -- (1,-1) -- (0,0);
		\draw[-{Straight Barb[width=3mm,length=3mm]},thick] (0,0) -- (0.5,0.5);
		\draw[-{Straight Barb[width=3mm,length=3mm]},thick] (0,0) -- (0.5,-0.5);
		\draw[-{Straight Barb[width=3mm,length=3mm]},thick] (2,0) -- ++(-0.5,0.5);
		\draw[-{Straight Barb[width=3mm,length=3mm]},thick] (2,0) -- ++(-0.5,-0.5);
		\node[circle,fill=black,inner sep=0.5pt,minimum size=0.1cm] at (0,0) {};
		\node[circle,fill=black,inner sep=0.5pt,minimum size=0.1cm] at (2,0) {};
		\node[draw=black,circle,fill=white,inner sep=0.5pt,minimum size=0.1cm] at (1,1) {};
		\node[draw=black,circle,fill=white,inner sep=0.5pt,minimum size=0.1cm] at (1,-1) {};
		\node[label={[label distance=-0.3cm]90: $\F_m$}] at (1,0) {};
		\node[label={180: $m$}] at ({0.5/sqrt(2)},{0.5/sqrt(2)}) {};
		\node[label={0: $1/2-m$}] at ({2-0.5/sqrt(2)},{0.5/sqrt(2)}) {};
		\node[label={180: $1/2-m$}] at ({0.5/sqrt(2)},{-0.5/sqrt(2)}) {};
		\node[label={0: $m$}] at ({2-0.5/sqrt(2)},{-0.5/sqrt(2)}) {};
		\node[label={180: $x_1$}] at (0,0) {};
		\node[label={90: $y_1$}] at (1,1) {};
		\node[label={0: $x_2$}] at (2,0) {};
		\node[label={270: $y_2$}] at (1,-1) {};
		\end{tikzpicture}
		\caption{Sketch illustrating \cref{infitF}. The mass flux $\F_m$ is optimal for all $m\in[0,1/2]$.}
		\label{fig6}
	\end{figure}
	\begin{examp}[$\varepsilon$ has a kink]
		\label{epskink}
		Consider the initial and final mass
		\begin{equation*}
		\mu_+=\tfrac25\delta_{(0,0)}+\tfrac15\delta_{(1,-\ell)}+\tfrac25\delta_{(2,0)}\textup{\qquad and\qquad}\mu_-=\tfrac25\delta_{(0,1)}+\tfrac15\delta_{(1,1+\ell)}+\tfrac25\delta_{(2,1)}
		\end{equation*}
		with parameter $\ell> 0$.
		If $\tau(\frac35)<\tau(\frac15)+\tau(\frac25)$ and $\ell$ is chosen sufficiently large we obtain two symmetric solutions for an optimal mass flux $\F$, no matter how $\tau$ looks like: The mass from $(1,-\ell)$ will be jointly transported with either the mass from $(0,0)$ or the mass from $(2,0)$ (see \cref{sf:fig8c} left). Moreover, one can argue that for large enough $\ell$ the mass from $(1,-\ell)$ will in fact in one case be transported through $(0,0)$ as well as $(0,1)$ and in the other case through $(2,0)$ as well as $(2,1)$:
		Indeed, if the mass from $(0,0)$ and $(-1,\ell)$ would combine in a point $p_\ell\neq(0,0)$,
		then the so-called momentum conservation (a local optimality condition at triple junctions \cite[Prop.\,4.5]{Xi04}) reads
		\begin{equation*}
		\tau(\tfrac15+\tfrac25)\cdot(0,1)=\tau(\tfrac25)\cdot v_\ell+\tau(\tfrac15)\cdot w_\ell\textup{ with }v_\ell=\frac{p_\ell}{|p_\ell|}\textup{ and }w_\ell=\frac{p_\ell-(1,-\ell)}{|p_\ell-(1,-\ell)|}.
		\end{equation*}
		For $\ell\to\infty$ the angle between $v_\ell$ and $w_\ell$ would go to zero which would violate this equality.
		Triple junctions near the other points can be excluded analogously. Now even though there are two solutions to the branched transport problem,
		if $\tau$ is such that $\varepsilon$ has a kink in the right place or equivalently if $\tau$ is affine at least on $[\frac25,\frac35]$,
		the corresponding urban planning problem has a unique solution.
		For instance, define a differentiable transportation cost (see \cref{sf:fig8a} middle) by
		\begin{equation*}
		\tau(m)=\begin{cases*}
		m&if $m\in[0,\tfrac15]$,\\
		\frac25-\frac54(m-\frac35)^2&if $m\in(\frac15,\frac25]$,\\
		\frac{3}{20}+\frac12m&if $m\in(\frac25,\frac35]$,\\
		-\frac{11}{20}+\sqrt{m+\frac{2}{5}}&if $m>\frac35$.
		\end{cases*}
		\end{equation*}
		The corresponding maintenance cost (\cref{sf:fig8b} right) is given by
		\begin{equation*}
		\varepsilon(b)=\begin{cases*}
		\frac1{4b}-\frac{11}{20}+\frac25b&if $b\in(0,\frac12]$,\\
		\frac1{5}b^2-\frac35b+\frac25&if $b\in(\frac12,1]$,\\
		0&if $b>1$.
		\end{cases*}
		\end{equation*}
		Note that $\varepsilon$ has a kink at $b=\frac12$ with left derivative $-\frac35$ and right derivative $-\frac25$.
		The solution $(S,b)$ to the urban planning problem is uniquely determined in the following sense: Ignoring sets where $b=\tau'(0)=1$ (which one may always do without loss of generality) and identifying $b$ in the $\Ha$-almost everywhere sense, the optimal pair $(S,b)$ is uniquely given by
		\begin{equation*}
		S=[(0,0),(0,1)]\cup[(2,0),(2,1)]\textup{\qquad and\qquad}b\equiv\tfrac12.
		\end{equation*}
	\end{examp}
	\begin{figure}
		\centering
		\begin{subfigure}[b]{0.3\textwidth}
			\centering
			\begin{tikzpicture}[scale=0.8]
			\draw[line width = 0.05cm] (0,0) -- (0,1);
			\draw[line width = 0.05cm] (2,0) -- (2,1);
			\draw (0,1) -- ++({1/3},{2/3});
			\draw[loosely dotted] ({1/3},{1+2/3}) -- ({2/3},{1+4/3});
			\draw ({2/3},{1+4/3}) -- ({3/3},{1+6/3});
			\draw (0,0) -- ++({1/3},{-2/3});
			\draw[loosely dotted] ({1/3},{-2/3}) -- ({2/3},{-4/3});
			\draw ({2/3},{-4/3}) -- ({3/3},{-6/3});
			\draw[->] (0,1) -- ++({0.25*1/sqrt(5)},{0.3*2/sqrt(5)});
			\draw[->] (0,0) -- (0,0.85);
			\draw[->] (1,-2) -- ++({-0.25*1/sqrt(5)},{0.25*2/sqrt(5)});
			\draw[->] (2,0) -- ++(0,{0.6});
			\node[circle,fill=black,inner sep=0.5pt,minimum size=0.1cm,label={180:$(0,0)$}] at (0,0) {};
			\node[circle,fill=black,inner sep=0.5pt,minimum size=0.1cm,label={0:$(2,0)$}] at (2,0) {};
			\node[circle,fill=black,inner sep=0.5pt,minimum size=0.1cm,label={0:$(1,-\ell)$}] at (1,-2) {};
			\node[draw=black,circle,fill=white,inner sep=0.5pt,minimum size=0.1cm,label={180:$(0,1)$}] at (0,1) {};
			\node[draw=black,circle,fill=white,inner sep=0.5pt,minimum size=0.1cm,label={0:$(2,1)$}] at (2,1) {};
			\node[draw=black,circle,fill=white,inner sep=0.5pt,minimum size=0.1cm,label={0:$(1,1+\ell)$}] at (1,3) {};
			\end{tikzpicture}
			\caption{$2$ possible mass fluxes}
			\label{sf:fig8c}
		\end{subfigure}
		\begin{subfigure}[b]{0.3\textwidth}
			\centering
			\begin{tikzpicture}
			\begin{axis}[x = 9.6em, y = 9.6em,
			samples = 500, 
			domain = 0:1,
			ymax = .8,
			axis x line=bottom, 
			axis y line=left, 
			xlabel = { $m$}, 
			ylabel = { $\tau (m)$}, 
			x label style={at={(axis description cs:1,0)},anchor=south},
			y label style={at={(axis description cs:.07,0.9)},rotate=270,anchor=west},
			xtick = {0.2,0.4,0.6},
			xticklabels = {$\tfrac15$,$\tfrac25$,$\tfrac35$},
			ytick = \empty,
			]
			\addplot[black,domain=0:0.2,line width=2pt] expression {x};
			\addplot[black,domain=0.2:0.4,line width=2pt] expression {.4-5/4*(x-.6)^2};
			\addplot[black,domain=0.4:0.6,line width=2pt] expression {.15+.5*x};
			\addplot[black,domain=0.6:.9,line width=2pt] expression {-11/20+sqrt(x+.4)};
			\addplot[black,line width=0pt] expression {0};
			\addplot[only marks,mark=*,mark options={scale=1, fill=black},text mark as node=true] coordinates{({1/5},{1/5})};
			\addplot[only marks,mark=*,mark options={scale=1, fill=black},text mark as node=true] coordinates{({2/5},{7/20})};
			\addplot[only marks,mark=*,mark options={scale=1, fill=black},text mark as node=true] coordinates{({3/5},{9/20})};
			\end{axis}
			\end{tikzpicture}%
			\caption{$\tau$ differentiable}
			\label{sf:fig8a}
		\end{subfigure}
		\begin{subfigure}[b]{0.3\textwidth}
			\centering
			\begin{tikzpicture}
			\begin{axis}[x = 8em, y = 8em,
			samples = 500, 
			domain = 0:1.2,
			ymax = 0.96,
			axis x line=bottom, 
			axis y line=left, 
			xlabel = {$b$}, 
			ylabel = {$\varepsilon (b)$}, 
			x label style={at={(axis description cs:1,0)},anchor=south},
			y label style={at={(axis description cs:.1,0.9)},rotate=270,anchor=west},
			xtick = {.5,1},
			xticklabels = {$\frac12$,$1$},
			ytick = \empty,
			]
			\addplot[black,domain=0.1:.5,line width=2pt] expression {0.25/x-11/20+.4*x};
			\addplot[black,domain=0.5:1,line width=2pt] expression {x^2/5-3*x/5+.4};
			\addplot[black,domain=1:1.1,line width=2pt] expression {0};
			\addplot[black,line width=0pt] expression {0};
			\end{axis}
			\end{tikzpicture}
			\caption{$\varepsilon$ with kink at $b=\frac12$}
			\label{sf:fig8b}
		\end{subfigure}
		\caption{Sketch illustrating \cref{epskink}. Due to a kink in $\varepsilon$ there are $2$ possible optimal mass fluxes for $\ell$ sufficiently large.}
	\end{figure}
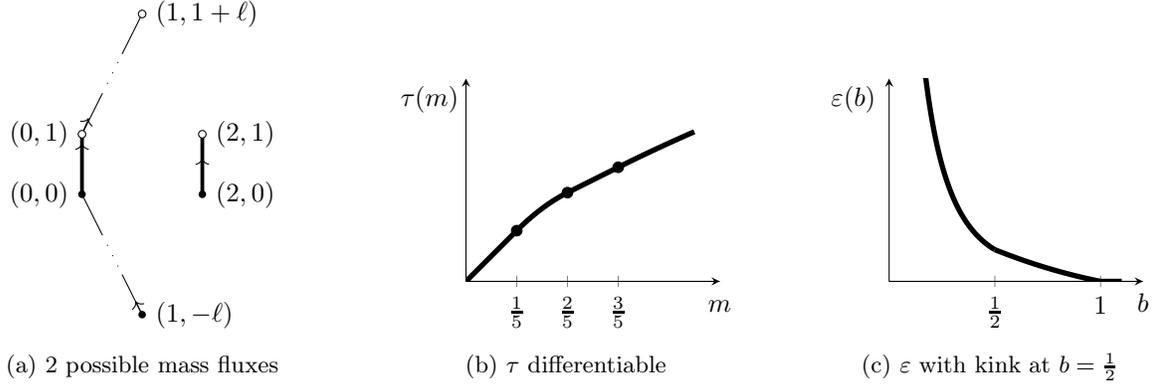
	The final example illustrates the reverse situation: The transportation cost $\tau$ exhibits a nondifferentiability so that to one $\xi$ there may correspond multiple $b$.
	As a result there will be a unique optimal mass flux for the branched transport problem, but multiple optimal solutions of the urban planning problem.
	\begin{examp}[$\tau$ has a kink]
		\label{taukink}
		Assume that $\mu_+=\delta_x$ and $\mu_-=\delta_y$ with $x\neq y$ (see \cref{fig7} left). Then the solution of the branched transport problem is unique (independent of $\tau$) and given by $\F_{opt}=\vec{e}\Ha\mres e$, where $e=[x,y]$ and $\vec{e}=(y-x)/|y-x|$. Further, we have $\mathcal{J}^{\tau,\mu_+,\mu_-}[\F_{opt}]=\tau(1)\Ha(e)$. Now set $S=e$ and let $\tau$ have a kink, for instance $\tau(m)=\min(m,1)$. Then for every spatially constant (and in fact even non-constant) $b\in [0,1]$ we obtain
		\begin{equation*}
		\mathcal{U}^{\varepsilon,\mu_+,\mu_-}[S,b]=W_{d_{e,1,b}}(\delta_x,\delta_y)+\int_e\varepsilon(b)\dHa=b\Ha(e)+\varepsilon(b)\Ha(e)=\Ha(e)=\mathcal{J}^{\tau,\mu_+,\mu_-}[\F_{opt}]
		\end{equation*}
		(see \cref{fig7} right) so that there exist infinitely many optimal friction coefficients $b$.
		Note that in this example $\varepsilon$ is differentiable below $a=\tau'(0)$.
	\end{examp}
	\begin{figure}
		\centering
		\begin{subfigure}[b]{0.33\textwidth}
			\centering
			\begin{tikzpicture}
			\node[label=180:$x$,circle,fill=black,inner sep=0.5pt,minimum size=0.1cm] (x) at (0,0) {};
			\node[label=0:$y$,draw=black,circle,fill=white,inner sep=0.5pt,minimum size=0.1cm] (y) at (2,1.5) {};
			\draw[thick] (x) -- (y);
			\draw[-{Straight Barb[width=3mm,length=3mm]},thick] (x) --++(1,0.75);
			\end{tikzpicture}
			\caption{optimal mass flux independent of $\tau$}
		\end{subfigure}%
		\begin{subfigure}[b]{0.33\textwidth}
			\centering
			\begin{tikzpicture}
			\begin{axis}[x = 4em, y = 4em,
			samples = 500, 
			domain = 0:2, 
			ymax = 1.5,
			axis x line=bottom, 
			axis y line=left, 
			xlabel = { $m$}, 
			ylabel = { $\tau (m)$}, 
			x label style={at={(axis description cs:1,0)},anchor=south},
			y label style={at={(axis description cs:.07,0.9)},rotate=270,anchor=west},
			xtick = {1},
			xticklabels = {$1$},
			ytick = {1},
			yticklabels = {$1$},
			]
			\addplot[black,domain=0:1.9,line width = 2pt] expression {min(x,1)};
			\addplot[black,line width=0pt] expression {0};
			\end{axis}
			\end{tikzpicture}%
			\caption{$-\partial(-\tau)(1)= [0,1]$}
		\end{subfigure}%
		\begin{subfigure}[b]{0.33\textwidth}
			\centering
			\begin{tikzpicture}
			\begin{axis}[x = 4em, y = 4em,
			samples = 500, 
			domain = 0:2,
			ymax = 1.5,
			axis x line=bottom, 
			axis y line=left, 
			xlabel = {$b$}, 
			ylabel = {$\varepsilon (b)$}, 
			x label style={at={(axis description cs:1,0)},anchor=south},
			y label style={at={(axis description cs:.1,0.9)},rotate=270,anchor=west},
			xtick = {1},
			xticklabels = {$1$},
			ytick = {1},
			yticklabels = {$1$},
			]
			\addplot[black,domain=0:1.9,line width = 2pt] expression {max(0,1-x)};
			\addplot[black,line width=0pt] expression {0};
			\end{axis}
			\end{tikzpicture}
			\caption{$b+\varepsilon(b)\equiv 1$ for $b\in [0,1]$}
		\end{subfigure}
		\caption{Sketch illustrating \cref{taukink}. A kink in $\tau$ leads to multiple optimal friction coefficients.}
		\label{fig7}
	\end{figure}
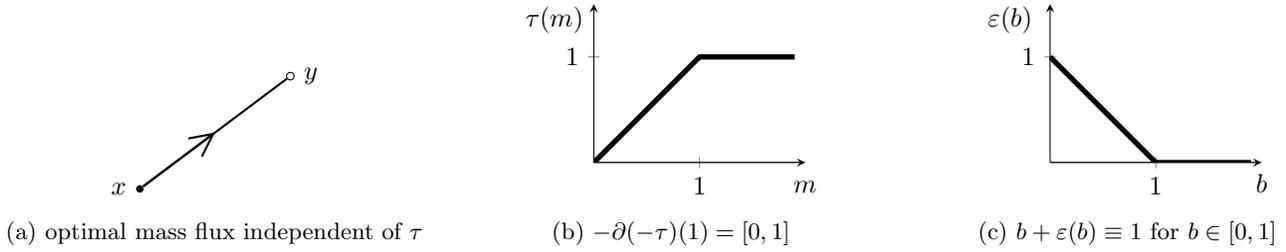
	\section{Acknowledgements}
	This work was supported by the Deutsche Forschungsgemeinschaft (DFG, German Research Foundation) under the priority program SPP 1962, grant WI 4654/1-1, and under Germany's Excellence Strategy EXC 2044 -- 390685587, Mathematics M\"unster: Dynamics--Geometry--Structure. B.W.’s and J.L.'s research was supported by the Alfried Krupp Prize for Young University Teachers awarded by the Alfried Krupp von Bohlen und Halbach-Stiftung. The publication was supported by the Open Access Publication Fund of the University of M\"unster.
	\printbibliography
\end{document}